%%%    LaTeX:   Survey : Algebra+Homotopy=Operad

\documentclass{amsart}

% Package

\usepackage{times,amsthm}
\usepackage{amsmath}
\usepackage{amssymb}
\usepackage{amsfonts}
\usepackage{mathrsfs}
\usepackage{latexsym}
\usepackage[all]{xy}
\SelectTips {cm}{}
\usepackage{graphicx}
\usepackage{comment}
\usepackage{appendix}
\usepackage{amscd}
\usepackage{a4wide}
\usepackage{mathrsfs}
\usepackage{pdfsync}
%\usepackage{showkeys}
%\usepackage{mathabx}

%%\usepackage{pst-pdf}
%\usepackage{pstricks}
%\usepackage{pstricks-add}
%\usepackage{stmaryrd}

% Counter

%\setcounter{tocdepth}{1}

% Colors

\usepackage{color}
%\definecolor{RougeFonce}{rgb}{0.71,0.14,0.07}
%\definecolor{Bordeaux}{rgb}{0.545, 0.137, 0.137}
%\definecolor{BleuFonce}{rgb}{0, 0, 0.612}
%\definecolor{BleuRoyalFonce}{rgb}{0.153, 0.25, 0.545}
%\definecolor{BleuGris}{rgb}{0.212, 0.392, 0.545}
\definecolor{Chocolat}{rgb}{0.36, 0.2, 0.09}
\definecolor{BleuTresFonce}{rgb}{0.215, 0.215, 0.36}

% Hyperref

\usepackage[colorlinks,final,backref=page,hyperindex]{hyperref}
\hypersetup{citecolor=BleuTresFonce, linkcolor=Chocolat}

%%% TEMP 

%\documentclass[english]{smfart}

%% Package
%\usepackage[T1]{fontenc}
%%\usepackage[default]{frcursive}
%\usepackage[all]{xy}
%\usepackage{latexsym}
%\usepackage{amssymb}
%\usepackage{amsmath}
%\usepackage{amsthm}
%\usepackage{amscd}
%\usepackage{fancyhdr}
%\usepackage{graphicx}
%\usepackage{a4wide}
%\usepackage{mathrsfs}
%\usepackage{pdfsync}

%\usepackage{showkeys}

%%% TEMP 

% Parametres

\setlength{\parindent}{0pt}
%\addtolength{\headsep}{0.5cm}

% Newtheorem

\theoremstyle{plain}
\newtheorem{prop}{Proposition}
\newtheorem{theo}[prop]{Theorem}
\newtheorem{lemm}[prop]{Lemma}

\theoremstyle{definition}
\newtheorem{ex}{\sc Exercise}
\newtheorem*{defi}{\sc Definition}
\newtheorem*{rema}{\sc Remark}
\newtheorem*{exam}{\sc Example}
\newtheorem*{exams}{\sc Examples}

\newtheorem*{ques}{\sc Questions}
\newtheorem*{appl}{\sc Application}

% SHUFFLE

\ProvidesFile{Ushuffle.fd}
\DeclareFontFamily{U}{shuffle}{} 
\DeclareFontShape{U}{shuffle}{m}{n}{<5-8>shuffle7  <8->shuffle10}{}
 \NeedsTeXFormat{LaTeX2e}
\ProvidesPackage{shuffle}[2008/10/27 Shuffle product symbol] 
\DeclareSymbolFont{Shuffle}{U}{shuffle}{m}{n}
\DeclareMathSymbol\shuffle{\mathbin}{Shuffle}{"001} 
\DeclareMathSymbol\cshuffle{\mathbin}{Shuffle}{"002}

% Newcommand

\def\I{\mathrm{ I }}
\def\DDD{D}

\def\Pac{\Po^{\ac}}

\newcommand{\CC}{\mathbb{C}}

\def\CCC{{\mathcal{C}}}
\def\TTT{{\mathcal{T}}}
\newcommand{\Ai}{A_\infty}

\newcommand{\Tw}{\mathrm{Tw}}

\newcommand{\qi}{\xrightarrow{\sim}}
\newcommand{\g}{\mathfrak{g}}

\newcommand{\KK}{\mathbb{K}}
\newcommand{\ZZ}{\mathbb{Z}}

\newcommand{\End}{\mathrm{End}}

\newcommand{\B}{\mathrm{B}}

\newcommand{\NN}{\mathbb{N}}

\newcommand{\Sy}{\mathbb{S}}
\newcommand{\CA}{\mathcal{A}}

\newcommand{\Po}{\mathcal{P}}

\newcommand{\ac}{\scriptstyle \text{\rm !`}}

\newcommand{\Id}{\mathrm{Id}}
\newcommand{\Qo}{\mathcal{Q}}

\newcommand{\Hom}{\mathrm{Hom}}

\newcommand{\sgn}{{sgn}}

\newcommand{\epi}{\twoheadrightarrow}
\newcommand{\mono}{\rightarrowtail}

\newcommand{\Y}{\vcenter{\xymatrix@M=0pt@R=6pt@C=6pt{
\ar@{-}[dr] &  &\ar@{-}[dl]  \\
 &\ar@{-}[d] &  \\  & &}}}
\newcommand{\YY}{\vcenter{\xymatrix@M=0pt@R=6pt@C=6pt{
\ar@{-}[dr] &  &\ar@2{-}[dl]  \\
 &\ar@2{-}[d] &  \\  & &}}}
\newcommand{\YYY}{\vcenter{\xymatrix@M=0pt@R=6pt@C=6pt{
\ar@{-}[dr] &  &\ar@3{-}[dl]  \\
 &\ar@3{-}[d] &  \\  & &}}}
\newcommand{\cop}{\vcenter{\xymatrix@M=0pt@R=6pt@C=6pt{
 & \ar@{-}[d] & \\
 &\ar@{-}[dr] \ar@{-}[dl] &  \\  & &}}}

\newcommand{\copL}{\xymatrix@M=0pt@R=6pt@C=6pt{
 & \ar@{-}[d] & \\
 &\ar@{-}[dr] \ar@{-}[dl] &  \\  & &\\  & &\\  & &}}
\newcommand{\YL}{\vcenter{\xymatrix@M=0pt@R=6pt@C=6pt{
\ar@{-}[dr] &  &\ar@{-}[dl]  \\
 &\ar@{-}[d] &   \\  & &\\  & &\\  & &}}}
\newcommand{\YYL}{\vcenter{\xymatrix@M=0pt@R=6pt@C=6pt{
\ar@{-}[dr] &  &\ar@2{-}[dl]  \\
 &\ar@2{-}[d] &  \\  & &\\  & &\\  & &}}}
\newcommand{\LYY}{\vcenter{\xymatrix@M=0pt@R=6pt@C=6pt{
\ar@{-}[dr] &  &\ar@2{-}[dl]  \\
 &\ar@2{-}[d] &  \\  & &\\  & &\\  & &}}}

\newcommand{\XX}{\vcenter{\xymatrix@M=0pt@R=6pt@C=6pt{\ar@{-}[ddrr]&&\ar@{-}[ddll] \\ && \\ &&   }}}

\newcommand{\Ta}{\vcenter{\xymatrix@M=0pt@R=6pt@C=6pt{ \ar@{-}[dddrrr] && \ar@{-}[dl] &&  \\
&&& \ar@{-}[dl]  &  \\ &&&&  \ar@{-}[dl]  \\&&&  \ar@{-}[d] &
\\&&&& }}}
\newcommand{\Tb}{\vcenter{\xymatrix@M=0pt@R=6pt@C=6pt{  & \ar@{-}[dr]&&\ar@{-}[dl] \\
\ar@{-}[dr]&&\ar@{-}[dl]& \\&\ar@{-}[dr]&&\ar@{-}[dl]
\\&&\ar@{-}[d]& \\&&& }}}
\newcommand{\Tc}{\vcenter{\xymatrix@M=0pt@R=6pt@C=6pt{   \ar@{-}[dr]&&\ar@{-}[dl]& \\
&\ar@{-}[dr]&& \ar@{-}[dl] \\\ar@{-}[dr]&&\ar@{-}[dl]&
\\&\ar@{-}[d]&& \\&&& }}}
\newcommand{\Td}{\vcenter{\xymatrix@M=0pt@R=6pt@C=6pt{ && \ar@{-}[dr]&&\ar@{-}[dddlll] \\
 &\ar@{-}[dr]&&& \\ \ar@{-}[dr]&&&& \\& \ar@{-}[d]&&& \\&&&&  }}}
\newcommand{\Te}{\vcenter{\xymatrix@R=3pt@C=3pt{\ar@{-}[drdr] &&\ar@{-}[dl]  *=0{}
\ar@{-}[dr]&& \ar@{-}[ddll] \\ &&& *=0{}& \\&& *=0{} \ar@{-}[d]&&
\\&&&& }}}
\newcommand{\TaC}{\vcenter{\xymatrix@M=0pt@R=6pt@C=6pt{ \ar@{-}[ddddddrrrrrr] && \ar@{-}[dl] && && \\
&&& \ar@{-}[dl]  &&&  \\ &&&&  \ar@{-}[dl]&&  \\&&& &&&
\\&&&&\ar@{-}[dl]&& \\&&&&&\ar@{-}[dl]&\\&&&&&& }}}
\newcommand{\TreeL}{\vcenter{\xymatrix@M=0pt@R=5pt@C=5pt{ \ar@{-}[dr] &
&\ar@{-}[dl] & &  \\
& \ar@{-}[dr] & &\ar@{-}[dl]  & \\
& &\ar@{-}[d] & & \\
& & \\ & & }}}
\newcommand{\TreeR}{\vcenter{\xymatrix@M=0pt@R=5pt@C=5pt{
 & &\ar@{-}[dr] & & \ar@{-}[dl]  \\
& \ar@{-}[dr] & &\ar@{-}[dl]  & \\
& &\ar@{-}[d] & & \\
& & \\ & & }}}

%%%%%%%%%%%%%%%%%%%%%%%%%%%%%% Draftnote  %%%%%%%%%%%%%%%%%%%%%%%%%%%%%%%%%%%%%%%%%%%%%

%\newcommand{\draftnote}[1]{}

%%%%%%%%%%%%%%%%%%%%%%%%%%%%%% Newenvironment  %%%%%%%%%%%%%%%%%%%%%%%%%%%%%%%%%%%%%%%%%%%%%

\long\def\symbolfootnote[#1]#2{\begingroup%
\def\thefootnote{\fnsymbol{footnote}}\footnote[#1]{#2}\endgroup}

% \pagestyle{fancy}

%\fancyhf{} \fancyhead[LO]{\bfseries\rightmark}
%\fancyhead[RE]{\bfseries\leftmark}

%%%%%%%%%%%%%%%%%%%%%%%%%%%%%%%% Table des matieres (condensees)  %%%%%%%%%%%%%%%%%%%%%%%%%%%%%%%

\setcounter{tocdepth}{1}

%%%%%%%%%%%%%%%%%%%%%%%%%%%%%%%   Titre et auteur %%%%%%%%%%%%%%%%%%%%%%%%%%%%%%%%%%%%%%%%%%%%%

\title{Algebra\ +\ Homotopy\ =\ Operad}

\author{Bruno Vallette}

\address{
Laboratoire J.A. Dieudonn\'e \\
Universit\'e de Nice Sophia-Antipolis \\
Parc Valrose \\ 06108 Nice
Cedex 02 \\ France
}
%\address{Max-Planck-Institut f\"ur Mathematik \\
%Vivatsgasse 7\\
%53111 Bonn \\
%Germany}

\email{brunov@unice.fr}

%\URL{http://math.unice.fr/$\sim$brunov}

\begin{document}

\dedicatory{``If I could only understand the beautiful consequence following from the concise proposition $d^2=0$.'' \\  \qquad\qquad\qquad\qquad\qquad\qquad\qquad\qquad\qquad\qquad\qquad\qquad\qquad\qquad\qquad\qquad\qquad\qquad \emph{Henri Cartan}}

\maketitle

\begin{abstract}
This survey provides an elementary introduction  to operads and to their applications in homotopical algebra. 
The aim is to explain how the notion of an operad was prompted by the necessity to have an algebraic object which encodes higher homotopies.  We try to show how universal this theory is by giving many applications in Algebra, Geometry, Topology, and Mathematical Physics. 
(This text is  accessible to any student knowing what  tensor products, chain complexes, and categories are.)
\end{abstract}

\tableofcontents

\section*{Introduction}

Galois explained to us that operations acting on the solutions of algebraic equations are mathematical objects as well. The notion of an \emph{operad} was created in order to have a well defined mathematical object which encodes ``operations''. Its name is a {porte-manteau} word, coming from the contraction of the words ``operations'' and ``monad'', because an operad can be defined as a monad encoding operations. 
%(We will not mention nor use that word anymore through the text). 
The introduction of this notion was prompted in the 60's, by the necessity of working with higher operations made up of  higher homotopies appearing in algebraic topology. 

\medskip

\emph{Algebra} is the study of algebraic structures with respect to \emph{isomorphisms}. Given two isomorphic vector spaces and one algebra structure on one of them, one can always define, by means of transfer, an algebra structure on the other space such that these two algebra structures become isomorphic. 

\emph{Homotopical algebra} is the study of algebraic structures with respect to \emph{quasi-isomorphisms}, i.e. morphisms of chain complexes which induce isomorphisms in homology only. The main difference with the previous situation is that quasi-isomorphisms of chain complexes are not invertible in general. However, given two quasi-isomorphic chain complexes and one algebra structure on one of them, one can always define, on the other chain complex, an algebra structure. But, in this case, the relations are satisfied only \emph{up to homotopy}. In the end, these two algebra structures become \emph{quasi-isomorphic} in some way. This last result is called the \emph{Homotopy Transfer Theorem}, or \emph{HTT} for short. 

\medskip

In the first cases like associative algebras, commutative algebras, and Lie algebras, this kind of result can be proved by hand. However, in general, the transferred algebra structure yields a coherent system of higher homotopies, whose complexity increases quickly with the number of  the initial  operations.  Using operad theory, for instance Koszul duality theory, one can prove the HTT with explicit formulae. In this way, one recovers, and thus unifies, many classical objects appearing in Algebra, Geometry, Topology and, Mathematical Physics  like spectral sequences, Massey products, or Feynman diagrams, for instance. \\

This text does not pretend to be exhaustive, nor to serve as a faithful reference to the existing literature. Its only aim is to give a gentle introduction  to the ideas of this field of mathematics. We would like to share here one  point of view  on the subject, from ``student to student''.  It includes many figures and exercises to help the learning reader in its journey. To ease the reading, we skipped many technical details, which can be found in the  book \cite{LodayVallette10}.   \\

\textsc{Convention.} 
In this text, a chain complex $(V, d)$ is usually a graded module $V:=\lbrace V_n \rbrace_{n\in \ZZ}$ equipped with a degree $-1$ map $d$ (homological convention), which squares to zero. For the simplicity of the presentation, we always work over a field $\KK$ of characteristic $0$, even if some exposed results admit generalizations beyond that case. 

\section{When Algebra meets Homotopy}\label{Sec:Alg+Homo}

In this section, we treat the mother example of higher algebras:  $A_\infty$-algebras. We show how this notion appears naturally when one tries to transfer the structure of an associative algebra through  homotopy data. We provide an elementary but extensive study of its homotopy properties (Homotopy Transfer Theorem, $A_\infty$-morphism, Massey products and homotopy category). 

\subsection{Homotopy data and algebraic data}
Let us first consider  the following \emph{homotopy data} of chain complexes: 
\begin{eqnarray*}
&\xymatrix{     *{ \quad \ \  \quad (A, d_A)\ } \ar@(dl,ul)[]^{h}\ \ar@<0.5ex>[r]^{p} & *{\
(H,d_H)\quad \ \  \ \quad }  \ar@<0.5ex>[l]^{i}}&\\
& \Id_A-i p =d_A  h+ h  d_A \ ,
\end{eqnarray*}
where $i$ and $p$ are morphisms of chain complexes and where $h$ is a degree $+1$ map. It is called a \emph{homotopy retract}, when the map $i$ is a quasi-isomorphism, i.e. when it realizes an isomorphism in homology. If moreover $p i=\Id_H$, then it is called a \emph{deformation retract}. 

\begin{ex}
Since we work over a field, show that any quasi-isomorphism $i$ extends to a homotopy retract. 
(Such a statement holds over $\ZZ$ when all the $\ZZ$-modules are free.)

\texttt{Hint}. Use the same kind of decomposition of chain complexes with their homology groups as in Section~\ref{subsec:Massey}.
\end{ex}

Independently, let us consider the following \emph{algebraic data} of a associative algebra structure on $A$:
$$\nu : A^{\otimes 2} \to A, \quad \textrm{such that} \quad  \nu(\nu(a,b),c)=\nu(a,\nu(b,c)), \ \forall a,b,c \in A \ . $$ 
By simplicity, we choose to depict these composites by the following graphically composition schemes: 

\begin{center}
\includegraphics[scale=0.2]{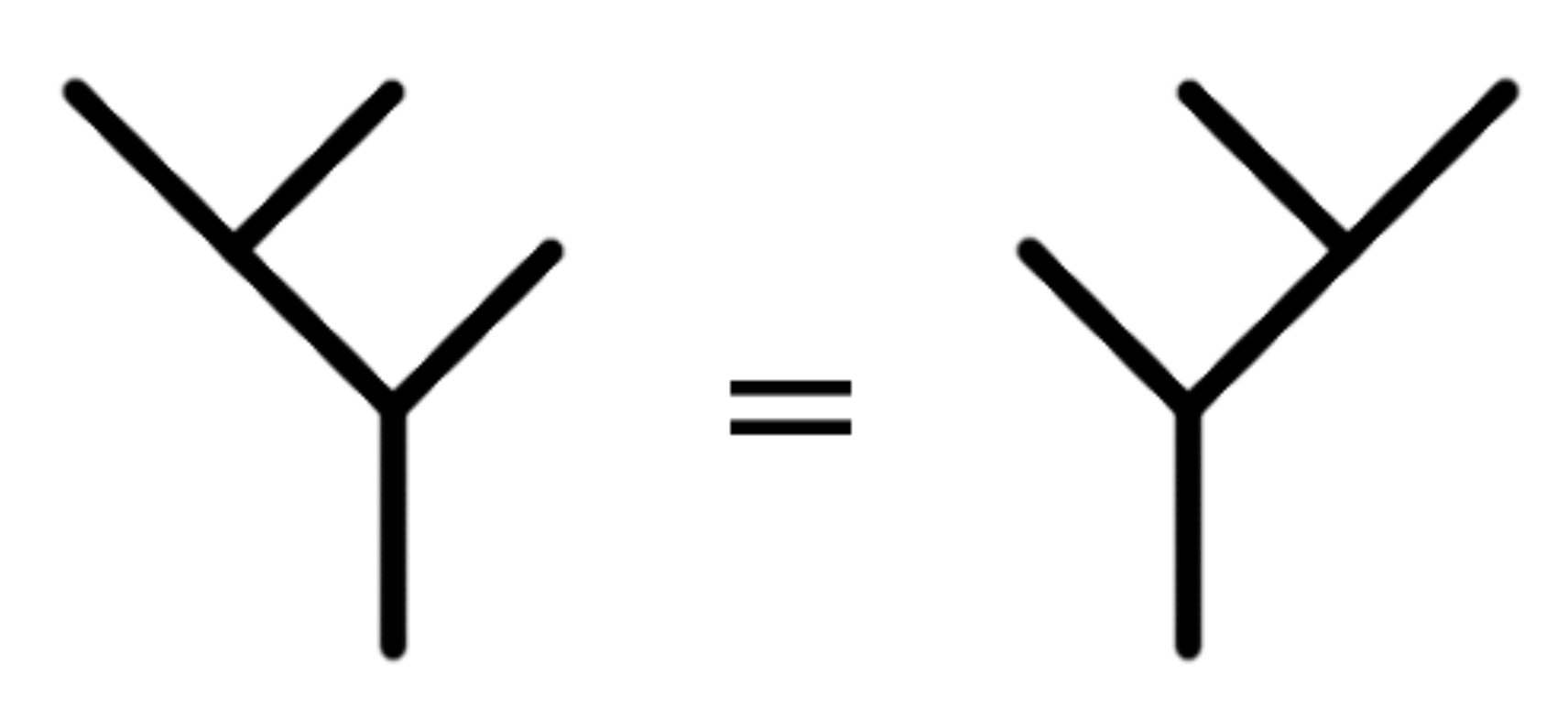} 
\end{center}

where we forget about the input variables since they are generic.
Actually, we consider a differential graded associative algebra structure on $(A,d_A)$, dga  algebra for short, on $A$. This means that the differential $d_A$ of $A$ is a derivation for the product $\nu$:

%\begin{figure}[!h]
%\centering
\begin{center}
\includegraphics[scale=0.2]{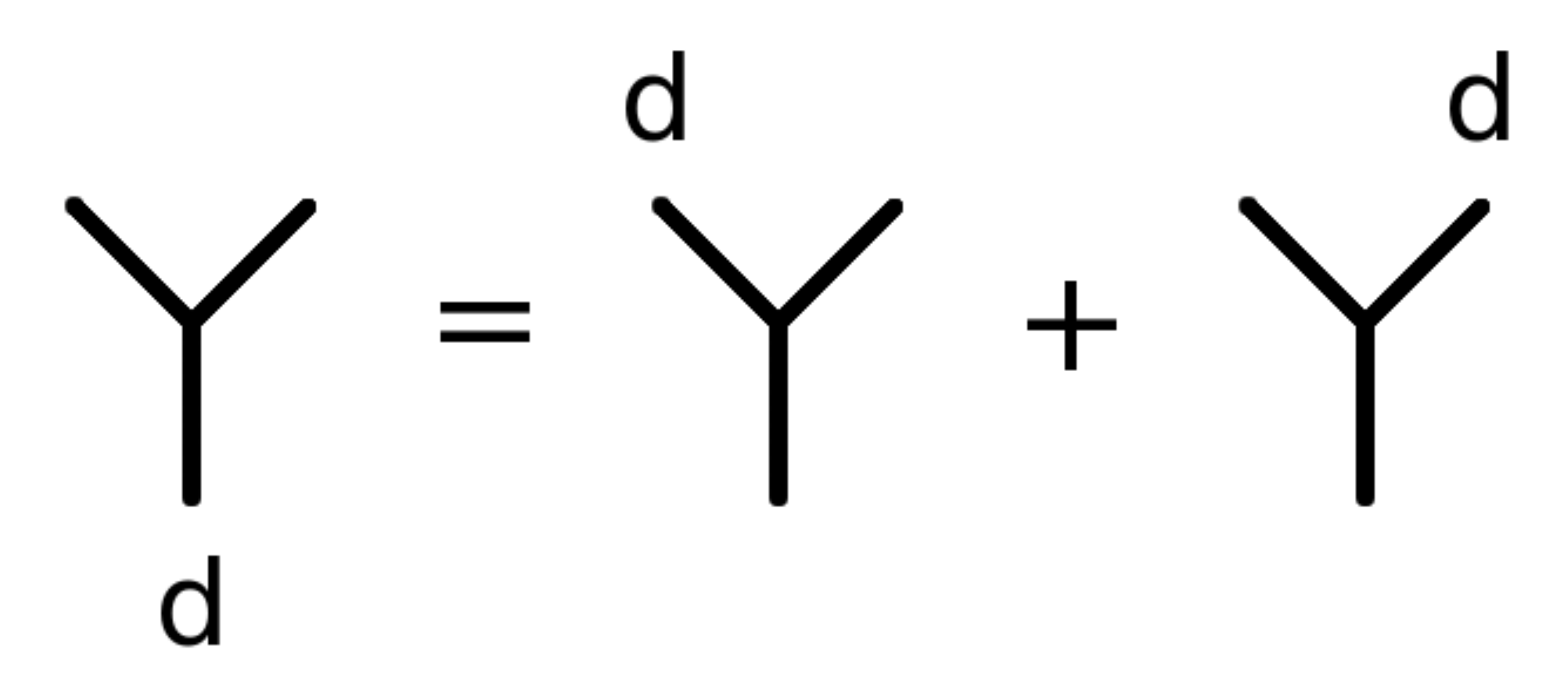} 
\end{center}
%\caption{Derivation relation}
%\label{Fig:Der}
%\end{figure}

(In this section, we do not require that the associative algebra has a unit.)

\subsection{Transfer of algebraic structure}\label{subsec:TransferDGA}
One can now try to study how these data mix. Namely,  is it  possible to transfer the associative algebra structure from $A$ to $H$ through the homotopy data in some way~? The first answer is given by the following formula for a binary product 
$$\mu_2 := p \circ \nu \circ i^{\otimes 2}\  : \  H^{\otimes 2} \to H$$ on $H$. 
Under our graphical representation, it gives

%\begin{figure}[!h]
%\centering
\begin{center}
\includegraphics[scale=0.2]{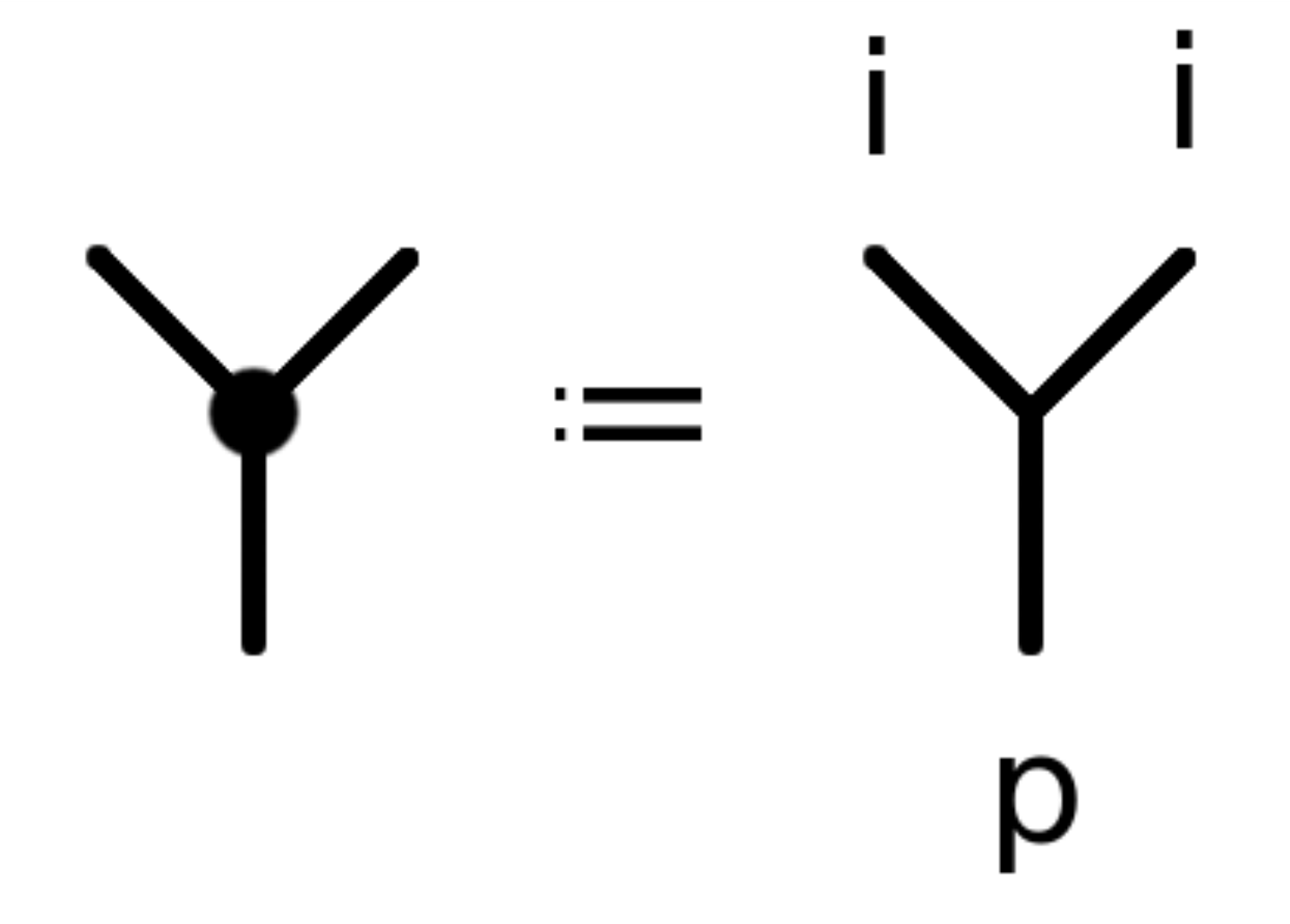} 
\end{center}
%\caption{Definition of $\mu_2$}
%\label{Fig:mu2}
%\end{figure}

Now, let us check whether the product $\mu_2$ is associative: 

%\begin{figure}[h]
%\centering
\begin{center}
\includegraphics[scale=0.2]{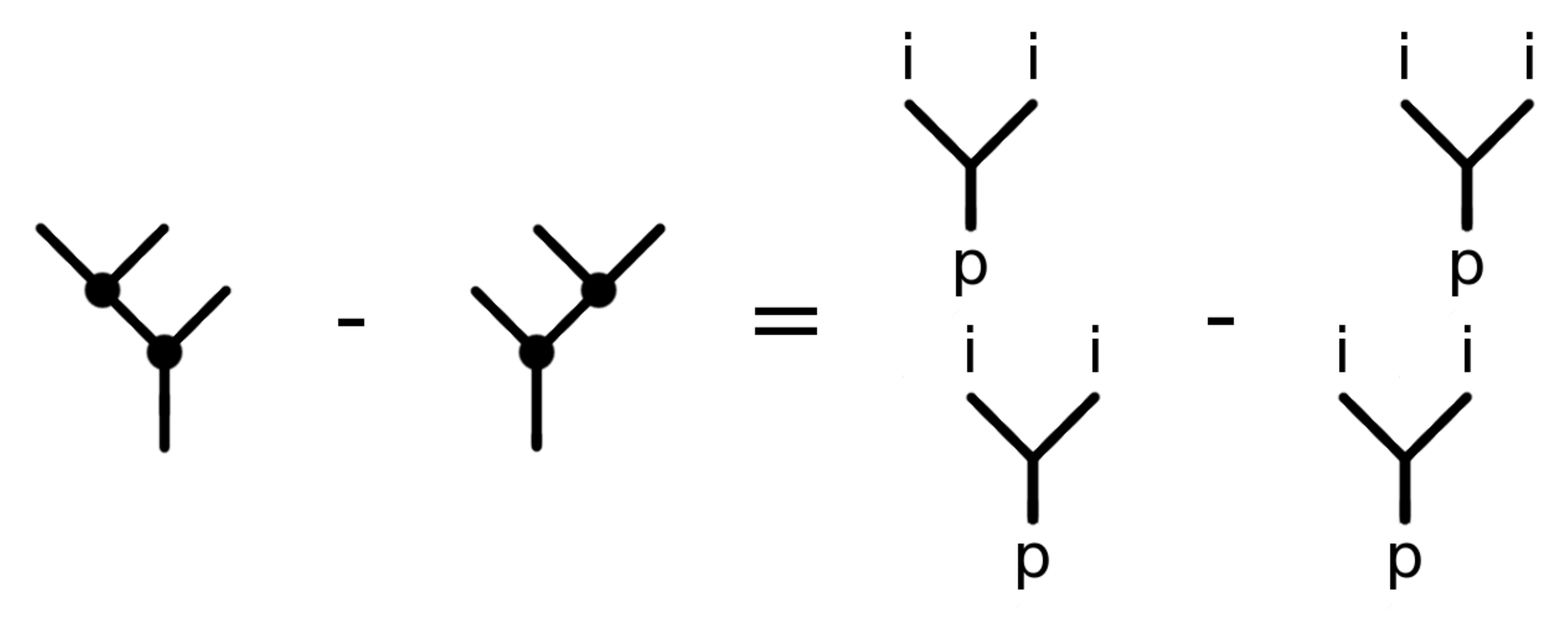} 
\end{center}
%\caption{Associativity of  $\mu_2$ ?}
%\label{Fig:AssocMu2}
%\end{figure}

If the map $i$ were an isomorphism, with inverse $p$, the answer would be \emph{yes}. But, this does not hold in general; so the answer is \emph{no}. Since the composite $i p$ is equal to the identity up to the homotopy $h$, the transferred product $\mu_2$ seems to be associative only ``up to the homotopy $h$''. Let us make this statement precise.

The associativity relation is equivalent to the  vanishing of the associator 

%\begin{figure}[h]
%\centering
\begin{center}
\includegraphics[scale=0.2]{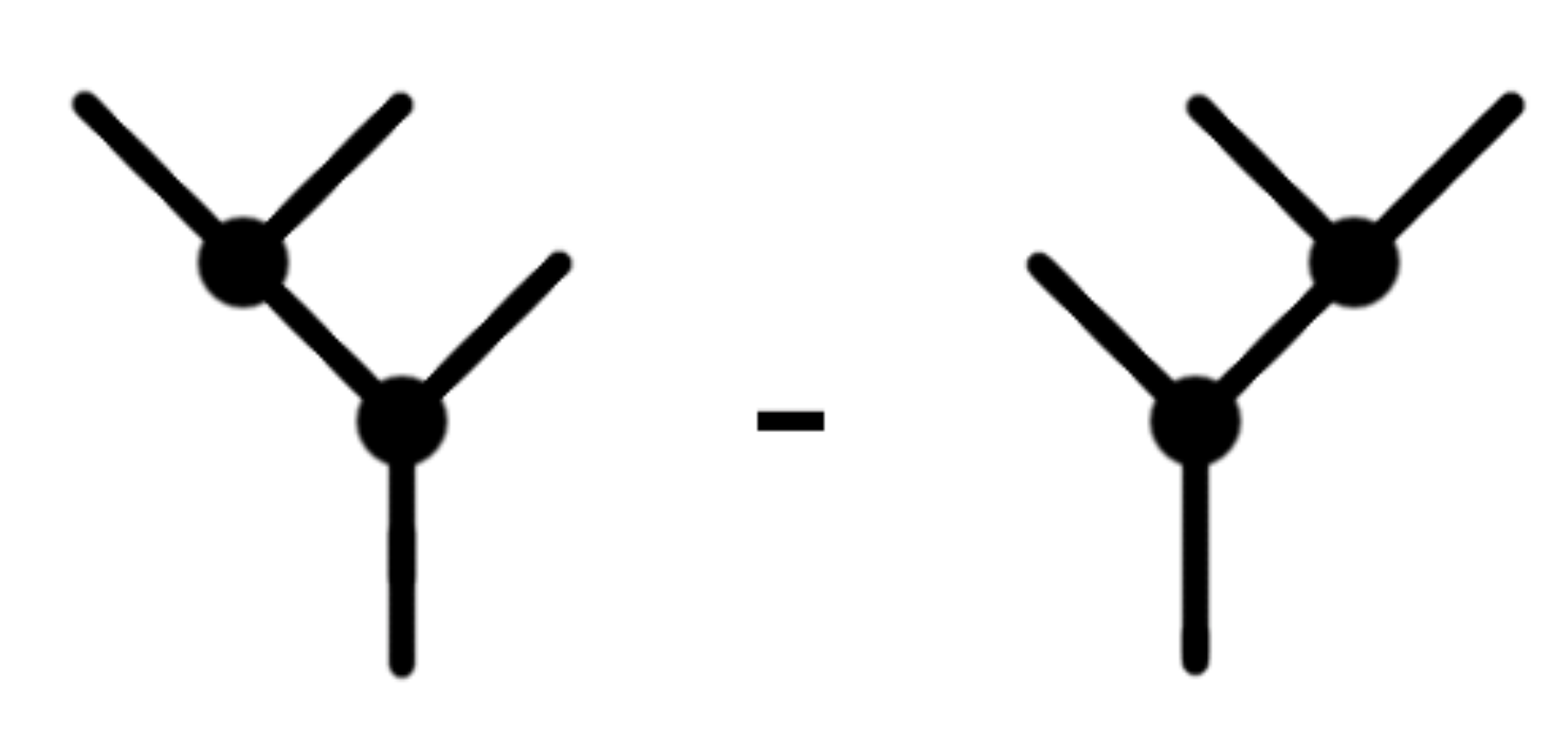} 
\end{center}
%\caption{Associator of  $\mu_2$}
%\label{Fig:AssociatorMu2}
%\end{figure}

 in $\Hom(H^{\otimes 3}, H)$. This mapping space becomes a chain complex when equipped with the usual differential map $\partial (f) :=  d_H f - (-1)^{|f|} d_{H^{\otimes 3}} f $. We introduce the element  $\mu_3$

%\begin{figure}[h]
%\centering
\begin{center}
\includegraphics[scale=0.2]{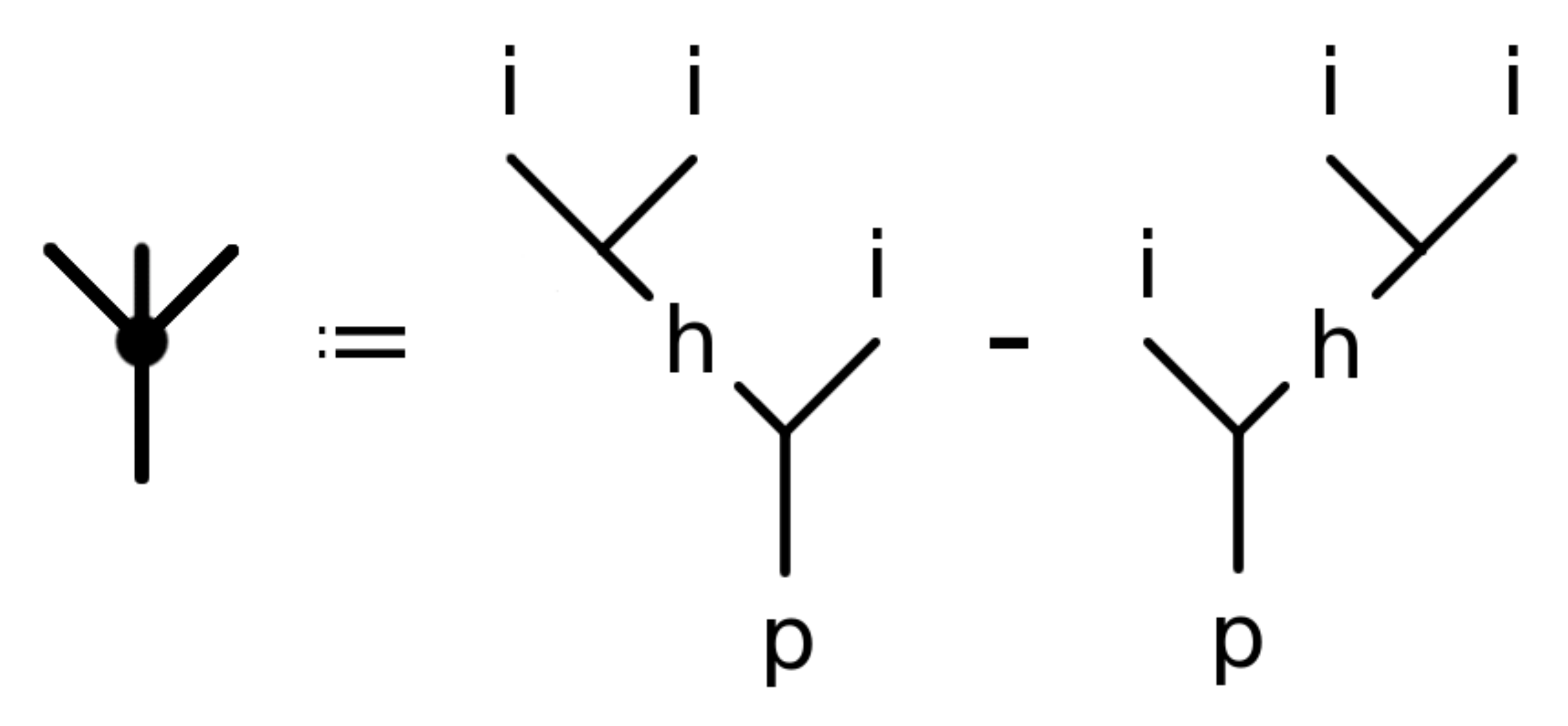} 
\end{center}
%\caption{Definition of  $\mu_3$}
%\label{Fig:Mu3}
%\end{figure}

It has degree $+1$, since it is the composite of maps of degree $0$ (the maps $i$, $p$, and $\nu$) and one map of degree $1$ (the map $h$).
\begin{ex}
We leave it to the reader to check that 

%\begin{figure}[h]
%\centering
\begin{center}
\includegraphics[scale=0.2]{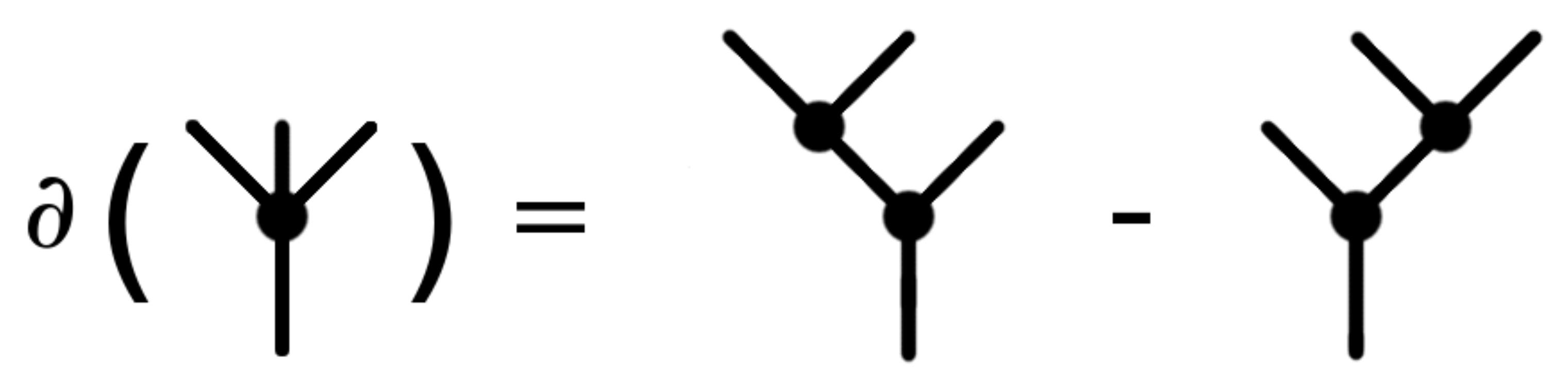} 
\end{center}
%\caption{Derivative of  $\mu_3$}
%\label{Fig:DerMu3}
%\end{figure}
\end{ex}

This means that, in the chain complex $(\Hom(H^{\otimes 3}, H), \partial)$, the associator of $\mu_2$ is not zero in general, but vanishes up to the homotopy $\mu_3$. Under this property, one says that  ``the product $\mu_2$ is associative up to homotopy''. \\ 

The next step consists in checking whether the two operations $\mu_2$ and $\mu_3$ satisfy some relation. The answer is again \emph{yes}, they satisfy one new relation but \emph{only up to yet another homotopy}, which is a linear map $\mu_4$ of degree $+2$ in  $\Hom(H^{\otimes 4}, H)$. And so on and so forth. 

Let us cut short and give a general answer: generalizing the previous two formulae, we consider the following family of maps 
$$ \mu_n=\vcenter{\hbox{\includegraphics[scale=0.2]{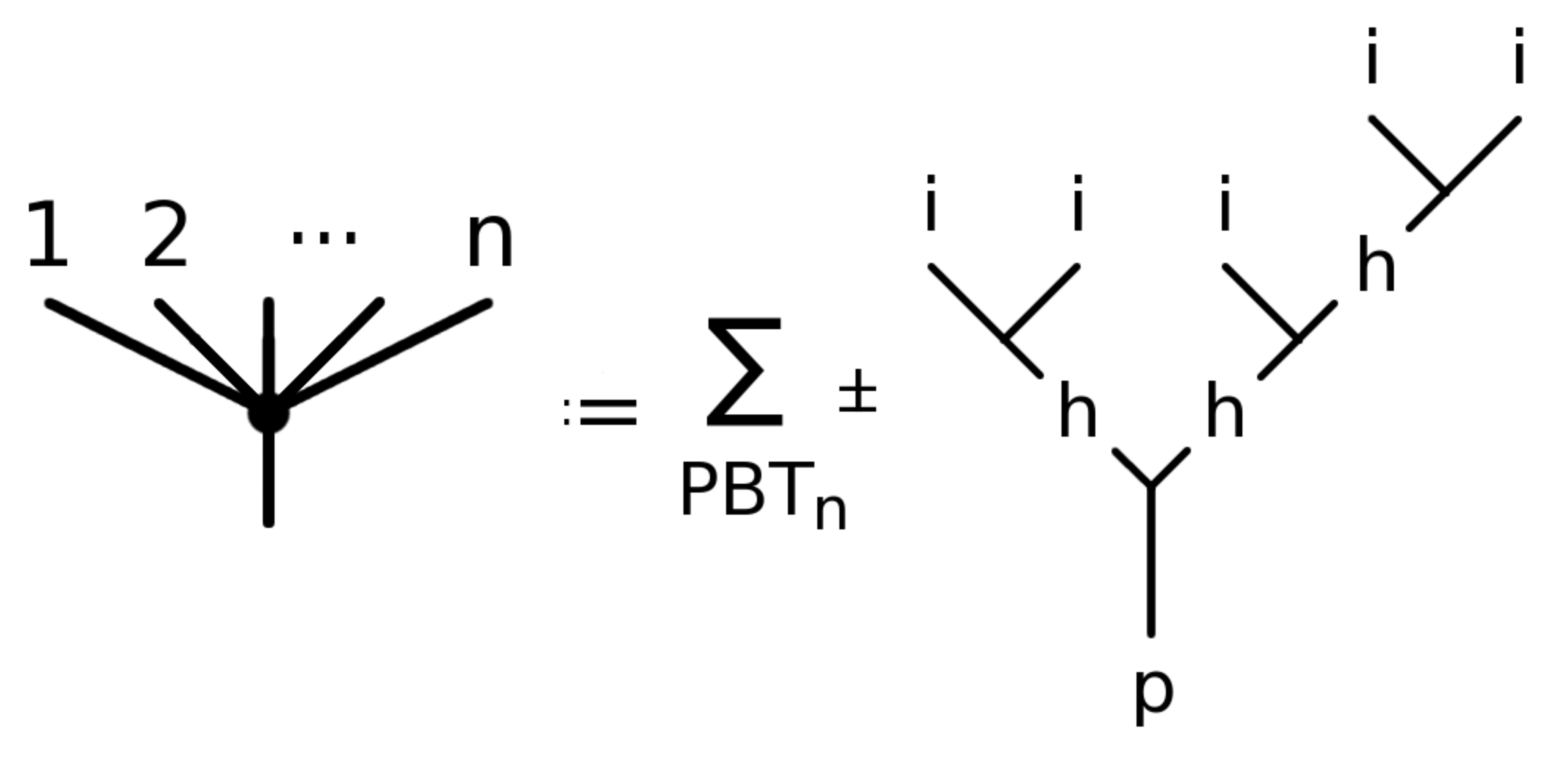}}} $$
in  $\Hom(H^{\otimes n}, H)$, for any $n \ge 2$, where the notation $PBT_n$ stands for the set of planar binary rooted trees with $n$ leaves. This defines linear maps of degree $n-2$, which satisfy the following relations 

\begin{center}
\boxed{\includegraphics[scale=0.2]{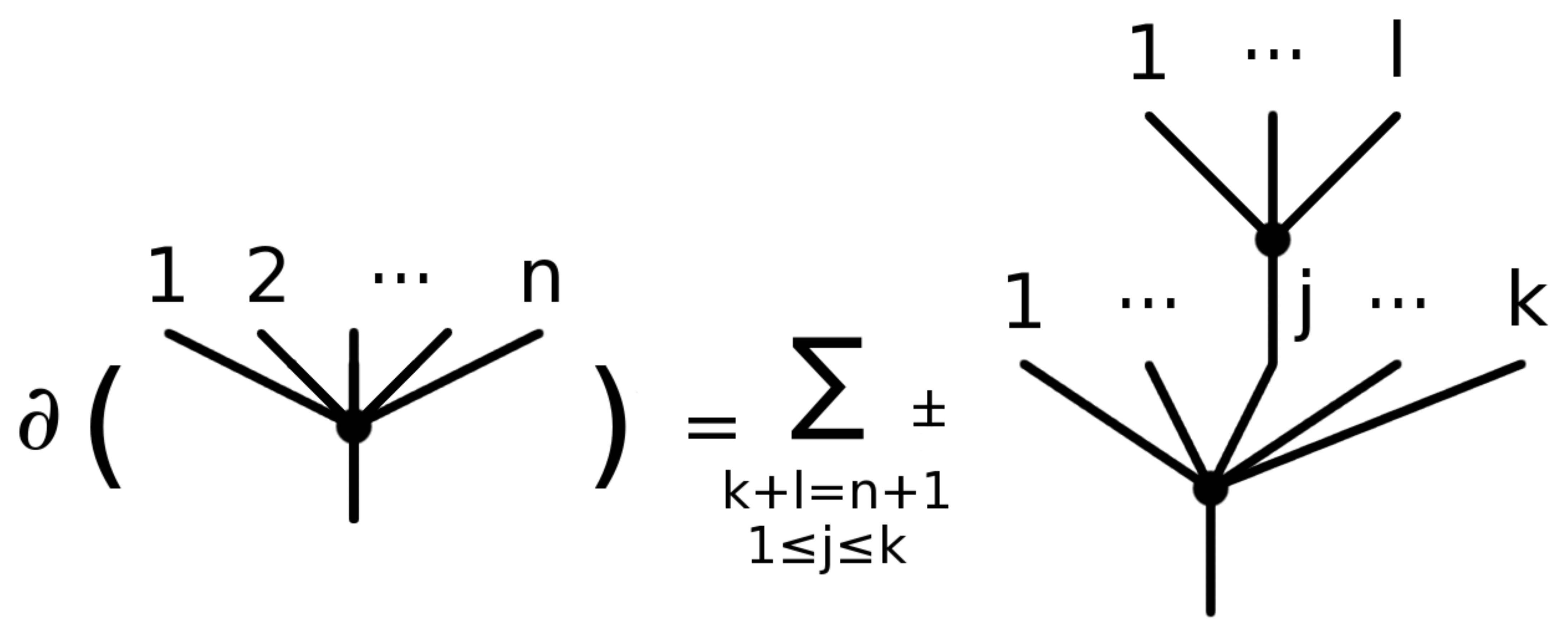} }
\end{center}

\subsection{$A_\infty$-algebra}\label{subsec:AinftyAlg}
The aforementioned example triggers the following general definition.

\begin{defi}[$A_\infty$-algebra]
An \emph{associative algebra up to homotopy}, also called \emph{homotopy as\-so\-cia\-ti\-ve algebra} or  \emph{$A_\infty$-algebra} for short, is a chain complex $(A, d)$ endowed with a family of operations 
$\mu_n : A^{\otimes n} \to A$ of degree $n-2$, for any $n\ge 2$,  satisfying the aforementioned relations.
\end{defi}

This notion was discovered by Jim Stasheff in his Ph.D thesis \cite{Stasheff63} on loop spaces. 

\begin{exam}
An $A_\infty$-algebra, whose higher operations $\mu_n =0$ vanish for $n\ge 3$, is a dga algebra. If we define a morphism of $A_\infty$-algebras to be a chain map commuting with the respective operations, then 
 the category of dga algebras is a full sub-category of the category of 
 $A_\infty$-algebras:
$$\mathsf{ dga \  alg} \ \subset\ A_\infty\textsf{-}\mathsf{alg}\ .$$
\end{exam}

\begin{theo}[Homotopy Transfer Theorem for dga algebras \cite{Kadeishvili80}]\label{HTT1}
The operations $\lbrace \mu_n \rbrace_{n\ge 2}$  defined on $H$ from the associative product $\nu$ on $A$  by the formulae of Section~$\ref{subsec:TransferDGA}$  form an $A_\infty$-algebra structure. 
\end{theo}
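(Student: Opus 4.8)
The plan is to verify the $A_\infty$-relations directly from the tree formulae, by computing the boundary $\partial\mu_n$ in the mapping complex $(\Hom(H^{\ot n},H),\partial)$ and identifying it with the expected quadratic expression. First I would record the recursive shape of the definition: every planar binary rooted tree with $n$ leaves is obtained by grafting a tree with $k$ leaves onto a tree with $n-k$ leaves at a root vertex, so $\mu_n$ is an alternating sum, over $PBT_n$, of composites built from $i$ on the leaves, $\nu$ at the internal vertices, $h$ on the internal edges, and $p$ at the root. Since a binary tree with $n$ leaves has exactly $n-2$ internal edges, hence $n-2$ copies of the degree $+1$ map $h$, each such composite has degree $n-2$, which matches the prescribed degree of $\mu_n$.

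The core of the argument is to propagate the differential through such a composite. Because $i$, $p$, and $\nu$ are morphisms of chain complexes and $d_A$ is a derivation for $\nu$, each of these generators commutes with the differential; by the graded Leibniz rule the boundary $\partial\mu_n$ therefore reduces to a sum, over the internal edges of all the trees, of the composites in which a single homotopy $h$ has been replaced by its boundary $\partial h = d_A h + h\, d_A = \Id_A - i p$, where the last equality is the defining relation of the homotopy retract. (This already incorporates the $\mu_1 := d_H$ summands of the $A_\infty$-relations, since those terms are built into the differential $\partial$ of the mapping complex.)

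I would then treat the two resulting families separately. Replacing $h$ by $\Id_A$ on an internal edge fuses the two adjacent copies of $\nu$ into $\nu\circ(\nu\ot\Id)$ or $\nu\circ(\Id\ot\nu)$; by associativity of $\nu$ these composites coincide, and the two trees related by that edge occur with opposite signs, so the whole $\Id_A$-family cancels in pairs. This is precisely the step that consumes the hypothesis that $\nu$ is associative. Replacing $h$ by $-ip$ on an internal edge, on the contrary, cuts each tree into an outer tree capped by $p$ at its root and an inner tree capped by $i$ at the cut leaf; by the recursive description of the operations, summing these cut contributions over all edges and all trees reassembles exactly the quadratic composites $\mu_{a}\circ(\Id^{\ot r}\ot\mu_{b}\ot\Id^{\ot s})$ with $r+b+s=n$, $a=r+1+s$, and $b\ge 2$. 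Hence $\partial\mu_n$ equals the expected quadratic expression, which is the $A_\infty$-relation.

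The main obstacle is the sign bookkeeping. One must fix orientation conventions on the trees and then check that the Koszul signs produced by sliding the degree $+1$ map $h$ and the differential past factors of various degrees, together with the signs built into $\partial$ and into the $A_\infty$-relations, are all mutually consistent, so that the $\Id_A$-contributions cancel \emph{exactly} and the $ip$-contributions recombine with the \emph{correct} signs. Everything else is the routine, if lengthy, book\-keeping already sketched in the exercises for $n=2$ and $n=3$.
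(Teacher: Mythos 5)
Your proposal is correct and is exactly the argument the paper has in mind: it leaves the proof as ``an easy and pedagogical exercise'' whose solution is the one you give, namely pushing $\partial$ through each tree composite, replacing $h$ on each internal edge by $\partial h=\Id_A-ip$, cancelling the $\Id_A$-terms in pairs via associativity of $\nu$, and reassembling the $ip$-terms into the quadratic composites $\mu_a\circ(\Id^{\otimes r}\otimes\mu_b\otimes\Id^{\otimes s})$. The only part you defer, the orientation and Koszul-sign conventions, is likewise suppressed throughout the paper, so your write-up is at the same level of completeness as the intended solution.
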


\begin{proof}
We leave the proof to the reader as an easy and pedagogical exercise.
\end{proof}

\subsection{Application: Massey products}\label{subsec:Massey}

The first kind of examples is given by considering 
$$(H, d_H):=(H_\bullet(A, d_A), 0)$$ 
together with the following deformation retract.  First the degree $n$ space  $A_n$ is the direct sum $A_n \cong  Z_n \oplus B_{n-1}$ of the $n$-cycles $Z_n:=\lbrace x\in A_n; d_A(x)=0\rbrace$ with the $(n-1)$-boundaries $B_{n-1}:=d_A(A_{n})$. Then, one lifts the homology groups $H_n:=Z_n/B_n$, to finally get $A_n\cong B_n \oplus H_n \oplus B_{n-1}$. Such a decomposition induces the required  maps:
\begin{eqnarray*}
&& i : H_n \mono  B_n \oplus H_n \oplus B_{n-1}, \quad p : B_n \oplus H_n \oplus B_{n-1} \epi H_n \quad \textrm{and} 
\\ 
&& h : A_{n-1} \cong B_{n-1} \oplus H_{n-1} \oplus B_{n-2} \epi B_{n-1} \mono B_{n} \oplus H_{n} \oplus B_{n-1} \cong A_{n}\ .
\end{eqnarray*}

It is an easy exercise to prove that the product $\nu$ of a dga algebra induces an associative product $\bar \nu$ on the underlying homology groups. Moreover, Theorem~\ref{HTT1} allows us to endow $H(A)$ with a much richer structure extending $\bar \nu=\mu_2$.

\begin{prop}
The underlying homology groups $H(A)$ of a dga algebra $A$ carry an $\Ai$-algebra structure, with trivial differential, and extending the induced binary product. 
\end{prop}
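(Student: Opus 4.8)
The plan is to recognize the entire statement as a direct application of the Homotopy Transfer Theorem (Theorem~\ref{HTT1}) to the specific data just described: once the splitting-induced maps are shown to form a genuine homotopy retract onto $(H(A),0)$, the $\Ai$-structure is produced by the theorem, and only the identification of $\mu_2$ with the induced product remains. So the real work reduces to two verifications plus one invocation.

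First I would check that the triple $(i,p,h)$ attached to the splitting $A_n \cong B_n \oplus H_n \oplus B_{n-1}$ is a deformation retract of $(A,d_A)$ onto $(H(A),0)$. The maps $i$ and $p$ are chain maps: since $i$ lands in the cycles $Z_n = B_n \oplus H_n$ one has $d_A\, i = 0 = i\, d_H$, and since $d_A$ takes values in the boundary summand $B_{n-1}$ whereas $p$ reads off the $H_n$-component, one has $p\, d_A = 0 = d_H\, p$. The map $h$ has degree $+1$ by construction, and $p\, i = \Id_H$ is immediate from the splitting, so $i$ is in particular a quasi-isomorphism. It remains to verify the fundamental identity $\Id_A - i p = d_A h + h\, d_A$, which is the one genuine computation; carrying it out summand by summand on $B_n \oplus H_n \oplus B_{n-1}$, both sides vanish on the middle summand $H_n$, the term $d_A h$ reproduces the identity on the first summand $B_n$, and the term $h\, d_A$ does so on the last summand $B_{n-1}$.

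Having a bona fide homotopy retract, I would then simply invoke Theorem~\ref{HTT1}: the family $\lbrace \mu_n \rbrace_{n\ge 2}$ built from $\nu$, $i$, $p$, and $h$ by the tree formulae of Section~\ref{subsec:TransferDGA} defines an $\Ai$-algebra structure on $H(A)$. The differential of this structure is the differential $d_H$ of the target chain complex, which is zero by the choice $(H,d_H)=(H_\bullet(A,d_A),0)$; hence the transferred structure has trivial differential. To finish, I would identify the binary operation $\mu_2 = p\circ \nu \circ i^{\otimes 2}$ with the product $\bar\nu$ induced by $\nu$ on homology: for two classes represented by cycles $x,y$, the elements $i([x]),i([y])$ are precisely the chosen cycle representatives, and since $d_A$ is a derivation for $\nu$ the element $\nu(x,y)$ is again a cycle, so applying $p$ extracts its class $[\nu(x,y)]=\bar\nu([x],[y])$. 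Thus $\mu_2=\bar\nu$ and the $\Ai$-structure extends the induced binary product.

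I expect the only real obstacle to be the verification of the homotopy identity $\Id_A - i p = d_A h + h\, d_A$: it is elementary but requires honest bookkeeping of how $d_A$ and $h$ interchange the two boundary summands $B_n$ and $B_{n-1}$ across adjacent degrees. Everything else is formal once the HTT is granted.
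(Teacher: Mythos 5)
Your proposal is correct and follows exactly the route the paper takes: build the deformation retract $(i,p,h)$ from the splitting $A_n\cong B_n\oplus H_n\oplus B_{n-1}$, invoke Theorem~\ref{HTT1}, and observe that $\mu_2=p\circ\nu\circ i^{\otimes 2}$ coincides with the induced product $\bar\nu$ while the differential on $H(A)$ is zero by construction. You merely spell out the summand-by-summand check of $\Id_A-ip=d_Ah+hd_A$ that the paper leaves implicit, and that check is carried out correctly.
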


\begin{defi}[$\Ai$-Massey products]
We call \emph{$\Ai$-Massey products} the  operations $\lbrace \mu_n : H(A)^{\otimes n} \to H(A) \rbrace_{n \ge 2}$ of the transferred $\Ai$-algebra structure on $H(A)$.
\end{defi}

In the light of the formulae of Section~\ref{subsec:TransferDGA}, we would like to say that  the induced binary product on homology is associative ``for a bad reason'': not that the first homotopy $\mu_3$ vanishes, but just because the differential map $d_H$ is trivial ! In this case, even if $\mu_2$ is associative, the higher homotopies $\lbrace \mu_n \rbrace_{n \ge 3}$  are not trivial, in general, as the example below shows. The $\Ai$-Massey products actually encode faithfully the homotopy type of the dga algebra $A$, see Section~\ref{subsec:HoTheoAiAlg}. \\

The classical \emph{Massey products}, as defined by William S. Massey in \cite{Massey58}, are only partially defined and live on some coset of the homology groups. For example,  the \emph{triple Massey product} $\langle \bar x, \bar y, \bar z\rangle$ is defined for homology classes  $\bar x,\bar  y,\bar  z$ such that $ \bar{\nu}(\bar x,  \bar  y) = 0 = \bar{\nu} (\bar  y,  \bar  z)$ as follows. Let us denote by $x,y,z$ cycles representing the homology classes. Because of the hypothesis, there exist chains $a$ and $b$ such that $(-1)^{|x|} \nu(x, y) = d_A(a) , (-1)^{|y|}\nu(y, z) = d_A(b)$. Then the chain 
$$(-1)^{|x|} \nu(x, b) + (-1)^{|a|} \nu(a, z)$$
 is a cycle and so defines an element $\langle \bar x, \bar y, \bar z\rangle$ in  
 $H(A)/\lbrace \bar{\nu}(\bar x , H(A)) + \bar{\nu}(H(A), \bar z)\rbrace$.
 
 \begin{ex}
 Show that $\mu_3(x,y,z)$, as defined by the formula of Section~\ref{subsec:TransferDGA}, provides a representative for the partial triple Massey product $\langle \bar x, \bar y, \bar z\rangle$. 
 \end{ex}
 
 The formulae defining the Massey products are similar to  the formulae defining the $\Ai$-Massey products. The  difference lies in the fact  that, in the latter case,  all the choices are made once and for all in the deformation retract. (The various liftings are then given by the homotopy $h$.)
 
 \begin{prop}
 The $\Ai$-Massey products provide representatives for the partial Massey products. 
 \end{prop}
 
The original Massey products were defined in the context of the singular cohomology  since, for any topological space  $X$, the cup product $\cup$ endows the singular cochain complex $C^\bullet_\textrm{sing}(X)$ with a dga algebra structure, cf. \cite[Section~$3.2$]{Hatcher02}.

\begin{appl}[Borromean rings] 
The Borromean rings are three interlaced rings, which represent the coat of arms of the Borromeo family in Italy, see Figure~\ref{fig:Borro}. Removing any one of the three rings, the remaining two rings are no longer interlaced. 

We consider the complement in the $3$-sphere of the Booromean rings. Each ``ring'', i.e.\ solid torus, determines a $1$-cocycle: take a loop from the base-point with linking number one with the circle. Since any two of the three circles are disjoint, the cup product of the associated cohomology classes is $0$. The nontriviality of the triple Massey product of these three cocycles detects the nontrivial entanglement of the three circles, cf. \cite {Massey58}. 
\begin{figure}[h]
\centering
{\scalebox{1.8}{\includegraphics{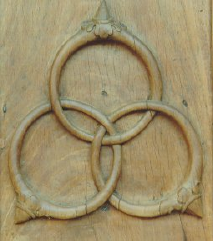}}}
\caption{Coat of arms of the Borromeo family}
\label{fig:Borro}
\end{figure}
\end{appl}

\subsection{Homotopy Transfer Theorem}
We have seen that starting from a dga algebra  structure on one side of a homotopy data, one gets an $A_\infty$-algebra structure, with full higher operations,  on the other side. What will happen if one starts from an $A_\infty$-algebra structure $\lbrace \nu_n : A^{\otimes n} \to A\rbrace_{n \ge 2} $ ? 

In this case, one can extend the aforementioned formulae, based on planar binary trees, by considering the full set $PT_n$ of planar rooted trees with $n$ leaves, this time:
$$ \boxed{\mu_n=\vcenter{\hbox{\includegraphics[scale=0.2]{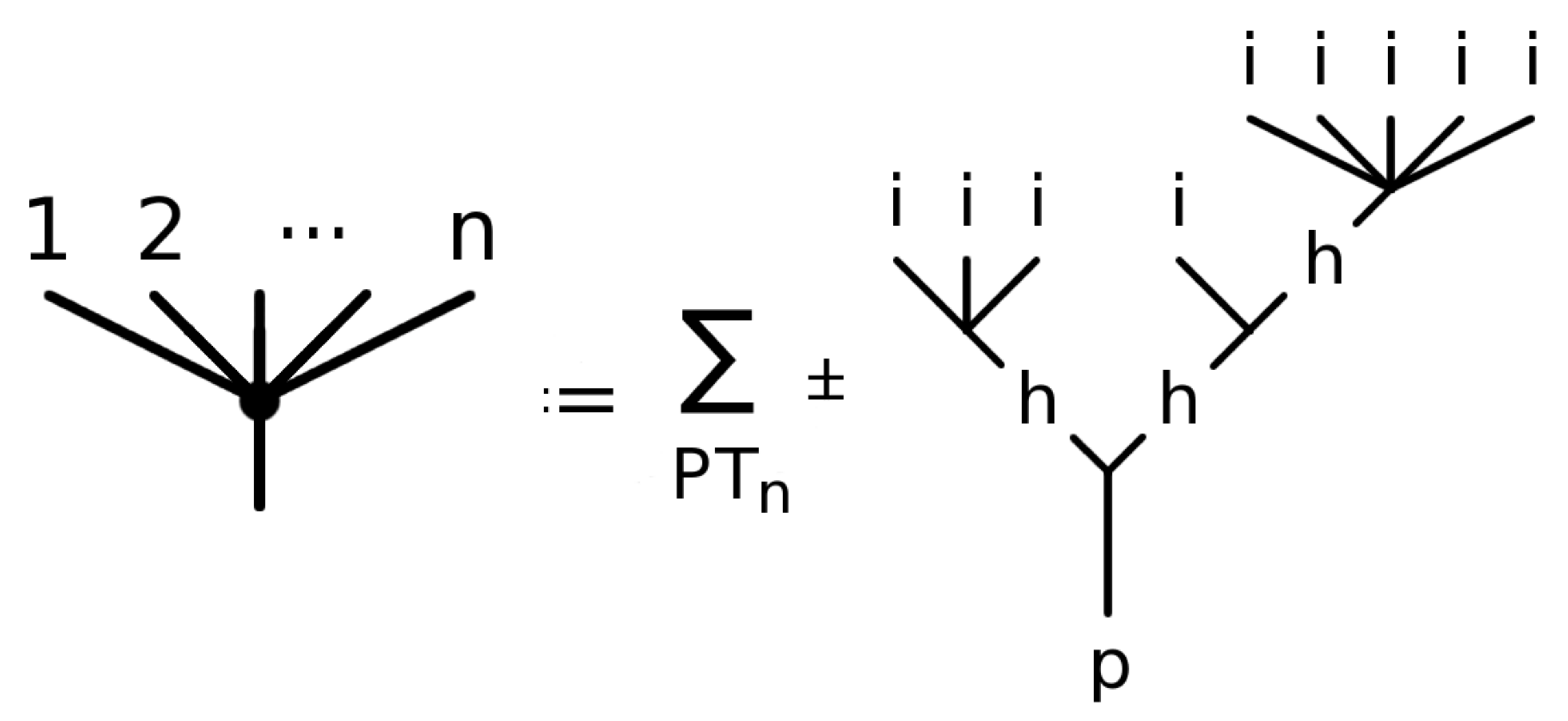}}} }$$
\begin{theo}[Homotopy Transfer Theorem for $A_\infty$-algebras \cite{Kadeishvili82}]\label{HTT2}
The operations $\lbrace \mu_n \rbrace_{n\ge 2}$  defined on $H$ from the $A_\infty$-algebra structure $\lbrace \nu_n \rbrace_{n\ge 2}$ on $A$  by the aforementioned formulae  form an $A_\infty$-algebra structure. 
\end{theo}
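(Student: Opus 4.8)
The plan is to verify directly the $\Ai$-relations for $\{\mu_n\}_{n\ge 2}$ on $H$ by differentiating the tree sum $\mu_n=\sum_{T\in PT_n}\pm\,\mu_T$, where each summand $\mu_T$ is the composite reading $p$ at the root, $i$ at each leaf, one operation $\nu_k$ at each $k$-ary vertex, and the homotopy $h$ at each internal edge. The whole computation rests on exactly three inputs that are already available: $i$ and $p$ are chain maps, the operations $\nu_k$ satisfy the $\Ai$-relations on $A$, and $h$ satisfies $d_A h+h d_A=\Id_A-ip$.

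First I would compute $\partial(\mu_T)$ in $\Hom(H^{\otimes n},H)$. Since $i$ and $p$ are chain maps, the outer differential $\partial=d_H\circ-\pm-\circ d_{H^{\otimes n}}$ can be pushed past the boundary maps at the root and the leaves, where the resulting terms cancel, and is thereby converted into the internal $d_A$ acting, by the Leibniz rule, on each vertex and each internal edge of $T$. Differentiating a vertex $\nu_k$ uses the $\Ai$-relation on $A$, namely $[d_A,\nu_k]=-\sum\pm\,\nu_a\circ_j\nu_b$, which replaces that vertex by the sum of all ways of expanding it into two vertices joined by a \emph{bare} internal edge (an edge carrying $\Id_A$, with no homotopy). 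Differentiating an $h$-edge uses $[d_A,h]=\Id_A-ip$, which replaces the homotopy on that edge either by $\Id_A$ — again a bare edge — or by $-ip$, which \emph{cuts} the edge, inserting $p$ below and $i$ above.

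The heart of the argument is the cancellation of the bare-edge terms. Every tree carrying exactly one bare edge arises in two ways: once by contracting an $h$-edge (the $\Id_A$ part of $[d_A,h]$) and once by splitting a vertex (the $\Ai$-relation on $A$). With the Koszul signs in place these two contributions are opposite, so all bare-edge terms cancel in pairs and disappear from $\partial(\mu_n)$. What survives is precisely the family of cut terms $-ip$. Reading a cut at an internal edge $e$ of $T$ from the root upward, the $p$ caps off the subtree below $e$ while the $i$ opens the subtree above it; each of these two subtrees is itself a decorated tree computing some $\mu_a$ and $\mu_b$, so the cut produces $\mu_a\circ_j\mu_b$. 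Summing over all trees $T\in PT_n$ and all internal edges reassembles exactly $-\sum\pm\,\mu_a\circ_j\mu_b$, so that $\partial(\mu_n)+\sum\pm\,\mu_a\circ_j\mu_b=0$ for every $n$, which is the defining $\Ai$-relation on $H$.

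The main obstacle is not the combinatorics, which is transparent once planar trees are used as the bookkeeping device, but the signs: one must fix a single coherent convention (orientation of the trees, degrees of $h$, $\nu_k$ and $\mu_n$, and the Koszul rule) and check that it makes the bare-edge terms cancel in the third step and yields the correct overall sign in the reassembly of the fourth step. A cleaner, sign-free alternative — which I would only mention here, since it requires machinery introduced later — is to encode an $\Ai$-structure as a square-zero coderivation on a cofree coalgebra and to obtain $\{\mu_n\}$ by transferring that codifferential through the Homological Perturbation Lemma; squareness of the perturbed differential is then automatic and is precisely the assertion to be proved.
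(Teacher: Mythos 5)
Your proposal is correct and is precisely the direct computation that the paper deliberately leaves to the reader: the Leibniz-rule expansion of $\partial(\mu_T)$ over vertices and $h$-labelled edges, the pairwise cancellation of bare-edge terms between the vertex-splitting contributions (the $\Ai$-relations on $A$) and the $\Id_A$-part of $[d_A,h]=\Id_A-ip$, and the reassembly of the $-ip$ cut terms into $\sum\pm\,\mu_a\circ_j\mu_b$ is the standard and complete argument, with the signs being, as you say, the only delicate point. The coderivation/perturbation alternative you mention in closing is exactly the route the paper itself takes later for the general Homotopy Transfer Theorem~\ref{TransferThm}, via the morphism of dg cooperads $\Psi:\B\,\End_A\to\B\,\End_H$ of Lemma~\ref{lemm:HomoMorphPsi}, which specializes to the present statement for the operad $As$.
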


\begin{proof}
Once again, we leave this straightforward computation to the learning reader. 
\end{proof}

So, with the definition of $\Ai$-algebras, we have reached a big enough ``category'', including that of dga algebras, and stable under transfer through homotopy data. It remains to define a good  notion of morphism.

\subsection{$A_\infty$-morphism}\label{subsec:AiMorph}

\begin{defi}[$A_\infty$-morphism]
An \emph{$\Ai$-morphism} between two $\Ai$-algebras $(A, d_A, \lbrace \mu_n  \rbrace_{n\ge 2})$ and $(B,$ $d_B,$ $\lbrace \nu_n  \rbrace_{n\ge 2})$ is a collection of linear maps 
$$ \lbrace f_n \ : \ A^{\otimes n } \to B \rbrace_{n\ge 1} $$
of degree $n-1$ satisfying 
\begin{center}
\boxed{\includegraphics[scale=0.27]{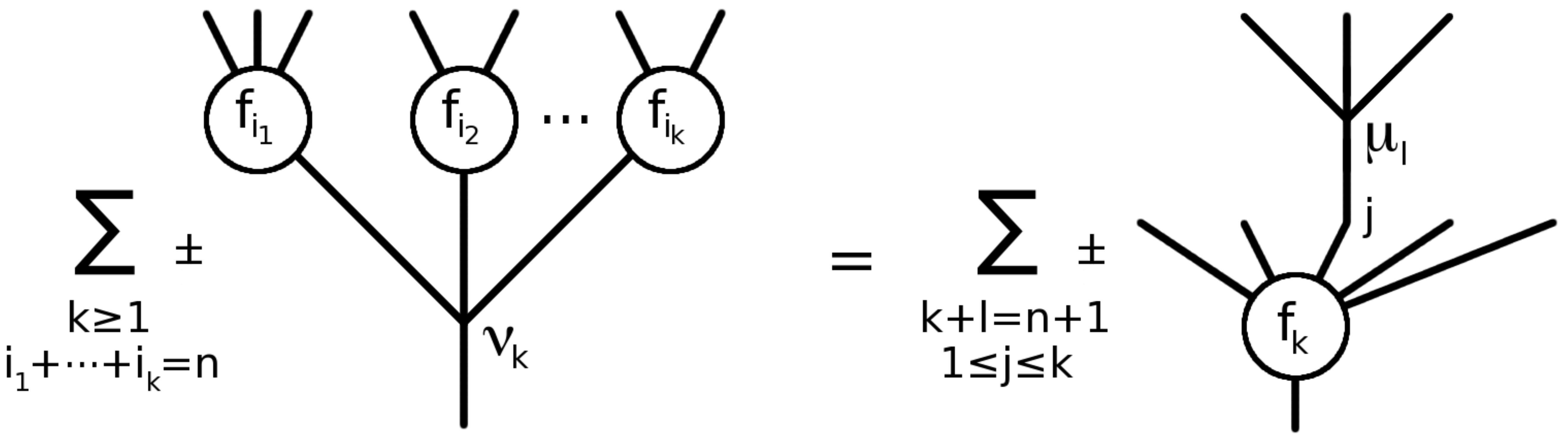}  }
\end{center}
\end{defi}
where $\mu_1=d_A$ and $\nu_1=d_B$.
The data of an $A_\infty$-morphism is denoted $f : A \rightsquigarrow B$. 
The composite of 
two 
$A_\infty$-morphisms 
$f : A \rightsquigarrow B $ and $g : B \rightsquigarrow C $
is defined by 
$$ (g_\circ f)_n:=\vcenter{\hbox{\includegraphics[scale=0.27]{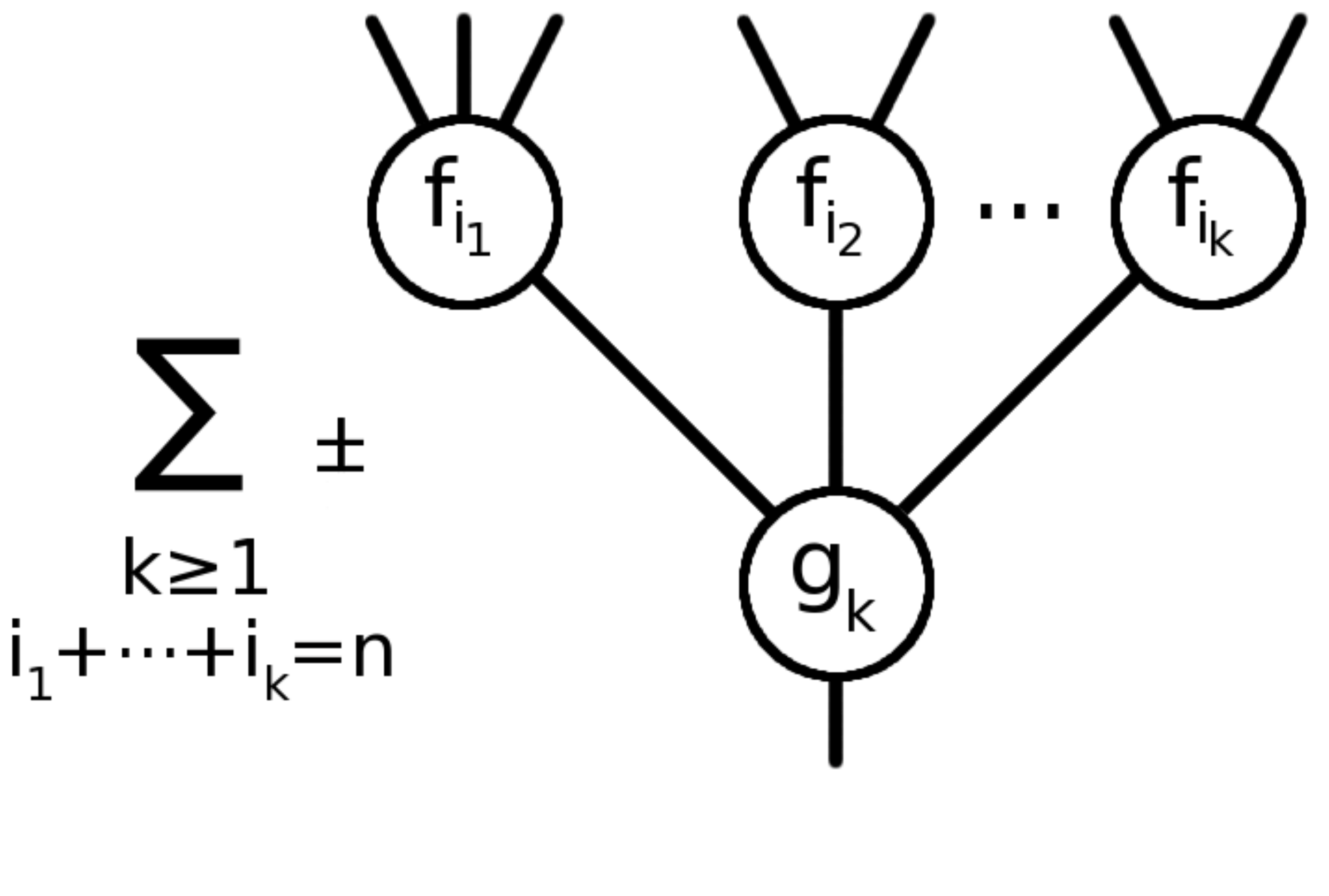}}} $$

\begin{ex}
Show that $\Ai$-algebras with $\Ai$-morphisms form a category. We denote this category by $\infty\textsf{-}A_\infty\textsf{-alg}$, where the first symbol $\infty$ stands for the $A_\infty$-morphisms.
\end{ex}

By definition, the first component $f_1 : A \to B$ of an $\Ai$-morphism is a morphism of chain complexes. 

\begin{defi}[$A_\infty$-isomorphism and $A_\infty$-quasi-isomorphism]
When the map $f_1$ is an isomorphism (resp. a quasi-isomorphism), the $\Ai$-morphism  $f$ is called an \emph{$\Ai$-isomorphism} (resp. an \emph{$\Ai$-quasi-iso\-mor\-phi\-sm}). 
\end{defi}

\begin{ex}
Show that $\Ai$-isomorphisms are the invertible morphisms of the category $\infty\textsf{-}A_\infty\textsf{-alg}$. (This proof is similar to the proof that power series $a_1 x + a_2 x^2 + \cdots$ with first term  invertible  are invertible). 
\end{ex}

\begin{theo}[Homotopy Transfer Theorem with $A_\infty$-quasi-isomorphism \cite{KontsevichSoibelman01}]\label{HTTAiQI}
Starting from a homotopy retract, the maps $\tilde{\imath}_1:=i : H \to A$ and 
$$ \tilde{\imath}_n:=\vcenter{\hbox{\includegraphics[scale=0.2]{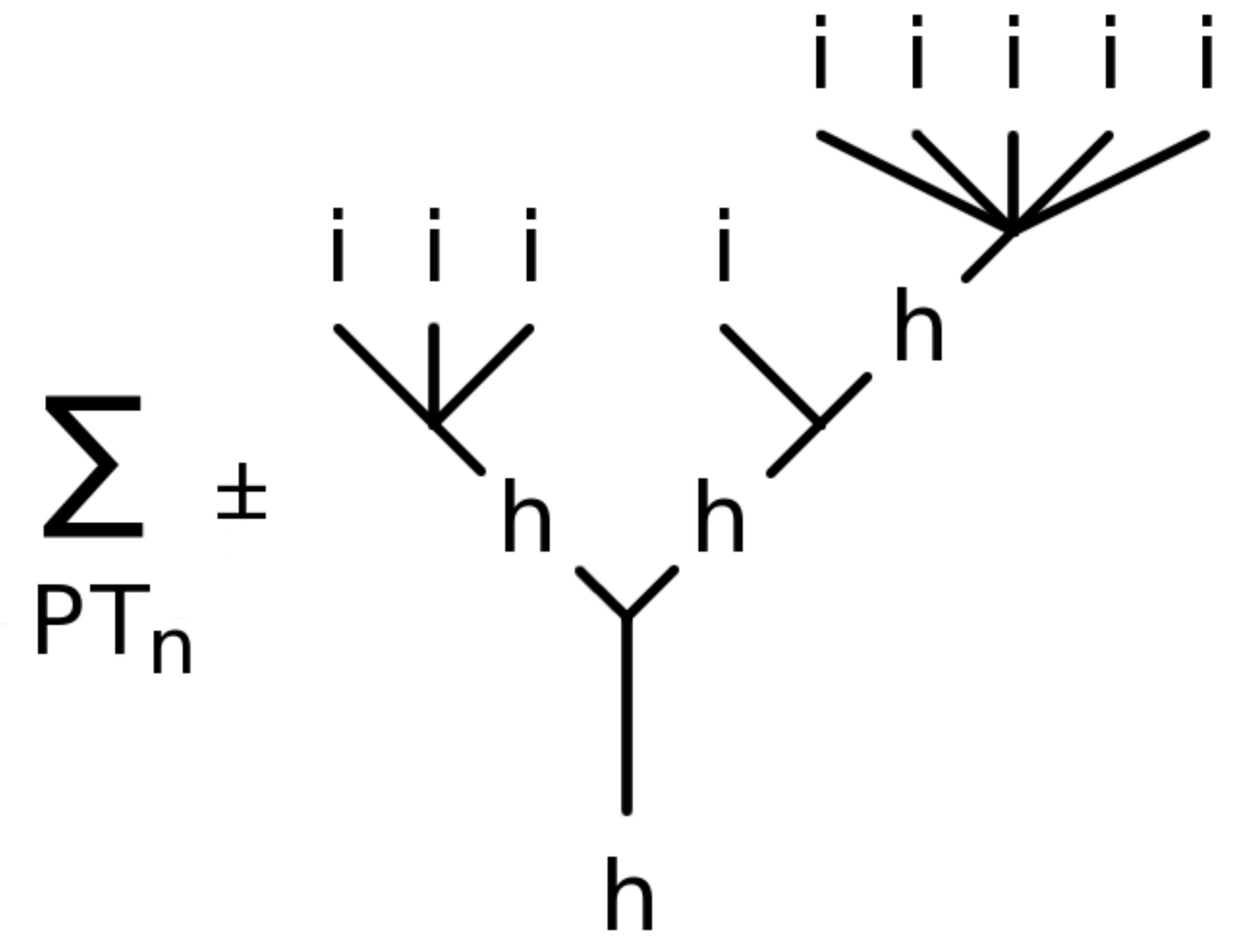}}} \ : \ H^{\otimes n} \to A\ , $$ for $n\ge 2$, 
define an $\Ai$-quasi-isomorphism $\tilde{\imath} : H \stackrel{\sim}{\rightsquigarrow} A$ between the transferred $A_\infty$-algebra structure $\lbrace \mu_n \rbrace_{n\ge 2}$ on $H$ and the initial 
$A_\infty$-algebra structure $\lbrace \nu_n \rbrace_{n\ge 2}$ on $A$.
\end{theo}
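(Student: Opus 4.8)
The statement has two parts: that $\tilde{\imath}$ is an $\Ai$-morphism, and that it is a quasi-isomorphism. The second part is immediate from the first, since an $\Ai$-morphism is an $\Ai$-quasi-isomorphism as soon as its first component is a quasi-isomorphism, and here $\tilde{\imath}_1 = i$ is one by the very definition of a homotopy retract. So the whole content is to verify the $\Ai$-morphism relation of Section~\ref{subsec:AiMorph} for the family $\{\tilde{\imath}_n\}$. The plan is to reformulate this through the bar construction. Encoding the $\Ai$-structures $\{\nu_n\}$ and $\{\mu_n\}$ as square-zero coderivations $D_A, D_H$ on the cofree conilpotent coalgebras $T^c(sA)$ and $T^c(sH)$, the family $\{\tilde{\imath}_n\}$ assembles, by cofreeness, into a unique coalgebra morphism $\tilde{I} : T^c(sH) \to T^c(sA)$ whose corestriction onto the cogenerators $sA$ has components $\tilde{\imath}_n$. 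Being an $\Ai$-morphism is then equivalent to the single identity $D_A \tilde{I} = \tilde{I} D_H$, which, after corestriction to $sA$, is exactly the boxed morphism relation. I would thus reduce everything to proving, for each $n$, the equality of the ``target side'' $\sum_r \nu_r(\tilde{\imath}_{i_1} \otimes \cdots \otimes \tilde{\imath}_{i_r})$ with the ``source side'' $\sum \tilde{\imath}_m \circ (\Id^{\otimes p} \otimes \mu_b \otimes \Id^{\otimes t})$.

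The key bookkeeping device is to write $\tilde{\imath}_n = h \circ \mathring{T}_n$ and $\mu_n = p \circ \mathring{T}_n$, where $\mathring{T}_n : H^{\otimes n} \to A$ is the sum over $PT_n$ of the same decorated trees but carrying no decoration on the root edge; unfolding a tree at its root vertex gives the recursion $\mathring{T}_n = \sum_{r \ge 2} \nu_r(\tilde{\imath}_{i_1} \otimes \cdots \otimes \tilde{\imath}_{i_r})$. This identifies the $r \ge 2$ part of the target side with $\mathring{T}_n$, leaving only the differential term $\nu_1 \tilde{\imath}_n = d_A h \mathring{T}_n$. On the source side the operations $\mu_b = p \mathring{T}_b$ are fed into the leaves of an outer tree through $i$, so each grafting produces the composite $i p$. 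The heart of the proof is then to substitute the homotopy identity $\Id_A - i p = d_A h + h d_A$ at every such grafting point and to reorganise the resulting trees, using the $\Ai$-relations for $\{\nu_n\}$, so that the two sides coincide. This simultaneously re-proves Theorem~\ref{HTT2}, since the operations $\mu_b$ are governed by the same $\mathring{T}$.

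I expect the main obstacle to be a matter of signs and resummation rather than of ideas. Carried out naively, the Koszul signs coming from the degree $+1$ map $h$ refuse to cancel, and the computation only closes cleanly in the suspended (bar) picture, where the homotopy identity and the $\Ai$-relations become sign-free consequences of $D_A^2 = 0$. For this reason I would in fact prefer the conceptual route via the Homological Perturbation Lemma: equip $T^c(sA)$ and $T^c(sH)$ with the tensor-trick homotopy retract induced by $(i, p, h)$, and view the higher products $\{\nu_n\}_{n \ge 2}$ as a perturbation $\delta$ of the internal differential, which is locally nilpotent because it lowers word length. The perturbation lemma then outputs a perturbed inclusion whose geometric-series expansion $\sum_k (h\delta)^k i$ is precisely the tree sum $\tilde{I}$, and which is automatically a chain map intertwining the perturbed codifferentials.

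The one point needing care on that route is the \emph{coalgebra} compatibility of the perturbed maps: the perturbation lemma a priori produces only a chain map, and one must check it remains a morphism of coalgebras so as to yield a genuine $\Ai$-morphism. This holds once the ambient retract is put in strong deformation retract form, i.e. once the side conditions $h^2 = 0$, $h i = 0$, $p h = 0$ are imposed — always achievable over a field $\KK$, by the normalisation used in Section~\ref{subsec:Massey}. Granting this, $\tilde{I}$ is a morphism of dg coalgebras, hence $\tilde{\imath}$ is an $\Ai$-morphism, and since its first component is $i$, a quasi-isomorphism, it is an $\Ai$-quasi-isomorphism, as claimed.
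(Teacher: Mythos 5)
Your proposal is essentially sound, but it follows a genuinely different route from the one the paper develops. The paper leaves the proof of Theorem~\ref{HTTAiQI} as an exercise and only later, in Section~\ref{subsec:HTT}, supplies the intended argument in full generality: one encodes the $\Ai$-structure as a morphism of dg cooperads $As^{\ac}\to \B\,\End_A$ (the third entry of the Rosetta stone, Theorem~\ref{thm:RosettaStone}), pushes it forward along the Van der Laan morphism $\Psi:\B\,\End_A\to\B\,\End_H$ of Lemma~\ref{lemm:HomoMorphPsi}, and reads off $\tilde{\imath}$ as the same tree formula with the root label $p$ replaced by $h$. On that route the whole verification is concentrated in the single check that $\Psi$ is a morphism of dg cooperads; the advantage is that it works verbatim for any Koszul operad (Theorem~\ref{TransferThm}), not just $As$. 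You instead work with the bar construction $T^c(sA)$ directly and invoke the Homological Perturbation Lemma with the tensor-trick retract; the paper explicitly acknowledges this as a legitimate alternative (see the Remark at the end of Section~\ref{subsec:HTT}, citing Berglund). What your route buys is the closed-form geometric series $\sum_k(h\delta)^k i$ and a clean explanation of why the tree sum converges (the perturbation lowers word length); what it costs is exactly the point you defer.

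That deferred point is the only real gap: the Perturbation Lemma produces a chain map $T^c(sH)\to T^c(sA)$, and the assertion that it is a morphism of \emph{coalgebras} (equivalently, that the perturbed differential is a coderivation) is the entire content of the theorem in this formulation --- it is not automatic from the side conditions $h^2=0$, $hi=0$, $ph=0$ alone, but requires in addition that the tensor-trick homotopy on $T^c(sA)$ be chosen compatibly with the deconcatenation coproduct (the Gugenheim--Lambe--Stasheff ``tensor trick''). You should either prove this compatibility or cite it precisely rather than ``granting'' it. Your first, direct argument (substituting $\Id_A-ip=d_Ah+hd_A$ at each grafting point and reorganising trees via the recursion $\mathring{T}_n=\sum_{r\ge 2}\nu_r(\tilde{\imath}_{i_1}\otimes\cdots\otimes\tilde{\imath}_{i_r})$) is the standard elementary proof and does close up, signs included, once carried out on the suspended complexes; if you want a self-contained argument, that is the one to complete.
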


\begin{proof}
As usual, the proof is left to the reader as a good exercise. 
\end{proof}

One can actually proceed one step further and extend the whole homotopy data from the category of chain complexes to the category of $\Ai$-algebras. There exists a notion of an   \emph{$\Ai$-homotopy} between $\Ai$-morphisms, whose equivalence relation is denoted $\sim_h$. Starting from a deformation retract, the complete result states that the quasi-isomorphism $p$ extends to an $\Ai$-quasi-isomorphism $\tilde{p}$ and that the homotopy $h$ extends to an {$\Ai$-homotopy} between $\Id_A$ and the composite $\tilde{p} \, \tilde{\imath}$, cf. \cite{Markl04}.

\subsection{Homotopy theory of $\Ai$-algebras}\label{subsec:HoTheoAiAlg}

\begin{theo}[Fundamental theorem of $\Ai$-quasi-isomorphisms \cite{LefevreHasegawa03}]
Any $\Ai$-quasi-isomorphism 
$A\stackrel{\sim}{\rightsquigarrow}B$ admits a homotopy inverse 
$\Ai$-quasi-isomorphism $B  \stackrel{\sim}{\rightsquigarrow} A$.
\end{theo}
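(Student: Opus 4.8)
The plan is to reduce the statement to two facts already at our disposal: first, that an $\Ai$-isomorphism (one whose first component is an isomorphism) is invertible in $\infty\textsf{-}A_\infty\textsf{-alg}$; and second, that the Homotopy Transfer Theorem, applied to the canonical deformation retract of an $\Ai$-algebra onto its homology, produces a \emph{pair of mutually homotopy-inverse} $\Ai$-quasi-isomorphisms (the content of the extended deformation-retract statement recalled just before Section~\ref{subsec:HoTheoAiAlg}). The idea is then to sandwich the given $\Ai$-quasi-isomorphism between such transfer data, so that its ``minimal model'' becomes an honest $\Ai$-isomorphism, which can be inverted on the nose, and to transport that strict inverse back by the transfer maps.

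Concretely, I would equip $H(A)$ and $H(B)$ with the transferred minimal $\Ai$-structures coming from the deformation retracts of Section~\ref{subsec:Massey}, and denote by $\tilde{\imath}_A : H(A) \stackrel{\sim}{\rightsquigarrow} A$ and $\tilde{p}_A : A \stackrel{\sim}{\rightsquigarrow} H(A)$ the $\Ai$-quasi-isomorphism of Theorem~\ref{HTTAiQI} together with its homotopy inverse, so that $\tilde{\imath}_A \, \tilde{p}_A \sim_h \Id_A$ and $\tilde{p}_A \, \tilde{\imath}_A \sim_h \Id_{H(A)}$, and likewise $\tilde{\imath}_B, \tilde{p}_B$ for $B$. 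Given an $\Ai$-quasi-isomorphism $f : A \stackrel{\sim}{\rightsquigarrow} B$, set $g := \tilde{p}_B \circ f \circ \tilde{\imath}_A : H(A) \rightsquigarrow H(B)$. Its first component is $p_B \circ f_1 \circ i_A$, a composite of quasi-isomorphisms between complexes with zero differential, hence an isomorphism; therefore $g$ is an $\Ai$-isomorphism and admits a strict two-sided inverse $g^{-1} : H(B) \rightsquigarrow H(A)$. I then propose the candidate $\bar f := \tilde{\imath}_A \circ g^{-1} \circ \tilde{p}_B : B \rightsquigarrow A$.

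To verify that $\bar f$ is a homotopy inverse, I would first use that $\sim_h$ is compatible with composition to rewrite $f = \Id_B \circ f \circ \Id_A \sim_h (\tilde{\imath}_B \, \tilde{p}_B)\, f \,(\tilde{\imath}_A \, \tilde{p}_A) = \tilde{\imath}_B \, g \, \tilde{p}_A$. Substituting and simplifying $\tilde{p}_A \, \tilde{\imath}_A \sim_h \Id_{H(A)}$ yields $f \circ \bar f \sim_h \tilde{\imath}_B \, g\, (\tilde{p}_A \, \tilde{\imath}_A)\, g^{-1}\, \tilde{p}_B \sim_h \tilde{\imath}_B\,(g\, g^{-1})\,\tilde{p}_B = \tilde{\imath}_B\,\tilde{p}_B \sim_h \Id_B$, and symmetrically $\bar f \circ f \sim_h \Id_A$. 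Finally $\bar f$ is itself an $\Ai$-quasi-isomorphism, since its first component $i_A \circ (g_1)^{-1} \circ p_B$ induces the inverse of $H(f_1)$ on homology.

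The main obstacle is not this formal assembly but the two inputs it rests on. The delicate point is to establish rigorously that $\sim_h$ is a genuine \emph{congruence} — an equivalence relation on $\Ai$-morphisms preserved under left and right composition — and that the deformation retract of chain complexes really lifts to a two-sided homotopy equivalence of $\Ai$-algebras supplying both $\tilde{\imath}\,\tilde{p} \sim_h \Id$ and $\tilde{p}\,\tilde{\imath} \sim_h \Id$ (the result attributed to Markl). Both are cleanest to handle after passing to the bar construction, where an $\Ai$-morphism becomes a morphism of conilpotent dg coalgebras and an $\Ai$-homotopy becomes a coalgebra homotopy; there the inputs reduce to an obstruction-theoretic, degree-by-degree inversion (paralleling the power-series inversion used for $\Ai$-isomorphisms), the quasi-isomorphism hypothesis ensuring that each successive obstruction is a coboundary. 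Carrying out that filtered argument carefully is where the real work lies.
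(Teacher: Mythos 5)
The paper offers no proof of this statement --- it simply cites \cite{LefevreHasegawa03} --- but your argument is exactly the intended one, assembled from the ingredients the paper has already set up: the Homotopy Transfer Theorem with the $\Ai$-quasi-isomorphism $\tilde{\imath}$ and the extensions $\tilde{p}$, $\tilde{h}$ of Markl, the invertibility of $\Ai$-isomorphisms, and the fact that $\sim_h$ is a congruence; this is also how the general operadic version (Theorem~\ref{theo:InverseInftyQI}) is proved in \cite{LodayVallette10}. One small point: the paper only records the relation $\Id_A \sim_h \tilde{\imath}\,\tilde{p}$, not the other composite you use; but $\tilde{p}\,\tilde{\imath} \sim_h \Id_{H(A)}$ follows formally, since composing $\tilde{\imath}\,\tilde{p} \sim_h \Id_A$ with $\tilde{p}$ on the left and $\tilde{\imath}$ on the right gives $(\tilde{p}\,\tilde{\imath})^2 \sim_h \tilde{p}\,\tilde{\imath}$, and $\tilde{p}\,\tilde{\imath}$ is an $\Ai$-isomorphism (its first component is $p\, i = \Id_H$), hence strictly invertible. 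You are right that the genuine content lies in the two flagged inputs, which are cleanest to establish after passing to the bar construction, where $\Ai$-morphisms and $\Ai$-homotopies become coalgebra morphisms and coalgebra homotopies.
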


This is the main property of $\Ai$-quasi-isomorphisms. Notice that given a 
 quasi-isomorphism of dga algebras, there does not always exist a quasi-isomorphism in the opposite direction as dga algebras. 
These two notions of quasi-isomorphisms are related by the following property:
 there exists a zig-zag of quasi-isomorphisms of dga algebras 
if and only if there exists a direct $\Ai$-quasi-isomorphism
 $$\exists \ A \stackrel{\sim}{\leftarrow} \bullet  \stackrel{\sim}{\rightarrow} 
\bullet  \stackrel{\sim}{\leftarrow} \bullet \cdots \bullet  \stackrel{\sim}{\rightarrow}  B 
\quad \Longleftrightarrow \quad \exists \ A \ \stackrel{\sim}{\rightsquigarrow} B \ . $$  

\begin{defi}[Formality]
A dga algebra $(A, d, \nu)$ is called \emph{formal} if there exists a zig-zag of quasi-isomorphisms of dga algebras
$$(A, d, \nu) \stackrel{\sim}{\leftarrow} \bullet  \stackrel{\sim}{\rightarrow} 
\bullet  \stackrel{\sim}{\leftarrow} \bullet \cdots \bullet  \stackrel{\sim}{\rightarrow}  (H(A), 0, \bar \nu)  \ .$$
\end{defi}

\begin{prop}$Ê\ $

\begin{enumerate}
\item If a dga algebra is formal, then the higher Massey products, i.e. for $n\ge 3$, vanish. 

\item If the higher $\Ai$-Massey products, i.e. for $n\ge 3$, vanish, then the dga algebra is formal.

\end{enumerate}
\end{prop}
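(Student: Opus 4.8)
The plan is to run both implications through the Homotopy Transfer Theorem~\ref{HTTAiQI}, which presents the homology $H(A)$ equipped with the $\Ai$-Massey products $\{\mu_n\}$ as a \emph{minimal} $\Ai$-model of $A$, together with a direct $\Ai$-quasi-isomorphism $\tilde{\imath}:(H(A),0,\{\mu_n\})\stackrel{\sim}{\rightsquigarrow}A$. The auxiliary input is the equivalence recalled above, between the existence of a direct $\Ai$-quasi-isomorphism and the existence of a zig-zag of dga quasi-isomorphisms (the fundamental theorem of $\Ai$-quasi-isomorphisms guaranteeing the good behaviour of these morphisms). Throughout, the key elementary observation is that a direct $\Ai$-quasi-isomorphism between two $\Ai$-algebras carried by $H(A)$ with vanishing differential is automatically an $\Ai$-\emph{isomorphism}, since its first component $f_1$ is then a quasi-isomorphism between complexes with zero differential, hence invertible.

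I would treat statement $(2)$ first, as the more direct one. Assuming the $\Ai$-Massey products $\mu_n$ vanish for all $n\ge 3$, the transferred structure on $H(A)$ reduces to the single operation $\mu_2=\bar\nu$, i.e.\ it is the strict dga algebra $(H(A),0,\bar\nu)$. The map $\tilde{\imath}$ of Theorem~\ref{HTTAiQI} is then a direct $\Ai$-quasi-isomorphism $(H(A),0,\bar\nu)\stackrel{\sim}{\rightsquigarrow}A$. Applying the equivalence between direct $\Ai$-quasi-isomorphisms and zig-zags of dga quasi-isomorphisms produces a zig-zag
$$ A \stackrel{\sim}{\leftarrow} \bullet \stackrel{\sim}{\rightarrow} \cdots \stackrel{\sim}{\rightarrow} (H(A),0,\bar\nu) $$
of dga quasi-isomorphisms, which is exactly the definition of formality.

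For statement $(1)$, I would start from a formal dga algebra $A$: by definition there is a zig-zag of dga quasi-isomorphisms from $A$ to $(H(A),0,\bar\nu)$, hence, by the same equivalence, a direct $\Ai$-quasi-isomorphism $A\stackrel{\sim}{\rightsquigarrow}(H(A),0,\bar\nu)$. Composing the minimal model $\tilde{\imath}:(H(A),0,\{\mu_n\})\stackrel{\sim}{\rightsquigarrow}A$ of Theorem~\ref{HTTAiQI} with this map yields a direct $\Ai$-quasi-isomorphism between the two $\Ai$-structures carried by $H(A)$, namely the $\Ai$-Massey products $\{\mu_n\}$ and the trivial structure $(0,\bar\nu)$. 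By the observation above it is an $\Ai$-isomorphism, so the $\Ai$-Massey products are gauge-equivalent to the structure having no higher operations. Since the $\Ai$-Massey products provide representatives for the partial classical Massey products, and the trivial structure yields only the coset of zero, the higher Massey products of $A$ vanish.

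The main obstacle lies in this last step of $(1)$. An $\Ai$-isomorphism fixing $H(A)$ and $\mu_2$ need \emph{not} send the higher operations $\mu_n$ to zero on the nose; a nontrivial gauge transformation can create nonzero but ``exact'' higher products. What must be checked is that the coset-valued classical Massey products, together with their indeterminacy $\bar\nu(\bar x,H(A))+\bar\nu(H(A),\bar z)$, are genuinely invariant under such gauge transformations, so that vanishing for the trivial structure forces each Massey coset of $A$ to contain $0$. Equivalently, one may bypass the $\Ai$-formalism here and invoke the classical functoriality of Massey products under dga quasi-isomorphisms, transporting along the zig-zag the evident vanishing of all higher Massey products on $(H(A),0,\bar\nu)$, where the zero differential forces every defining chain, and hence every representative, to vanish.
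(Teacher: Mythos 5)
Your argument for part $(2)$ is exactly the paper's: the vanishing of the higher $\Ai$-Massey products makes the transferred structure on $H(A)$ a strict dga algebra, Theorem~\ref{HTTAiQI} gives a direct $\Ai$-quasi-isomorphism $(H(A),0,\bar\nu)\stackrel{\sim}{\rightsquigarrow}A$ between two dga algebras, and the equivalence between direct $\Ai$-quasi-isomorphisms and zig-zags of dga quasi-isomorphisms yields formality. No difference there.

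For part $(1)$ the paper offers no proof at all, so here you go beyond it. Your first route --- producing an $\Ai$-isomorphism between $(H(A),0,\{\mu_n\})$ and $(H(A),0,\bar\nu)$ and trying to read off the vanishing of the Massey products --- has the gap you yourself identify: gauge-triviality of the transferred structure does not by itself kill the coset-valued classical products without an invariance argument. Your fallback is the right one and is the standard proof: classical Massey products are natural under dga morphisms (a morphism $f$ sends the defining system for $\langle \bar x,\bar y,\bar z\rangle$ to one for $\langle f_*\bar x,f_*\bar y,f_*\bar z\rangle$, compatibly with the indeterminacy subgroups), so a quasi-isomorphism identifies the Massey cosets on both sides; transporting along the zig-zag from $(H(A),0,\bar\nu)$, where one may take all defining chains to be zero so that every Massey coset contains $0$, gives the vanishing for $A$. (One small imprecision: with zero differential the defining chains need not vanish --- any chain is a cycle --- but they \emph{may} be chosen to be zero, which is all that is needed for the coset to contain $0$.) Note also that your care in distinguishing the classical coset-valued products in $(1)$ from the $\Ai$-Massey products in $(2)$ is essential: formality does not force the transferred operations $\mu_n$ themselves to vanish for a given choice of retract, which is why the converse of $(1)$ fails and why the paper speaks of vanishing ``in a uniform way.''
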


\begin{proof}
The proof of the second point is a corollary of the HTT, Theorem~\ref{HTTAiQI}. When the higher $\Ai$-Massey products vanish, the map  $\tilde{\imath} : H(A) \stackrel{\sim}{\rightsquigarrow} A$ is an $\Ai$-quasi-isomorphism between two dga algebras. We conclude then with the above property. 
\end{proof}

In the latter case, the Massey products vanish in a ``uniform way'', see \cite{DGMS75} for more details.

\begin{defi}[Homotopy category] 
The \emph{homotopy category} of dga algebras $$\textsf{Ho}(\mathsf{dga\ alg}):=\mathsf{dga\ alg}[qi^{-1}]$$  is defined as  the localization of the category of dga algebras with respect to the class of  quasi-isomorphisms. 
\end{defi}
This is the ``smallest'' category containing the category $\mathsf{dga\ alg}$ in which quasi-isomorphisms become invertible. It is actually made up of more maps than the category $\mathsf{dga\ alg}$ since the maps of the homotopy category are (some equivalence classes)
of chains 
of maps either from the morphisms   of dga algebras or from the  \emph{formal inverse} of quasi-isomorphisms:
$$\xymatrix{A \ar[r]   &   \bullet  \ar[r]&  \bullet       \ar@{..>}[r] &  \ar@/^/[l]^\sim   \bullet\ar[r]& \bullet \ \  \cdots \ \ \bullet   \ar[r] &
\bullet    \ar@{..>}[r]&  \ \ar@/^/[l]^\sim \bullet   \ar@{..>}[r]  &  \ar@/^/[l]^\sim \bullet \ar[r]  &  B\ .} $$
 
The following results shows that the homotopy classes of $\Ai$-morphisms correspond to the 
maps in the homotopy category of dga algebras.

\begin{theo}[Homotopy category \cite{Munkholm78, LefevreHasegawa03}]
The following categories are equivalent
$$\boxed{\mathsf{Ho}(\mathsf{dga\ alg})\  \cong \  \infty\textsf{-}A_\infty\mathsf{-alg}/\sim_h    
\ 
 \cong \  \infty\textsf{-}\mathsf{dga\ alg}/\sim_h     \   . }$$
\end{theo}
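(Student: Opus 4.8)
The plan is to prove the two displayed equivalences separately, using the middle category $\infty\textsf{-}A_\infty\mathsf{-alg}/\sim_h$ as a bridge. For the right-hand equivalence $\infty\textsf{-}\mathsf{dga\ alg}/\sim_h \cong \infty\textsf{-}A_\infty\mathsf{-alg}/\sim_h$, I start from the observation that $\infty\textsf{-}\mathsf{dga\ alg}$ is, by construction, the \emph{full} subcategory of $\infty\textsf{-}A_\infty\mathsf{-alg}$ on the dga algebras. The induced inclusion on the quotients by $\sim_h$ stays fully faithful, once one checks that the $\Ai$-homotopy relation between two $\Ai$-morphisms of dga algebras does not change when they are viewed inside the larger category (it only constrains source and target). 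Essential surjectivity is then rectification: every $\Ai$-algebra $A$ admits an $\Ai$-quasi-isomorphism to an honest dga algebra, for instance the counit $\Omega\mathrm{B}A \qi A$ of the cobar-bar resolution. By the Fundamental theorem of $\Ai$-quasi-isomorphisms this map has a homotopy inverse, hence becomes an isomorphism in $\infty\textsf{-}A_\infty\mathsf{-alg}/\sim_h$; so every object is isomorphic in the quotient to the class of a dga algebra, and the inclusion is an equivalence.

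For the left-hand equivalence $\mathsf{Ho}(\mathsf{dga\ alg}) \cong \infty\textsf{-}\mathsf{dga\ alg}/\sim_h$, I would produce a functor out of the localization by its universal property. Sending a morphism of dga algebras to the $\Ai$-morphism concentrated in arity one and then quotienting by $\sim_h$ gives a functor $\mathsf{dga\ alg} \to \infty\textsf{-}\mathsf{dga\ alg}/\sim_h$. It carries a quasi-isomorphism of dga algebras to an $\Ai$-quasi-isomorphism, which, again by the Fundamental theorem, is invertible in the target. Hence it factors uniquely through the localization, yielding a functor $\Phi \colon \mathsf{Ho}(\mathsf{dga\ alg}) \to \infty\textsf{-}\mathsf{dga\ alg}/\sim_h$ that is the identity on objects, so in particular essentially surjective.

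The content is the full faithfulness of $\Phi$, i.e.\ the bijection
$$\Hom_{\mathsf{Ho}(\mathsf{dga\ alg})}(A,B)\ \cong\ \{\Ai\text{-morphisms } A \rightsquigarrow B\}/\!\sim_h\ .$$
To build the inverse I would use the bar-cobar machinery, which linearizes the nonlinear data: an $\Ai$-morphism $A \rightsquigarrow B$ is precisely a morphism of dg coalgebras $\mathrm{B}A \to \mathrm{B}B$, and the cobar construction turns it into a strict dga morphism $\Omega\mathrm{B}A \to \Omega\mathrm{B}B$. Composing with the resolution quasi-isomorphisms produces the roof
$$A\ \xleftarrow{\ \sim\ }\ \Omega\mathrm{B}A\ \longrightarrow\ \Omega\mathrm{B}B\ \qi\ B$$
in $\mathsf{Ho}(\mathsf{dga\ alg})$, hence a candidate inverse $\Psi$. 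Fullness of $\Phi$ is then essentially the $\Longrightarrow$ direction of the zig-zag correspondence recalled above, namely that every zig-zag of dga quasi-isomorphisms collapses to such a roof, while faithfulness is the identification of the equivalence of roofs in the localization with the relation $\sim_h$ on the associated coalgebra morphisms. Verifying $\Phi\Psi \cong \Id$ and $\Psi\Phi \cong \Id$ then closes the argument.

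The hard part will be exactly this last identification of homotopy relations: that two $\Ai$-morphisms of dga algebras become equal in $\mathsf{Ho}(\mathsf{dga\ alg})$ if and only if they are $\Ai$-homotopic. Establishing it cleanly requires a genuine homotopy calculus on $\Ai$-algebras (cylinder or path objects, or equivalently a calculus of fractions), guaranteeing that $\sim_h$ is a congruence compatible with composition and that roofs admitting a common refinement correspond to homotopic coalgebra maps. The remaining bookkeeping, i.e.\ well-definedness of $\Phi$ and $\Psi$ on equivalence classes and their being mutually inverse up to natural isomorphism, is routine once this bridge, which rests on the full force of the Fundamental theorem and of the bar-cobar adjunction, is secured.
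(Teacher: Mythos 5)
Your outline is correct and, for the second displayed equivalence, coincides with what the paper does: the paper's entire proof consists of the remark that the inclusion $\infty\textsf{-}\mathsf{dga\ alg}/\sim_h\ \hookrightarrow\ \infty\textsf{-}A_\infty\textsf{-alg}/\sim_h$ is essentially surjective by the Rectification property, which is exactly your argument (you make explicit the fully-faithfulness of the inclusion and the use of the Fundamental theorem to invert the rectifying $\Ai$-quasi-isomorphism in the quotient, both of which the paper leaves implicit). For the first equivalence the paper offers no argument and defers to \cite{Munkholm78, LefevreHasegawa03}; the general $\Po_\infty$ version later in the text is proved ``by endowing the category of dg coalgebras with a model category structure'', i.e.\ by transporting everything through the bar construction and doing the homotopy calculus on the coalgebra side. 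Your route --- the universal property of the localization to get $\Phi$, then an explicit candidate inverse built from the roofs $A \xleftarrow{\ \sim\ } \Omega\B A \to \Omega\B B \qi B$ --- is the same underlying mechanism presented as a calculus of fractions rather than as a Quillen equivalence. What the model-categorical packaging buys is precisely the step you single out as the hard part: the cylinder and path objects needed to show that $\sim_h$ is a composition-compatible congruence, and that two $\Ai$-morphisms of dga algebras are homotopic if and only if they agree in $\mathsf{Ho}(\mathsf{dga\ alg})$, come for free from the model structure on conilpotent dg coalgebras, whereas in your presentation they must be built by hand. So your proposal is a faithful expansion of the standard proof, with the caveat that the deferred identification of the two homotopy relations is where essentially all of the content of \cite{LefevreHasegawa03} lives; it is genuinely not routine bookkeeping, and your proof is an outline until that bridge is supplied.
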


\begin{proof}
The last equivalence with the category of dga algebras equipped with the $\Ai$-morphisms  is given by the \emph{Rectification property}: any $\Ai$-algebra admits a dga algebra, which is  $\Ai$-quasi-isomorphic to it.
\end{proof}

%\hrule

\begin{ques}$ \ $

\begin{enumerate}

\item Why this particular definition of an $A_\infty$-algebra when starting from an associative  algebra ? 

\item How could we perform the same theory starting from other types of algebras like commutative algebras,  Lie algebras or Lie bialgebras, for instance ? 

\end{enumerate}
\end{ques}

\section{Operads}

In this section, we introduce the notion of an operad trough the paradigm given by the multilinear maps of a vector space. One can also use operads in the context of topological spaces, differential graded modules and simplicial sets, for instance. Operads encode categories of algebras having operations having multiple inputs and one output. When one wants to encode other types of algebraic structures, one has to introduce some generalizations of operads.

\subsection{Unital associative algebras and their representation theory}\label{subsec:uAsRep}
Let $V$ be a vector space. The vector space $\Hom(V,V)$ of endomorphisms of $V$, equipped with the composition $\circ$ of functions and the identity morphism $id_V$,  becomes a unital associative algebra. 
Representation theory is the study  of morphisms of unital associative algebras with target  the endomorphism algebra:

$$
\begin{array}{ccc}
\Phi \ : \ (A,\mu, 1)  &\to& (\Hom(V,V), \circ, id_V) \\
a &\mapsto& \vcenter{\hbox{\includegraphics[scale=0.2]{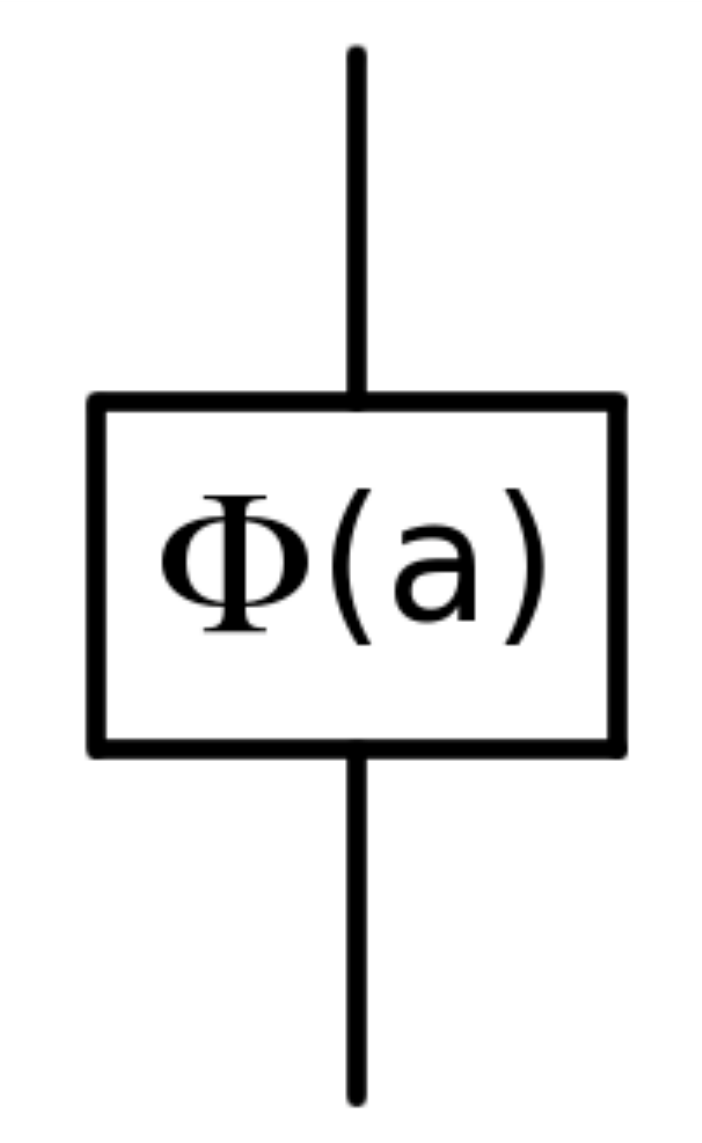}}}
\end{array}
$$

This means that the map $\Phi$ sends the unit $1$ to the identity $id_V$ and any product of two elements of $A$ to the associated composite

$$ \Phi(\mu(b\otimes a))\ =\Phi(b)\circ \Phi(a) = \ \vcenter{\hbox{\includegraphics[scale=0.2]{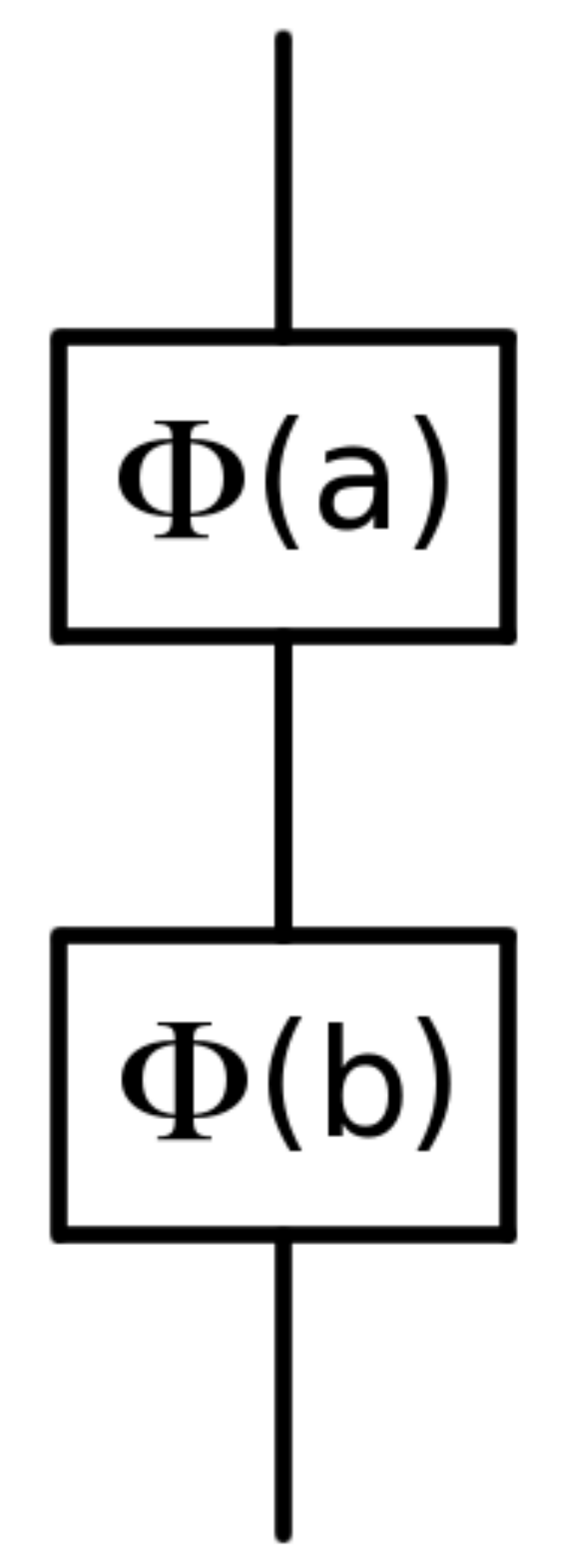}}}  $$

On the one hand, the endomorphism space $\Hom(V,V)$ is the space of all ``concrete'' linear operations acting on $V$. On the other hand, the elements composing the image of the representation $\Phi$ form a particular choice of operations. The type (number, relations, etc.) of these operations is encoded, once and for all $V$ and $\Phi$, by the 
algebra $A$. 

\begin{exam}[Algebra of dual numbers and chain complexes]
Let us begin with an elementary example. Suppose that we want to encode in one associative algebra, the algebraic data of a unary square-zero operator. In this case, we would first consider the free unital associative algebra on one generator $\delta$: it is explicitly given by the tensor module  
$$T(\KK \delta):=\bigoplus_{n\in \NN} (\KK \delta)^{\otimes n}\ ,$$ 
equipped with the concatenation product. This is meant to represent all the possible ``formal'' compositions of the operator with itself. 
Any morphism of unital associative algebras $\Phi : T(\KK \delta) \to \Hom(V,V)$ is completely characterized by the image $d:=\Phi(\delta)$ of this generator. Then we quotient this free algebra by the ideal generated by $\delta^2$, that is the relation we want to model. It gives the algebra of \emph{dual numbers}:
$$D:= T(\KK \delta)/(\delta^2)\ .$$
In this case, a morphism of unital associative algebras $\Phi : D \to \Hom(V,V)$ is characterized by the image $d:=\Phi(\delta)$ of the generator $\delta$, which  has to satisfy the relation $d^2=0$. Finally, a vector space equipped with a square-zero operator is a representation of $D$. 
%, also called a $D$-module \draftnote{no confusion with diff mod}.
 If we work with graded vector spaces and if we place the generator $\delta$ in degree $-1$, then we get nothing but the notion of a chain complex. 
\end{exam}

\begin{exam}[Steenrod operations  and Steenrod  algebra]
Even if the previous example can seem naive at first sight, the idea of abstracting families of unary operations, which act in the same way is not trivial. Consider again the singular cohomology $H_\textrm{sing}^\bullet(X)$ of  topological spaces. Norman E. Steenrod  constructed in \cite{Steenrod47} natural unary operations, the \emph{Steenrod squares} $\lbrace Sq^i\rbrace_{i\ge 1}$ in characteristic $2$, which act on the singular cohomology $H_\textrm{sing}^\bullet(X)$. They are higher operators which come from the higher homotopies for the commutativity of the cup product. 
They are functorial and always satisfy the same relations, the \emph{Adem relations}, for any topological space $X$. 
So, Henri Cartan introduced in \cite{Cartan55bis} an algebra 
$$\mathcal{A}_2:=T(\lbrace Sq^i\rbrace_{i\ge 1})/(R_{{Adem}}) $$ 
naturally called the \emph{Steenrod algebra}.
It encodes all the Steenrod squares at once. Proving a functorial result for all Steenrod operations amounts to proving only one statement on the level of  the Steenrod algebra. 
\end{exam}

\subsection{Operads and multilinear representation theory}
We aim now at developing the same kind of theory but with operations having multiple inputs this time. On the right hand side, we consider the family $$\End_V:=\lbrace \Hom(V^{\otimes n}, V) \rbrace_{n\ge 0}$$ of multilinear maps. 
The goal now is to find what kind of mathematical object $\Po$ should be put on the left hand side: 
$$\Phi \ : \  \text{?`} \ \Po  \  \text{?} \to \End_V\ . $$

Let us extract the properties of $\End_V$ as follows. 

\begin{itemize}
\item[$(i)$] First, it is made of a family of vector spaces indexed by the integers, so will be $\Po:=\lbrace \Po_n \rbrace_{n\in \NN}$. 

\item[$(ii)$] Then, the space $\End_V$ of multilinear maps is equipped with the classical composition of functions. Given a first level of $k$ operations and one operation with $k$ inputs, one can compose them in the following way 

%\begin{figure}[h]
%\centering
\begin{center}
\includegraphics[scale=0.22]{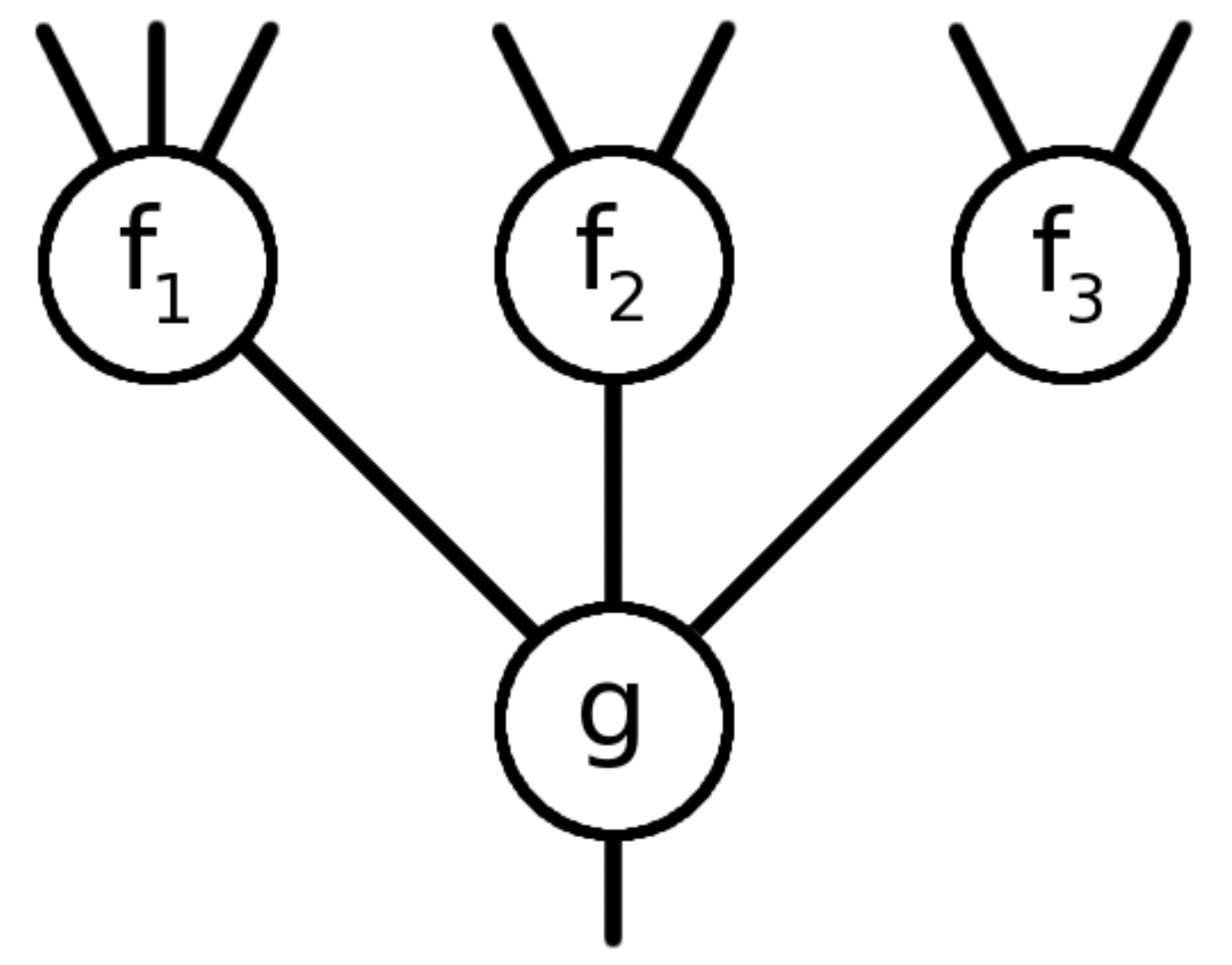} 
\end{center}
%\caption{Definition of  $\mu_n$}
%\label{Fig:CompoEnd}
%\end{figure}

\noindent
Mimicking this composition, we require $\Po$ to be equipped with \emph{composite maps}: 
\begin{eqnarray*}
\gamma_{i_1, \ldots, i_k} \ : \ \Po_k \otimes \Po_{i_1} \otimes \cdots \otimes  \Po{i_k} &\to& \Po_{i_1+\cdots+i_k} \\
\mu \otimes \nu_1 \otimes \cdots \otimes \nu_k & \mapsto & \gamma(\mu; \nu_1, \ldots , \nu_k)
\end{eqnarray*}
that we of course represent similarly. 

\noindent
The composition of functions is associative in the following way: if we consider $3$ levels of functions, composing the two upper levels and then the result with the third one or beginning with the two lower levels and then compositing with the first one, this gives the same function. This associativity condition reads as follows.

%\begin{figure}[h]
%\centering
\begin{center}
\includegraphics[scale=0.2]{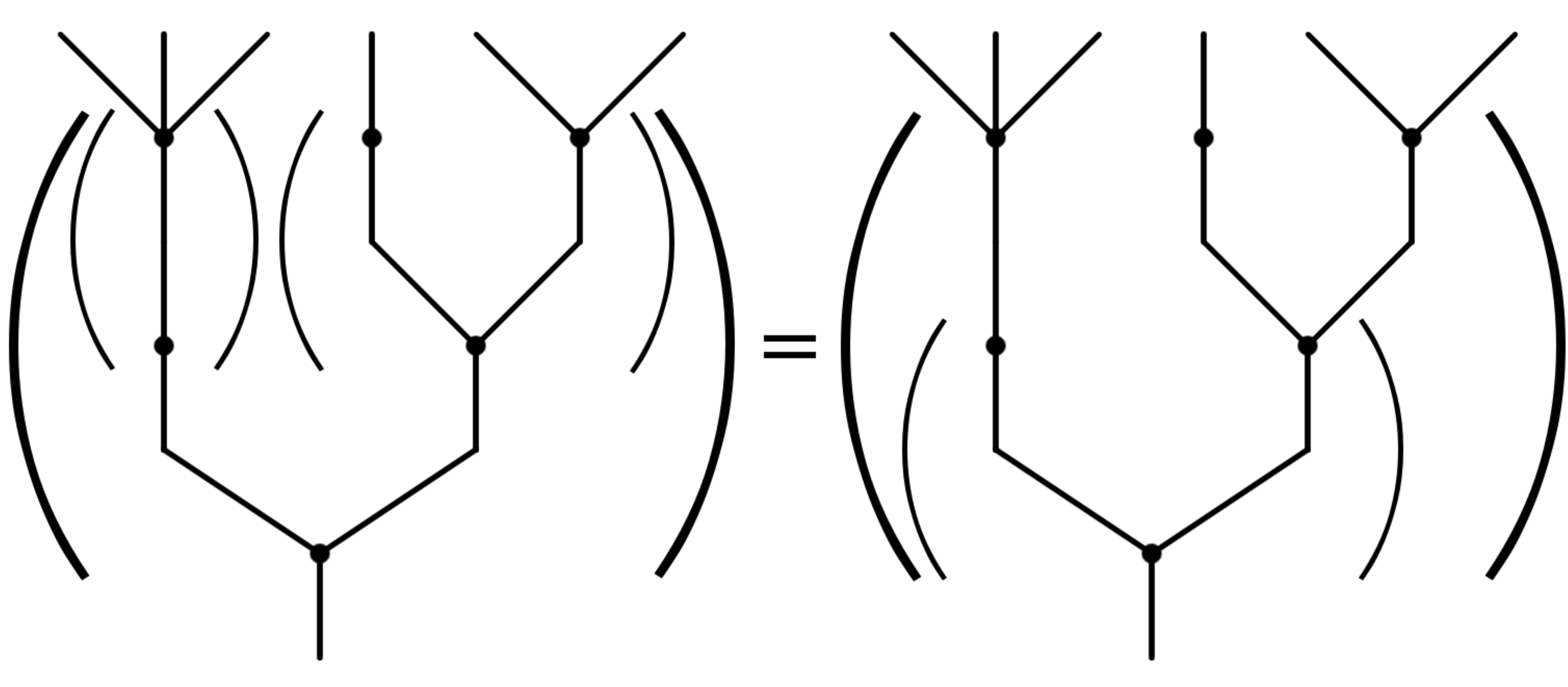} 
\end{center}
%\caption{Definition of  $\mu_n$}
%\label{Fig:AssocCompo}
%\end{figure}

\item[$(iii)$]  Finally, we have the identity morphism $id_V$ in $\Hom(V,V)$. So, we ask for a particular element $\I$ in $\Po(1)$, which behaves as a unit for the composite maps:
$$\gamma_{1, \ldots, 1} : \mu \otimes \I \otimes \cdots \otimes \I \mapsto \mu \quad \text{and} \quad \gamma_k : \I\otimes \mu \mapsto \mu\ . $$
\end{itemize}

\begin{defi}[Nonsymmetric operad]
A \emph{nonsymmetric operad} $\Po$, \emph{ns operad} for short,   is a family $\lbrace \Po_n \rbrace_{n\in \NN}$ of vector spaces with an element $\I\in \Po_1$ and  endowed with associative and unital composite maps 
$$\gamma_{i_1, \ldots, i_k} \ : \ \Po_k \otimes \Po_{i_1} \otimes \cdots \Po_{i_k} \to \Po_{i_1+\cdots+i_k}\ . $$
\end{defi}

\begin{exams}$ \ $

\begin{itemize}
\item[$\diamond$] The \emph{endomorphism operad} $\End_V:=\lbrace \Hom(V^{\otimes n}, V)\rbrace_{n\ge 0}$ is the mother of (nonsymmetric) operads. 

\smallskip

\item[$\diamond$] Let $(A, \mu, 1_A)$ be a unital associative algebra. We consider the family $\mathcal A$ defined by $\CA_1:=A$ and $\CA_n:=0$ for $n\neq 1$ and we set $\I:=1_A$. All the composite maps are equal to zero, except for $\gamma_{1}:=\mu$. So a unital associative algebra induces a ns operad. In the other way round, any ns  operad concentrated in arity $1$ is a unital associative algebra. \end{itemize}
\end{exams}

Since the notion of a nonsymmetric operad is  a generalization of the notion of unital associative algebra, we will be able to extend many results from associative algebras to operads.

\begin{defi}[Morphism of ns operads]
A \emph{morphism} $f: \Po \to \Qo$ between two ns operads $(\Po, \gamma, \I_\Po)$ and $(\Qo, \zeta, \I_\Qo)$ is a family $\lbrace f_n : \Po_n \to \Qo_n \rbrace_{n\ge 0}$ of linear morphisms which preserve the units 
$f_1(\I_\Po)=\I_\Qo$ and which commute with the composite maps
$$ \zeta_{i_1, \ldots, i_k} \circ (f_k\otimes f_{i_1} \otimes \cdots \otimes f_{i_k})= f_{i_1+\cdots+i_k} \circ \gamma_{i_1, \ldots, i_k} \ .$$
\end{defi}

We now define the representations of a nonsymmetric operad $\Po$ in a vector space $V$ by the  morphisms of nonsymmetric operads:
$$\Phi : \Po \to \End_V \ . $$
Such a map associates to any ``formal'' operation $\mu \in \Po_n$ a ``concrete'' $n$-multilinear operation $\Phi(\mu)\in \Hom(V^{\otimes n}, V)$.

%\begin{figure}[h]
%\centering
\begin{center}
\includegraphics[scale=0.2]{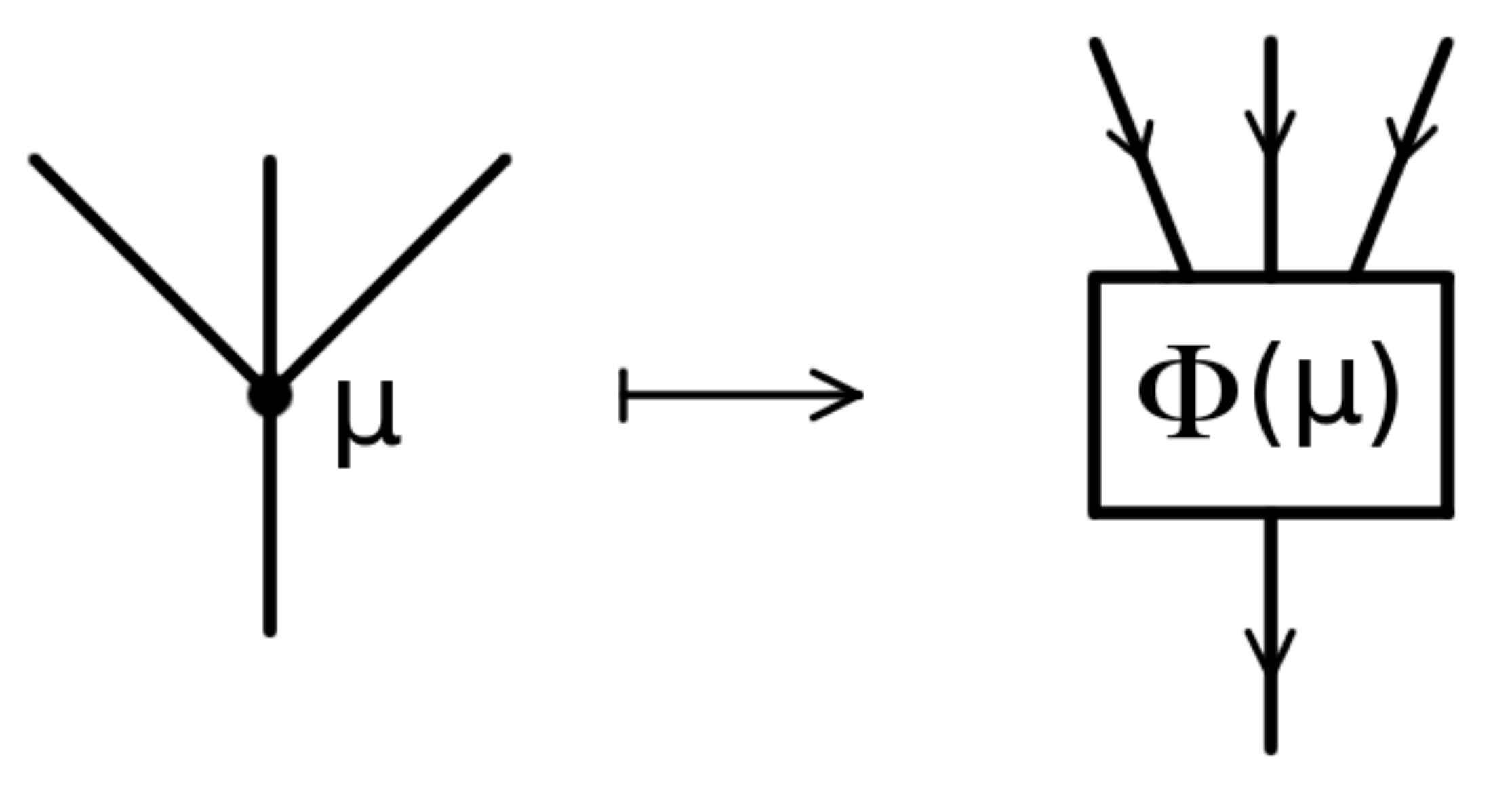} 
\end{center}
%\caption{Definition of  $\mu_n$}
%\label{Fig:AssocCompo}
%\end{figure}

The operations in the image of $\Phi$ satisfy the same relations as that of $\Po$. In the other way round, given a certain category of algebras, like associative algebras, commutative algebras or Lie algebras, for instance, we are able to encode the space of all possible operations into one operad, as follows.

\begin{defi}[$\Po$-algebra]
Let $\Po$ be an operad. A \emph{$\Po$-algebra structure} on a vector space $V$ is a morphism of ns operads 
$\Phi : \Po \to \End_V$.
\end{defi}

\subsection{The examples of $As$ and $uAs$} Let us see how this works on an example. We go back to the example of Section~\ref{Sec:Alg+Homo}: the category of associative algebras, not necessarily unital. Let us now try to figure out the operad, which models this category. Given $n$ elements $a_1, \ldots, a_n$ of an associative algebra $A$, what are the operations, induced by the associative binary product, acting on them ? Keeping the variables in that order, that is without permuting them, there is just one because any bracketing will produce the same result under the associativity relation. Hence, we decide to define 
$$As_n:=\KK \  \vcenter{\hbox{\includegraphics[scale=0.2]{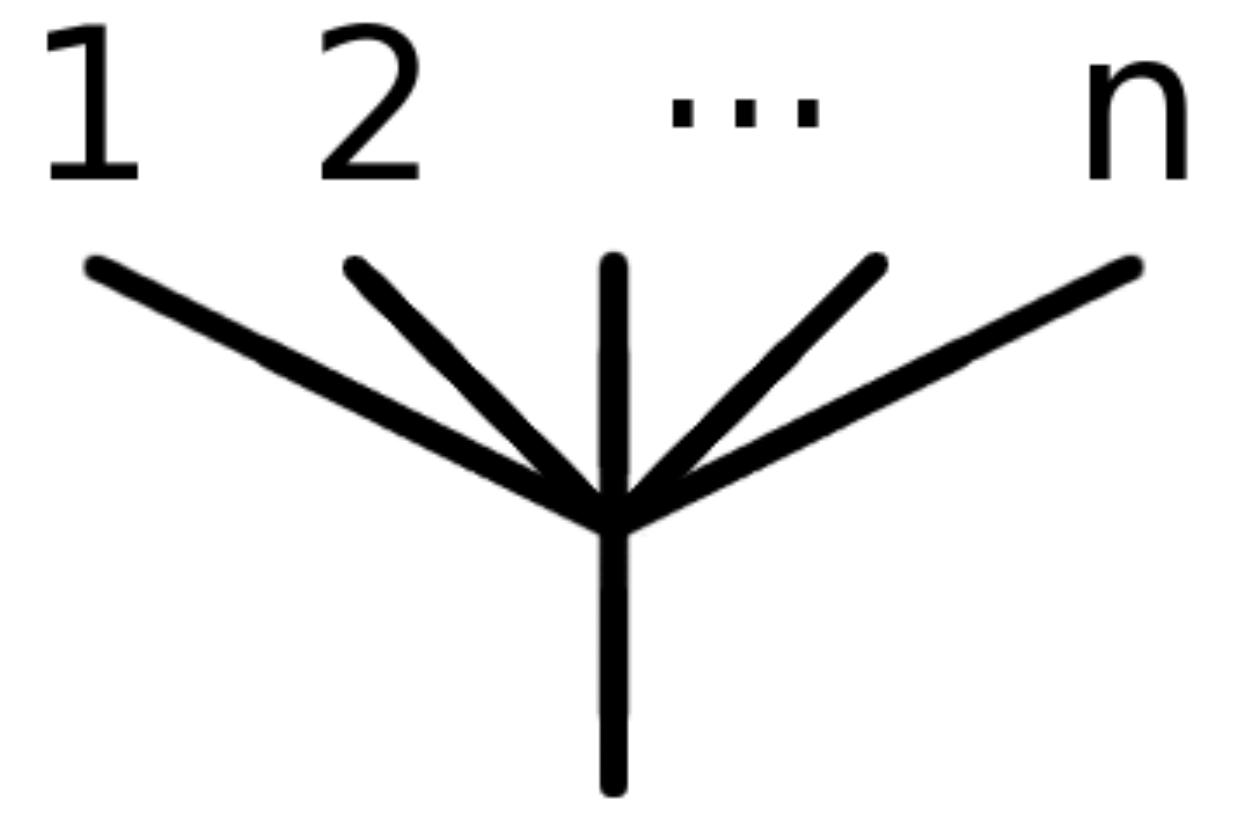}}}$$
the one-dimensional vector space, for any $n\ge 1$. For an algebra over an operad, the set $\Po_0$ is sent to $\Hom(\KK, V)$, since $V^{\otimes 0}=\KK$ by convention. Therefore it stands for the set of particular elements in $V$. In the case of associative algebras,  we have none of them. There is no unit for instance; so we define $As_0:=0$. 

The composite map $\gamma_{i_1, \ldots, i_k} : As_k\otimes As_{i_1} \otimes \cdots \otimes As_{i_k} \to As_{i_1+\cdots+i_k}$ is the multiplication of scalars, i.e. the isomorphism $\KK\otimes \KK\otimes \cdots \otimes \KK \cong \KK$.

\begin{ex}
Show that, with this definition, the data $(As, \gamma, \I)$ forms a nonsymmetric operad.
\end{ex}

The next proposition shows that we have done a good job.

\begin{prop}
The category of $As$-algebras is isomorphic to the category of associative algebras.
\end{prop}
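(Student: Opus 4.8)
The plan is to exhibit two mutually inverse functors that both act as the identity on underlying vector spaces and on underlying linear maps, so that we get a genuine isomorphism of categories rather than a mere equivalence. The whole argument rests on one structural observation, which I would establish first: the ns operad $As$ is generated by the single binary element spanning $As_2$. Write $\mu_n$ for the corolla spanning $As_n$. Since every composite map $\gamma_{i_1,\dots,i_k}$ is just the multiplication of scalars $\KK\otimes\cdots\otimes\KK\cong\KK$, any iterated composite of copies of $\mu_2$ (interspersed with $\I$) lands on $\mu_n$ up to a scalar; normalizing the generators so that, for instance, $\gamma(\mu_2;\mu_2,\I)=\gamma(\mu_2;\I,\mu_2)=\mu_3$ and so on, all such bracketings of $n$ factors agree. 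This is exactly the content of the one-dimensionality of $As_n$ together with the associativity of $\gamma$.

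First I would treat objects. Given an $As$-algebra $\Phi : As \to \End_V$, set $\nu := \Phi(\mu_2) \in \Hom(V^{\otimes 2},V)$. Because $\Phi$ commutes with the composite maps and $\mu_3=\gamma(\mu_2;\mu_2,\I)=\gamma(\mu_2;\I,\mu_2)$ holds inside $As_3$, applying $\Phi$ yields $\nu\circ(\nu\otimes\Id_V)=\Phi(\mu_3)=\nu\circ(\Id_V\otimes\nu)$, so that $\nu$ is associative; this defines the functor to associative algebras. Conversely, from an associative algebra $(V,\nu)$ I would build $\Phi$ by setting $\Phi_0=0$ (since $As_0=0$), $\Phi_1(\I)=\Id_V$, and $\Phi_n(\mu_n):=\nu^{(n)}$, the $n$-fold product $V^{\otimes n}\to V$, which is well-defined precisely because associativity forces every bracketing to coincide. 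Checking that $\Phi$ is a morphism of ns operads then amounts to the identity $\nu^{(i_1+\cdots+i_k)}=\nu^{(k)}\circ(\nu^{(i_1)}\otimes\cdots\otimes\nu^{(i_k)})$, which is again associativity, together with the trivial compatibility with the scalar structure of $\gamma$ and with the unit $\I$.

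Next I would handle morphisms, using the standard notion: a morphism of $\Po$-algebras $(V,\Phi)\to(W,\Psi)$ is a linear map $f : V\to W$ satisfying $f\circ\Phi(\mu)=\Psi(\mu)\circ f^{\otimes n}$ for every $\mu\in\Po_n$. Since $As$ is generated by $\mu_2$, this whole system of conditions collapses, for $\Po=As$, to the single equation $f\circ\nu_V=\nu_W\circ f^{\otimes 2}$: the conditions for the higher $\mu_n$ follow by the same induction that built them from $\mu_2$. But this single equation is exactly the defining relation of a morphism of associative algebras. Hence the two functors agree on morphisms as well, and since each is the identity on the underlying data, they are strictly inverse to one another, which gives the claimed isomorphism of categories.

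The main obstacle, and the only place where genuine work is hidden, is the generation statement together with the verification that the candidate $\Phi$ respects $\gamma$: one must show, by induction on planar trees, that an arbitrary iterated composite of copies of $\mu_2$ equals the normalized generator $\mu_n$, so that $\nu^{(n)}$ is unambiguous and $\Phi$ is consistently defined on all of $As$. Everything else is routine bookkeeping with one-dimensional spaces.
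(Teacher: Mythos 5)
Your proposal is correct and follows essentially the same route as the paper: extract the binary product $\nu:=\Phi(\mu_2)$ and deduce its associativity from the fact that the two bracketings coincide in the one-dimensional space $As_3$. The paper explicitly leaves the remaining verifications (the inverse functor, the generalized associativity $\nu^{(i_1+\cdots+i_k)}=\nu^{(k)}\circ(\nu^{(i_1)}\otimes\cdots\otimes\nu^{(i_k)})$, and the identification of morphisms) to the reader as an exercise, and your write-up supplies exactly those details correctly.
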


\begin{proof}
Let $\Phi : As \to \End_V$ be a representation of the ns operad $As$. Since $As_0=0$ and $As_1=\KK\,  \I$, they code for nothing. Let us call the image of $\Y$ by $\mu:=\Phi(\Y)$. It defines a binary product on $V$. 
The composite of $\Y \otimes (\Y \otimes I)$ and $\Y \otimes (I \otimes \Y)$ in $As$ give the same result. Therefore the composite of their respective image in $\Hom(V^{\otimes 3}, V)$ are equal, which gives the associativity of $\mu$. 
As usual, we leave the rest of the proof to the reader as a good exercise. 
\end{proof}

If we want to model the category of \emph{unital} associative algebras this time, we define the operad $uAs$ in the same way, except for $uAs_0:=\KK \, \mathfrak{1}$. 

\begin{ex}
Show that $(uAs, \gamma, \I)$ forms a nonsymmetric operad and that $uAs$-algebras are unital associative algebras.
\end{ex}

\textsc{Remark.} At this point of the theory, some readers might be confused by the fact that unital associative algebras appeared twice: on the one hand, the notion of a nonsymmetric operad is a generalization of the notion of unital associative algebra and, on the other hand, there is one operad $uAs$, which encodes the category of unital associative algebras. So, one should be careful. In these two cases, the category of unital associative algebras does not play the same role and is not placed on the same footing.

\subsection{Symmetric operads}\label{subsec:SymmetricOp}
Let us now extend the definition of a nonsymmetric operad in order to take care of the possible symmetries of the operations that we aim to encode. 

First, we notice that the space $\Hom(V^{\otimes n}, V)$ of $n$-multilinear maps carries a natural right action of the symmetric group $\Sy_n$, induced by the permutation of the input elements:
$$f^\sigma(v_1, \ldots, v_n):=f(v_{\sigma^{-1}(1)}, \ldots, v_{\sigma^{-1}(n)})\ .$$
The composition of multilinear functions satisfies natural equivariance properties with respect to this action. 

\begin{ex}
Write down these equivariance properties.
\end{ex}

\begin{defi}[Symmetric operad]
A \emph{symmetric operad} $\Po$, or simply an \emph{operad}, is a family $\lbrace \Po(n) \rbrace_{n\in \NN}$ of right $\Sy_n$-modules  with an element $\I\in \Po(1)$ and  endowed with associative, unital and equivariant composite maps 
$$\gamma_{i_1, \ldots, i_k} \ : \ \Po(k) \otimes \Po(i_1) \otimes \cdots \otimes \Po(i_k) \to \Po(i_1+\cdots+i_k)\ . $$
\end{defi}

We refer to this definition as to the \emph{classical} definition of an operad, since it coincides with the original definition of J. Peter\ May in \cite{May72}.
The definition of a morphism of symmetric operads and the definition of an algebra over a symmetric operad are the equivariant extensions of the nonsymmetric ones. 
So, forgetting the action of the symmetric groups, one defines a functor from symmetric operads to nonsymmetric operads. 

\begin{ex}[Partial definition]
Show that the definition of an operad structure on a family of $\Sy_{n}$-modules $\lbrace \Po(n)\rbrace_{n\geq 0}$ is equivalent to the data of \emph{partial compositions}
$$ \circ_{i}: \Po(m)\otimes \Po(n) \to \Po(m-1+n),\textrm{ for } 1\leq i \leq m,$$
satisfying 
\begin{itemize}
\item[$\diamond$]
the equivariance with respect to the symmetric groups (to be made precise), 

\item[$\diamond$] the axioms:
\begin{displaymath}
\left\{\begin{array}{crcll}
(\textrm{I})&(\lambda \circ_i \mu)\circ_{i-1+j}\nu  &=& \lambda \circ_i (\mu\circ_{j}\nu ), &\mathrm{for }\ 1\leq i\leq l, 1\leq j\leq m,  \\
(\textrm{II})&(\lambda \circ_i \mu)\circ_{k-1+m}\nu  &=& (\lambda \circ_k\nu)\circ_{i}\mu , &\mathrm{for }\  1\leq i < k\leq l, \\
\end{array}\right .
\end{displaymath}
for any $\lambda \in \Po(l), \mu\in \Po(m), \nu\in \Po(n)$, 

\item[$\diamond$] and having  a unit element $\I \in \Po(1)$.
\end{itemize}
We refer to this definition as to the \emph{partial} definition of an operad.
\end{ex}

\begin{exams}$ \ $

\begin{itemize}

\item[$\diamond$]  Let us define the symmetric operad $Com$ (respectively $uCom$), in the same way as the nonsymmetric operad $As$ (respectively $uAs$), by the one-dimensional spaces
$$
Com(n):=\KK \ 
\vcenter{\hbox{\includegraphics[scale=0.2]{FIG14Corolla.pdf}}}$$
but endowed with the trivial representation of the symmetric groups, this time. As usual, we leave it, as a good exercise, to the reader to check that these data form an operad. Prove also that the category of $Com$-algebras (respectively $uCom$-algebras) is isomorphic to the category of commutative algebras (respectively unital commutative algebras).

\smallskip

\item[$\diamond$]  In contrast to the operad $Com$ of commutative algebras, the various known bases of the space of $n$-ary operations of the operad $Lie$ of Lie algebras 
do not  behave easily with respect to both the operadic composition and to the action of the symmetric groups.
 
\noindent
On the one hand, we know that the right module $Lie(n)$ is the induced representation $\mathrm{Ind}^{\Sy_{n}}_{\ZZ/n\ZZ}(\rho)$, where $(\rho)$ is the one-dimensional representation of the cyclic group given by an irreducible $n$th root of unity. On the other hand, the following bases of $Lie(n)$, due to Guy Melan\c con and Christophe Reutenauer in \cite{MelanconReutenauer96}, behaves well with respect to the operadic composition. 

\noindent
We consider the subset $MR_n$ of planar binary trees with $n$ leaves such that, at any given vertex, the left-most  (resp.\ right-most) index is the smallest (resp.\ largest) index among the indices involved by this vertex. 
Here are the elements of $MR_2$ and $MR_3$.
%\begin{center}
$$
\vcenter{\hbox{\includegraphics[scale=0.2]{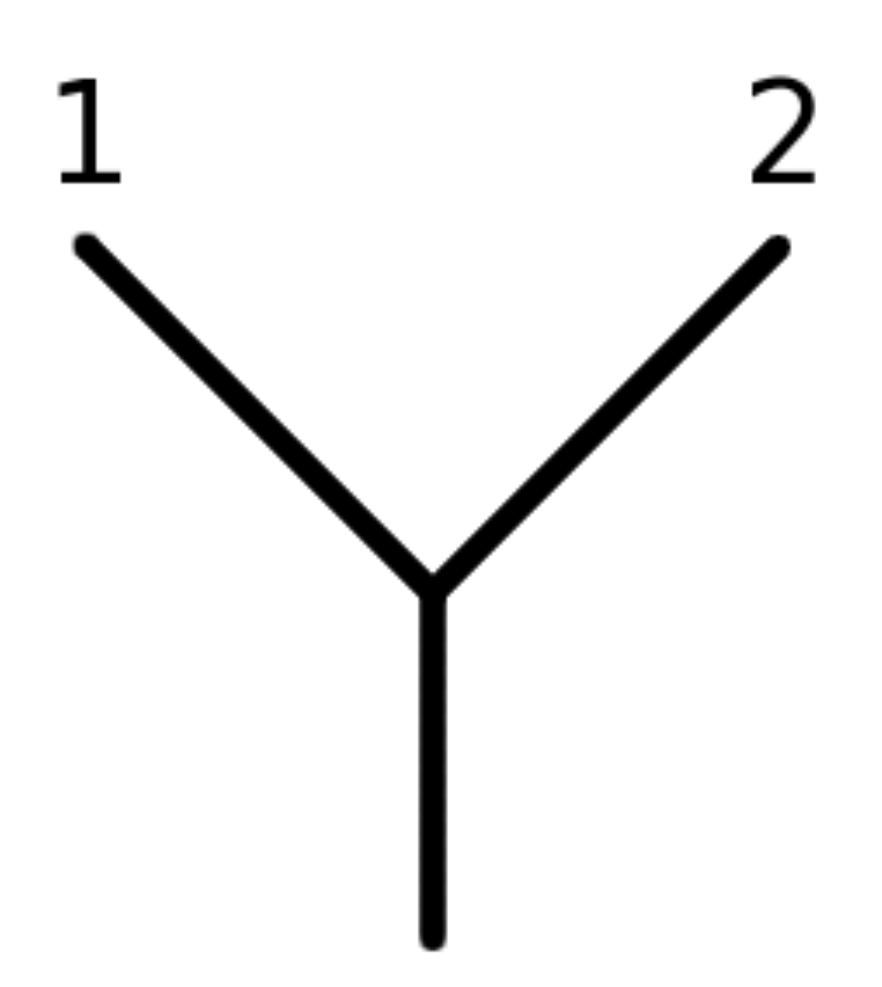}}}
\qquad
\vcenter{\hbox{\includegraphics[scale=0.2]{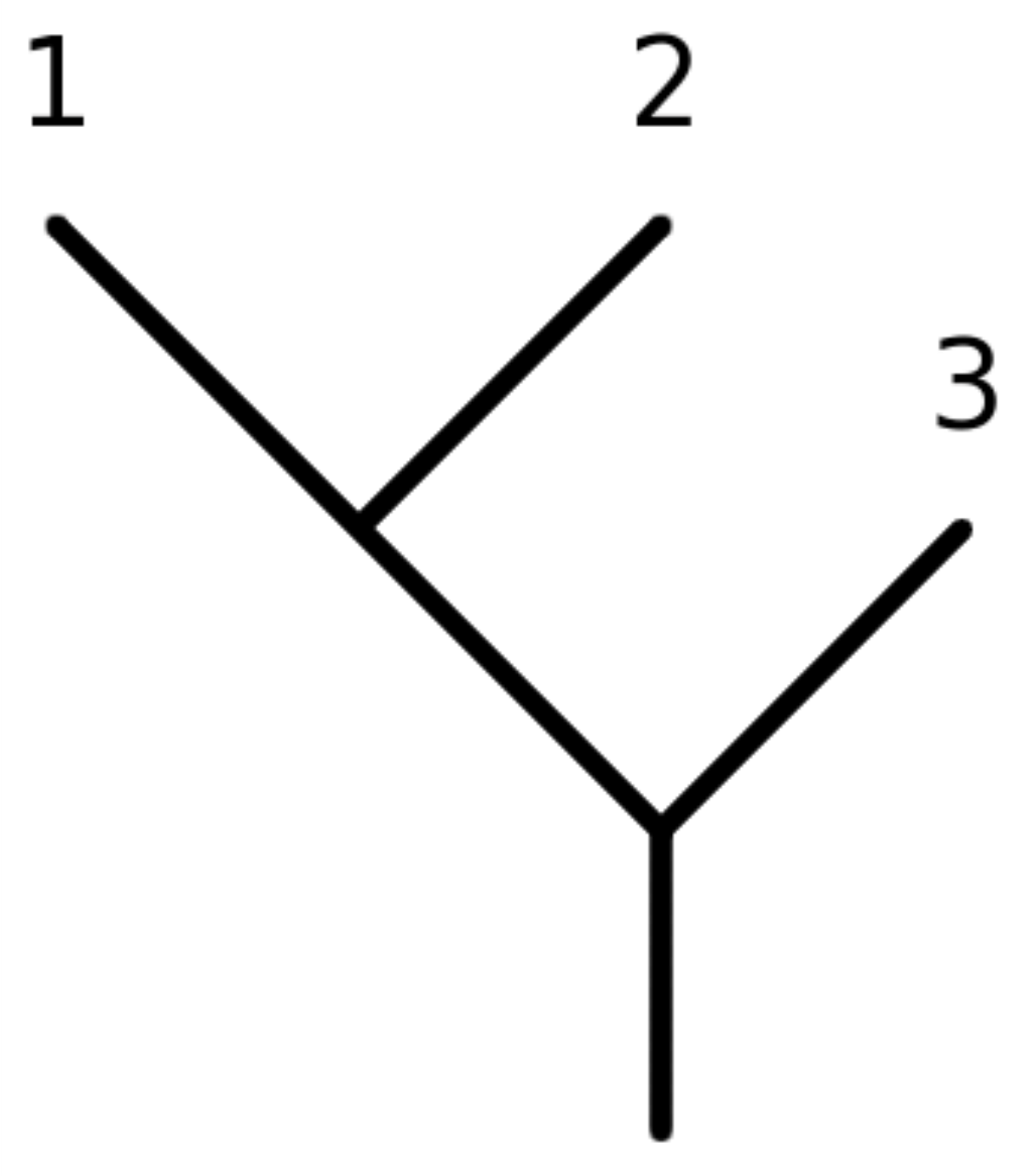}}}
\qquad
\vcenter{\hbox{\includegraphics[scale=0.2]{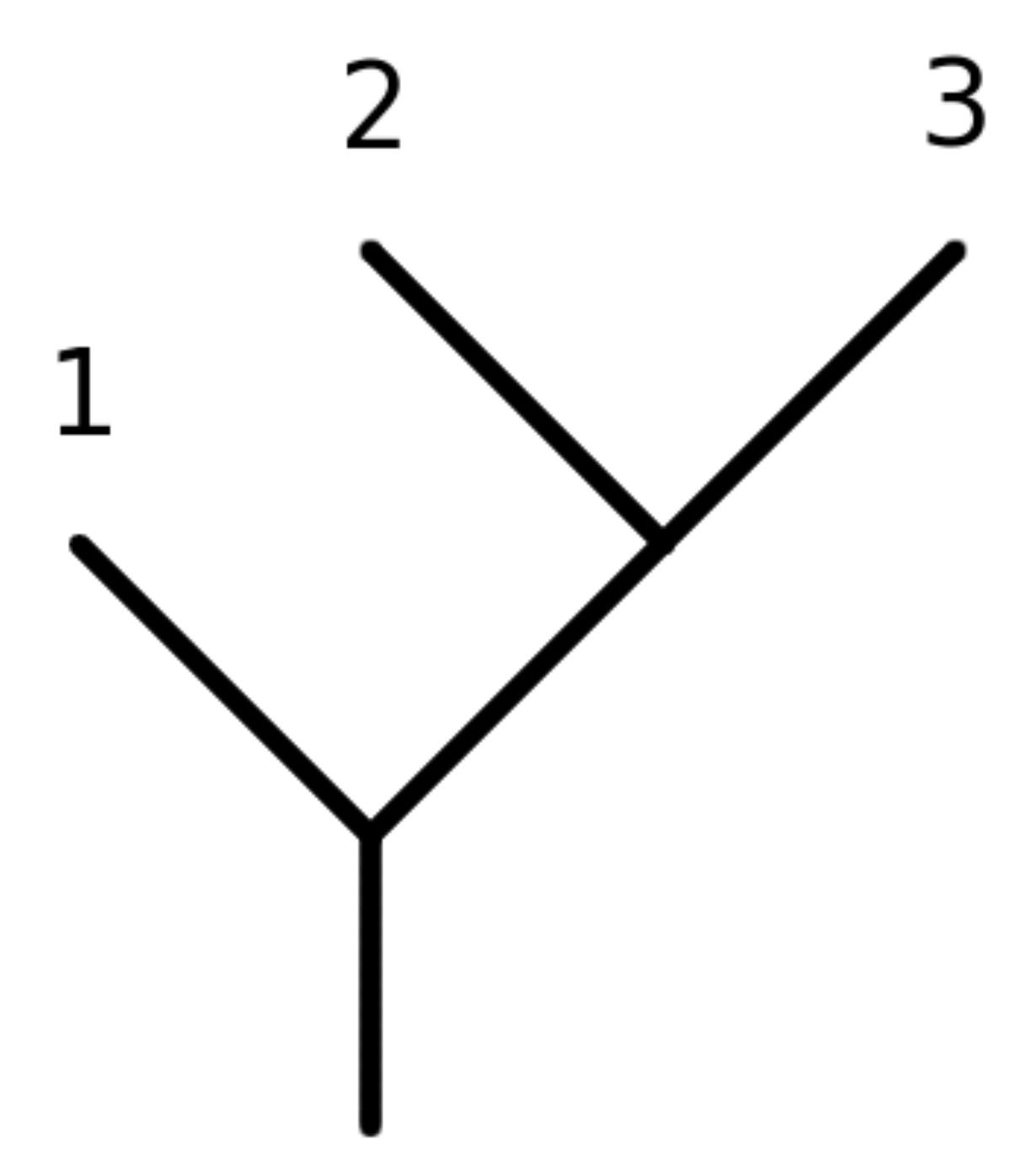}}}$$
%\end{center}
We define the partial compositions  $t\circ_{i}s$ of two such trees
by grafting the root of $s$ on the leaf of $t$ with label $i$ and by shifting the indices accordingly. We leave it to the reader to verify that this endows $\lbrace \KK\,  {MR}_n  \rbrace$ with a nonsymmetric operad structure. 

\noindent
For more properties about this basis of the operad $Lie$, we refer the reader to the end of Section~\ref{subsec:Rewriting}.
\end{itemize}
\end{exams}

\begin{ex}
One can also encode the category of associative algebras with a symmetric operad that we denote $Ass$, where the second ``s'' stands for ``symmetric''. Since the multilinear operations of an associative algebra have in general no symmetry, the component of arity $n$ of the operad $Ass$ is the regular representation of the symmetric group: $Ass(n):=\KK[\Sy_n]$. We leave it to the reader to make the composite maps explicit. 
\end{ex}

\subsection{Changing the underlying monoidal category}\label{subsec:Underlying}
One can also consider operads not only over the category of vector spaces but over other categories as follows. 

In order to define the notion of an operad, we used the fact that the category $(\mathsf{Vect}, \otimes)$ of vector spaces is a \emph{monoidal category}, when endowed with the tensor product $\otimes$. To write down the associativy property of the composite maps $\gamma$, one needs this monoidal category to be \emph{symmetric} because we have to commute terms separated by tensors. 

In the end, one can define the notion of an operad over any symmetric monoidal category. To name but a few:

%\medskip

\noindent
\begin{center}
\begin{tabular}{|cl|c|}
\hline
\multicolumn{2}{|c|}{\textsc{Symmetric monoidal category}} & \textsc{Type of operad}  \\
\hline
\hline 
Vector spaces& $(\mathsf{Vect}, \otimes)$ &  Linear operads   \\
\hline
Graded modules& $(\mathsf{gr \ Mod}, \otimes)$ & Graded operads   \\
\hline 
Differential graded modules& $(\mathsf{dg \ Mod}, \otimes)$  & Differential graded operads   \\
\hline
Sets & $(\mathsf{Set}, \times)$ & Set-theoretic operads   \\
\hline
Topological spaces& $(\mathsf{Top}, \times)$ & Topological operads   \\
\hline
Simplicial sets& $(\mathsf{Set_\Delta}, \times)$ & Simplicial operads   \\
\hline
\end{tabular}
\end{center}

\subsubsection*{$\diamond$ Set-theoretical operad: the example of monoids}
 Let us define the set-theoretic ns operad $Mon$, in the same way as the ns operad $uAs$, by the following one-element sets: 
$$Mon_n:=
\left\lbrace\vcenter{\hbox{\includegraphics[scale=0.2]{FIG14Corolla.pdf}}}\right\rbrace\ ,$$
for $n\in \NN$. 
In this case, the category of $Mon$-algebras is isomorphic to the category of monoids.

\subsubsection*{$\diamond$ Topological operad: the example of the little discs}
 The mother of topological operads is the \emph{little discs operad} $\DDD^2$, which  is a topological operad defined as follows. The topological space $\DDD^2(n)$ is made up of the unit disc (in $\CC$) with $n$  
 sub-discs with non-intersecting interiors. So, an element on $\DDD^2(n)$ is completely determined by a family of $n$ continuous maps $f_{i}: S^{1}\to D^{2}, i=1, \ldots, n$, satisfying the non-intersecting condition, see Figure~\ref{LD1}.

\begin{figure}[!h]
\centering
\includegraphics[scale=0.20]{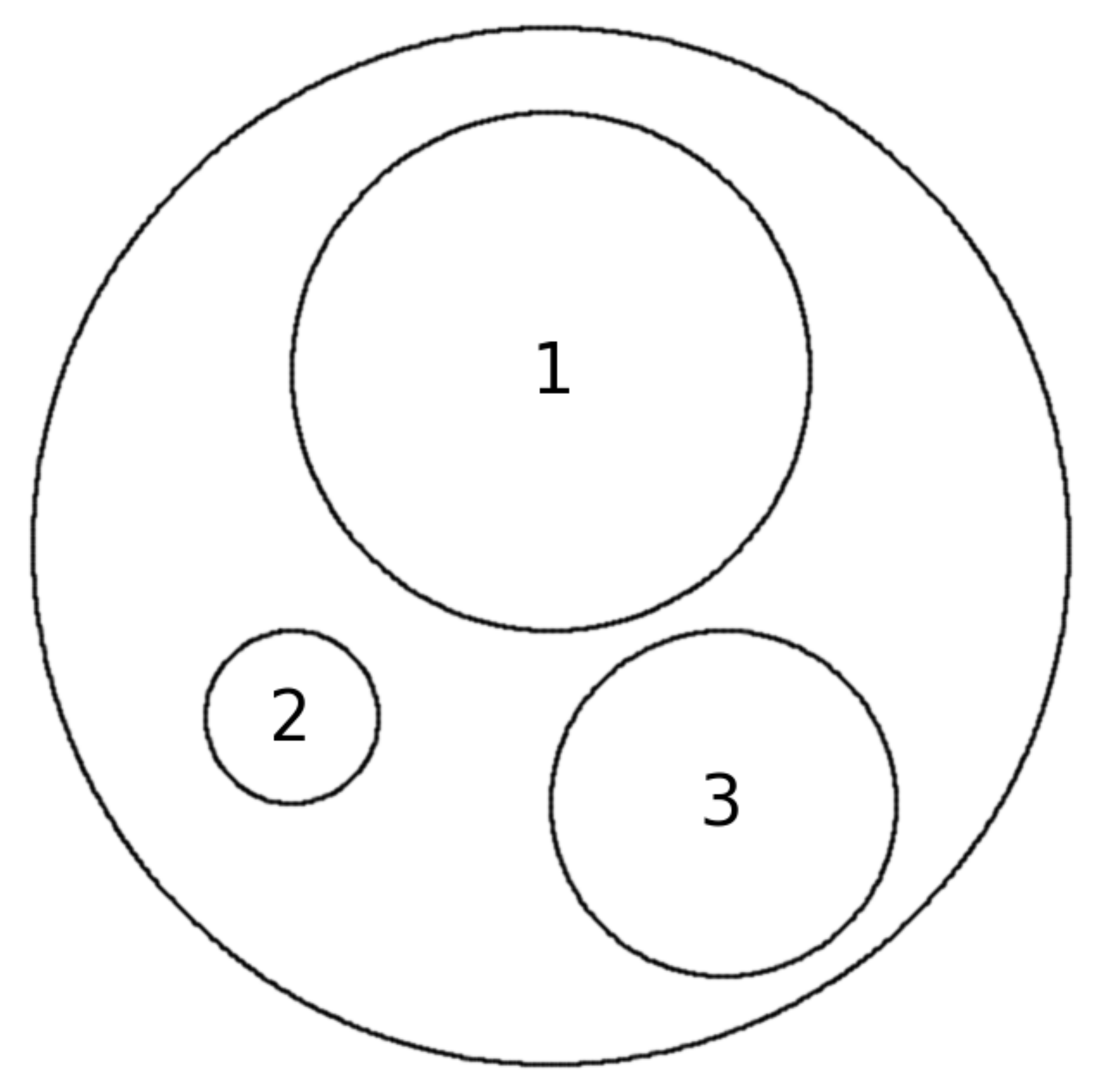}
\caption{Little discs configuration in $\DDD^2(3)$} \label{LD1}
\end{figure}

\noindent
The enumeration of the interior discs is part of the structure. The operadic composition is given by insertion of a disc in an  interior disc. The symmetric group action is given by permuting the labels. Figure~\ref{LDcompo} gives an example of a partial composition. 

\begin{figure}[!h]
\begin{eqnarray*}
&\includegraphics[scale=0.20]{Littlediscs1.pdf}\circ_1
\includegraphics[scale=0.20]{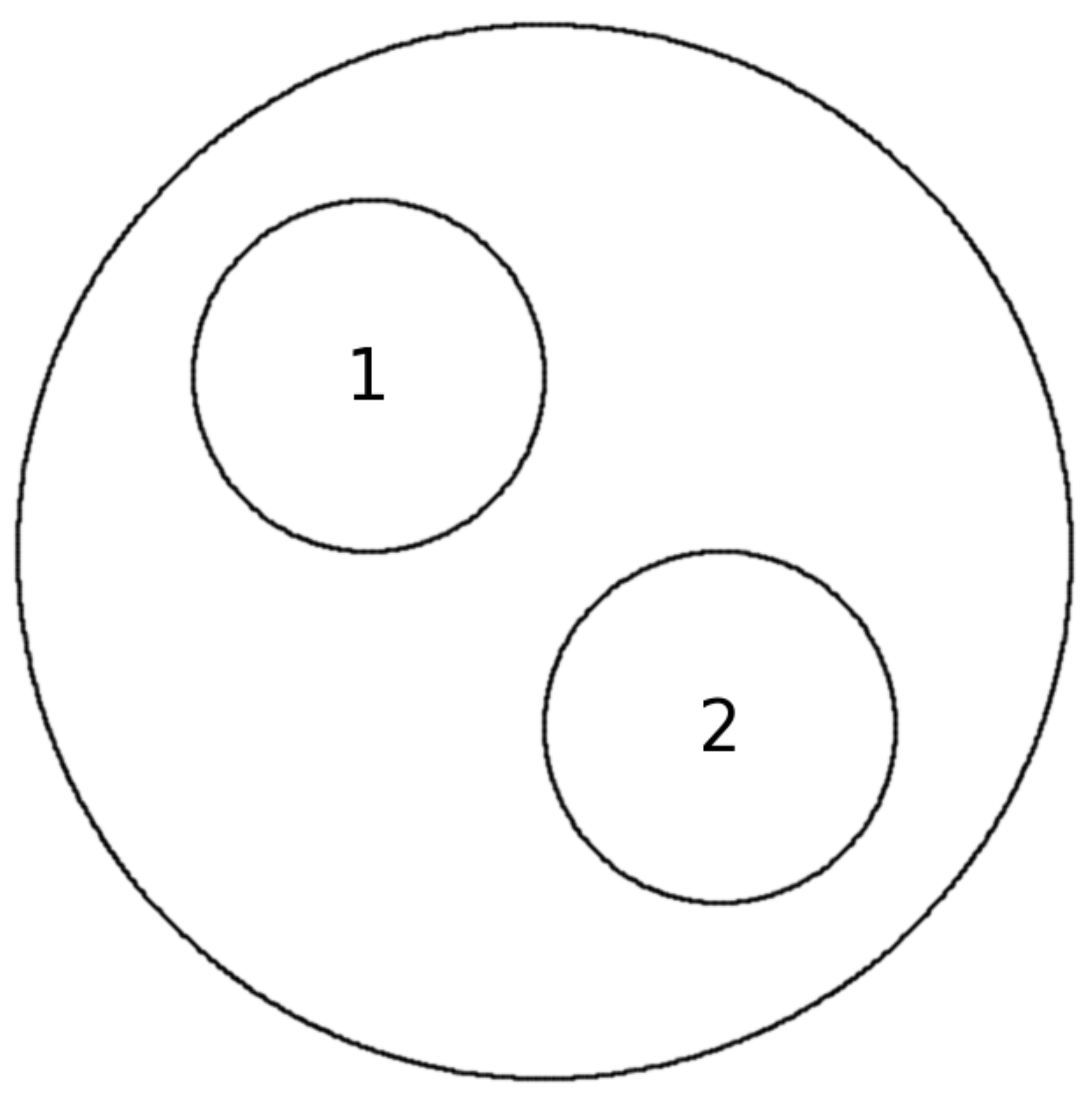}&\\
&=\includegraphics[scale=0.20]{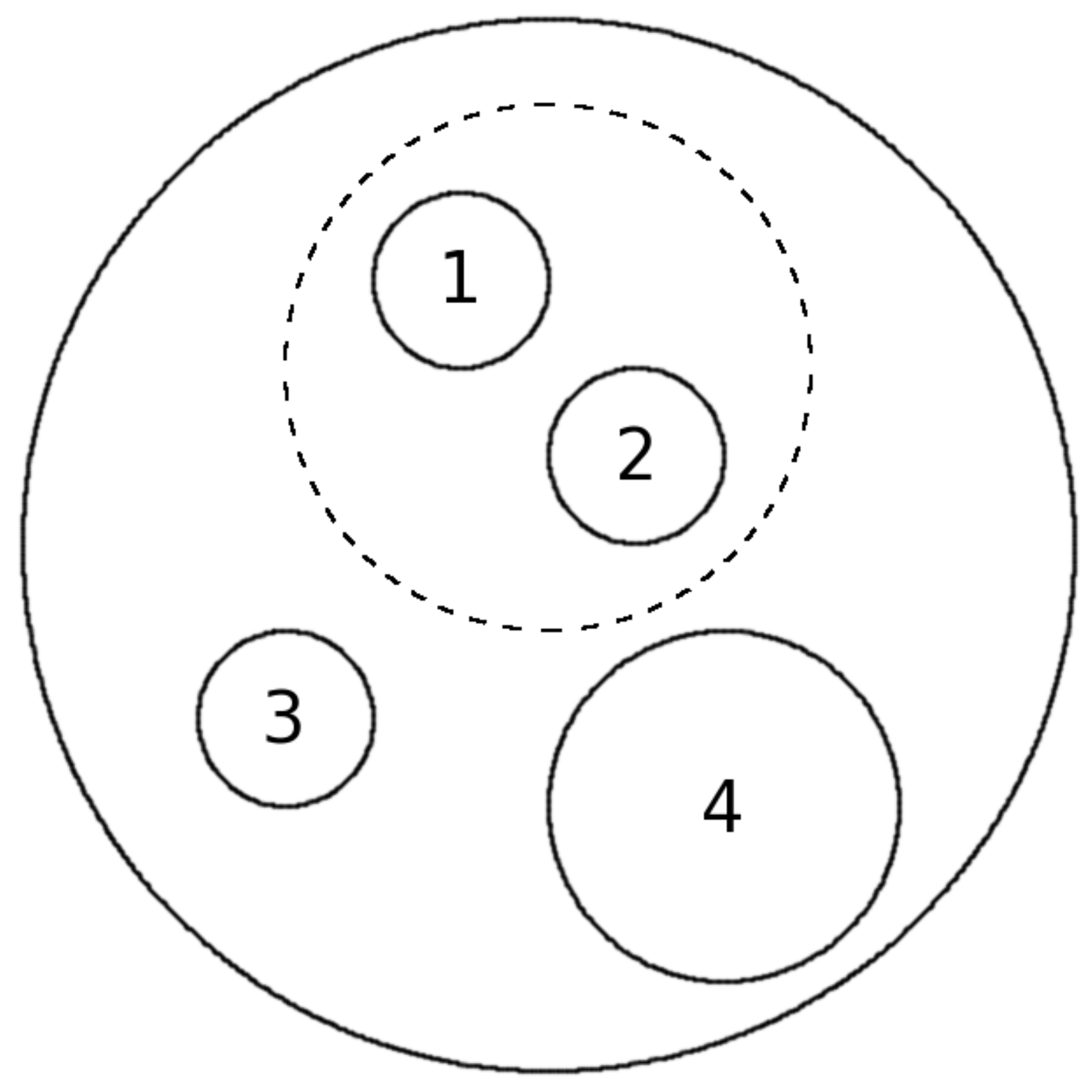}&
\end{eqnarray*}
\caption{Exemple of partial composition in the little discs operad} \label{LDcompo}
\end{figure}
It is clear how to proceed to define the \emph{little $k$-discs operad} $\DDD^k$, for any $k\in \NN$.
In the case $k=1$, one gets the \emph{little intervals} operad. 

\begin{ex}
Prove that any $k$-fold loop space $\Omega^{k}(Y)$ is an algebra over $\DDD^k$.
\end{ex}

\noindent
The main property of the little $k$-discs operad is the following result, which goes  the other way round. 
 
 \begin{theo}[Recognition principle \cite{BoardmanVogt73, May72}]
If the connected topological space $X$ is an algebra over the little $k$-discs operad, then it is homotopy equivalent to the $k$-fold loop space of some other pointed space $Y$:
$$X \sim \Omega^{k}(Y).$$ 
\end{theo}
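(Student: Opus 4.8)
The plan is to realise $X$ explicitly as a $k$-fold loop space by resolving it through a two-sided bar construction, following the strategy of J.~P.~May. Throughout write $\DDD^k$ for the little $k$-discs operad. The first step I would take is to reformulate the operad action monadically. To $\DDD^k$ one associates the free-algebra monad
$$C_k X := \coprod_{n\ge 0} \DDD^k(n)\times_{\Sy_n} X^n\ ,$$
(with basepoint identifications using the basepoint of $X$), whose algebras are precisely the $\DDD^k$-algebras; the hypothesis therefore supplies a structure map $\theta : C_k X \to X$. On the topological side, the adjunction $(\Sigma^k, \Omega^k)$ makes $\Omega^k\Sigma^k$ a monad, and the fact that every $k$-fold loop space carries a $\DDD^k$-action (the preceding exercise) upgrades, by the freeness of $C_k$, to a morphism of monads $\alpha : C_k \to \Omega^k\Sigma^k$. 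This $\alpha$ is the bridge between the combinatorial side and the topological side.

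The key topological input is the \emph{approximation theorem}: for \emph{connected} $X$, the natural map $\alpha_X : C_k X \to \Omega^k\Sigma^k X$ is a weak homotopy equivalence. Granting this, I would feed the $C_k$-algebra $X$ into the bar construction. Letting $\lambda : \Sigma^k C_k \to \Sigma^k$ denote the right $C_k$-action on $\Sigma^k$ adjoint to $\alpha$, I set
$$Y := B(\Sigma^k, C_k, X)\ ,$$
the geometric realisation of the simplicial space $[q]\mapsto \Sigma^k C_k^{\,q} X$, whose faces and degeneracies are assembled from $\lambda$, the monad multiplication, and the algebra map $\theta$. This pointed space $Y$ is the candidate delooping.

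It then remains to exhibit the promised equivalence, which I would present as the zig-zag
$$X \,\simeq\, B(C_k,C_k,X) \xrightarrow{\ \sim\ } B(\Omega^k\Sigma^k,C_k,X) \xrightarrow{\ \xi\ } \Omega^k B(\Sigma^k,C_k,X) = \Omega^k Y\ .$$
The left-hand equivalence is formal: the bar resolution $B(C_k,C_k,X)$ of an algebra by free algebras admits an extra degeneracy, so its realisation is simplicially homotopy equivalent to $X$. The first arrow is induced level-wise by $\alpha$, hence is a level-wise weak equivalence by the approximation theorem; since all the simplicial spaces in sight are proper (Reedy-cofibrant), realisation preserves such equivalences. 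The last map $\xi$ compares geometric realisation with $\Omega^k$, and it is exactly here that connectedness is used, since $\Omega^k$ commutes with realisation up to homotopy only under a level-wise connectivity hypothesis, which the connectivity of $X$ provides.

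The main obstacle is the interface formed by the two genuinely topological facts invoked above — the approximation theorem and the commutation $\Omega^k|{-}|\simeq|\Omega^k{-}|$ — both of which rest on delicate connectivity conditions. The approximation theorem itself is the hard technical core: it is proved by filtering $C_k X$ by number of inputs and comparing, by induction, the successive subquotients with the corresponding stages of the James-type filtration of $\Omega^k\Sigma^k X$. The role of the ``connected'' hypothesis is precisely to make these comparisons equivalences; for non-connected algebras the statement fails and must be weakened to a \emph{group-completion} theorem, reflecting that a $k$-fold loop space is always group-like whereas a general $\DDD^k$-algebra need not be. For connected $X$ the $\pi_0$ obstruction is vacuous, and this is the simplification the hypothesis is designed to exploit.
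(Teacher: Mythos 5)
The paper does not prove this theorem: it is quoted as a black box with references to Boardman--Vogt and May, so there is no internal proof to compare against. Your outline is a faithful reproduction of May's argument in \emph{The Geometry of Iterated Loop Spaces} --- the monad $C_k$, the morphism of monads $\alpha : C_k \to \Omega^k\Sigma^k$, the approximation theorem, the delooping $Y = B(\Sigma^k, C_k, X)$, and the three-step zig-zag through $B(C_k,C_k,X)$ and $B(\Omega^k\Sigma^k,C_k,X)$ --- and you correctly locate both the hard input (the approximation theorem) and the exact places where connectivity is used (levelwise application of $\alpha$ to the connected spaces $C_k^{\,q}X$, and the commutation of $\Omega^k$ with realisation, which needs the levels $\Sigma^k C_k^{\,q}X$ to be sufficiently connected). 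The only points you elide, which May has to handle with some care, are the cofibration (``properness'') hypotheses on the simplicial spaces --- arranged by whiskering the basepoint --- and the passage from weak equivalence to genuine homotopy equivalence, which requires $X$ to have the homotopy type of a CW complex; neither affects the architecture of the proof.
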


\noindent
So, up to the homotopy relation of topological spaces, the category of $\DDD^k$-algebras is the category of $k$-fold loop spaces.

\subsubsection*{$\diamond$ Differential graded operad: the example of the Gerstenhaber operad}
A \emph{Gerstenhaber algebra}  is a  graded vector space $A$ endowed with 

\smallskip

\noindent
$\triangleright$  a commutative binary product $\bullet$ of degree
$0$,

\noindent
$\triangleright$  a commutative  bracket $\langle\; ,  \, \rangle$  of degree $+1$, i.e. $|\langle a ,  b \rangle|=|a|+|b|+1$, where the notation $|\ |$ stands for the grading of elements.

\smallskip

such that

\smallskip

\noindent
$\rhd$ the product $\bullet$ is associative,

\noindent
$\rhd$ the bracket satisfies the Jacobi identity

$$\langle \langle\; ,  \, \rangle,  \, \rangle \ +\ \langle \langle\; ,  \, \rangle,  \, \rangle.(123) \ +\ \langle \langle\; ,  \, \rangle,  \, \rangle.(321)\ = \ 0,$$

\noindent
$\rhd$ the product $\bullet$ and the bracket $\langle\;
, \,\rangle$ satisfy the Leibniz relation
$$\langle\, \textrm{-} , \textrm{-}\bullet \textrm{-}\, \rangle\ =\
(\langle\,\textrm{-}, \textrm{-}\,\rangle\bullet \textrm{-})\
+ \ (\textrm{-}\bullet \langle\,\textrm{-},\textrm{-}\,\rangle).(12),   $$

We denote by $Ger$ the graded symmetric operad which encodes Gerstenhaber algebras.\\

Any symmetric monoidal functor between two symmetric monoidal categories induces a functor between the associated notions of operads. For instance, the free vector space functor 
$(\mathsf{Set}, \times) \to (\mathsf{Vect}, \otimes)$ is symmetric monoidal. Under this functor, the  set-theoretic ns operad $Mon$ is sent to the linear ns  operad $uAs=\KK Mon$. 
Since we are working over a field, the homology functor 
$$H^\bullet \ : \ (\mathsf{Top}, \times) \to (\mathsf{gr \ Mod}, \otimes) \  ,  $$
defines such a symmetric monoidal functor. 

\begin{theo}[Homology of the little discs operad \cite{Cohen76}]
The homology of the little discs operad is isomorphic to the operad encoding Gerstenhaber algebras
$$H^\bullet(\DDD^2)\cong Ger\ . $$
\end{theo}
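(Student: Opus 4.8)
The plan is to pass from the little discs spaces to configuration spaces, identify the generating operations in low arity, and then reduce the statement to Arnold's computation of the cohomology of configuration spaces together with a dimension count. Throughout I work with homology with coefficients in $\KK$, since over a field the functor $H_\bullet(-;\KK)$ is symmetric monoidal (K\"unneth) and covariant, hence sends the topological operad $\DDD^2$ to a graded operad; I will write $H^\bullet = (H_\bullet)^*$ only when quoting Arnold's presentation, which is traditionally stated on cohomology.

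First I would establish that $\DDD^2(n)$ deformation retracts onto the configuration space $F(\mathbb{R}^2,n)$ of $n$ ordered distinct points in the plane (equivalently in $\CC$), by contracting each little disc to its centre. This retraction is $\Sy_n$-equivariant and compatible with the operadic composition up to homotopy, so it induces an isomorphism of graded operads $H_\bullet(\DDD^2)\cong H_\bullet\big(F(\mathbb{R}^2,-)\big)$. Thus everything is reduced to a statement about configuration spaces.

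Second I would pin down the generators. In arity $2$ one has $F(\mathbb{R}^2,2)\simeq S^1$ (record the normalised difference of the two points), so $H_\bullet(\DDD^2(2))$ is two-dimensional: the class in degree $0$, which I send to the commutative product $\bullet$, and the fundamental class of the circle in degree $1$, which I send to the bracket $\langle\,,\,\rangle$. Examining the $\Sy_2$-action (its generator acts on $S^1$ as the swap of the two points, i.e. the rotation by $\pi$) shows that $\bullet$ is symmetric of degree $0$ and $\langle\,,\,\rangle$ is graded symmetric of degree $+1$, exactly as required by the definition of a Gerstenhaber algebra. I would then define a morphism of operads $\Phi : Ger \to H_\bullet(\DDD^2)$ by sending the generating product and bracket of $Ger$ to these two classes, and verify that the defining relations hold in the target: associativity of $\bullet$ is automatic in $H_0$ (each $\DDD^2(n)$ is connected), while the graded Jacobi relation and the Leibniz (Poisson) relation are relations among the low-degree classes in $H_\bullet(\DDD^2(3))\cong H_\bullet(F(\mathbb{R}^2,3))$. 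These follow from Arnold's presentation of $H^\bullet(F(\mathbb{R}^2,n))$ by degree-one classes $\omega_{ij}$ subject to $\omega_{ij}^2=0$ and the Arnold relations $\omega_{ij}\omega_{jk}+\omega_{jk}\omega_{ki}+\omega_{ki}\omega_{ij}=0$: dualising the Arnold relation in arity $3$ yields precisely the Jacobi identity, while the interaction of a degree-$0$ and a degree-$1$ class yields the Leibniz relation.

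Finally I would prove that $\Phi$ is an isomorphism by surjectivity plus a Poincar\'e series count. Surjectivity is the statement that $H_\bullet(F(\mathbb{R}^2,n))$ is generated, under the operadic composition, by iterated products and brackets of the arity-$2$ classes, which is visible from Arnold's monomial basis. For injectivity I would match dimensions arity by arity: the Fadell--Neuwirth fibrations $F(\mathbb{R}^2,n)\to F(\mathbb{R}^2,n-1)$ (with fibre a plane minus $n-1$ points) give the Poincar\'e polynomial $\prod_{j=1}^{n-1}(1+j\,t)$, of total dimension $\prod_{j=1}^{n-1}(1+j)=n!$, whereas $\dim Ger(n)=n!$ since $Ger$ is, as an $\Sy$-module, a degree-shift of the Poisson operad $\Com\circ\Lie$. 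As $\Phi$ respects the homological grading and the $\Sy_n$-action, matching dimensions degree by degree forces it to be an isomorphism. The main obstacle is the combination of the third and fourth steps: one must produce the cohomology ring of the configuration spaces and correctly translate its multiplicative relations, through the operadic composition and with careful bookkeeping of the signs introduced by the degree-$1$ bracket, into exactly the Jacobi and Leibniz relations of $Ger$ and no others. Establishing generation and the relations is where the real content lies; the dimension count merely certifies the outcome.
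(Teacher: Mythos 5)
The paper offers no proof of this statement: it is a survey and simply cites F.~Cohen's computation \cite{Cohen76}, so there is no internal argument to compare yours against. Your sketch follows the classical Arnold--Cohen route (little discs $\simeq$ configuration spaces, Arnold's presentation of $H^\bullet(F(\mathbb{R}^2,n))$, a map $Ger \to H_\bullet(\DDD^2)$ checked to be well defined on the generators and relations, then an equality of Poincar\'e polynomials $\prod_{j=1}^{n-1}(1+jt)$ on both sides), and the overall architecture is sound: the dimension count is correct, since $Ger(n)$ is a degree shift of $(\Com\circ\Lie)(n)$ and hence $n!$-dimensional with matching grading, and identifying the arity-$2$ generators via $F(\mathbb{R}^2,2)\simeq S^1$ with its $\Sy_2$-action is exactly right.

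Two steps deserve more care than you give them. First, the collapse of discs to their centres is \emph{not} a morphism of operads, and the configuration spaces $F(\mathbb{R}^2,n)$ do not themselves form an operad; you must either transport the composition maps along the arity-wise homotopy equivalences (which is legitimate on homology, but should be said) or work with a genuine operadic model such as the Fulton--MacPherson compactification. Second, and more seriously, the surjectivity of $\Phi : Ger \to H_\bullet(\DDD^2)$ is asserted as ``visible from Arnold's monomial basis,'' but Arnold's basis lives in \emph{cohomology} and says nothing a priori about which homology classes arise as operadic composites of the two arity-$2$ generators. The standard way to close this gap is to exhibit explicit cycles (products of points and of circles on which pairs of points orbit each other) realizing the composites, and to pair them against the admissible Arnold monomials $\omega_{i_1 j_1}\cdots\omega_{i_k j_k}$ to obtain a triangular, hence nondegenerate, pairing matrix; this simultaneously gives injectivity and surjectivity and is where the real content of Cohen's theorem sits. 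You correctly identify this as ``where the real content lies,'' but the proof is not complete until that pairing (or an equivalent generation argument) is actually carried out.
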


\subsection{Other types of Operads}\label{operads}
Depending on the type of algebraic structure that one wants to encode (several spaces, scalar product, trace, operations with multiple inputs and multiple outputs, etc.), it is often possible to extend the previous definition of an operad. 

\subsubsection*{$\diamond$ Colored operad}
Algebraic structures acting on  several underlying spaces can be encoded with colored operads. In this case, the paradigm is given by  $End_{V_1\oplus \cdots \oplus V_k}$.
 An operation in a colored operad comes equipped with a color for each input  and a color for the output. The set of ``colors'' corresponds to the indexing set of the various spaces, for instance $\lbrace 1, \ldots, k\rbrace$. 
 
\begin{ex}
The data of two associative algebras $(A, \mu_A)$ and $(B, \mu_B)$ with a morphism $f : A \to B$ between them can be encoded into a $2$-colored operad denoted $As_{\bullet \to \circ}$. Show that this operad is made up of corollas of three types : only black inputs and output (for the source associative algebra $A$), only white inputs and output (for the target associative algebra $B$), and corollas with black inputs and white output (either for the composite of iterated products in $A$ and then the morphism $f$ or for the composite of tensors of copies of the morphism $f$ and then iterated products in $B$). 
\begin{center}
\includegraphics[scale=0.2]{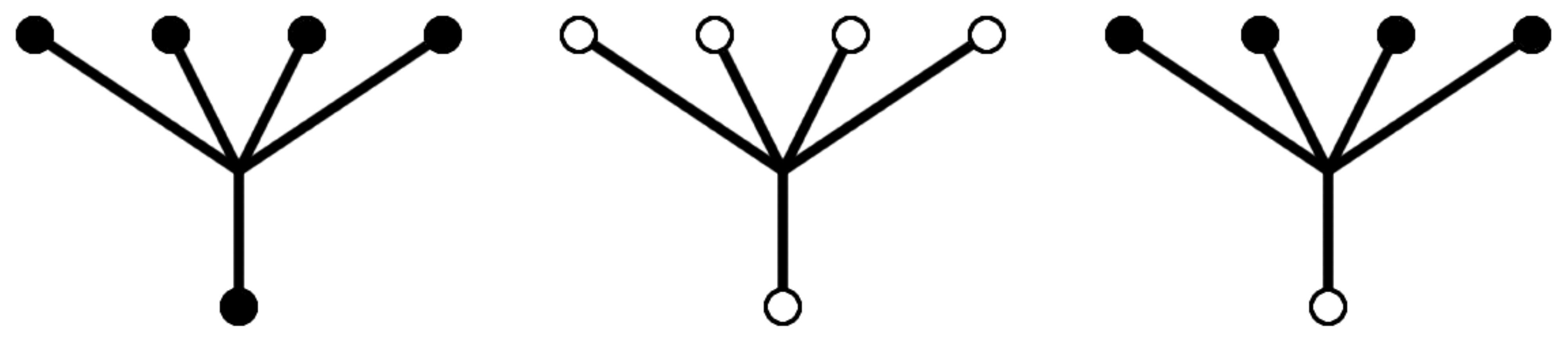} 
\end{center}
\end{ex}
 
\begin{exam}
An interesting example of topological colored operad
is given by  the \emph{operad of tangles} of Vaughan F.R. Jones \cite{JonesV10}, see Figure~\ref{PlanarAlgebraVFRJ}. 
\begin{figure}[h]
\centering
\includegraphics[scale=0.2]{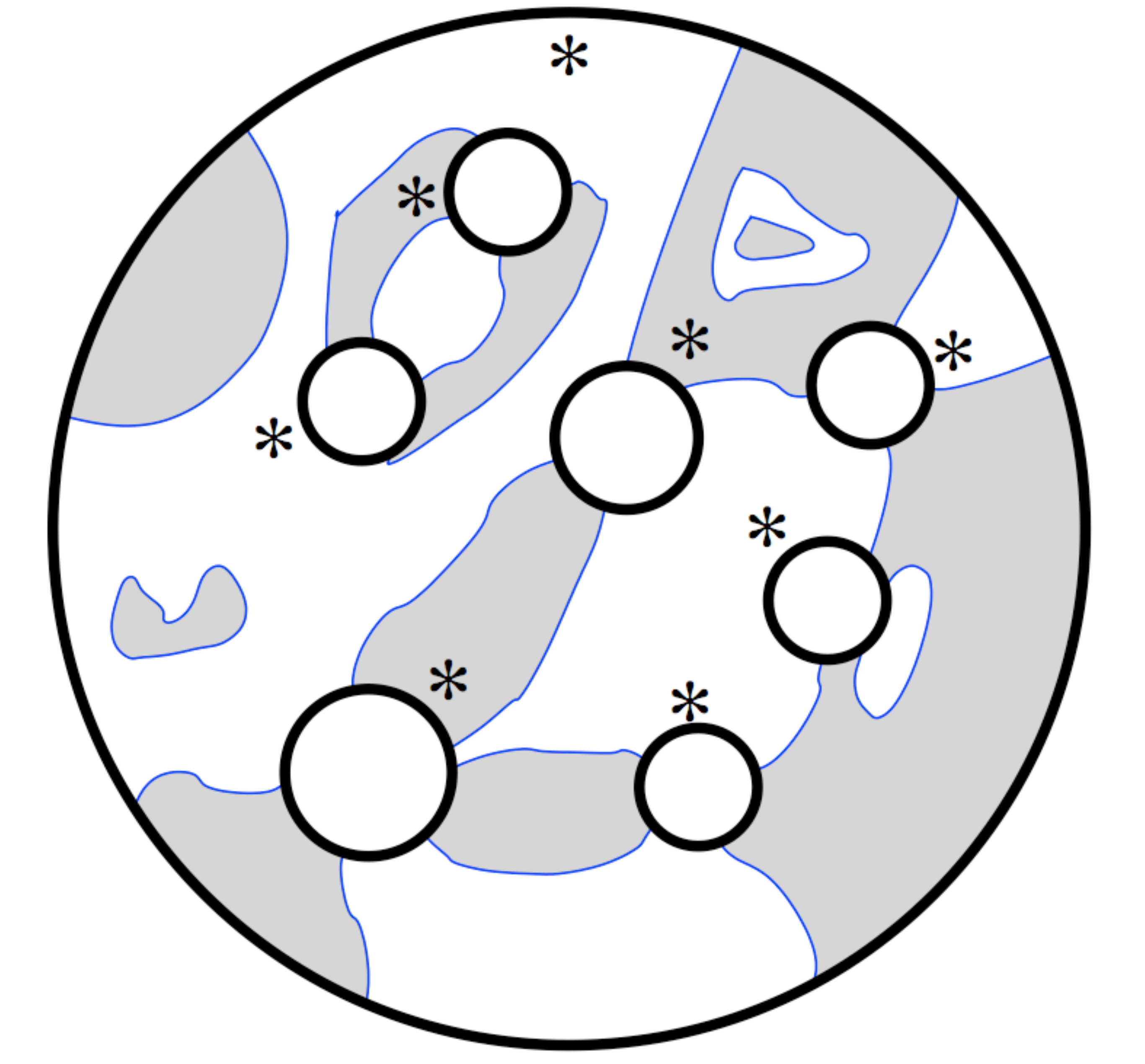}
\caption{A Tangle (Courtesy of V.F.R. Jones)}
 \label{PlanarAlgebraVFRJ}
\end{figure}
(Notice the similarity with the little discs operad.) Algebras over this operad are the \emph{planar algebras} \cite{JonesV99},  notion related to Jones invariants of knots.
\end{exam}

\subsubsection*{$\diamond$ Cyclic operad}
 The purpose of cyclic operads is to encode algebraic structures acting on spaces endowed with a non-degenerate symmetric pairing $\langle -,-\rangle: V\otimes V \to \KK$. In this case, the endomorphism operad $\End_V(n)=\Hom(V^{\otimes n}, V)\cong \Hom(V^{\otimes n+1},\KK)$ is equipped with a natural action of $\Sy_{n+1}$, which extends the action  of $\Sy_n$ and which is compatible with the partial compositions. On the level of cyclic operads, this means that we are given an action of the cycle $(1 \ 2 \ldots n+1)$ which exchanges input and output:

%\begin{figure}[h]
%\centering
\begin{center}
\includegraphics[scale=0.2]{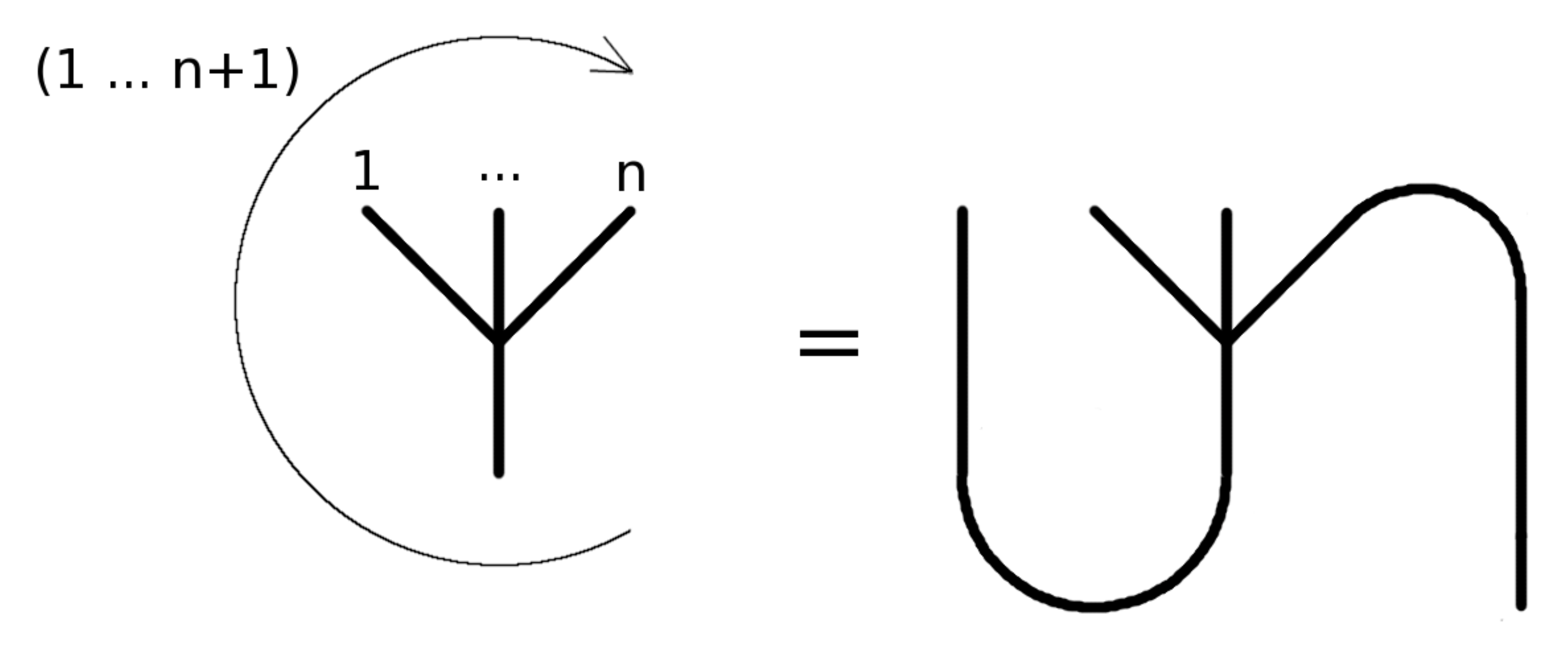} 
\end{center}
%\caption{Definition of  $\mu_n$}
%\label{Fig:Cyclic}
%\end{figure}

\noindent
As usual, the data of an algebra over a cyclic operad is given by a morphism of cyclic operads $\Po \to \End_V$. For example an algebra over the cyclic operad $Ass$ is a finite dimensional associative algebra equipped with a non-degenerate symmetric pairing $\langle -,-\rangle$, which is \emph{invariant}: 
$$\langle ab,c\rangle = \langle a, bc\rangle\ .$$

\noindent
\textsc{Example.} A \emph{Frobenuis algebra}, central notion  in representation theory \cite{CurtisReiner62}, 
is nothing but a cyclic $uAs$-algebra. (The nonsymmetric operad $uAs$-can be endowed with a ``cyclic nonsymmetric operad'' structure, that is with an action of the cyclic groups $\ZZ/(n+1)\ZZ$). A \emph{symmetric Frobenius algebra}, central notion 
 in Topological Quantum Field Theory \cite{Kock04}, is a cyclic $uAss$-algebra.  Let us recall that a Frobenius algebra (resp. symmetric Frobenius algebra) is a unital associative algebra, equipped with a nondegenerate (resp. and symmetric) bilinear form, such that $\langle ab,c\rangle = \langle a, bc\rangle$.

\begin{ex}
Show that the operad $Lie$ can be endowed with a cyclic operad structure. Prove that any finite dimensional Lie algebra $\g$, becomes an algebra over the cyclic operad $Lie$, when equipped with  the Killing form.
\end{ex}

\subsubsection*{$\diamond$ Modular operad}
In a cyclic operad, one makes no distinction between inputs and outputs. So one can compose operations along genus $0$ graphs. The idea of modular operad is to also allow one to  compose operations along any graph of any genus. 

\begin{exam} The  Deligne-Mumford moduli spaces $\overline{\mathcal{M}}_{g, n+1}$  \cite{DeligneMumford69} of stable curves of genus $g$ with $n+1$ marked points is the mother of modular operads. The operadic composite maps are defined by intersecting curves along their marked points: 
\begin{center}
\includegraphics[scale=0.18]{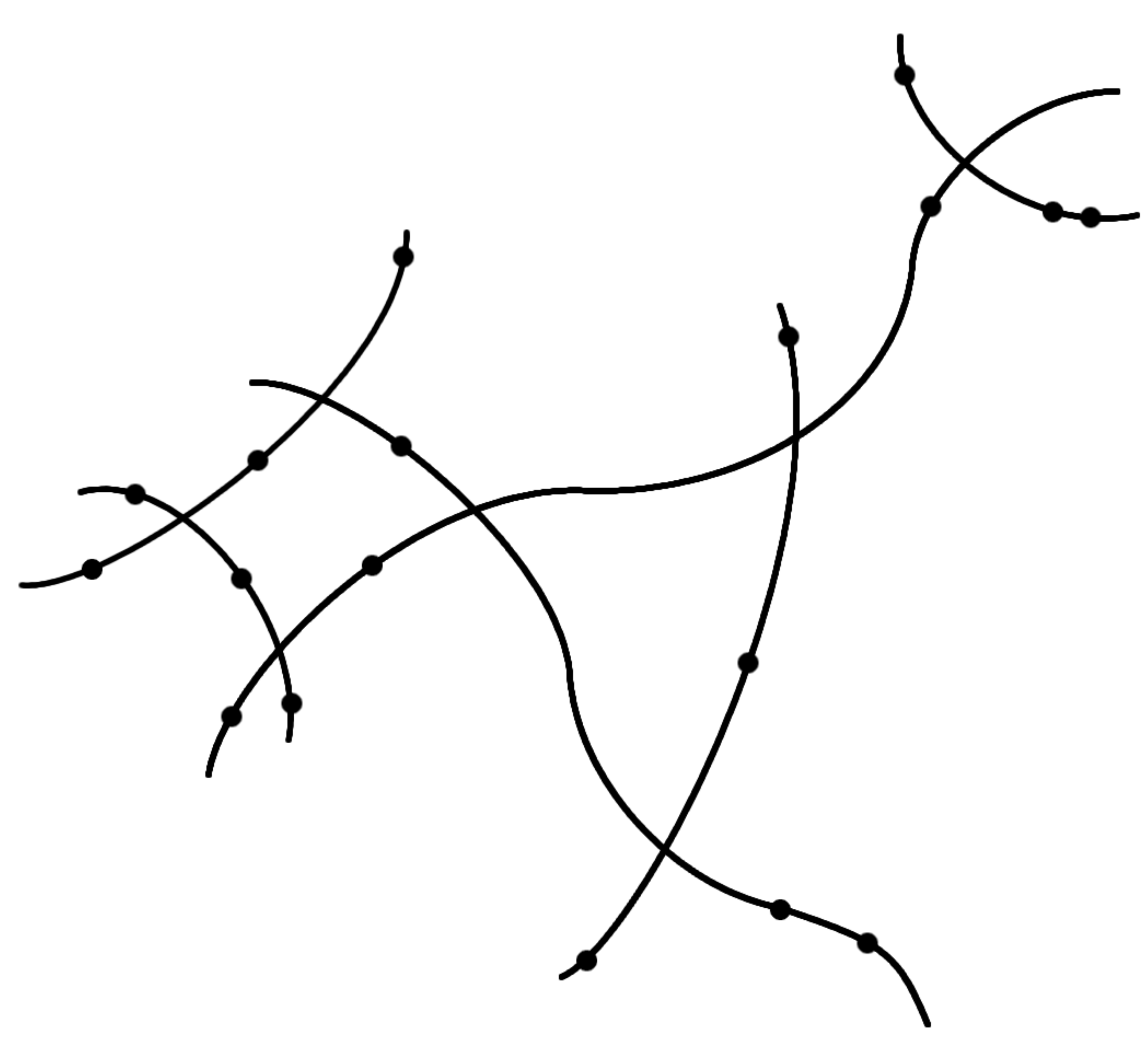} 
\end{center}
The  Gromov-Witten invariants of enumerative geometry endow the cohomology groups $H^\bullet(X)$ of any projective or symplectic variety with a $H_\bullet(\overline{\mathcal{M}}_{g, n+1})$-algebra structure, see \cite{KontsevichManin94}.  A \emph{Frobenius manifold} \cite{Manin99} is an algebra over the cyclic operad $H_\bullet(\overline{\mathcal{M}}_{0, n+1})$. By definition, the \emph{Quantum cohomology ring} is the $H_\bullet(\overline{\mathcal{M}}_{0, n+1})$-algebra structure on $H^\bullet(X)$.
\end{exam}

\subsubsection*{$\diamond$ Properad} A properad $\lbrace \Po(n,m)\rbrace_{m,n\in \NN}$ is meant to encode operations with several inputs and several outputs. But, in contrast to modular operads, where inputs and outputs are confused,  one keeps track of the inputs and the outputs in a properad. Moreover, we consider only compositions along connected graphs as follows.

\begin{center}
\includegraphics[scale=0.2]{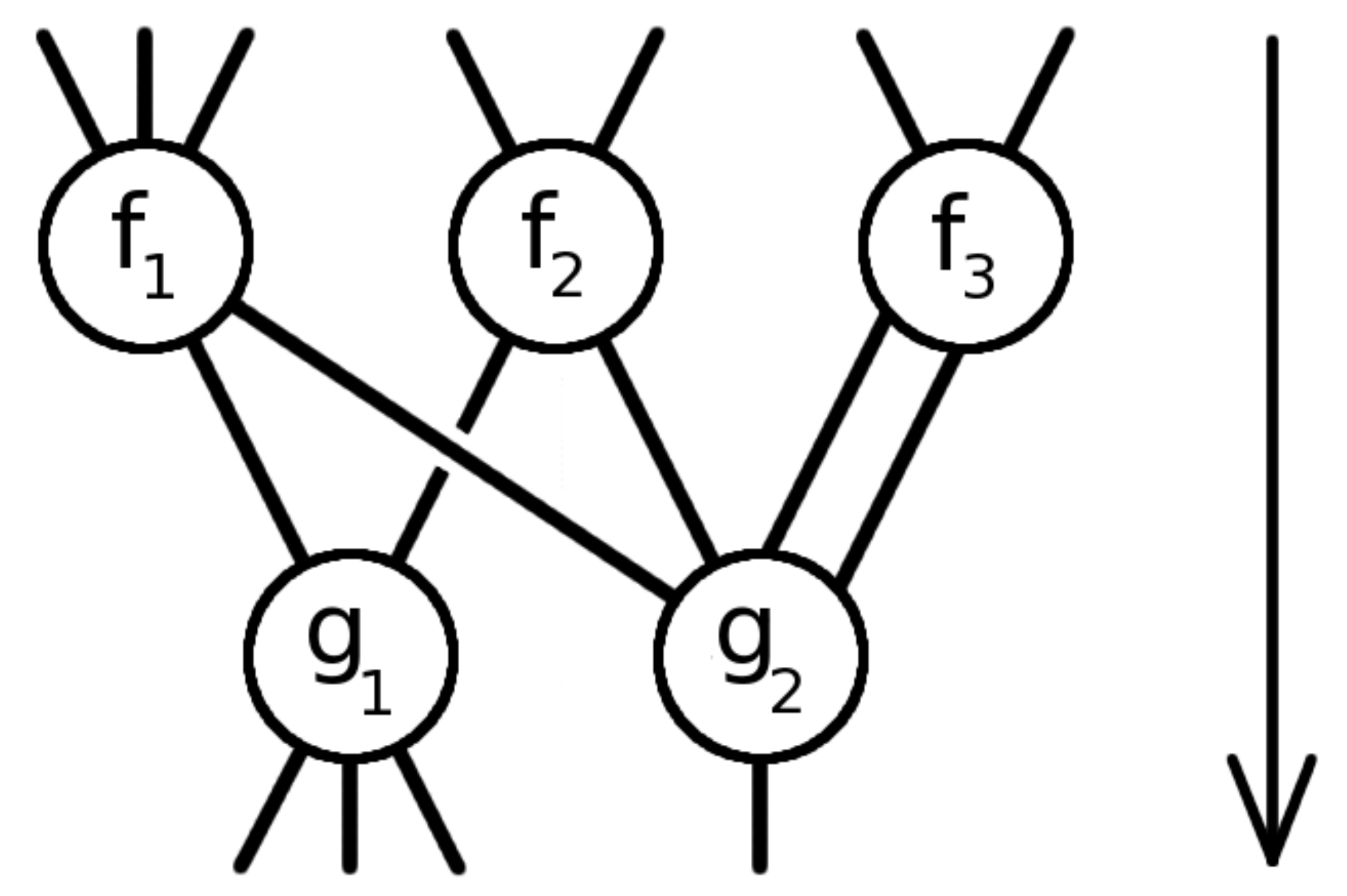} 
\end{center}

\begin{exam} Riemann surfaces, i.e. smooth compact complex curves,  with parametrized holomorphic holes form a properad. We denote by  $\mathcal{R}_{g, n, m}$ its component of genus $g$ with $n$ input holes and $m$ output holes.  The properadic composite maps are defined by sewing the Riemann surfaces along the holes. 
\begin{center}
\includegraphics[scale=0.14]{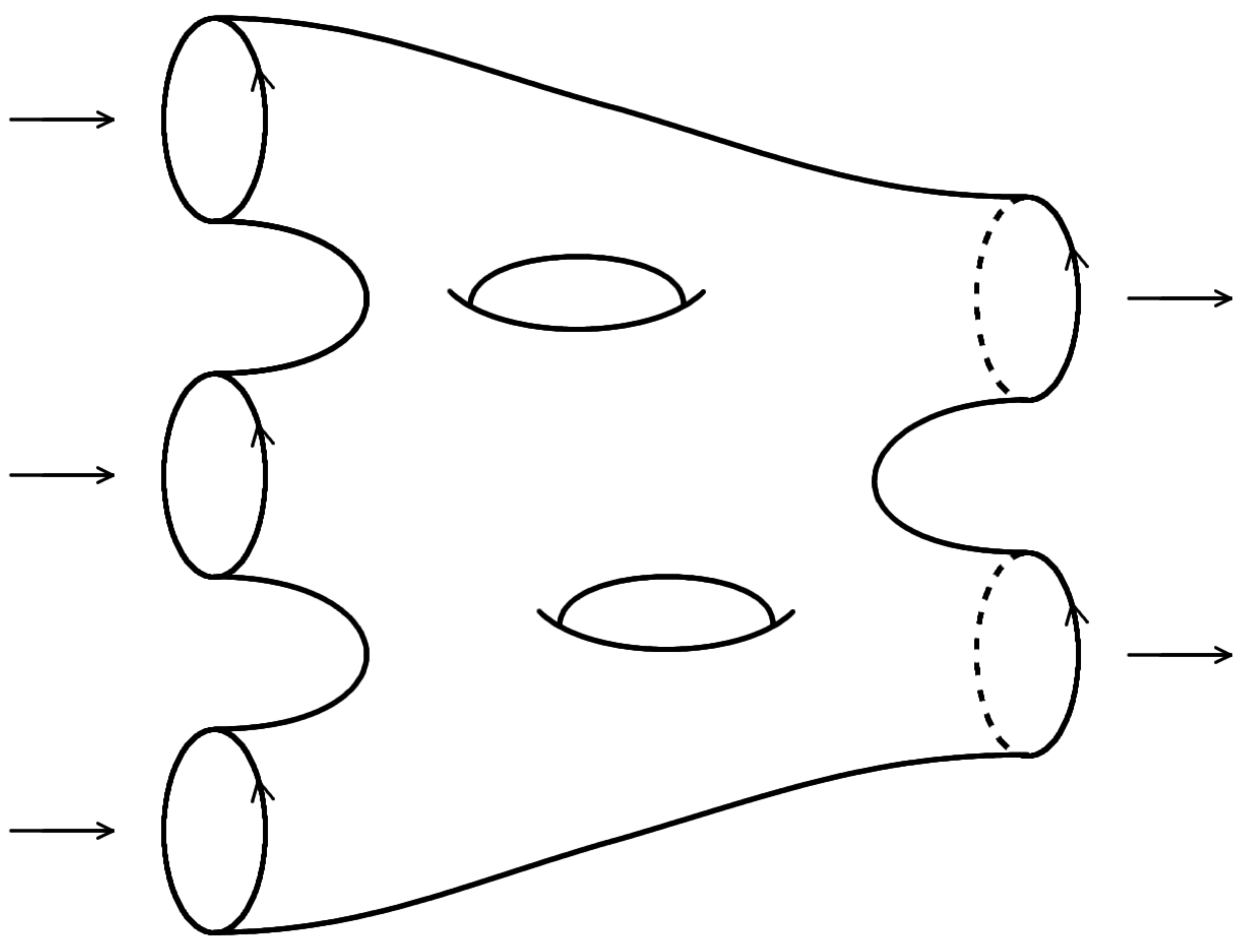} 
\end{center}

\noindent
A \emph{Conformal Field Theory}, as defined by Graeme Segal in \cite{Segal04}, is nothing but an algebra over the properad $\mathcal{R}_{g, n, m}$ of Riemann surfaces. 
\end{exam}

\noindent
In the rest of the text, we will introduce algebraic properads to encode various categories of ``bialgebras''. The name ``properad'' is a porte-manteau word from ``prop'' and ``operad''. 

\subsubsection*{$\diamond$ Prop} A prop is like a properad, but where one can also compose along non-necessarily connected graphs. This is the operadic notion which was introduced first, by Saunders MacLane in \cite{MacLane65} as a symmetric monoidal category $\mathsf{C}$, whose objects are the natural numbers  and whose monoidal product is their sum.  In this case, the elements of $\Hom_{\mathsf{C}}(n,m)$ are seen as the operations with $n$ inputs and $m$ outputs. An algebra over a prop is what F.-W. Lawvere  called an \emph{algebraic theory} in \cite{Lawvere63}.

\begin{exam} The categories of cobordism, where the objects are the $d$-dimensional manifolds and where the morphisms are the $(d+1)$-dimensional manifolds with $d$-dimension boundary, form a prop. 
Sir Michael F. Atiyah proposed to define a \emph{Topological Quantum Field Theory}, or \emph{TQFT} for short, as an algebra over a category of cobordism  \cite{Atiyah88}. 
\end{exam}

\subsubsection*{$\diamond$ Wheeled properad, wheeled prop} Sergei A. Merkulov introduced  in \cite{Merkulov09} the notion of ``wheels'' to encode algebras equipped with traces.  A trace is a linear map $\mathrm{tr} : \Hom(V,V) \to \KK$ satisfying 
$\mathrm{tr} (fg)=\mathrm{tr} (gf)$. We translate this on the operadic level by adding contraction maps:
\begin{eqnarray*}
\xi^i_j : \Po(n,m)&\to& \Po(n-1, m-1) \\
\vcenter{\hbox{\includegraphics[scale=0.2]{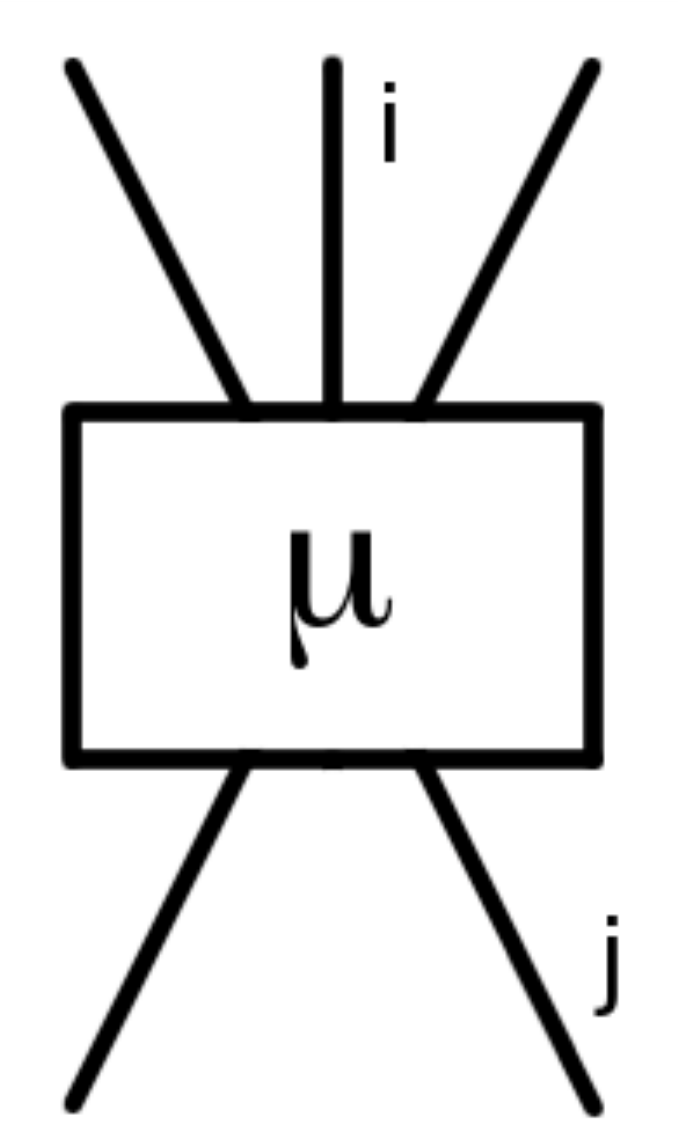}}}
&\mapsto& \vcenter{\hbox{\includegraphics[scale=0.2]{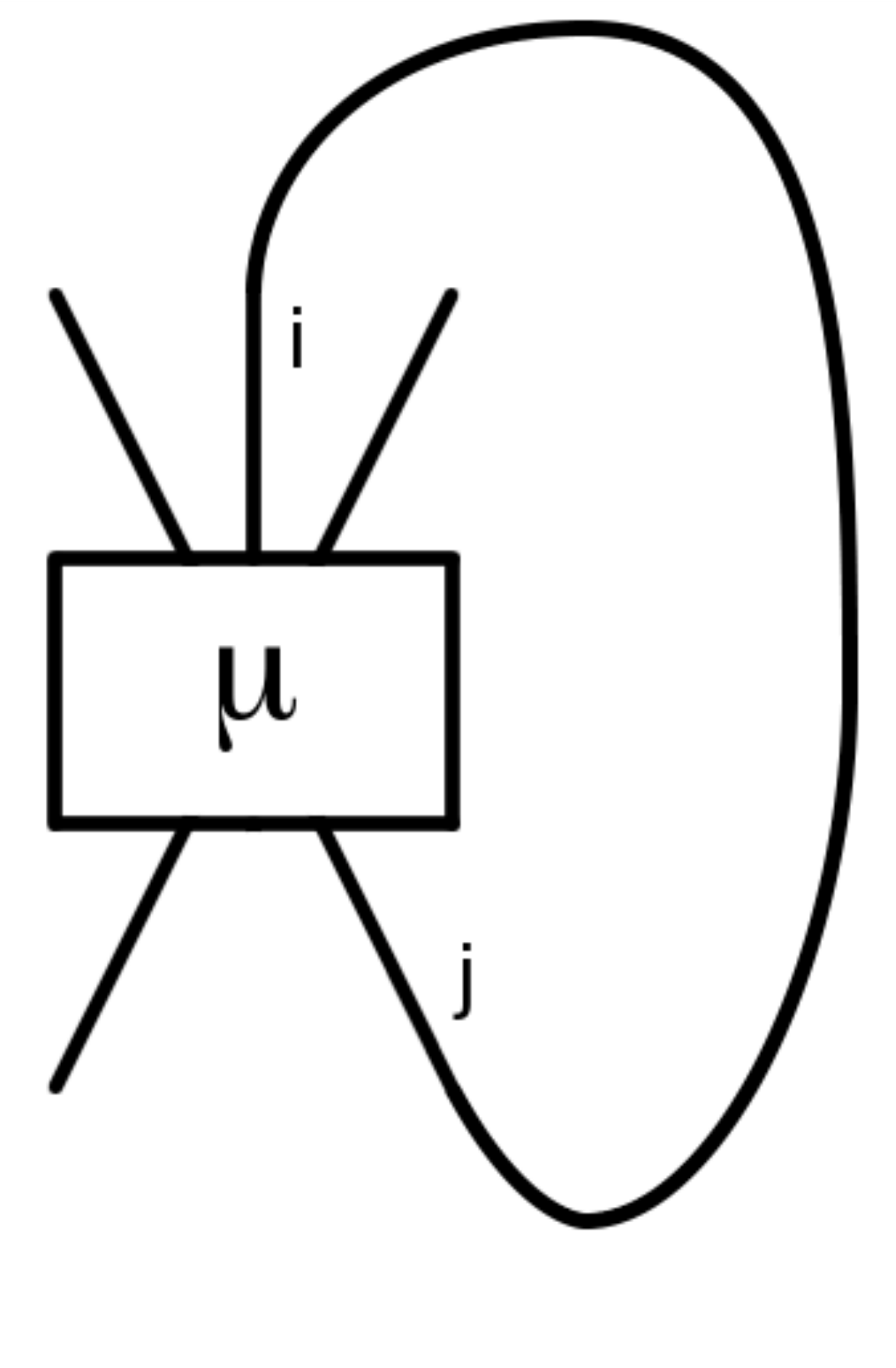}}}
\end{eqnarray*}
satisfying
$$ \vcenter{\hbox{\includegraphics[scale=0.15]{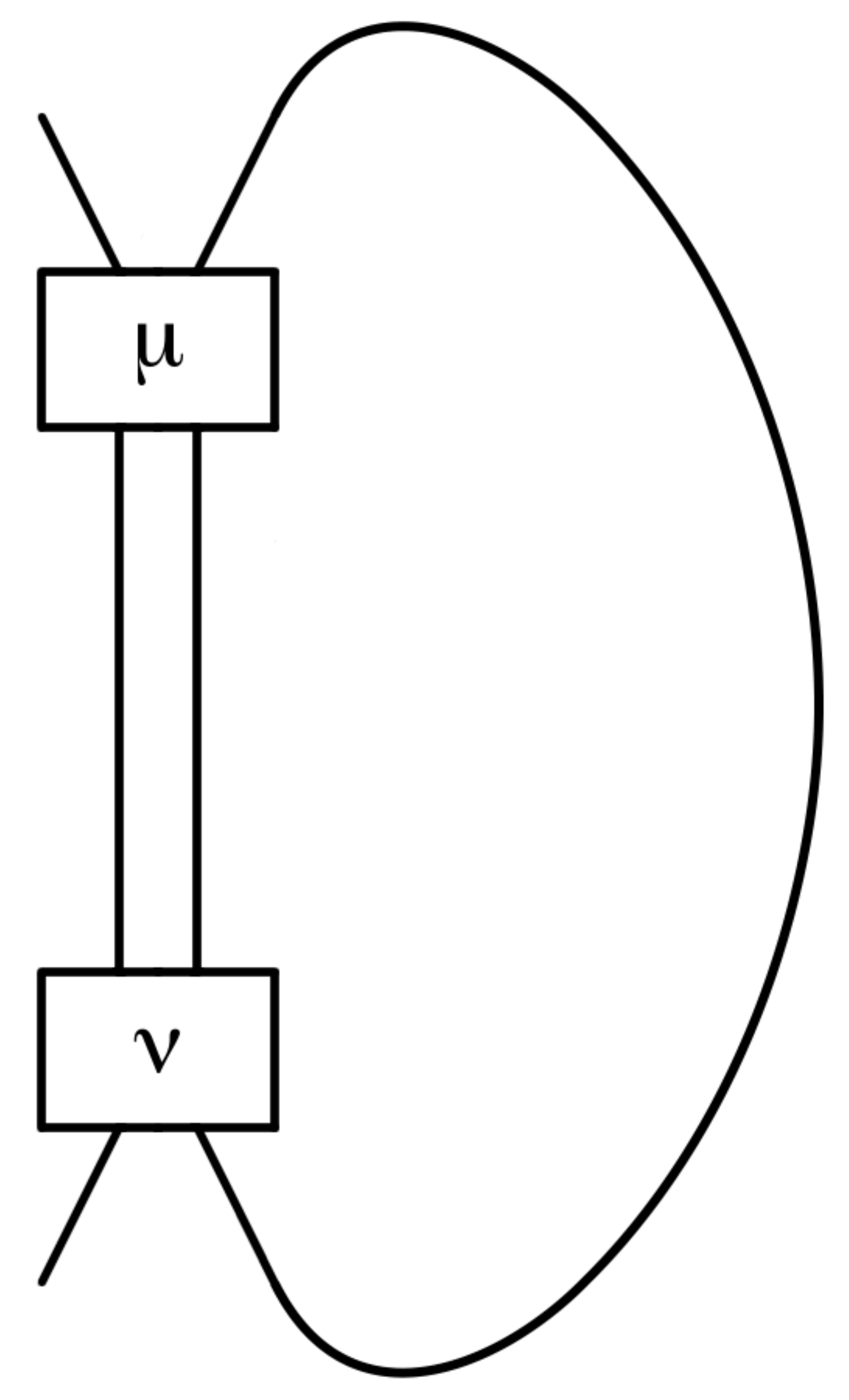}}}
\ \ =\  \vcenter{\hbox{\includegraphics[scale=0.15]{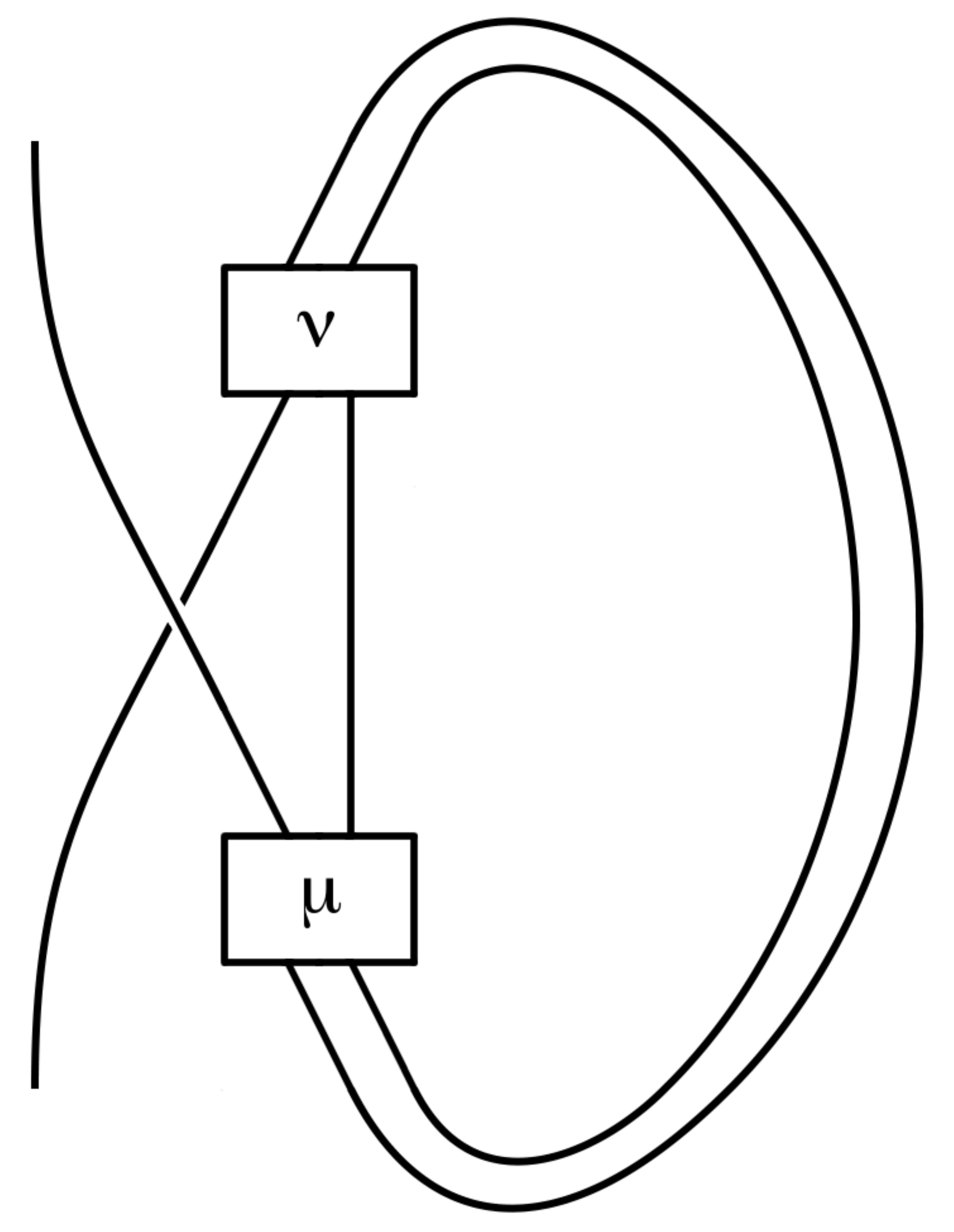}}}. $$
There are obvious functors between these notions. For instance, fixing an output, the genera $0$ and $1$ composition maps of a modular operad induce a wheeled operad structure. In general, a modular operad induces a properad, which contains all the higher genera compositions. The advantage is that properads can act on infinite dimensional vector spaces and that a Koszul duality theory is defined on properads level, see \ref{subsec:KoszulDuality}.Ê\\

Now that we have opened the Pandora's box, we hope that the reader feels comfortable enough  to go on and define new kinds of operads, which suit its problem best.

\section{Operadic syzygies}\label{sec:OpSyzygies}
Using the notion of an operad, we can answer the two questions raised at the end of Section~\ref{Sec:Alg+Homo}.
The example studied in Section~\ref{Sec:Alg+Homo} shows that some categories of  algebras over an operad are not stable under homotopy data: the category of associative algebras is not stable, but the category of $A_\infty$-algebras is stable, as Theorem~\ref{HTT2} shows. The conceptual explanation is that the ns operad $A_\infty$, encoding $A_\infty$-algebras, is \emph{free}. 

So, the method to provide a good framework for homotopy theory with a given category of algebras is as follows. 

First, one encodes the category of algebras by an operad $\Po$. Then, one tries to find a free operad $\Po_\infty$, which resolves the operad $\Po$, that is which is quasi-isomorphic to it. Finally, the category of $\Po_\infty$-algebras  carries the required homotopy properties and the initial category of $\Po$-algebras sits inside it. 
$$\xymatrix{\text{operad}\ \Po\  \ar@{..>}[d] & \ar[l]_(0.66){\sim} \Po_\infty=(\TTT(X),d) \ : \ \text{free resolution} \ar@{..>}[d] \\
\textsf{category of algebras}\  \ar@{^{(}->}[r] & \textsf{category} \  \textsf{of}\ {\textsf{homotopy algebras}  }}$$

\subsection{Quasi-free operad}
As well as for unital associative algebras,  operads admit free operads as follows. 
Forgetting the operadic compositions, one defines a functor $\mathcal U$ from the category of operads to the category of families of right modules over the symmetric groups $\Sy_n$, called \emph{$\Sy$-modules}.

\begin{defi}[Free operad]
The left adjoint  $\TTT$ to the forgetful functor
$$\mathcal U \ : \ \textsf{Operads} \ \rightleftharpoons\
\Sy\textsf{-modules}   \ : \ \TTT$$
 produces the \emph{free operad},  since it freely creates operadic compositions from any $\Sy$-modules. 
\end{defi}

\begin{ex}
Show that the free operad $\TTT(X)$ on an $\Sy$-module $X$ is equivalently characterized by the following universal property: there exists a natural morphism of $\Sy$-modules $X \to \TTT(X)$, such that any morphism of $\Sy$-modules $f : X \to \Po$, where $\Po$ is an operad, extends to  a unique morphism of operads $\tilde f : \TTT(X) \to \Po$: 
$$\xymatrix{ X \ar[r] \ar[dr]_(0.4)f  & \TTT(X) \ar@{..>}[d]^{\tilde f} \\ & \Po \ . }$$
\end{ex}

One defines the same notion on the level of nonsymmetric operads by replacing the category of $\Sy$-modules by the category of families of $\KK$-modules, called \emph{$\NN$-modules}. \\

Let us now make this functor explicit. To begin with the nonsymmetric case, let $X$ be an $\NN$-module and let $t$ be a planar tree. We consider the space $t(X)$ defined by trees of shape $t$ with vertices labelled by elements of $X$ according to their arity.

\begin{center}
\includegraphics[scale=0.2]{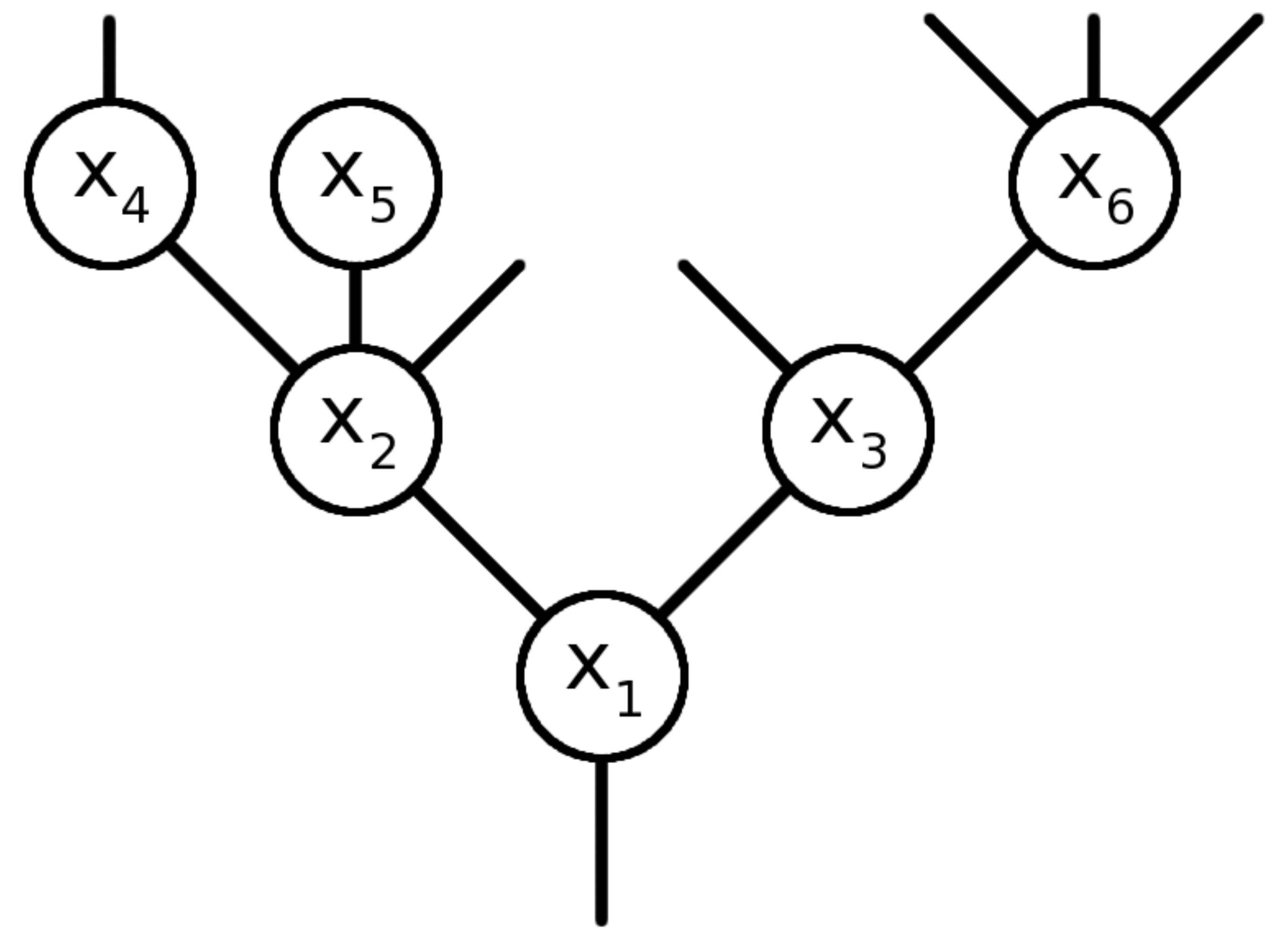} 
\end{center}

Its structure of vector space is given by identifying $t(X)$ with $ X^{\otimes k}$, where $k$ is the number of vertices of the tree $t$. We define the $\NN$-module $\TTT(X)$ by the sum of all the $X$-labelled planar trees:
$$\TTT(X)_n:=\bigoplus_{t \in PT_n} t(X) \ .$$
The operadic compositions are given by the grafting of trees. In the symmetric case, one  considers the set of rooted trees ``in space'', instead of planar rooted trees, with leaves labelled bijectively by $1, \ldots, n$. Then one labels the vertices with the elements of the $\Sy$-module $X$ in a coherent way with respect to the action of the symmetric group on $X$ and on the set of inputs of the vertices, see \cite[Section~$5.5$]{LodayVallette10} for more details. 
In a similar way, one can perform free operad constructions over the various underlying 
monoidal categories 
mentioned in Section~\ref{subsec:Underlying}.

\begin{prop}[Free operad]
The tree-module $\TTT(X)$, with the grafting of trees as operadic compositions, is the free operad on $X$.
\end{prop}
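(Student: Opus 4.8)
The plan is to verify the universal property of the free operad recalled in the exercise above: I would exhibit a natural map of $\Sy$-modules $\eta \colon X \to \TTT(X)$ and prove that every morphism of $\Sy$-modules $f \colon X \to \Po$ into an operad $\Po$ factors uniquely through $\eta$ by a morphism of operads $\tilde f \colon \TTT(X) \to \Po$. This characterization is equivalent to $\TTT$ being left adjoint to the forgetful functor $\mathcal U$.

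First I would define $\eta$ by sending $x \in X(n)$ to the corolla with a single vertex decorated by $x$, which is exactly the summand $t(X)$ with $t$ the $n$-corolla. Before this is meaningful I must check that $\TTT(X)$, with grafting of decorated trees, really is an operad; this is where the combinatorics sits. Grafting is strictly associative because the tree obtained by grafting $s$ onto a leaf of $t$ and then grafting again is determined by its shape together with its decoration, independently of the order of the graftings. The unit $\I$ is the trivial tree with one leaf and no vertex, and equivariance comes from the $\Sy_n$-action permuting the leaves (in the symmetric case acting simultaneously on the vertex labels and on the inputs of the vertices, as indicated in the construction). These verifications are routine and I would only sketch them.

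The heart of the argument, and the step I expect to be the main obstacle, is the construction of $\tilde f$. Given $f$, I would define $\tilde f$ on a summand $t(X)$ by \emph{evaluation in $\Po$}: replace each vertex label $x$ by $f(x)\in\Po$ and contract the tree using the composite maps $\gamma$ of $\Po$, proceeding from the leaves toward the root. The crucial point is that this evaluation is \emph{well defined}, i.e.\ independent of the order in which one performs the $\gamma$-compositions along the internal edges of $t$. This is precisely the content of the associativity axiom for $\Po$ (axiom (I) of the partial definition): any two bracketings of the iterated composition along $t$ agree. In the symmetric case one must additionally keep track of the leaf labels and invoke the equivariance axiom to see that evaluation respects the $\Sy_n$-action. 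Granting well-definedness, $\tilde f$ is a morphism of operads essentially by construction, since grafting of trees on the source corresponds to a further $\gamma$-composition on the target (again an instance of associativity), and $\tilde f \circ \eta = f$ because a corolla evaluates to its single label.

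Finally I would establish uniqueness. Any morphism of operads $g \colon \TTT(X) \to \Po$ with $g\circ\eta = f$ must coincide with $\tilde f$: every decorated tree arises from corollas by iterated grafting, so the value of $g$ on such a tree is forced by its values on corollas, which equal $f$ by hypothesis, together with compatibility with the composite maps. Hence $g = \tilde f$, the factorization is unique, and the adjunction $\TTT \dashv \mathcal U$ follows, so $\TTT(X)$ is the free operad on $X$.
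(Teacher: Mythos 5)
Your proof is correct and follows exactly the route the paper intends: the paper itself leaves this proposition as an exercise for the reader, having just stated the universal property of $\TTT(X)$ in the preceding exercise, and your argument (unit map via corollas, evaluation of decorated trees in $\Po$ well-defined by associativity of $\gamma$, uniqueness since every tree is an iterated grafting of corollas) is the standard verification of that universal property. Nothing is missing.
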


\begin{proof}
As usual, we leave this proof to the reader as a good exercise. 
\end{proof}

According to Section~\ref{subsec:Underlying}, a differential graded operad, dg operad for short, is a differential graded $\Sy$-module $(\Po, d)$ endowed with operadic compositions, for which the differential $d$ is a derivation: 
$$d(\gamma(\mu; \nu_1, \ldots, \nu_k))=\gamma(d(\mu); \nu_1, \ldots, \nu_k)+ \sum_{i=1}^k \pm\,  \gamma(\mu; \nu_1, \ldots, d(\nu_i), \ldots, \nu_k) \ . $$

\begin{ex}
Show that the differential $d$ on a free operad $\TTT(X)$ is completely characterized by the image of the generators $X \to \TTT(X)$. Make explicit the full differential $d$ from its restriction to the space of generators. 
\end{ex}

\begin{defi}[Quasi-free operad]
A dg operad whose underlying graded operad is free, that is after forgetting the differential,  is called a \emph{quasi-free operad}. 
\end{defi}

The operad itself is free but not the differential map. In a free dg operad, the differential is completely characterized by the differential of the space of generators $d_X : X \to X$ and carries no more terms. In a quasi-free operad, the differential associates to a generator of $X$ a sum of labelled trees in $\TTT(X)$, as the example of the next section shows. 

\subsection{Operadic syzygies}\label{subsec:Syzygies}
Let us make explicit a quasi-free resolution for the ns operad $As$. We shall begin with a topological resolution of the set-theoretic ns operad $\overline{Mon}$ defined by 
$$\overline{Mon}_0:=\emptyset, \quad \text{and} \quad \overline{Mon}_n:=Mon_n= 
\left\lbrace\vcenter{\hbox{\includegraphics[scale=0.15]{FIG14Corolla.pdf}}}\right\rbrace, 
\ \text{for} \ n\ge 1 \ ,$$
and which provides a basis for $As$.\\

$\diamond\ $ \emph{In arity $1$}, the ns operad $\overline{Mon}$ has one element of degree $0$, so as the free ns operad $\TTT(X)$ on a generating set $X$ empty in arity $1$: $X_1=\emptyset$. Hence, in arity $1$, one has two $0$-cells, $\overline{Mon}_1=\TTT(X)_1=\lbrace \I \rbrace$, which are obviously homotopy equivalent. 

\begin{center}
\includegraphics[scale=0.18]{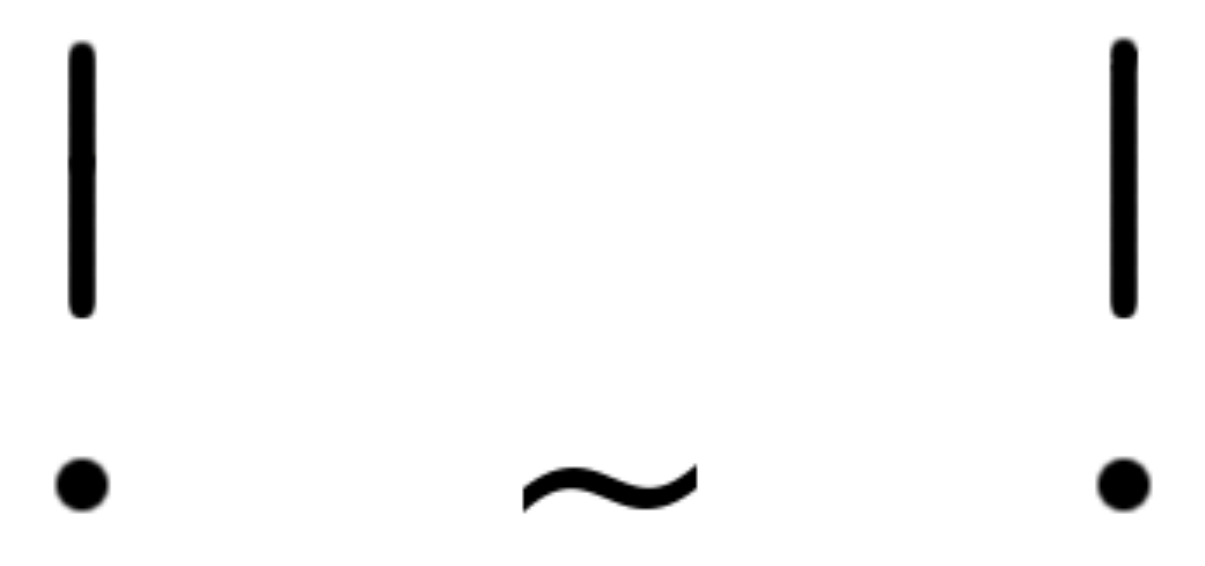} 
\end{center}

$\diamond\ $ \emph{In arity $2$}, the ns operad $\overline{Mon}$ has one element 
$\vcenter{\hbox{\includegraphics[scale=0.1]{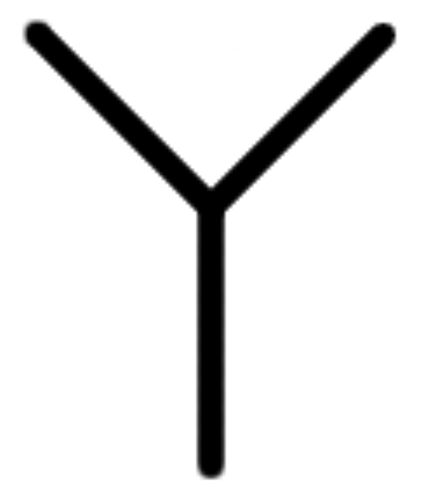}}}$ of degree $0$. So, we introduce a degree $0$ element $\vcenter{\hbox{\includegraphics[scale=0.1]{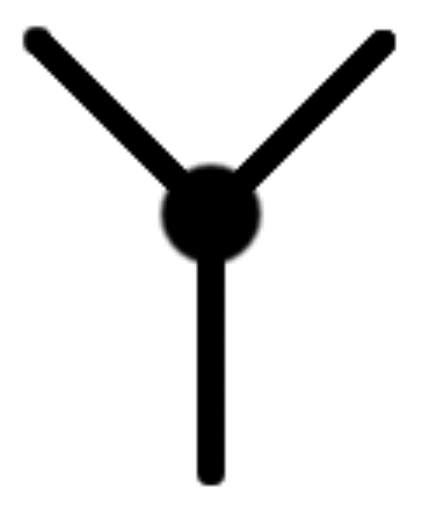}}}$ of arity $2$ in the generating set $X$ of $\TTT(X)$. In arity $2$, one has now two $0$-cells, which are homotopy equivalent. 

\begin{center}
\includegraphics[scale=0.18]{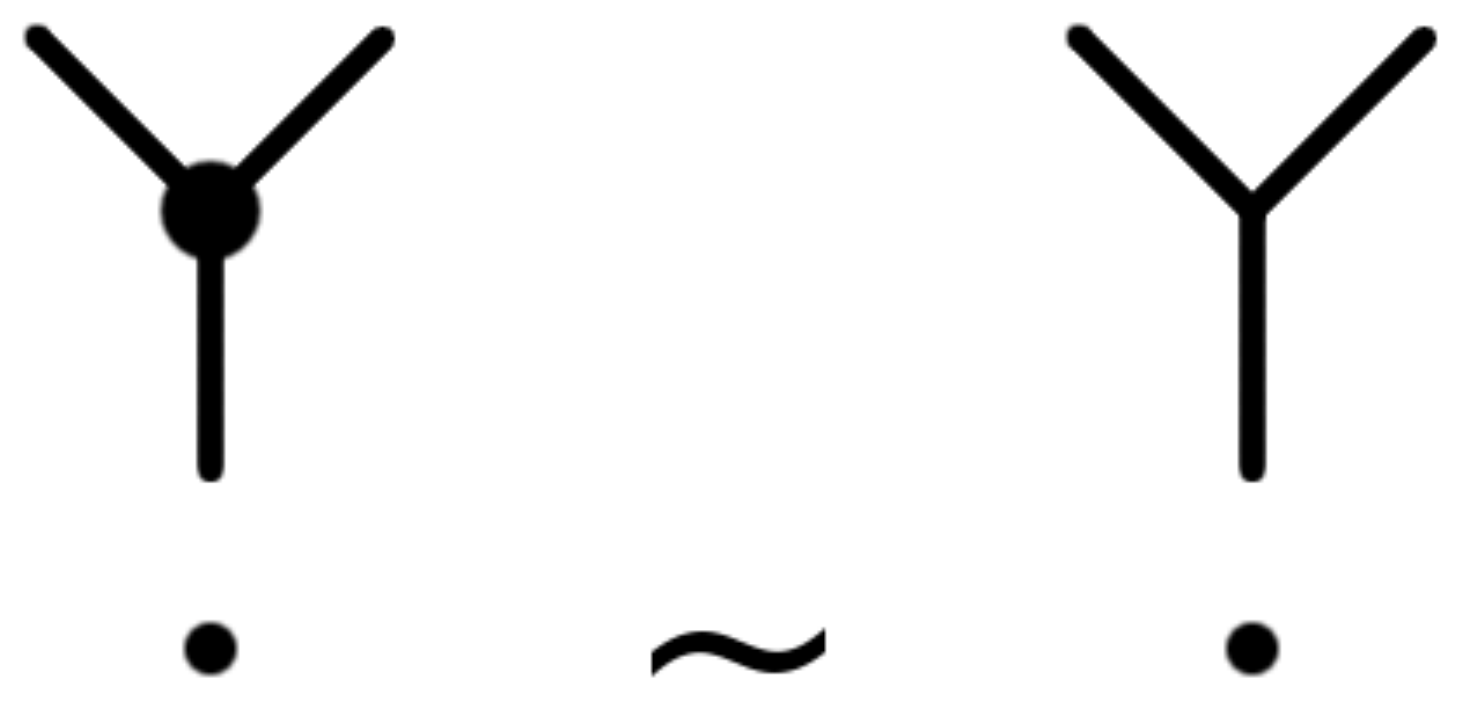} 
\end{center}

$\diamond\ $ \emph{In arity $3$}, on the one hand, the ns operad $\overline{Mon}$ has one element of degree $0$: 
$\vcenter{\hbox{\includegraphics[scale=0.1]{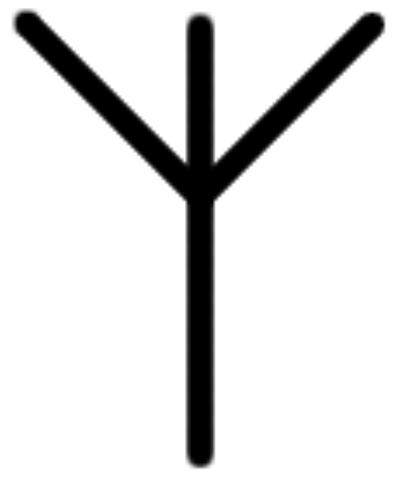}}}$. 
On the other hand, the free ns operad $\TTT(X)$ has $2$ different elements of degree $0$: the left comb 
$\vcenter{\hbox{\includegraphics[scale=0.1]{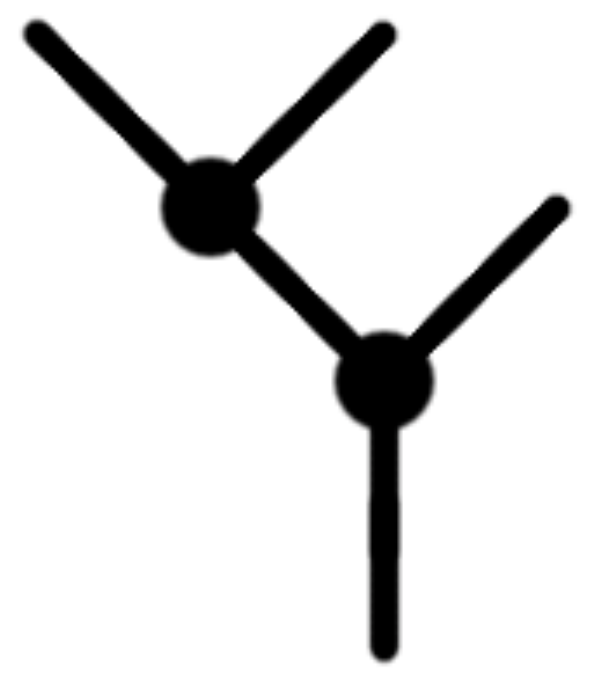}}}$
and the right comb 
$\vcenter{\hbox{\includegraphics[scale=0.1]{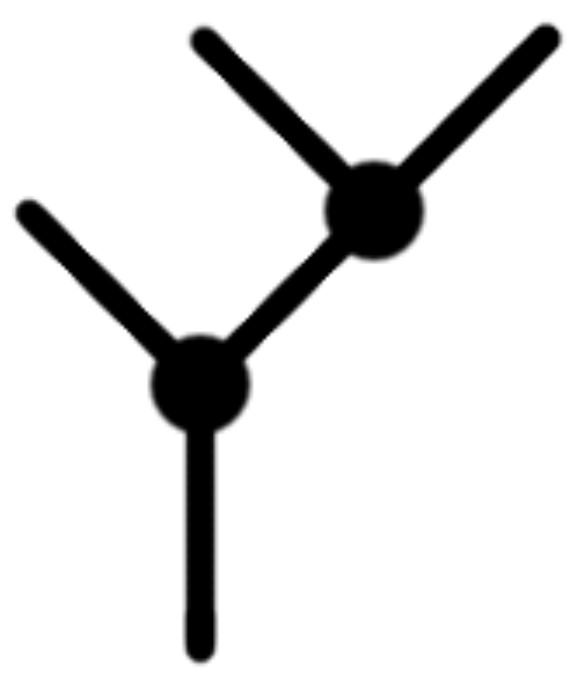}}}$. 
So, we introduce in $X_3$ a degree $1$ element 
$\vcenter{\hbox{\includegraphics[scale=0.1]{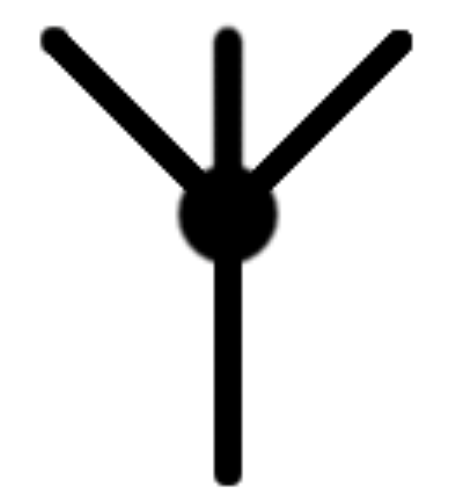}}}$
of arity $3$, whose boundary is equal to these two degree $0$ elements respectively. Finally, in arity $3$, one has now one $0$-cell, on one side, and two $0$-cells linked by a $1$-cell, on the other side. These two topological spaces are homotopy equivalent.

\begin{center}
\includegraphics[scale=0.18]{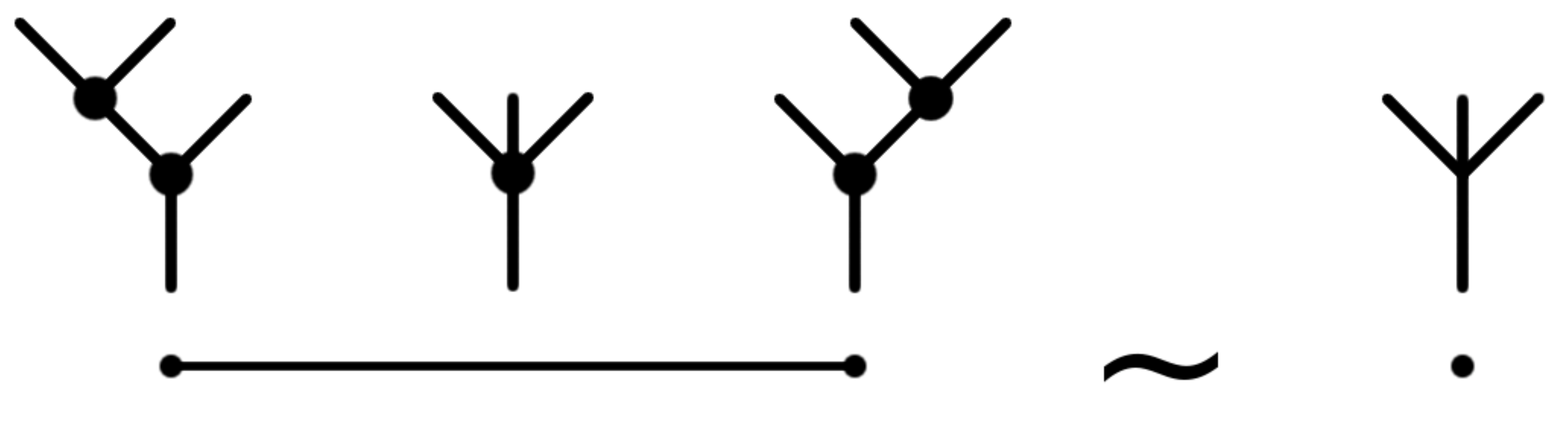} 
\end{center}

$\diamond\ $ \emph{In arity $4$}, on the one hand, the ns operad $\overline{Mon}$ has one element of degree $0$. On the other hand, the operad $\TTT(X)$ has $5$ elements of degree $0$, the $5$ planar binary trees with $4$ leaves, and $5$ elements of degree $1$, the $5$ planar trees with $2$ vertices and $4$ leaves. They are attached to one  another to form a pentagon. 

\begin{center}
\includegraphics[scale=0.12]{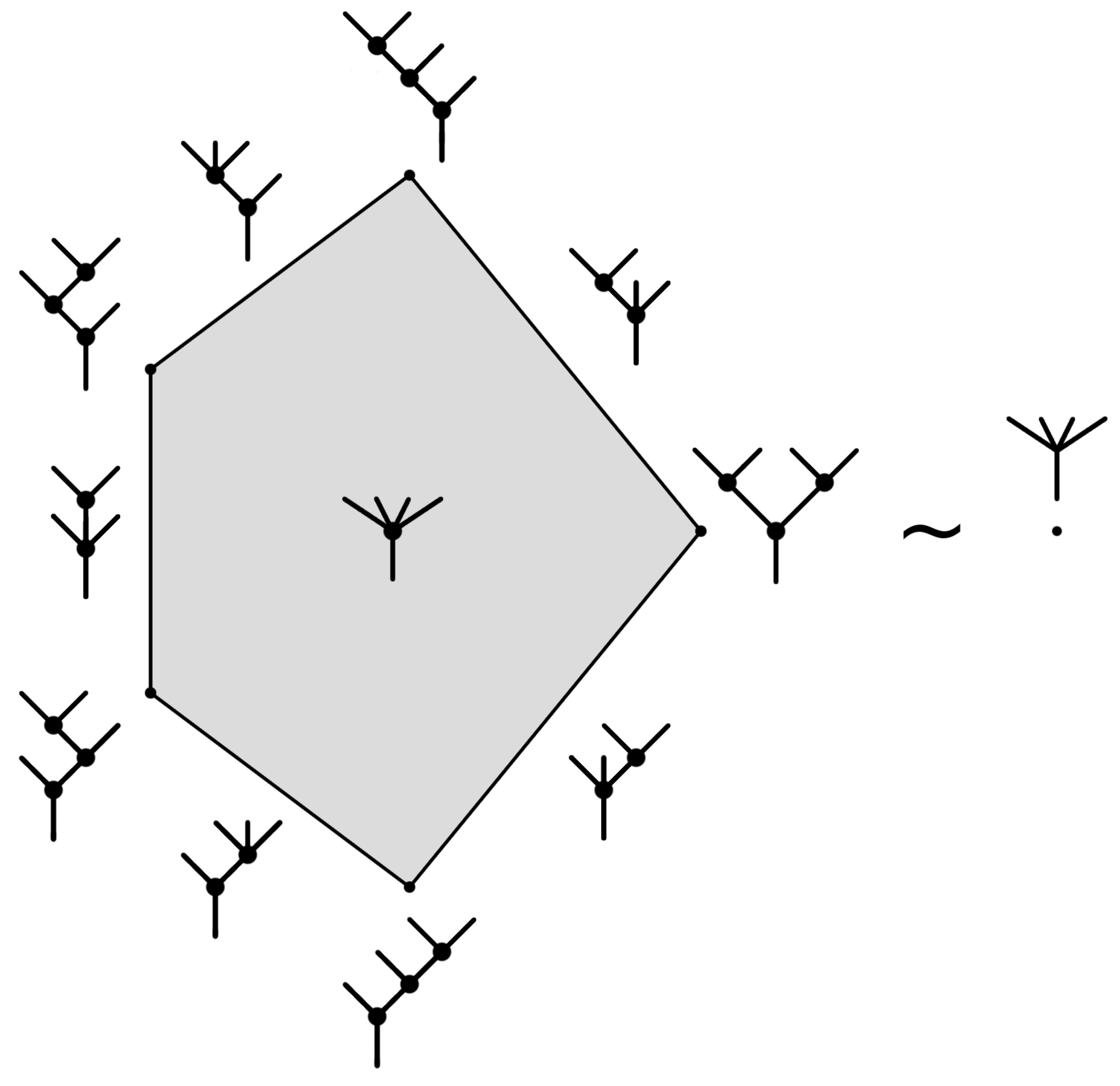} 
\end{center}

To get a topological space, homotopy equivalent to the point, we introduce in $X_4$ a degree $2$ element 
$\vcenter{\hbox{\includegraphics[scale=0.1]{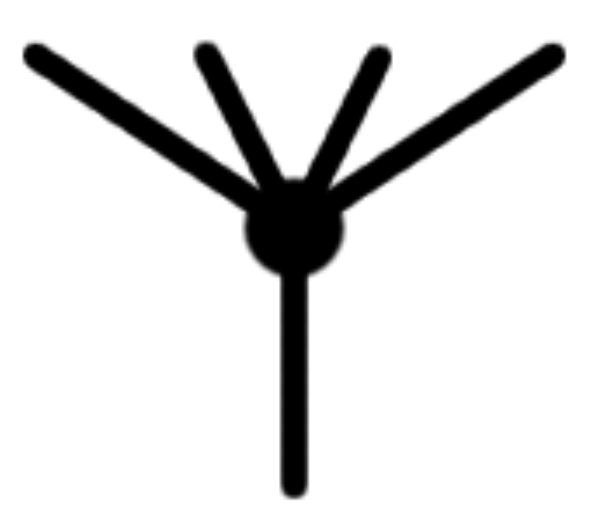}}}$, 
whose boundary is equal to the boundary of the pentagon. This kills the  homology group of degree $1$ in the free operad. 

\begin{ex}
Proceed in the same way in arity $5$ to discover the following polytope of dimension $3$ with $14$ $0$-cells, $20$ $1$-cells, 
$9$ $2$-cells, 
and $1$ new $3$-cell. Label it !
\begin{center}
\includegraphics[scale=0.13]{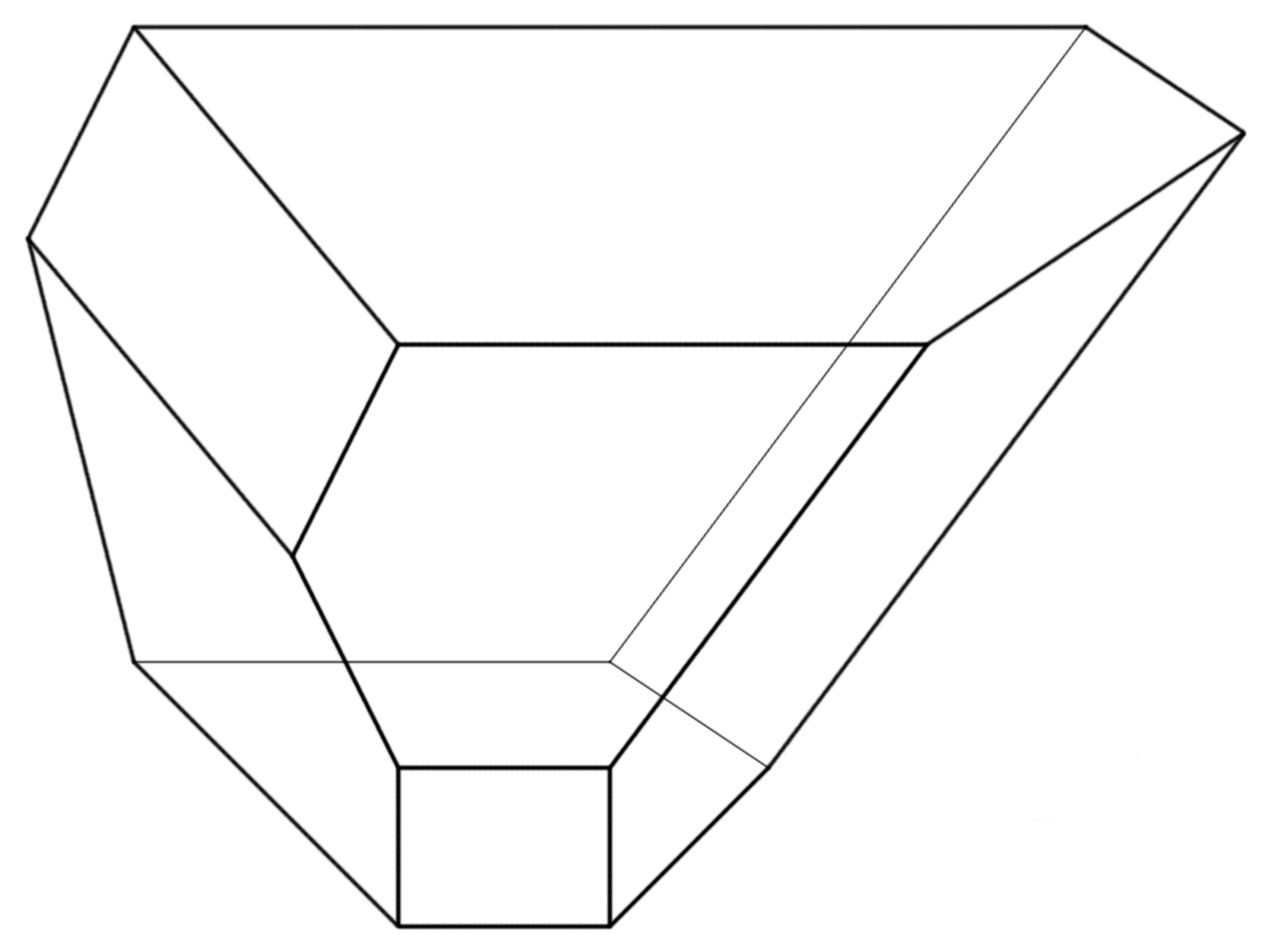} 
\end{center}
\end{ex} 

$\diamond\ $ \emph{In general}, one has to introduce one cell of dimension $n-2$ in arity $n$ to kill the $(n-2)^\text{th}$-homotopy group. In the end, the generating space $X$ is made up of one element of degree $n-2$ in arity $n$, for $n\ge 2$. The boundary of each generator is given, according to some orientation, by the set of planar trees with $2$ vertices. These polytopes are called the \emph{associahedra}, or the \emph{Stasheff polytopes} after Jim Stasheff \cite{Stasheff63}. (They also correspond to the Hasse diagrams of the Tamari lattices \cite{Tamari51} made up of  planar binary trees 
with the partial order generated by the covering relation 
$\vcenter{\hbox{\includegraphics[scale=0.1]{LeftComb.pdf}}}\prec
\vcenter{\hbox{\includegraphics[scale=0.1]{RightComb.pdf}}}$. We could also use the little intervals operad to get a topological resolution of $\overline{Mon}$.) \\

\begin{defi}[Operad $A_\infty$]
We define the dgns operad 
$$A_\infty:=(C_\bullet^{cell} (\TTT(X)),d) $$ by the image, under the cellular chain functor, of the aforementioned free topological resolution $\TTT(X)$ of the ns operad $\overline{Mon}$.
\end{defi}

\begin{theo}[Resolution $A_\infty$]
The dgns operad $A_\infty$ provides the following quasi-free resolution of $As$: 
$$\boxed{A_\infty=\left( \TTT \left(
\vcenter{\hbox{\includegraphics[scale=0.1]{Corolla2.pdf}}}
, \vcenter{\hbox{\includegraphics[scale=0.1]{Corolla3.pdf}}}, 
\vcenter{\hbox{\includegraphics[scale=0.1]{Corolla4.pdf}}}, \ldots \right), d\right) \qi As }$$
where the boundary condition for the top dimensional cells translates into the following differential: 
\begin{center}
\includegraphics[scale=0.18]{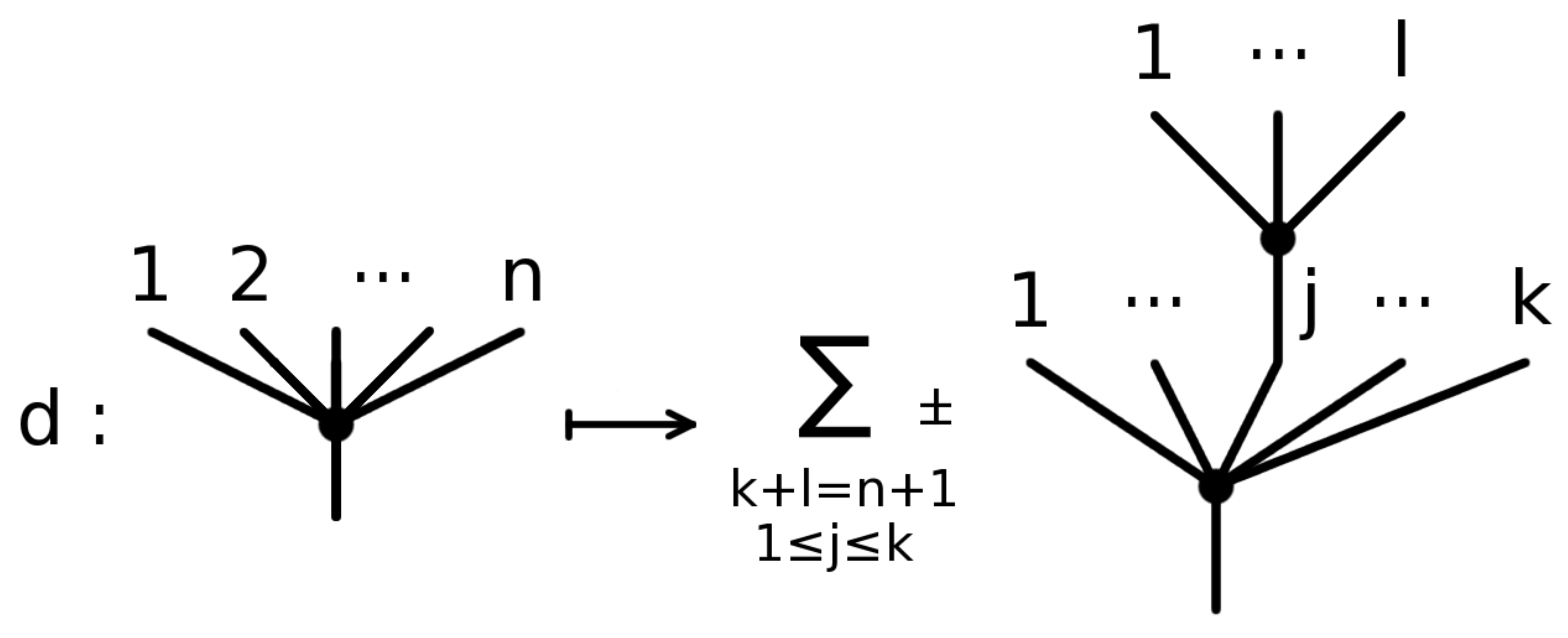} 
\end{center}
\end{theo}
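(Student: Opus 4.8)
The plan is to read off both assertions---that $A_\infty$ is quasi-free and that it resolves $As$---directly from the definition of $A_\infty$ as the image of the topological resolution $\TTT(X)\to\overline{Mon}$ under the cellular chain functor, transporting the topological facts established in the preceding construction. First I would recall the cell structure produced by that construction: as a topological operad $\TTT(X)$ is assembled from the associahedra, so that the arity-$n$ component $\TTT(X)_n$ is the associahedron $K_n$ of dimension $n-2$, whose CW structure has exactly one cell for each planar rooted tree with $n$ leaves. A tree with $k$ vertices labels a cell of dimension $n-1-k$, so the corolla is the unique top cell (the generator $\mu_n$, of dimension $n-2$) and the planar \emph{binary} trees are the $0$-cells, with operadic grafting sending faces to faces.

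Next I would establish the quasi-free structure from this cell decomposition. Passing to cellular chains yields, in arity $n$, a graded vector space with one basis vector per planar tree with $n$ leaves, where a tree with $k$ vertices sits in degree $n-1-k=\sum_v(\mathrm{arity}(v)-2)$; this is precisely the free graded operad $\TTT(\mu_2,\mu_3,\dots)$ on generators $\mu_n$ of arity $n$ and degree $n-2$. Conceptually this is the fact that $C^{cell}_\bullet$ is strong symmetric monoidal on CW complexes---$C^{cell}_\bullet(A\times B)\cong C^{cell}_\bullet(A)\otimes C^{cell}_\bullet(B)$ via Eilenberg--Zilber, with the cellular boundary satisfying Leibniz---so that it carries the free topological operad $\TTT(X)$ to the free dg operad on $C^{cell}_\bullet(X)$. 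This exhibits $A_\infty$ as quasi-free, its differential $d$ being the unique derivation extending the cellular boundary $\pa^{cell}$ on the generators.

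I would then compute $d(\mu_n)=\pa^{cell}(\mu_n)$ explicitly. Being the top cell of $K_n$, its cellular boundary is the signed sum of the codimension-one faces; by the dimension formula these are exactly the cells indexed by planar trees with two vertices and $n$ leaves, that is the partial composites $\mu_p\circ_i\mu_q$ with $p+q=n+1$. Reading off the induced orientations gives the boxed formula. The relation $d^2=0$ is then automatic from $(\pa^{cell})^2=0$; concretely it expresses that each codimension-two face (a three-vertex tree) lies in exactly two codimension-one faces, contributing with opposite signs.

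Finally, for the quasi-isomorphism I would apply $C^{cell}_\bullet$ to the structural projection $\TTT(X)\to\overline{Mon}$, a morphism of topological operads which is an arity-wise homotopy equivalence since each $K_n$ is contractible while $\overline{Mon}_n$ is a point for $n\ge1$ (both empty for $n=0$). This gives a morphism of dg operads $A_\infty\to C^{cell}_\bullet(\overline{Mon})=As$ inducing in each arity the isomorphism $H_\bullet(K_n)=\KK\cong As_n$ concentrated in degree $0$; hence $A_\infty\qi As$. The main obstacle is the sign bookkeeping in the third step---matching the cellular orientations of the faces of the associahedron with the signs in the boxed differential, and thereby pinning down the Leibniz-compatible formula for $d$---whereas the freeness and the quasi-isomorphism follow formally from the monoidality of $C^{cell}_\bullet$ and the contractibility of the $K_n$.
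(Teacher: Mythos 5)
Your argument is exactly the paper's, which disposes of the theorem in one sentence: since the topological ns operad $\TTT(X)$ built from the associahedra is arity-wise homotopy equivalent to $\overline{Mon}$, applying the cellular chain functor yields the quasi-isomorphism $A_\infty \qi As$. Your additional details --- the monoidality of $C^{cell}_\bullet$ giving quasi-freeness, the identification of the codimension-one faces of the top cell with the two-vertex trees, and the degree count $n-1-k=\sum_v(\mathrm{arity}(v)-2)$ --- are all correct and simply make explicit what the paper leaves to the preceding construction.
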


\begin{proof}
Since the topological ns operad $\TTT(X)$ is homotopy equivalent to $\overline{Mon}$, the dgns operad $A_\infty$ is quasi-isomorphic to $As$.
\end{proof}

Last step of the method explained in the introduction:  an algebra over the resolution $A_\infty$ of $As$ 
is an $A_\infty$-algebra, as defined in \ref{subsec:AinftyAlg}. Indeed,  a morphism of dgns operads 
$$A_\infty=\left( \TTT \left(
\vcenter{\hbox{\includegraphics[scale=0.1]{Corolla2.pdf}}}
, \vcenter{\hbox{\includegraphics[scale=0.1]{Corolla3.pdf}}}, 
\vcenter{\hbox{\includegraphics[scale=0.1]{Corolla4.pdf}}}, \ldots \right), d\right) \to (\End_V, \partial)$$
in characterized by the images of the generators, which give here   a family of operations $\mu_n : V^{\otimes n} \to V$ of degree $n-2$,  for any $n\ge 2$. The commutativity of the differentials applied to the generators amounts precisely to the relations given in \ref{subsec:AinftyAlg} of an $A_\infty$-algebra. 
Finally, this section answers Question~$(1)$ raised at the end of Section~\ref{Sec:Alg+Homo}.\\

\begin{defi}[Operadic syzygies]
The generating elements $X$ of a quasi-free resolution $\TTT(X)$ are called the \emph{operadic syzygies}. 
\end{defi}

The problem of finding a quasi-free resolution for an operad is similar to the problem of finding a quasi-free resolution 
$$ \cdots \to A\otimes M_2 \to A\otimes M_1 \to A\otimes M_0 \epi M $$
for a module $M$ over a commutative ring $A$ in algebraic geometry, see \cite{Eisenbud05}. In this case, the 
generating elements 
$\lbrace M_n \rbrace_{n\ge 0}$ 
of the free $A$-module form the classical syzygies. \\

As in the classical case of modules over a commutative ring, we have introduced here  a first syzygy corresponding to the generator of $As$. Then, we have introduced a second syzygy for the relation of $As$. The third syzygy was introduced for the relations among the relations, etc. This iterative process is called the \emph{Koszul-Tate resolution} after Jean-Louis Koszul \cite{Koszul50} and John Tate \cite{Tate57}. (It was  successfully used to compute the BRST cohomology in Mathematical Physics \cite{HenneauxTeitelboim92} and to compute rational homotopy groups in Rational Homotopy Theory \cite{Sullivan77}.)

\begin{rema}
One can develop the homotopy theory for dg operads by endowing this category with a model category structure, see \cite{Quillen67, Hinich97, BergerMoerdijk03}. 
The conceptual reason why the category of algebras over a quasi-free operad behaves well with respect to 
homotopy data is that quasi-free operads are cofibrant, the general notion for being projective. (This is actually true over non-negatively graded chain complexes. In general, one has to require the existence of a good filtration on the syzygies standing for the generators, the relations, the relations among the  relations, etc.)
\end{rema}

The Koszul-Tate method works step-by-step. We would like now to be able to produce all the syzygies at once. 

\subsection{Ideal, quotient operad and quadratic operad}\label{subsec:Quad}
As well as for unital associative algebras,  operads admit ideals and quotient operads. 

\begin{defi}[Operadic ideal]
A sub-$\Sy$-module $\mathcal I \subset \Po$ of an operad is called an \emph{operadic ideal} if the operadic compositions satisfy $\gamma(\mu; \nu_1, \ldots, \nu_k)\in \mathcal I$, when at least one of the 
$\mu, \nu_1, \ldots, \nu_k$ is in $\mathcal I$. 
\end{defi}

\begin{ex}
Show that, in this case, the quotient $\Sy$-module ${\Po}/{\mathcal I}$ carries a unique operad structure which makes the projection of $\Sy$-modules
$\Po \epi \Po/\mathcal I$ into an operad morphism.
\end{ex}

For any sub-$\Sy$-module $R$ of an operad $\Po$, there exists a smallest ideal  containing $R$. It is called the \emph{ideal generated by $R$} and denoted $(R)$. 

The free operad $\TTT(E)$ admits a weight grading $\TTT(E)^{(k)}$  given by the number of vertices of the underlying tree. 

\begin{defi}[Quadratic data and quadratic operad]
A \emph{quadratic data} $(E; R)$ consists of an $\Sy$-module $E$ and a sub-$\Sy$-module $R\subset \TTT(E)^{(2)}$. It induces the \emph{quadratic operad} 
$$\Po(E; R):=\TTT(E)/(R) \ . $$
\end{defi}

\begin{exams}$ \ $

\begin{itemize}
\item[$\diamond$] The algebra of dual numbers $D=T(\KK \delta)/(\delta^2)$ of Section~\ref{subsec:uAsRep} is a quadratic algebra, and thus a quadratic operad concentrated in arity $1$. 

\smallskip

\item[$\diamond$] The ns operad $As$ admits the following quadratic presentation 
$$As\cong\Po\left(\vcenter{\hbox{\includegraphics[scale=0.1]{Corolla2Simple.pdf}}}; 
\vcenter{\hbox{\includegraphics[scale=0.1]{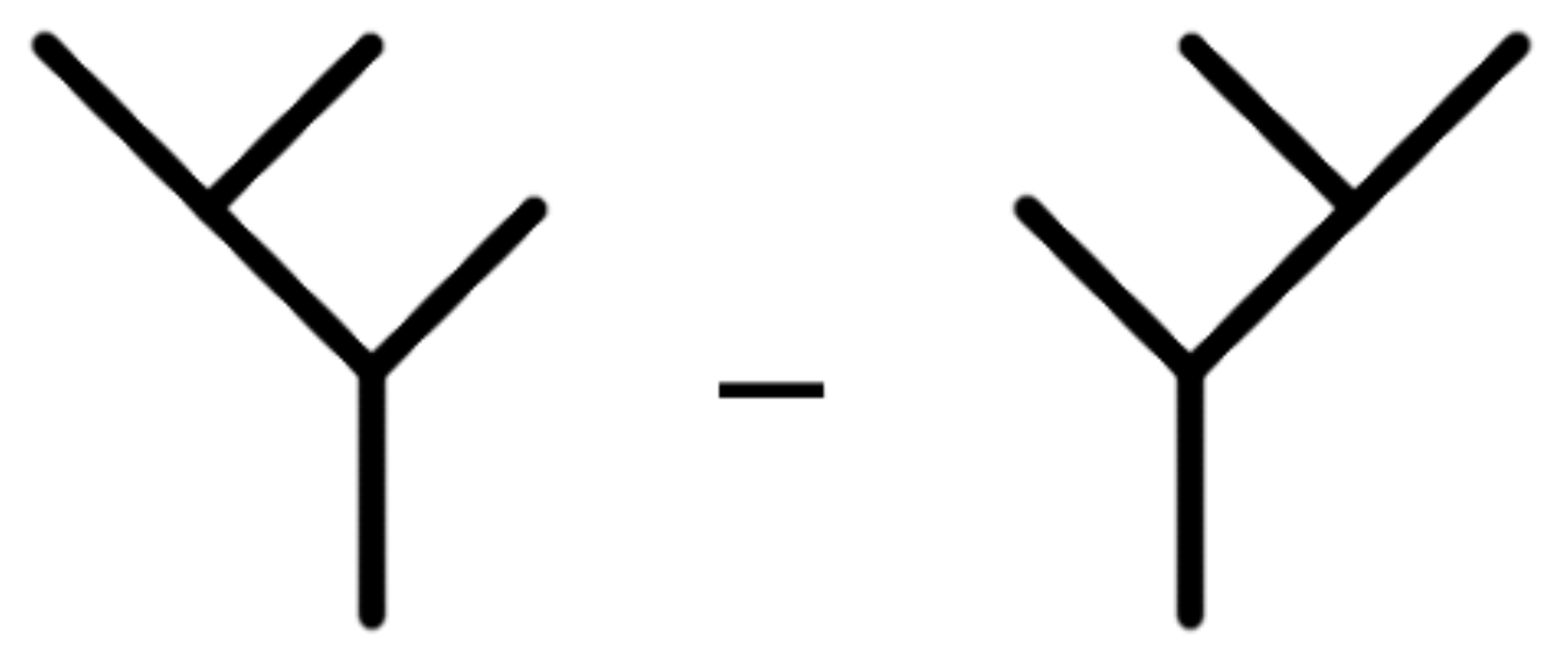}}}   \right)\ ,   $$
with one binary generator. Indeed, the free ns operad  $\TTT(\vcenter{\hbox{\includegraphics[scale=0.1]{Corolla2Simple.pdf}}})$ resumes to planar binary trees and, under the relation among  the subtrees $ 
\vcenter{\hbox{\includegraphics[scale=0.1]{FIG1Assoc.pdf}}}$, the planar binary trees with the same number of leaves identify to one another. 

\smallskip

 \item[$\diamond$] In the same way, the operad $Com$ admits the following quadratic presentation 
$$Com\cong\Po\left( \vcenter{\hbox{\includegraphics[scale=0.1]{FIG16Y12.pdf}}}; 
\vcenter{\hbox{\includegraphics[scale=0.1]{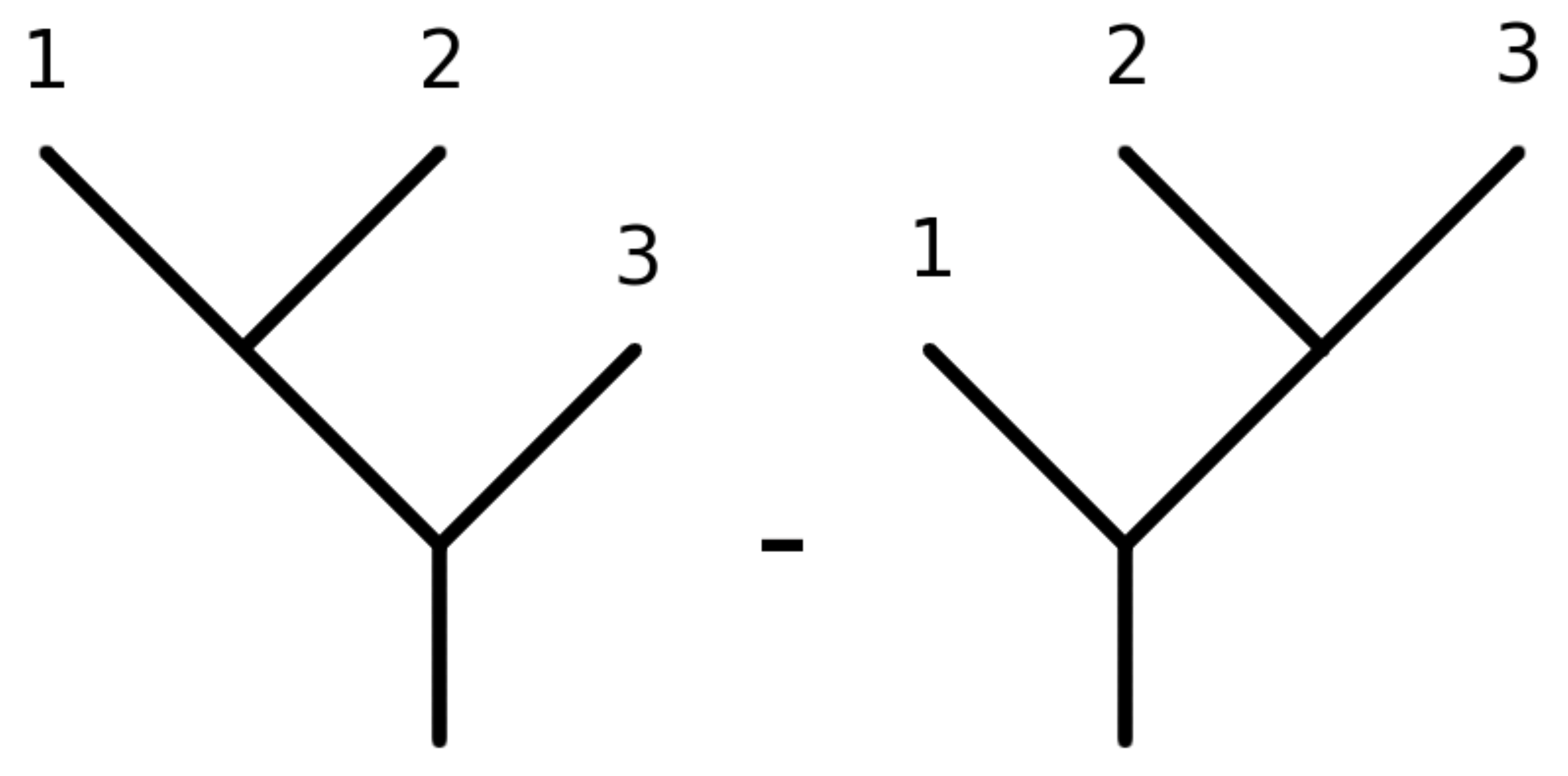}}},  
\vcenter{\hbox{\includegraphics[scale=0.1]{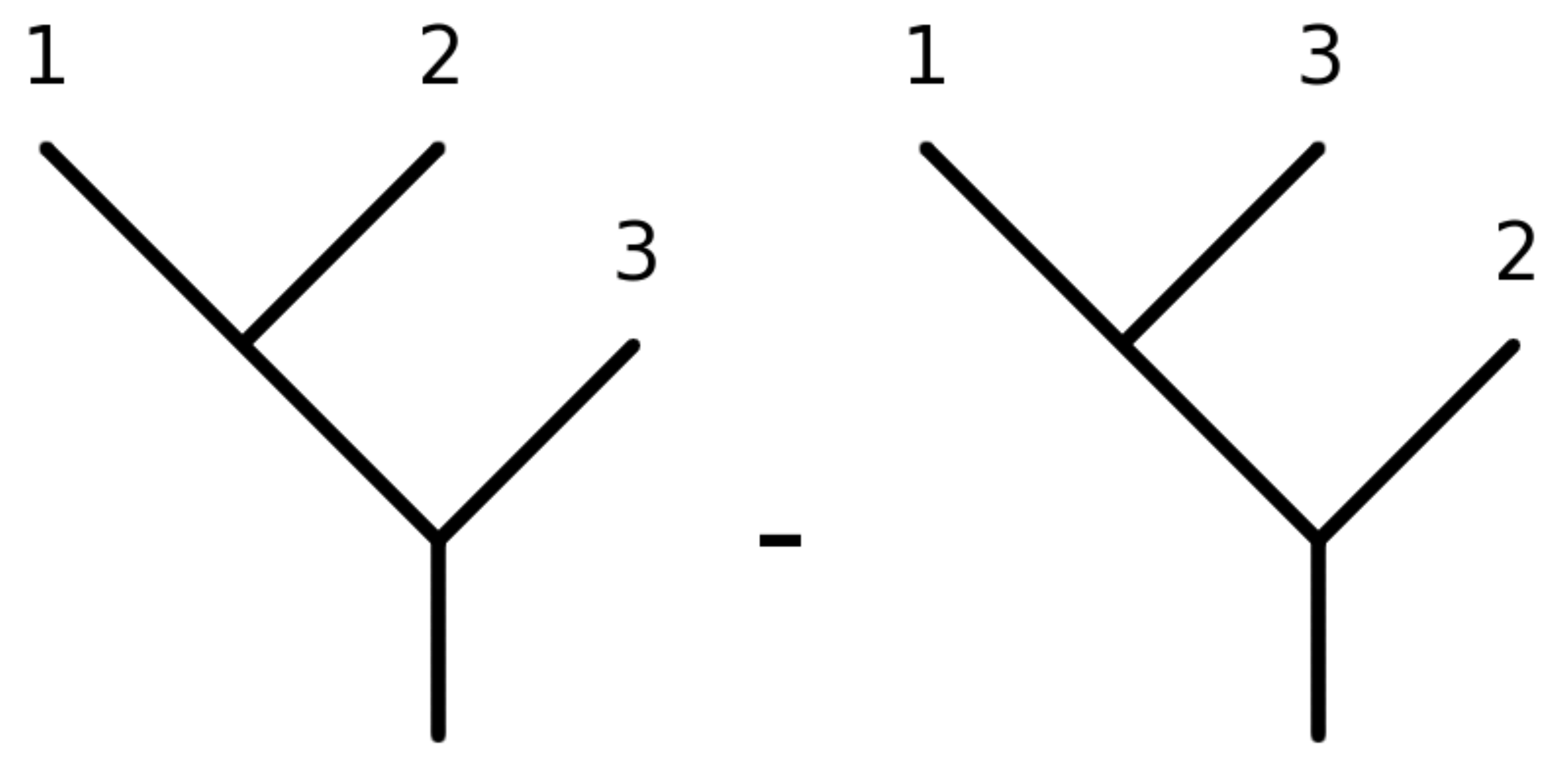}}}   \right) \ ,   $$
with one binary generator invariant under the action of $\Sy_2$. 

\smallskip

\item[$\diamond$] The operad $Lie$ is defined by the quadratic data 
$$Lie:=\Po\left( \vcenter{\hbox{\includegraphics[scale=0.1]{FIG16Y12.pdf}}};  
\vcenter{\hbox{\includegraphics[scale=0.1]{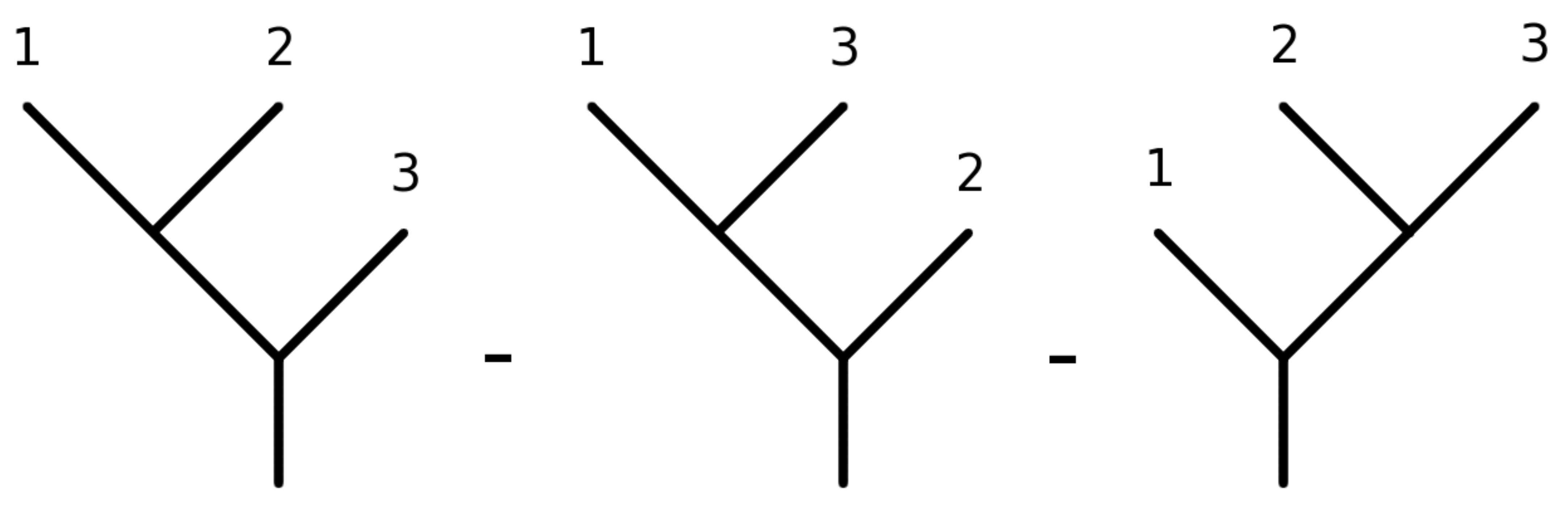}}} \right) \ .   $$

\smallskip

\item[$\diamond$] The presentation of the operad $Ger$ encoding Gerstenhaber algebras, given in 
Section~\ref{subsec:Underlying}, is quadratic.

\smallskip

\item[$\diamond$] In the same way, there exists a notion of free properad, explicitly given by labelled directed connected graphs. Properadic ideals and quotient are defined similarly, see \cite{Vallette07} for more details. 
The category of \emph{Lie bialgebras}, introduced by Vladimir Drinfel'd in \cite{Drinfeld87}, is encoded by the following quadratic properad $BiLie:=\Po(E; R)$, where 
$$
E:=\left\{    \vcenter{\hbox{\includegraphics[scale=0.1]{FIG16Y12.pdf}}}, \, 
 \vcenter{\hbox{\includegraphics[scale=0.1]{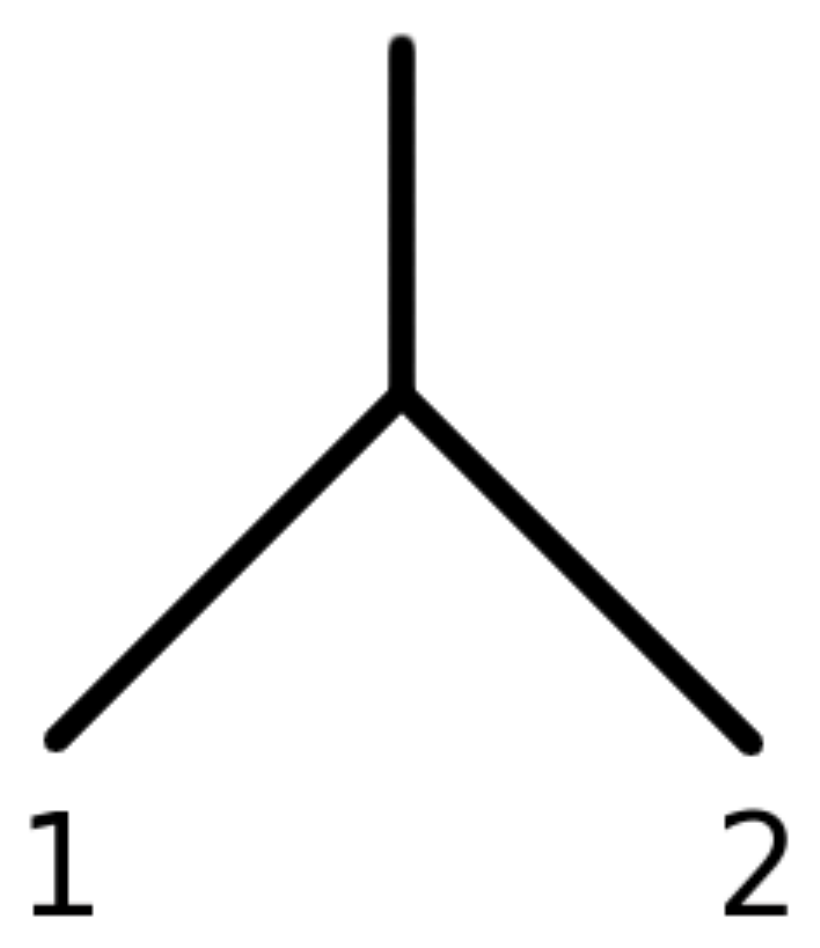}}}
\right\} $$
with skew-symmetric action of $\Sy_2$ in both cases, and where
\begin{eqnarray*}
R&:=&\left\{
\vcenter{\hbox{\includegraphics[scale=0.1]{FIG31Jacobi.pdf}}}, \, 
\vcenter{\hbox{\includegraphics[scale=0.1]{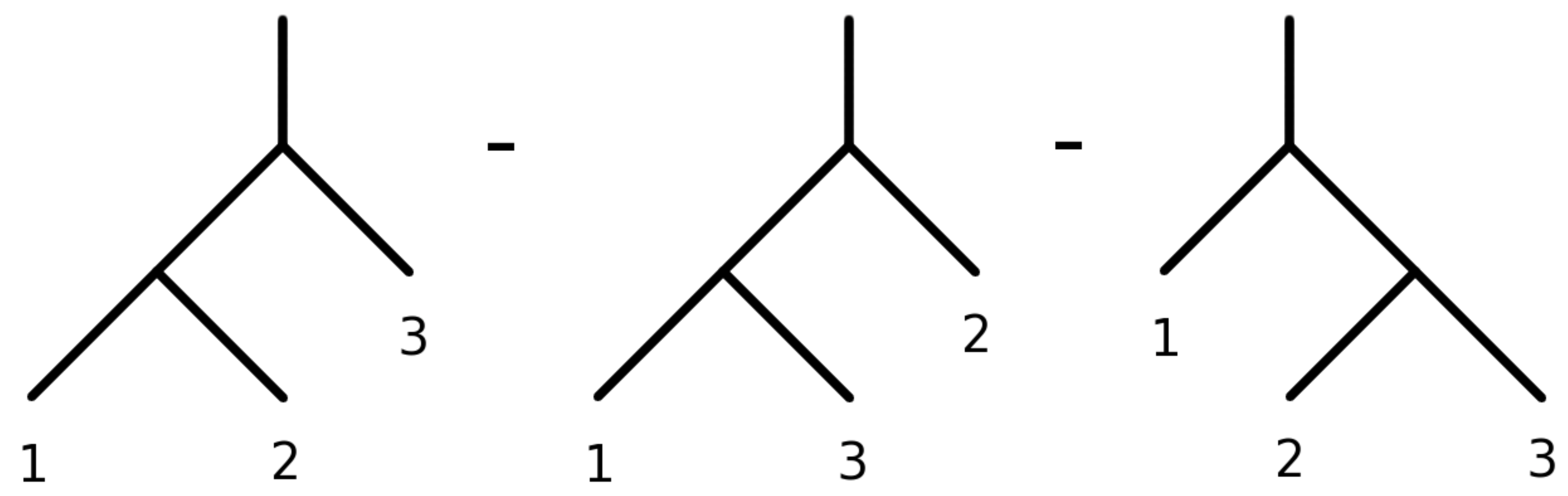}}}, \right. \\
&&\quad\ \left.\vcenter{\hbox{\includegraphics[scale=0.1]{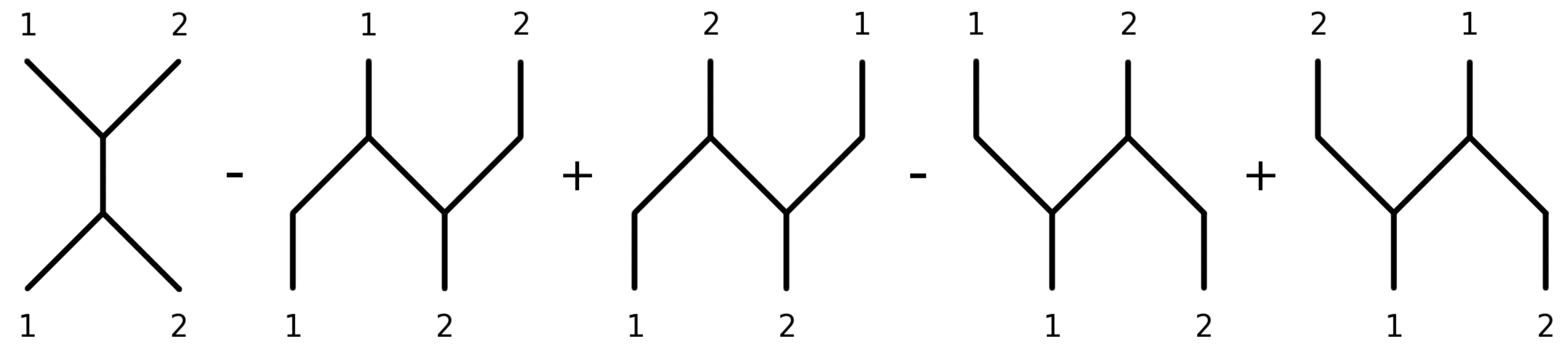}}} \right\}\ .
\end{eqnarray*}
An \emph{involutive Lie bialgebra} is a Lie bialgebra satisfying the following extra ``diamond'' condition: 
$$\vcenter{\hbox{\includegraphics[scale=0.1]{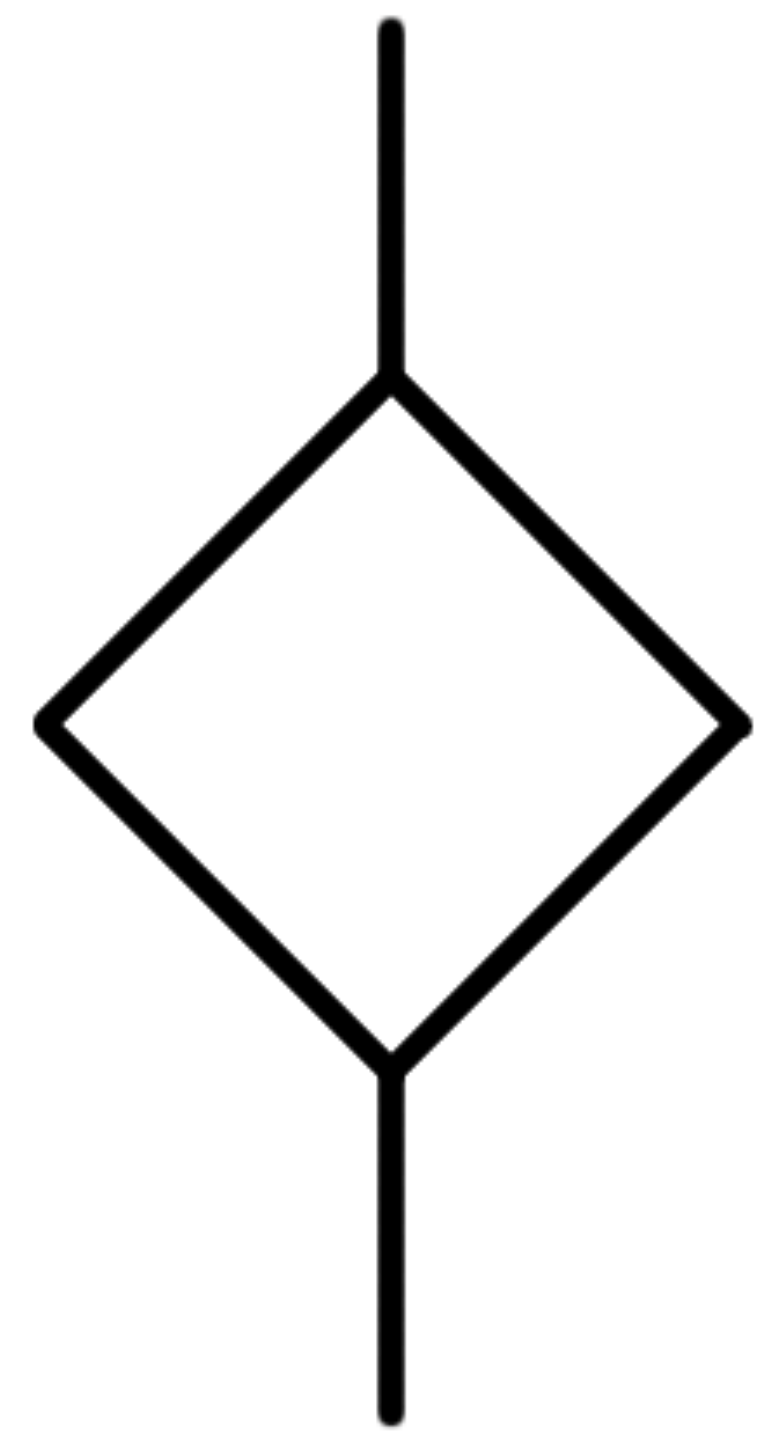}}}\ .$$
Such a structure can be found in String Topology, see \cite{ChasSullivan99}. The properad $BiLie_\diamond$ encoding involutive Lie bialgebras is quadratic. 

\smallskip

\item[$\diamond$] In any symmetric Frobenius algebra, one can dualize the product via the non-degenerate bilinear form to define a cocommutative coproduct. This gives rise to the notion of \emph{Frobenius bialgebra}, which is encoded by the following quadratic properad:
$$Frob:=    \Po\left(\vcenter{\hbox{\includegraphics[scale=0.1]{Corolla2Simple.pdf}}}, \,
\vcenter{\hbox{\includegraphics[scale=0.1]{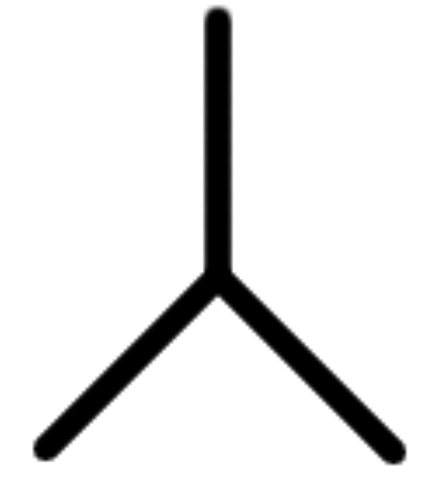}}}; \,
\vcenter{\hbox{\includegraphics[scale=0.1]{FIG1TERAssociator.pdf}}}, \,
\vcenter{\hbox{\includegraphics[scale=0.1]{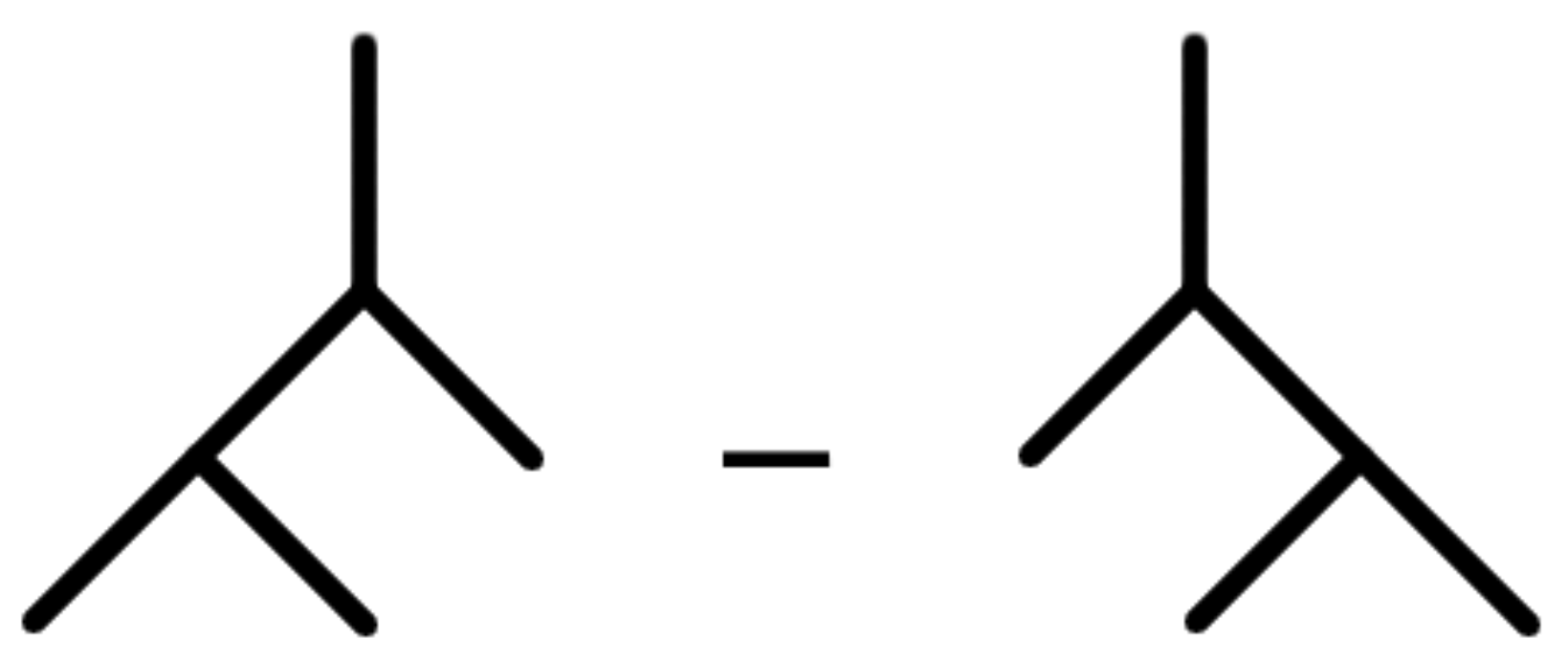}}}, \,
\vcenter{\hbox{\includegraphics[scale=0.1]{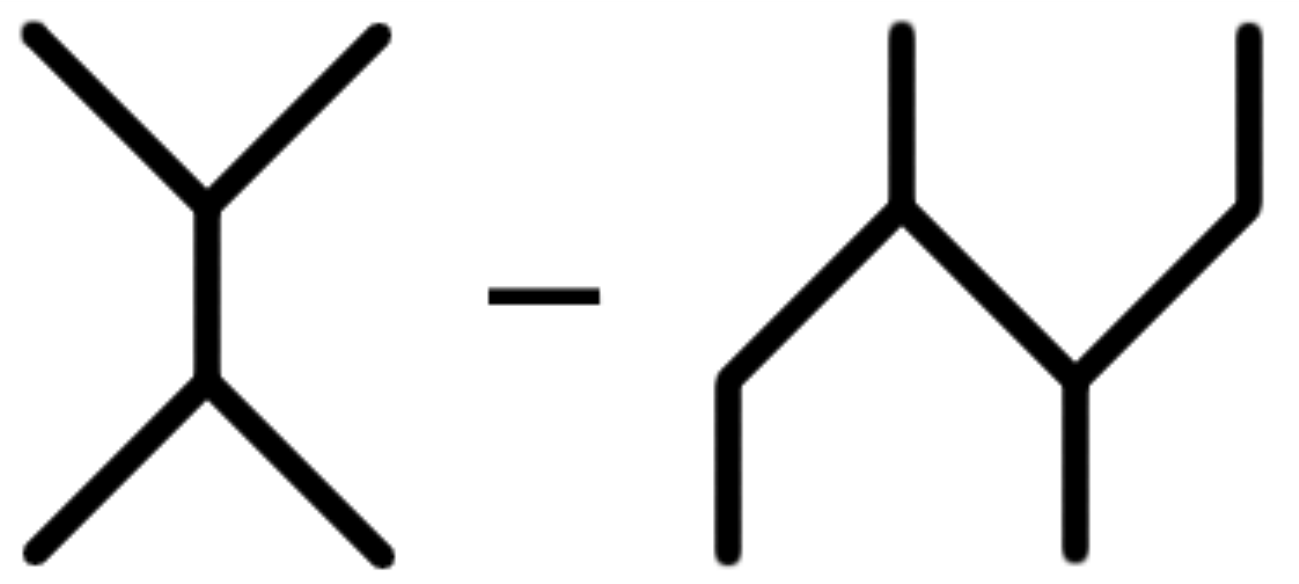}}}, \,
\vcenter{\hbox{\includegraphics[scale=0.1]{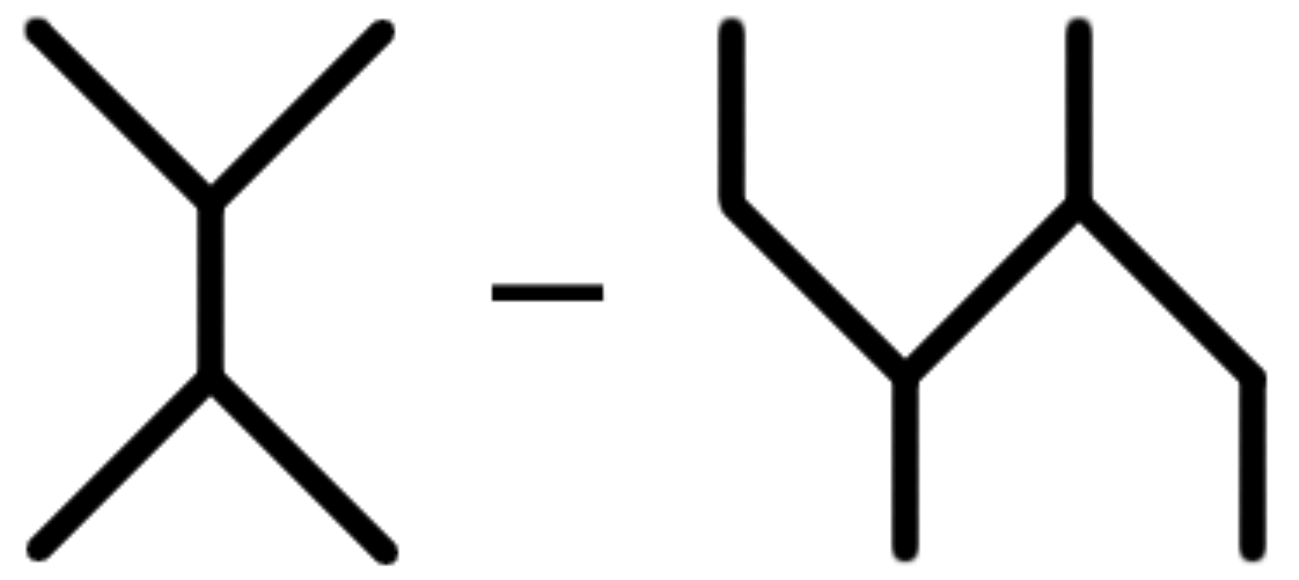}}}  
\right) \ .$$

\begin{ex}\label{ex:Frob}
Show that the properad $Frob$ is graded by the genus of the underlying graphs. Prove that the component $Frob_g(n,m)$ of genus $g$, with $n$ inputs and $m$ outputs,
is one dimensional. Then, make the properadic composition maps explicit. 
\end{ex}

\noindent
An \emph{involutive Frobenius bialgebra} is a Frobenius bialgebra satisfying the extra ``diamond'' condition: 
$\vcenter{\hbox{\includegraphics[scale=0.1]{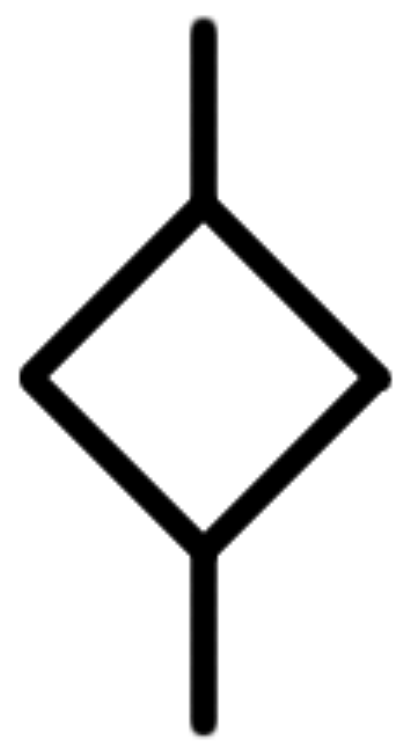}}}$. The associated properad admits the following quadratic presentation :
$$Frob_\diamond:=    \Po\left(\vcenter{\hbox{\includegraphics[scale=0.1]{Corolla2Simple.pdf}}}, \,
\vcenter{\hbox{\includegraphics[scale=0.1]{FIG50Coprod.pdf}}}; \,
\vcenter{\hbox{\includegraphics[scale=0.1]{FIG1TERAssociator.pdf}}}, \,
\vcenter{\hbox{\includegraphics[scale=0.1]{FIG51CoAssociator.pdf}}}, \,
\vcenter{\hbox{\includegraphics[scale=0.1]{FIG52MorphG.pdf}}}, \,
\vcenter{\hbox{\includegraphics[scale=0.1]{FIG53MorphD.pdf}}}, \,
\vcenter{\hbox{\includegraphics[scale=0.1]{FIG54Diamond.pdf}}}    
\right) \ .$$

\begin{ex}\label{ex:FrobD}
Show that in the properad $Frob_\diamond$, the composite of operations along graphs of genus greater or equal than $1$ vanishes. Prove that the component $Frob_\diamond(n,m)$  with $n$ inputs and $m$ outputs
is one dimensional. Then, make the properadic composition maps explicit. 
\end{ex}

\end{itemize}
\end{exams}

\begin{defi}[Quadratic-linear operad]
A \emph{quadratic-linear operad} is a quotient operad $\Po(E; R):=\TTT(E)/(R)$ generated by a \emph{quadratic-linear data} $(E; R)$: 
 $$R\subset  E \oplus \TTT(E)^{(2)} \ . $$
\end{defi}

\begin{exams}$ \ $

\begin{itemize}
\item[$\diamond$] The universal enveloping algebra $U(\g):=T(\g)/(x\otimes y - y \otimes x- [x,y])$ of a Lie algebra $\g$ is a quadratic-linear algebra. 

\smallskip 

\item[$\diamond$] The Steenrod algebra  $\mathcal{A}_2$  is the  quadratic-linear algebra
$$\mathcal{A}_2:=A(\lbrace Sq^i\rbrace_{i\ge 1}, R_{{Adem}}) $$
over the characteristic $2$ field $\mathbb{F}_2$, where $|Sq^i|=i$ and where $R_{{Adem}}$ stands for the \emph{Adem relations}:
$$Sq^i Sq^j = \binom{j-1}{i} Sq^{i+j} + \sum_{k=1}^{\left[ \frac{i}{2} \right]} \binom{j-k-1}{i-2k} Sq^{i+j-k} Sq^{k}\ , $$
for $i,j >0$ with $i<2j$.

\smallskip 

\item[$\diamond$] A \emph{Batalin-Vilkovisky algebra} is a Gerstenhaber algebra equipped with an extra square-zero unary operator $\Delta$ of degree $+1$ satisfying the following quadratic-linear relation: 
the bracket is the obstruction to $\Delta$ being a derivation with respect to  the product $\bullet$
$$\langle\,\textrm{-} , \textrm{-}\,\rangle\ =\ \Delta (\textrm{-} \bullet \textrm{-})\ -\
(\Delta(\textrm{-}) \bullet \textrm{-})   \ - \ (\textrm{-} \bullet
\Delta(\textrm{-})).$$
So, the operad $BV$ encoding Batalin-Vilkovosky algebras is a quadratic-linear operad.
\end{itemize}
\end{exams}

\begin{ex}
Show that any operad admits a quadratic-linear presentation. 
\end{ex}

\begin{defi}[Quadratic-linear-constant operad]
A \emph{quadratic-linear-constant operad} is a quotient operad $\Po(E; R):=\TTT(E)/(R)$ generated by a \emph{quadratic-linear-constant data} $(E; R)$: 
 $$R\subset  \KK \I \oplus E \oplus \TTT(E)^{(2)} \ . $$
\end{defi}

\begin{exams}$ \ $

\begin{itemize}
\item[$\diamond$] Let $V$ be symplectic vector space, with symplectic form $\omega : V^{\otimes 2} \to \KK$. Its \emph{Weyl algebra} is the quadratic-linear-constant algebra defined by 
$$W(V, \omega):= T(V)/(x\otimes y -y \otimes x -\omega(x,y))\ .$$
In the same way, let $V$ be a quadratic space, with quadratic form $q : V \to \KK$.  Its \emph{Clifford algebra} is the quadratic-linear-constant algebra defined by 
$$Cl(V, q):= T(V)/(x\otimes x -q(x))\ .$$

\smallskip

\item[$\diamond$] The nonsymmetric operad $uAs$ admits the following quadratic-linear-constant presentation:
$$uAs\cong\Po\left( \vcenter{\hbox{\includegraphics[scale=0.1]{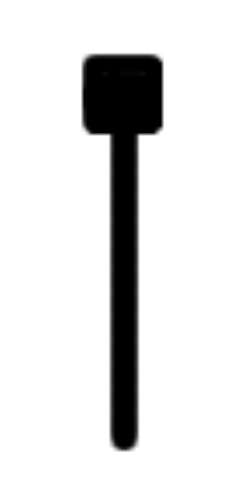}}}, 
\vcenter{\hbox{\includegraphics[scale=0.1]{Corolla2Simple.pdf}}}; 
\vcenter{\hbox{\includegraphics[scale=0.1]{FIG1TERAssociator.pdf}}}, 
\vcenter{\hbox{\includegraphics[scale=0.1]{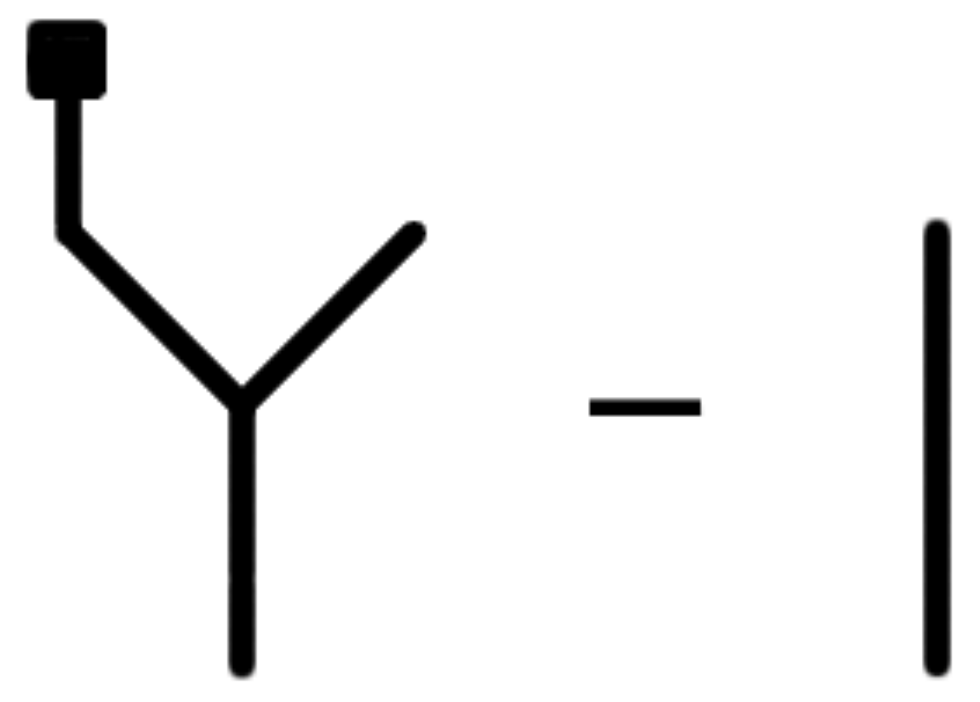}}}, 
\vcenter{\hbox{\includegraphics[scale=0.1]{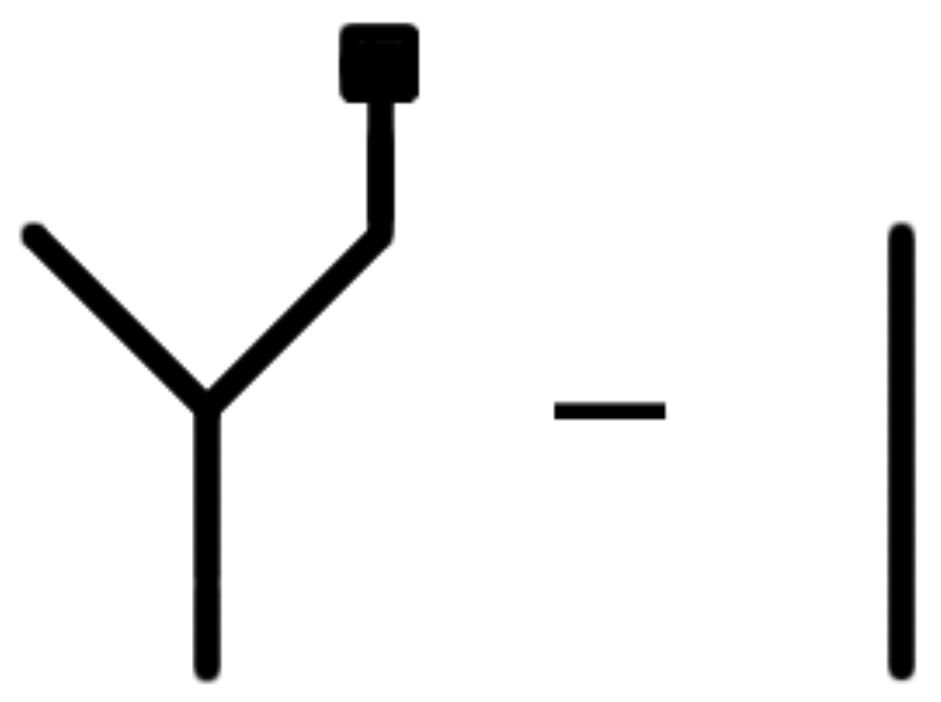}}}   \right)\ ,   $$
where the arity $0$ generator $\vcenter{\hbox{\includegraphics[scale=0.1]{FIG33Cork.pdf}}}$ encodes the unit.

\smallskip 

\item[$\diamond$] The properad $uFrob$ encoding unital and countial Frobenius bialgebras admits the following quadratic-linear-constant presentation: $uFrob:=$
$$\Po\left( 
\vcenter{\hbox{\includegraphics[scale=0.1]{FIG33Cork.pdf}}}, 
\vcenter{\hbox{\includegraphics[scale=0.1]{Corolla2Simple.pdf}}}, 
\vcenter{\hbox{\includegraphics[scale=0.1]{FIG50Coprod.pdf}}}; 
\vcenter{\hbox{\includegraphics[scale=0.1]{FIG1TERAssociator.pdf}}}, 
\vcenter{\hbox{\includegraphics[scale=0.1]{FIG51CoAssociator.pdf}}}, 
\vcenter{\hbox{\includegraphics[scale=0.1]{FIG52MorphG.pdf}}}, 
\vcenter{\hbox{\includegraphics[scale=0.1]{FIG53MorphD.pdf}}},  
\vcenter{\hbox{\includegraphics[scale=0.1]{FIG34CorkLeftUnit.pdf}}}, 
\vcenter{\hbox{\includegraphics[scale=0.1]{FIG35CorkRightUnit.pdf}}}, 
\vcenter{\hbox{\includegraphics[scale=0.1]{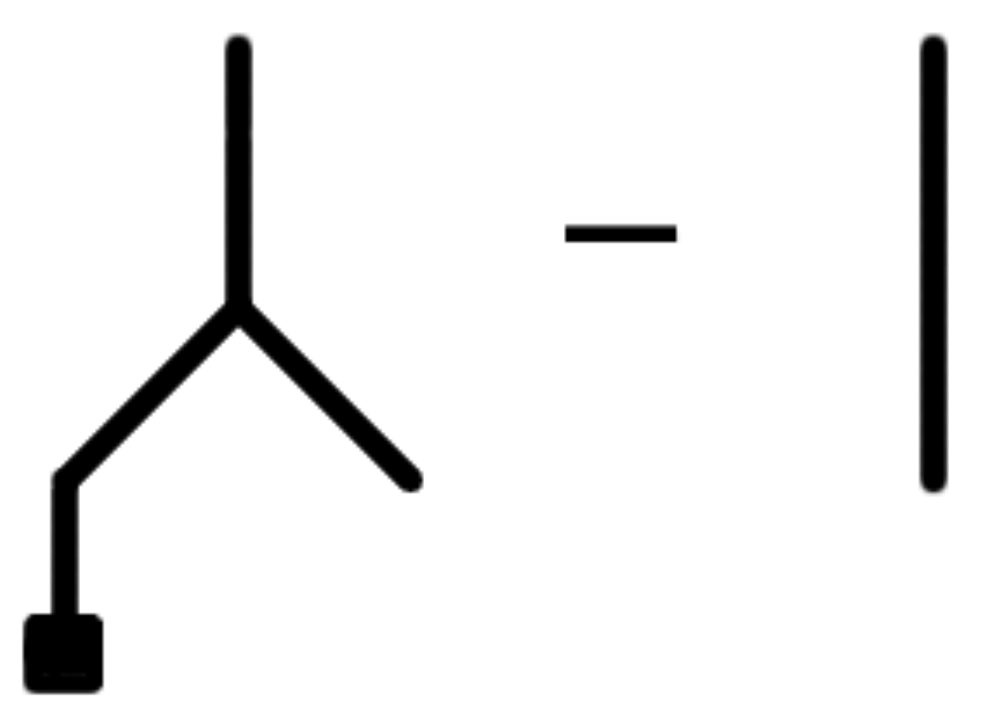}}}, 
\vcenter{\hbox{\includegraphics[scale=0.1]{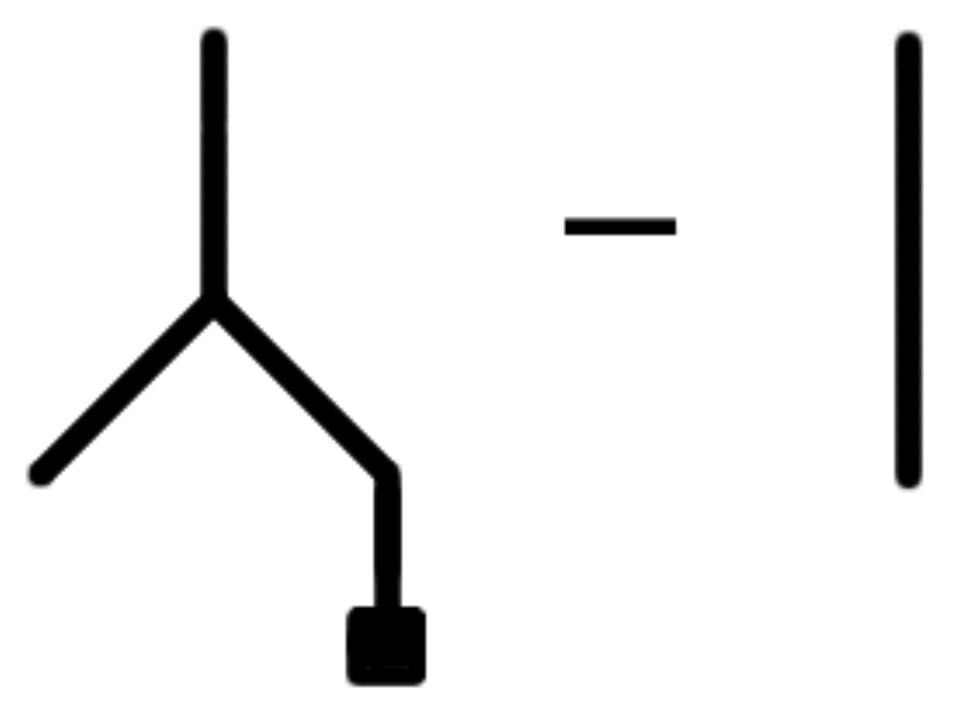}}}   \right) $$
 
 \noindent
 This notion classifies $2$-dimensional Topological Quantum Field Theories \cite{Kock04}.

\smallskip 

\end{itemize}
\end{exams}

\begin{ex}\label{Ex:Quotient=Initial}
Let $E$ be an $\Sy$-module and let $R$ be a sub-$\Sy$-module of $\TTT(E)$. We consider the category 
of operads $\TTT(E) \to \Po$ under $\TTT(E)$ such that the composite $R \mono \TTT(E) \to \Po $ is zero. A morphism in this category is a morphism of operads $\xymatrix@C=12pt{\Po \ar@{..>}[r] & \Qo}$ such that the following diagram 
$$\xymatrix@C=16pt@R=16pt{\TTT(E)  \ar[r] \ar[dr] &   \Po \ar@{..>}[d] \\
& \Qo} $$
is commutative. Show that the quotient operad $\Po(E; R)=\TTT(E)/(R)$ is the initial object of this category. 
\end{ex}

\subsection{Koszul duality theory}\label{subsec:KoszulDuality}

The Koszul duality theory first applies to operads having a quadratic presentation
$$\Po:=\Po(E;R)=\TTT(E)/(R), \quad R\subset \TTT(E)^{(2)} \ .$$ 
Working in the opposite category of vector spaces, that is changing the orientation of every map, one defines the notion of a cooperad. 
Dually, the  quadratic data $(E;R)$ induces a quadratic cooperad, which, in the Koszul case,  provides all the syzygies as follows. 

\begin{defi}[Nonsymmetric Cooperad]
A \emph{nonsymmetric cooperad} is an $\NN$-module $\lbrace \CCC_n \rbrace_{n\in \NN}$
 equipped with counital and coassociative \emph{decomposition maps}
$$\Delta_{i_1, \ldots, i_k} \ : \  \CCC_{i_1+\cdots+i_k} \to \CCC_k \otimes \CCC_{i_1} \otimes \cdots \otimes \CCC_{i_k} \ . $$
\end{defi}

The notion of \emph{(symmetric) cooperad} takes care of the symmetric groups action. 
 Dualizing the partial compositions, one gets \emph{partial decompositions}  
$$\Delta_{(1)} : \CCC \to \TTT(\CCC)^{(2)} \ , $$
which splits elements of $\CCC$ into two. 

\begin{ex}\label{Ex:LinearDual}
Show that the linear dual $\Po:=\CCC^*$ of a nonsymmetric cooperad is a nonsymmetric operad. 
Give an example of a nonsymmetric operad $\Po$ whose linear dual $\Po^*$ is not a nonsymmetric cooperad with the dual decomposition maps. 
Give a sufficient finite dimensional condition on $\Po$ for $\CCC:=\Po^*$ to be a nonsymmetric cooperad.
\end{ex}

One can dualize the definition of the free operad to get the notion of the \emph{cofree cooperad} $\TTT^c(E)$, see \cite[Section~$5.7.7$]{LodayVallette10} for more details. It shares the same underlying labeled-tree module as the free operad, with the decomposition maps given by the pruning of trees. (Since the notion of a cooperad is not the exact linear dual of the notion of an operad, as the above exercise shows, this cooperad only satisfies the universal property of the cofree cooperad only among \emph{connected} cooperads, that is cooperads for which the iteration of the decomposition maps produces only finite sums, cf. loc. cit.)\\

Let $(E, R)$ be a quadratic data, we consider the category dual to  the one introduced in Exercise~\ref{Ex:Quotient=Initial}: this is  the category 
of cooperads $\CCC \to \TTT^c(E) $ over $\TTT^c(E)$ such that the following composite $\CCC \to \TTT^c(E) \epi 
\TTT^c(E)^{(2)}/R$ is zero. 

\begin{defi}[Quadratic cooperad]
The \emph{quadratic cooperad} $\CCC(E;R)$ is the terminal object in the aforementioned category.  
\end{defi}

\begin{defi}[Koszul dual cooperad]
The \emph{Koszul dual cooperad} is the quadratic  cooperad
$$\Po^{\ac}:=\CCC(sE; s^2 R)\ , $$
where $s$  stands for the degree $+1$ suspension shift. 
\end{defi}

\begin{theo}[Koszul duality theory \cite{GinzburgKapranov94, GetzlerJones94}]
To any quadratic data $(E; R)$, there is a morphism of dg operads 
$$\boxed{\Po_\infty:=\big( \TTT(s^{-1}\overline{\Po}^{\ac}), d_2     \big)   \longrightarrow \Po(E;R)  }$$
where $\overline{\Po}^{\ac}:=\Po^{\ac}/\I$ and where $d_2$ is the unique derivation which extends the cooperad structure on $\Po^{\ac}$. 
\end{theo}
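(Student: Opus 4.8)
The plan is to recognize $\Po_\infty$ as the \emph{cobar construction} of the Koszul dual cooperad $\Po^{\ac}$ and to split the statement into two independent tasks: first, to check that $\big(\TTT(s^{-1}\oPo^{\ac}),d_2\big)$ is a genuine dg operad, i.e.\ that $d_2$ is a well-defined derivation squaring to zero; second, to produce the operad morphism onto $\Po(E;R)$ and to verify that it commutes with the differentials. Throughout I use the weight grading on $\Po^{\ac}=\CCC(sE;s^2R)$: its weight $1$ part is $sE$ and its weight $2$ part is $s^2R$, so after desuspension the generating $\Sy$-module $s^{-1}\oPo^{\ac}$ has weight $1$ part $E$ and weight $2$ part $sR$.

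First I would construct $d_2$. By the universal property of the free operad, and the exercise characterizing derivations by their restriction to generators, there is a unique derivation of $\TTT(s^{-1}\oPo^{\ac})$ whose restriction to the generators is the desuspension of the partial decomposition map $\Delta_{(1)}\colon \Po^{\ac}\to \TTT(\Po^{\ac})^{(2)}$; this derivation $d_2$ has degree $-1$, since it trades one desuspended input for a two-vertex tree carrying two desuspensions. The heart of the proof is the identity $d_2^2=0$. Because the graded commutator of a derivation with itself, $2\,d_2^2=[d_2,d_2]$, is again a derivation, and we work over a field of characteristic $0$, the map $d_2^2$ is itself a derivation and is therefore determined by its value on generators. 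On a generator, $d_2^2$ is computed by applying the partial decomposition twice, producing three-vertex labelled trees in the two ways $(\Delta_{(1)}\otimes \mathrm{id})\Delta_{(1)}$ and $(\mathrm{id}\otimes \Delta_{(1)})\Delta_{(1)}$. The \emph{coassociativity} of the cooperad $\Po^{\ac}$ forces these two families of trees to coincide, and the Koszul signs produced by the odd desuspension $s^{-1}$ make them appear with opposite signs, so that they cancel. This sign bookkeeping, which is precisely the operadic counterpart of the statement ``the cobar construction of a coassociative coalgebra is a dga'', is the step I expect to be the main obstacle.

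Next I would build the morphism $f\colon \Po_\infty\to \Po(E;R)$. Again by the universal property of the free operad, it is enough to prescribe a morphism of $\Sy$-modules on the generators: I send the weight $1$ part $s^{-1}sE=E$ by the canonical map $E\mono \TTT(E)\epi \Po(E;R)$, and I send every generator of weight $\ge 2$ to zero. This extends uniquely to an operad morphism $f$, and it remains to check that $f$ is a chain map. Since the differential of $\Po(E;R)$ vanishes, this amounts to $f\circ d_2=0$; because $f$ is an operad morphism and $d_2$ a derivation, $f\circ d_2$ vanishes everywhere as soon as it vanishes on the generating $\Sy$-module. On a weight $2$ generator (an element of $sR$), $d_2$ lands in $\TTT(E)^{(2)}$ with image exactly the space of relations $R$, which is sent to zero in the quotient $\Po(E;R)=\TTT(E)/(R)$. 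On a generator of weight $w\ge 3$, every tree occurring in $d_2$ has two vertices whose weights sum to $w$, so at least one of them carries a generator of weight $\ge 2$; as $f$ vanishes on such a vertex and respects the tree composition of $\Po(E;R)$, the whole term dies. Hence $f\circ d_2=0$ on all generators, and $f$ is the desired morphism of dg operads.

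Equivalently, one may phrase the construction through the bar--cobar adjunction: a morphism of dg operads out of the cobar construction of $\Po^{\ac}$ is the same as a twisting morphism $\kappa\in \Hom(\Po^{\ac},\Po)$, and the natural candidate is the projection-then-inclusion $\kappa\colon \Po^{\ac}\epi sE\xrightarrow{s^{-1}}E\mono \Po$. As both $\Po^{\ac}$ and $\Po$ carry trivial internal differentials, the Maurer--Cartan equation for $\kappa$ collapses to $\kappa\star\kappa=0$, an identity concentrated in weight $2$ whose left-hand side is once more the image of $R$ in $\Po$, hence zero. This packages the two verifications above into a single line, at the cost of adjunction machinery not developed in the excerpt.
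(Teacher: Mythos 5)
Your argument is correct. The paper offers no proof of this theorem (it is a survey and defers to \cite{GinzburgKapranov94, GetzlerJones94}), but your route is exactly the standard one and the one the paper implicitly endorses: it later identifies $\Po_\infty$ with the cobar construction $\Omega\, \Po^{\ac}$, and your closing paragraph --- the Koszul twisting morphism $\kappa \colon \Po^{\ac} \epi sE \to E \mono \Po(E;R)$ whose Maurer--Cartan equation reduces to the weight~$2$ identity ``image of $R$ vanishes in $\TTT(E)/(R)$'' --- is precisely the packaging via the bar--cobar adjunction that the paper sets up in its Rosetta Stone section.
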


\begin{defi}
A quadratic operad $\Po$ is called a \emph{Koszul operad}, when the above map  is a quasi-isomorphism.
\end{defi}

 In this Koszul case, \underline {all the syzygies and the differential map of the resolution are produced at once}  by the quadratic data under the universal property of a quadratic cooperad. There are many methods for proving that $\Po$ is a Koszul operad; we will give an elementary one in the next section. 
\\

The careful reader might object that it is not a common task to compute a quadratic cooperad, though this is the proper conceptual object which produces all the syzygies. In general, one proceeds  as follows. We consider the \emph{Koszul dual operad} $\Po^!$ defined as (some desuspension of) the linear dual of the Koszul dual cooperad ${\Po^{\ac}}^*$. Exercise~\ref{Ex:LinearDual} shows that ${\Po^{\ac}}^*$ is always an operad. 

\begin{prop}[Koszul dual operad \cite{GinzburgKapranov94, GetzlerJones94}]
When the generating space $E$ is finite dimensional, the Koszul dual operad admits the following quadratic presentation 
$$\Po^!\cong  \Po(E^\vee; R^\perp)\ , $$
where 
$E^\vee=E(2)^*\otimes \sgn_{\Sy_2}$, 
when $E$ is concentrated in arity $2$ and degree $0$. 
\end{prop}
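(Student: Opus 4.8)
The plan is to obtain the presentation as a formal consequence of the duality between quadratic operads and quadratic cooperads, and then to pin down the suspension twists that produce $E^\vee$ and $R^\perp$. First I would isolate the underlying linear-algebra statement: for a \emph{finite dimensional} quadratic data $(V;S)$ with $S\subset \TTT^c(V)^{(2)}$, the linear dual of the quadratic cooperad $\CCC(V;S)$ is the quadratic operad $\Po(V^*;S^\perp)$. Here one uses the canonical identification $\TTT^c(V)^{(2)}\cong \TTT(V)^{(2)}$ (at weight two there is no connectivity issue) together with the induced perfect pairing $\TTT(V^*)^{(2)}\ot \TTT^c(V)^{(2)}\to \KK$, with respect to which $S^\perp\subset \TTT(V^*)^{(2)}$ is the annihilator of $S$. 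Finite dimensionality is exactly what makes this pairing perfect and, by Exercise~\ref{Ex:LinearDual}, what guarantees that the dual of the cooperad is genuinely an operad.

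Second, I would prove this duality lemma by dualizing universal properties rather than by computing with labelled trees. The cooperad $\CCC(V;S)$ is the terminal object of the category of cooperads over $\TTT^c(V)$ on which the composite to $\TTT^c(V)^{(2)}/S$ vanishes. Passing to the opposite category reverses every arrow, so a terminal object becomes an initial one, the cofree cooperad $\TTT^c(V)$ becomes the free operad $\TTT(V^*)$, and the vanishing-of-composite condition becomes the requirement that the defining ideal contain $S^\perp$. By Exercise~\ref{Ex:Quotient=Initial}, the initial object of the latter category is precisely $\TTT(V^*)/(S^\perp)=\Po(V^*;S^\perp)$, which proves the lemma.

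Third, I would apply the lemma to $\Po^{\ac}=\CCC(sE;s^2R)$, giving $(\Po^{\ac})^*\cong \Po\big((sE)^*;(s^2R)^\perp\big)$, and then desuspend through the operadic suspension $\mathscr{S}^{-1}$ entering the definition of $\Po^!$. On generators this combines $(sE)^*=s^{-1}E^*$ with the arity-wise internal shift and sign representation carried by $\mathscr{S}$; in arity two the sign contribution is exactly $\sgn_{\Sy_2}$, so when $E$ is concentrated in arity two and degree zero the generators of $\Po^!$ become $E^\vee=E(2)^*\ot \sgn_{\Sy_2}$, again placed in degree zero.

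The main obstacle is this last identification: verifying that, after applying $s^2$ to the relations, taking the orthogonal, and desuspending by $\mathscr{S}^{-1}$, the resulting relation space is \emph{literally} $R^\perp\subset \TTT(E^\vee)^{(2)}$ rather than some sign-twisted variant. This is where the Koszul sign rule in the pairing, the shift $s^2$, and the signs of $\mathscr{S}$ must all cancel consistently. I expect the conceptual duality of the first two steps to be essentially routine, while this sign bookkeeping — ensuring that the twisted pairing on $\TTT(E^\vee)^{(2)}$ is the one for which $R^\perp$ is the honest annihilator of $R$ — is the genuinely delicate point.
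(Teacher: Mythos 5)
The paper itself offers no proof of this proposition: it is quoted from \cite{GinzburgKapranov94, GetzlerJones94}, and the surrounding text explicitly defers the computation of $R^\perp$ to \cite[Section~$7.6$]{LodayVallette10}. So there is no argument in the text to compare yours against; what can be said is that your outline is the standard route and is essentially sound. Reducing to the linear-duality statement $\CCC(V;S)^* \cong \Po(V^*;S^\perp)$ for finite-dimensional quadratic data, and then tracking the operadic suspension on generators and relations, is exactly how this is proved in the references.

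Two caveats on your execution. First, the step ``terminal in the opposite category becomes initial'' is not a formal triviality here: the opposite of the category of (connected, conilpotent) cooperads is \emph{not} the category of operads, since linear duality is an anti-equivalence only after restricting to weight-graded objects that are finite dimensional in each arity and weight. You must still check that $\CCC(V;S)^*$ is initial in the \emph{full} category of Exercise~\ref{Ex:Quotient=Initial}, not merely among operads arising as duals of cooperads. The cleanest fix is the direct weight-wise computation: the weight-$n$ component of $\CCC(V;S)$ is an intersection of kernels inside $\TTT^c(V)^{(n)}$, whose linear dual is the quotient of $\TTT(V^*)^{(n)}$ by the sum of the orthogonal subspaces, i.e.\ the weight-$n$ component of $\TTT(V^*)/(S^\perp)$; initiality then follows. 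Second, you correctly single out the suspension and sign bookkeeping as the delicate point, but you leave it unresolved; that is precisely where the content of the identification $E^\vee=E(2)^*\otimes \sgn_{\Sy_2}$ and of the pairing for which $R^\perp$ is the honest annihilator of $R$ lives, and it is the part the survey itself outsources to \cite[Section~$7.6$]{LodayVallette10}. As a plan your proposal is correct; as a proof it is complete only up to that verification.
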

 
Finally, the method consists in computing the Koszul dual operad, via this formula, and linearizing the result to get the space of syzygies together with the differential. (For more details on how to compute $R^\perp$,  we refer the reader to \cite[Section~$7.6$]{LodayVallette10}.)

\begin{exams}$ \ $

\begin{itemize}

\item[$\diamond$] The algebra $D^!$,  Koszul dual to the algebra of dual numbers $D:=T(\delta)/(\delta^2)$, is the algebra of formal series in one variable 
$ D^!=\KK[[\hbar]]$.
The  coalgebra $D^{\ac}$, Koszul dual to the algebra of dual numbers $D$, is the cofree coassociative coalgebra $T^c(s\delta)$ on one generator.
The algebra of dual numbers is Koszul. A $D_\infty$-module, also know as \emph{multicomplex} in the literature,  is a chain complex $(A, d)$ equipped
with linear maps 
$$d_{n}: A\to A,\quad  \mathrm{ for }\  n\geq 1,\quad  |d_{n}|=n-1,$$
such that  the following identities hold
$$d d_n + (-1)^n d_n d =\sum_{\substack{i+j=n \\ i, j \geq 1}} (-1)^{i}d_{i}d_{j}, \quad \text{for any} \  n\geq 1\ .$$

\smallskip

\item[$\diamond$] The orthogonal $R_{As}^\perp$ of the associativity relation of the ns operad $As$ is again generated by the associator 
$$\vcenter{\hbox{\includegraphics[scale=0.15]{FIG1TERAssociator.pdf}}} $$ thanks to the signs and the suspensions. So the ns operad $As$ is Koszul autodual: $As^!=As$. Finally the Koszul resolution  $A_\infty \qi As$ coincides with the resolution given in Section~\ref{subsec:Syzygies}. 

\smallskip

\item[$\diamond$] The presentations of the operads $Lie$ and $Com$, given in \ref{subsec:Quad}, show  that  $Lie^!\cong Com$. The operad $Lie$ is Koszul, see next section. 

\noindent
A \emph{homotopy Lie algebra},  also called an \emph{$L_\infty$-algebra}, is 
a dg module $(A, d)$ equipped with  a family of skew-symmetric maps
$\ell_n \, : \,  A^{\otimes n} \to A$ of degree $|\ell_n|=n-2$, for all $n\ge 2$, which satisfy the relations
$$\sum_{p+q=n+1 \atop p, q>1} \sum_{\sigma\in Sh^{-1}_{p,q}} \textrm{sgn}(\sigma) (-1)^{(p-1)q} (\ell_p \circ_1 \ell_q)^\sigma=\partial(\ell_n)\ , $$
where $Sh_{p,q}$ stands for the set of 
$(p,q)$-\emph{shuffles}, that is permutations $\sigma \in
\Sy_{p+q}$ such that
$$\sigma(1)<\cdots <\sigma(p) \textrm{ and } \sigma(p+1)<\cdots <\sigma(p+q)\ .$$

\begin{ex}
 Let $(A, d, \{\mu_n\}_{n\ge 2})$ be an $A_\infty$-algebra structure on a dg module $A$.
Show that the anti-symmetrized maps $\ell_n \, :\, A^{\otimes n} \to A$, given by $$\ell_n:=\sum_{\sigma \in \Sy_n} \mathop{sgn}(\sigma){\mu_n}^\sigma,$$ endow the dg module $A$ with an $L_\infty$-algebra structure. 

\noindent
\texttt{Hint}. Use the morphism of cooperads $Lie^{\ac} \to Ass^{\ac}$ induced by the morphism of operads $Lie \to Ass$.
\end{ex}

\item[$\diamond$] 
The presentations of the operads $Com$ and $Lie$  show  that  $Com^!\cong Lie$. The operad $Com$ is Koszul, see next section. 

\noindent
A \emph{homotopy commutative algebra},  also called a \emph{$C_\infty$-algebra}, is 
 an $\Ai$-algebra $(A, d, \{\mu_{n}\}_{n\geq 2})$ such that each map 
$\mu_{n} : A^{\otimes n} \to A$  vanishes on the sum of all $(p,q)$-shuffles  for $p+q=n$.
This is proved by considering  the epimorphism of cooperads $Ass^{\ac} \epi Com^{\ac}$ and by showing  that its kernel is given by the sum of the shuffles (Ree's Theorem \cite{Ree58}). 
\smallskip

\item[$\diamond$] The operad  $Ger^{!}$, Koszul dual to the operad $Ger$ encoding Gerstenhaber algebras, is equal to $Ger$, up to some suspension. The notion of $G_\infty$-algebra is made explicit in \cite[Section~$2.1$]{GCTV09}.

\smallskip

\item[$\diamond$] The Koszul duality theory was extended to properads in \cite{Vallette07}. The properads $BiLie$ and $Frob_\diamond$ are sent to one another under the Koszul dual functor. In the same way,  the properads $BiLie_\diamond$ and $Frob$ are sent to one another: 
$$BiLie  \stackrel{!}{\longleftrightarrow}   Frob_\diamond     \quad \& \quad 
BiLie_\diamond  \stackrel{!}{\longleftrightarrow}   Frob \   . $$
One can prove, by means of distributive laws, that the properads $BiLie$ and $Frob_\diamond$ are Koszul. 
It is still an open conjecture to prove that the properads $BiLie_\diamond$ and $Frob$ are Koszul.

\noindent 
This theory defines the notion of \emph{homotopy Lie bialgebra} (resp. \emph{homotopy involutive Lie bialgebra}). One makes it explicit by using that the coproperad structure on $Frob_\diamond^*$ (resp. on $Frob^*$) is obtained by dualizing the result of Exercice~\ref{ex:FrobD} (resp. Exercice~\ref{ex:Frob}) , see \cite{Merkulov06} (resp. \cite{DCTT10}) for more details.  

\noindent
This theory also defines the notion of  \emph{homotopy Frobenius bialgebra} and \emph{homotopy involutive Frobenius bialgebra}, whose explicit form is more involved.

\end{itemize}
\end{exams}

When the operad $\Po:=\Po(E; R)$ admits a quadratic-linear presentation, $R\subset  E \oplus \TTT(E)^{(2)} $, we first consider the homogeneous quadratic operad 
$$q\Po:=\Po(E, qR) \ ,  $$
where $qR$ is the image of $R$ under the projection $q: \TTT(E) \epi \TTT(E)^{(2)}$. On the Koszul dual cooperad $q\Po^{\ac}$, we define a square-zero coderivation $d_\varphi$, which takes care of the inhomogenous relations. The \emph{Koszul dual dg cooperad} $\Po^{\ac}$ of $\Po$ is define by the dg cooperad  $(q\Po^{\ac}, d_\varphi)$.

\begin{theo}[Inhomogeneous Koszul duality theory \cite{GCTV09}]
For any quadratic-linear data $(E; R)$, there is a morphism of dg operads 
$$\boxed{\Po_\infty:=\big( \TTT(s^{-1}\overline{q\Po}^{\ac}), d_1+ d_2     \big)   \longrightarrow \Po(E;R)  }$$
where  $d_1$ is the unique derivation which extends $d_\varphi$.
\end{theo}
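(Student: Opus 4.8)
The plan is to deduce this statement from the homogeneous Koszul duality theorem stated above, the only new ingredient being the internal differential $d_\varphi$. The backbone is the operadic analogue of the bar--cobar adjunction. For a dg cooperad $\CCC$ and a dg operad $\Po$, recall the convolution dg (pre-)Lie algebra $\Hom_{\Sy}(\CCC,\Po)$, whose product $\star$ is obtained by splitting an element of $\CCC$ with the partial decomposition $\Delta_{(1)}$, applying the two maps, and recomposing with a partial composite of $\Po$. A \emph{twisting morphism} is a degree $-1$ solution $\alpha$ of the Maurer--Cartan equation $\pa(\alpha)+\alpha\star\alpha=0$, and the fundamental fact I would invoke is that twisting morphisms $\CCC\to\Po$ are in natural bijection with morphisms of dg operads $\Omega\CCC\to\Po$ out of the cobar construction.

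First I would identify $\Po_\infty$ with the cobar construction $\Omega(\Po^{\ac})$ of the Koszul dual dg cooperad $\Po^{\ac}=(\qPo^{\ac},d_\varphi)$: its underlying free operad is $\TTT(s^{-1}\overline{\qPo}^{\ac})$ and its differential is $d_1+d_2$, where $d_2$ is the unique derivation extending the cooperadic decomposition of $\qPo^{\ac}$ and $d_1$ is the unique derivation extending the internal differential $d_\varphi$. This matches the boxed formula verbatim. Hence, by the adjunction, producing the map $\Po_\infty\to\Po(E;R)$ is equivalent to exhibiting a twisting morphism $\kappa\colon\Po^{\ac}\to\Po(E;R)$. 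For $\kappa$ I would take the same canonical map as in the quadratic case, namely the composite $\qPo^{\ac}\epi sE\xrightarrow{s^{-1}}E\mono\Po(E;R)$, concentrated in weight $1$ and zero elsewhere; note it has degree $-1$ because of the desuspension.

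The substance is then the verification of the Maurer--Cartan equation $\pa(\kappa)+\kappa\star\kappa=0$, where $\pa$ now incorporates the internal differential $d_\varphi$ of the source. Since $\Po(E;R)$ carries no internal differential, $\pa(\kappa)$ reduces to $\pm\,\kappa\circ d_\varphi$. Both $\kappa\star\kappa$ and $\kappa\circ d_\varphi$ vanish outside weight $2$ for weight reasons, so it suffices to analyze the weight-$2$ component $s^2\qR\subset\qPo^{\ac}$. There, $\kappa\star\kappa$ reproduces the image in $\Po(E;R)$ of the quadratic part $\qR$, while $\kappa\circ d_\varphi$ reproduces, through the map $\varphi\colon \qR\to E$ extracting the linear part, the image of the linear part of the relations. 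Their sum is the image of the full relation $R$, which is zero in $\TTT(E)/(R)=\Po(E;R)$; this is exactly the Maurer--Cartan equation, and it produces the claimed dg operad morphism.

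The main obstacle is entirely concentrated in the inhomogeneous correction, and is twofold. First, one must know that $d_\varphi$ is genuinely a square-zero coderivation, so that $\Po^{\ac}$ is a bona fide dg cooperad and $\Omega(\Po^{\ac})$ a well-defined dg operad; this is the assertion preceding the theorem, and it rests on the coherence satisfied by the quadratic-linear data (well-definedness of $\varphi$ on $\qR$ and $d_\varphi^2=0$). Second, the weight-$2$ computation above must be carried out with scrupulous care for the signs and suspension shifts, because it is precisely the matching of the $\kappa\circ d_\varphi$ term with the linear summand $\varphi(\qR)$ that encodes the inhomogeneity: once the homogeneous term $\kappa\star\kappa$ is handled exactly as in the quadratic theorem, the remaining and delicate task is to confirm that $d_\varphi$ contributes the missing linear part of $R$ with the correct sign, so that the two contributions assemble into the image of the full relation.
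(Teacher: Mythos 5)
The paper states this theorem without proof, merely citing \cite{GCTV09}, and your argument is exactly the one given in that reference (and, for the algebra case, in \cite[Chapter~$3$]{LodayVallette10}): reduce via the bar--cobar adjunction to the Maurer--Cartan equation for the canonical twisting morphism $\kappa\colon q\Po^{\ac}\epi sE\to E\mono \Po(E;R)$, whose only nontrivial component lives in weight $2$, where $\kappa\star\kappa$ yields the quadratic part of the relations and $\kappa\circ d_\varphi$ the linear part, so that their sum is the image of $R$ and hence vanishes in $\TTT(E)/(R)$. Your proposal is correct and follows the intended route, including the correct identification of where the real content lies, namely the conditions on the quadratic-linear data ensuring that $\varphi$ is well defined on $\mathrm{q}R$ and that $d_\varphi$ is a square-zero coderivation.
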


A quadratic-linear operad $\Po$ is called a \emph{Koszul operad}, when the above map  is a quasi-isomorphism. This  happens when the generating space $E$ is minimal, when the space of relations $R$ is maximal and when the homogeneous quadratic operad $q\Po$ is Koszul.

\begin{exams}$ \ $

\begin{itemize}
\item[$\diamond$] The Koszul dual dga coalgebra of the universal enveloping algebra $U(\g)$ is the following dga coalgebra
$$ U(\g)^{\ac}\cong (\Lambda^c(s\g), d_{CE}),$$
where $\Lambda^c$ stands for the cofree exterior coalgebra and where $d_{CE}$ is the Chevalley-Eilenberg boundary map defining the homology of the Lie algebra $\g$.

\smallskip 

\item[$\diamond$] The linear dual of the Koszul dual dga coalgebra of the Steenrod algebra $\mathcal{A}_2$ is a dga algebra, which is anti-isomorphic to the \emph{$\Lambda$ algebra} of \cite{BCKQRS66}. Notice that its homology gives the second page of the Adams spectral sequence which computes homotopy groups of spheres.

\smallskip

\noindent
The examples of the Steenrod algebra and the universal enveloping algebra of a Lie algebra prompted the Koszul duality theory, which was created by Stewart Priddy in \cite{Priddy70} for associative algebras.
\smallskip 

\item[$\diamond$] The operad $BV$ is an inhomogeneous Koszul operad, see \cite{GCTV09}, with Koszul dual dg cooperad 
$$BV^{\ac}\cong (T^c(s\Delta) \otimes Ger^{\ac}, d_\varphi) \ .$$
The dual ${}^td_\varphi$ of the differential is equal to $(s\Delta)^*\otimes d_{CE}$, where $d_{CE}$ is  the Chevalley-Eilenberg boundary map defining the homology of the free Lie algebra. The notion of $BV_\infty$-algebra is made explicit in \cite[Section~$2.3$]{GCTV09}.

\end{itemize}
\end{exams}

When the operad $\Po:=\Po(E; R)$ admits a quadratic-linear-constant presentation $R\subset  \KK \I \oplus E \oplus \TTT(E)^{(2)}$, we endow  the Koszul dual cooperad 
$q\Po^{\ac}$ with a coderivation $d_\varphi : q\Po^{\ac} \to q\Po^{\ac}$, which takes care of the linear part of the relations, and with a \emph{curvature} $\theta : q\Po^{\ac} \to \KK \I$, which takes care of the constant part of the relations. All together, these two maps satisfy the same relations as the curvature in geometry. 

\begin{theo}[Curved Koszul duality theory \cite{HirschMilles10}]
For any quadratic-linear-constant data $(E; R)$, there is a morphism of dg operads 
$$\boxed{\Po_\infty:=\big( \TTT(s^{-1}\overline{q\Po}^{\ac}), d_0+d_1+ d_2     \big)   \longrightarrow \Po(E;R)  }$$
where  $d_0$ is the unique derivation which extends the curvature $\theta$.
\end{theo}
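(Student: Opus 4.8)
The plan is to follow the template of the two preceding theorems (homogeneous and inhomogeneous Koszul duality), adding one further layer to carry the constant, or curvature, terms. First I would unpack the quadratic-linear-constant data. The governing hypothesis is that the leading-term projection $q\colon \TTT(E)\epi \TTT(E)^{(2)}$ restricts to an isomorphism $R \xrightarrow{\sim} qR$ (the minimality/maximality condition on $(E;R)$, the analogue of conditions (ql1)--(ql2) of the inhomogeneous case together with an extra constant-term condition). Granting this, every relation has the form $X-\varphi(X)-\theta(X)$ with $X\in qR$, $\varphi(X)\in E$, and $\theta(X)\in \KK\I$, which produces two linear maps $\varphi\colon qR \to E$ and $\theta\colon qR \to \KK\I$. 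Dualizing and suspending, I would transport these to the weight-two part $s^2 qR$ of the quadratic Koszul dual cooperad $q\Po^{\ac}$, obtaining a degree $-1$ coderivation $d_\varphi\colon q\Po^{\ac}\to q\Po^{\ac}$ and a curvature $\theta\colon q\Po^{\ac}\to \KK\I$.

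Second, I would verify that $(q\Po^{\ac}, d_\varphi, \theta)$ is a curved (conilpotent) cooperad, i.e.\ that the triple satisfies the three curvature axioms: $d_\varphi$ is a coderivation, the square $d_\varphi^2$ equals the operator induced by $\theta$ through the decomposition map (the curvature equation), and $\theta \circ d_\varphi = 0$ (the Bianchi identity). These are exactly the ``relations of a curvature in geometry'' alluded to above. Since $d_\varphi$ and $\theta$ are obtained by dualizing $\varphi$ and $\theta$ on the weight-two generators, each axiom becomes a statement purely about the sub-$\Sy$-module $R$ and its images under partial (de)compositions, so the verification reduces to the consistency conditions on $(E;R)$. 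This is the step I expect to be the main obstacle: tracking the suspensions and Koszul signs while checking that the constant part $\theta$ exactly compensates the failure of $d_\varphi$ to square to zero is genuinely delicate, and it is the new phenomenon absent from the two earlier theorems.

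Third, I would form the cobar-type construction, whose underlying graded operad is the free operad $\TTT(s^{-1}\overline{q\Po}^{\ac})$ and whose differential splits, with respect to the weight grading, into $d_2$ (the unique derivation extending the reduced decomposition $\Delta_{(1)}$), $d_1$ (the unique derivation extending $d_\varphi$), and $d_0$ (the unique derivation extending the curvature $\theta$). Expanding $(d_0+d_1+d_2)^2$ and sorting by weight, the vanishing of $d_2^2$ follows from coassociativity, that of $d_1 d_2 + d_2 d_1$ from $d_\varphi$ being a coderivation, that of $d_1^2 + d_0 d_2 + d_2 d_0$ from the curvature equation, that of $d_0 d_1 + d_1 d_0$ from the Bianchi identity, and $d_0^2 = 0$ holds for weight reasons. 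Thus the three curved-cooperad axioms of the previous step are precisely what makes $\Po_\infty$ a dg operad.

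Finally, for the morphism itself I would invoke the curved bar--cobar adjunction. I would exhibit the canonical degree $-1$ map $\kappa\colon q\Po^{\ac}\epi sE \xrightarrow{s^{-1}} E \mono \TTT(E)\epi \Po(E;R)$ and check that it is a curved twisting morphism, i.e.\ a solution of the curved Maurer--Cartan equation $\partial \kappa + \kappa \star \kappa - \eta \circ \theta = 0$ in the convolution pre-Lie algebra, where $\eta\colon \KK\I \to \Po(E;R)$ is the unit and the term $\eta\circ\theta$ records the curvature. By the very definitions of $\varphi$ and $\theta$, evaluating this equation on $s^2 qR$ is equivalent to the identity $X = \varphi(X) + \theta(X)$ holding in $\Po(E;R)$ for each $X\in qR$, which is true since $R$ maps to zero in the quotient. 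The universal property of the cobar construction then converts $\kappa$ into the desired morphism of dg operads $\Po_\infty = \TTT(s^{-1}\overline{q\Po}^{\ac}) \to \Po(E;R)$, completing the proof.
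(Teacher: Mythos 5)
The paper does not actually prove this statement: it is quoted from Hirsh--Mill\`es \cite{HirschMilles10}, and the only ``proof'' present is the preceding paragraph describing the coderivation $d_\varphi$ and the curvature $\theta$ on $q\Po^{\ac}$. Your outline is a correct reconstruction of exactly that argument --- curved Koszul dual cooperad, weight-by-weight verification that $(d_0+d_1+d_2)^2=0$ from the three curvature axioms, and the curved twisting morphism $\kappa$ converted into an operad morphism by the bar--cobar adjunction --- so it takes essentially the same approach as the cited source. The only quibbles are terminological: no linear duals are taken in transporting $\varphi$ and $\theta$ to the weight-two part $s^2qR$ of $q\Po^{\ac}$ (it \emph{is} $s^2qR$), and you should make explicit that extending $d_\varphi$ and $\theta$ from weight $2$ to all of $q\Po^{\ac}$ is where the second Koszul-type condition on $R$ (which you allude to) is actually used.
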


A quadratic-linear-constant operad $\Po$ is called a \emph{Koszul operad}, when the above map  is a quasi-iso\-mor\-phi\-sm. This  happens under the same conditions as in the quadratic-linear case. Notice that, in this curved case, the Koszul dual cooperad $(q\Po^{\ac}, d_\varphi, \theta)$ is \underline{not} a dg object, whereas the Koszul resolution $\Po_\infty$ is a dg operad.

\begin{exams}$ \ $

\begin{itemize}
\item[$\diamond$] The curved Koszul duality theory was settled for algebra in \cite{Positselski93, PolischukPositselski05}, where the examples of the Weyl algebras and the Clifford algebras are treated.

\smallskip

\item[$\diamond$] The nonsymmetric operad $uAs$ is treated in detail in \cite{HirschMilles10} and is proved therein to be Koszul. A \emph{homotopy unital associative algebra}, or \emph{$uA_\infty$-algebra}, is 
a dg module $(A, d)$ equipped with  a family of  maps
$$\left\{ \mu_n^S=\vcenter{\hbox{\includegraphics[scale=0.18]{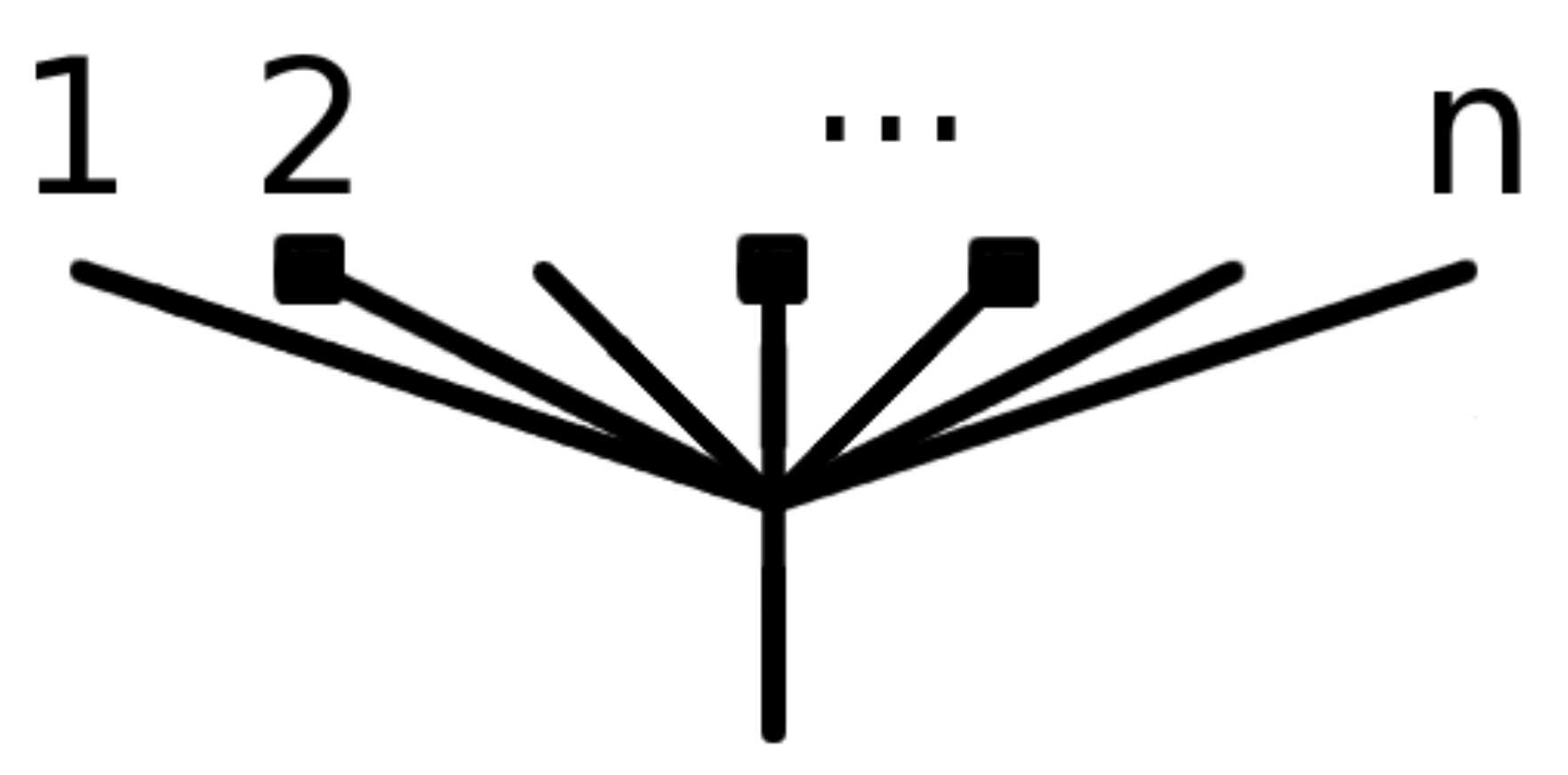}}}\ : \ A^{\otimes (n-|S|)}  \to A, \  | \mu_n^S|=n-2+|S|      \right\}\  ,  $$
 where the set $S$
runs over the subsets of $\lbrace1, \ldots, n\rbrace$  for any integer $n\ge 2$, and  where $S = \lbrace 1 \rbrace$,  for $n = 1$. We choose to represent them  by planar corollas with $n$ leaves and with the inputs in $S$ labelled by a ``cork''.  
They satisfy the following relations: 
\begin{eqnarray*}
&\vcenter{\hbox{\includegraphics[scale=0.2]{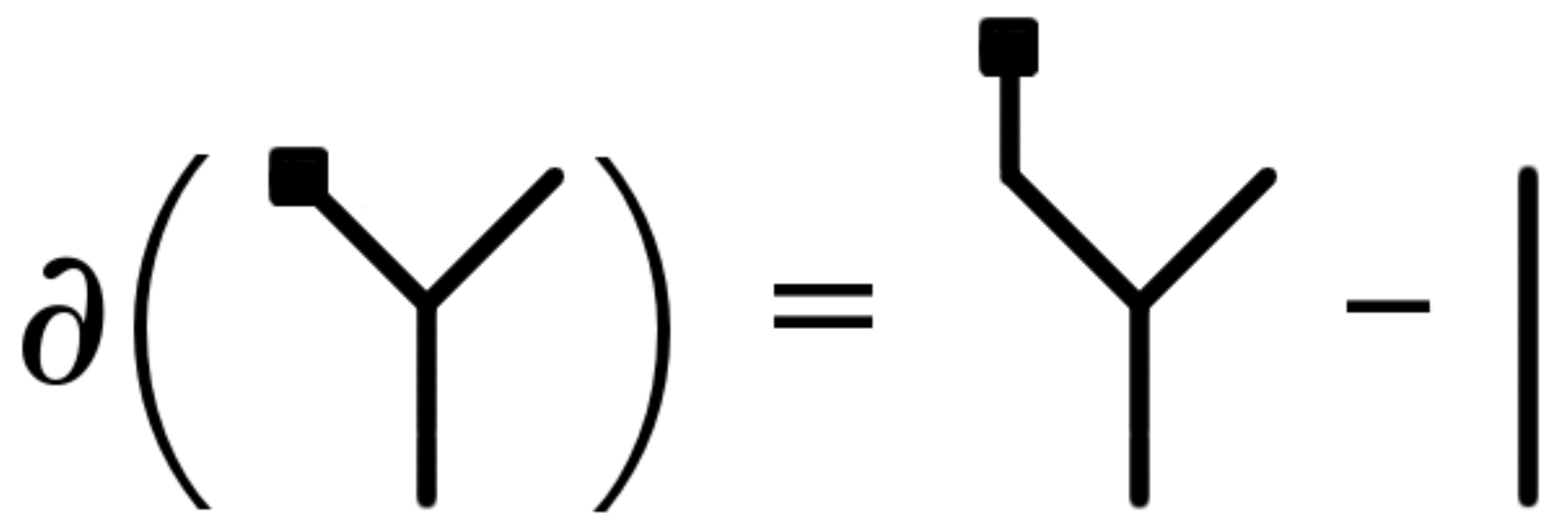}}} \ ,
\qquad \qquad 
\vcenter{\hbox{\includegraphics[scale=0.2]{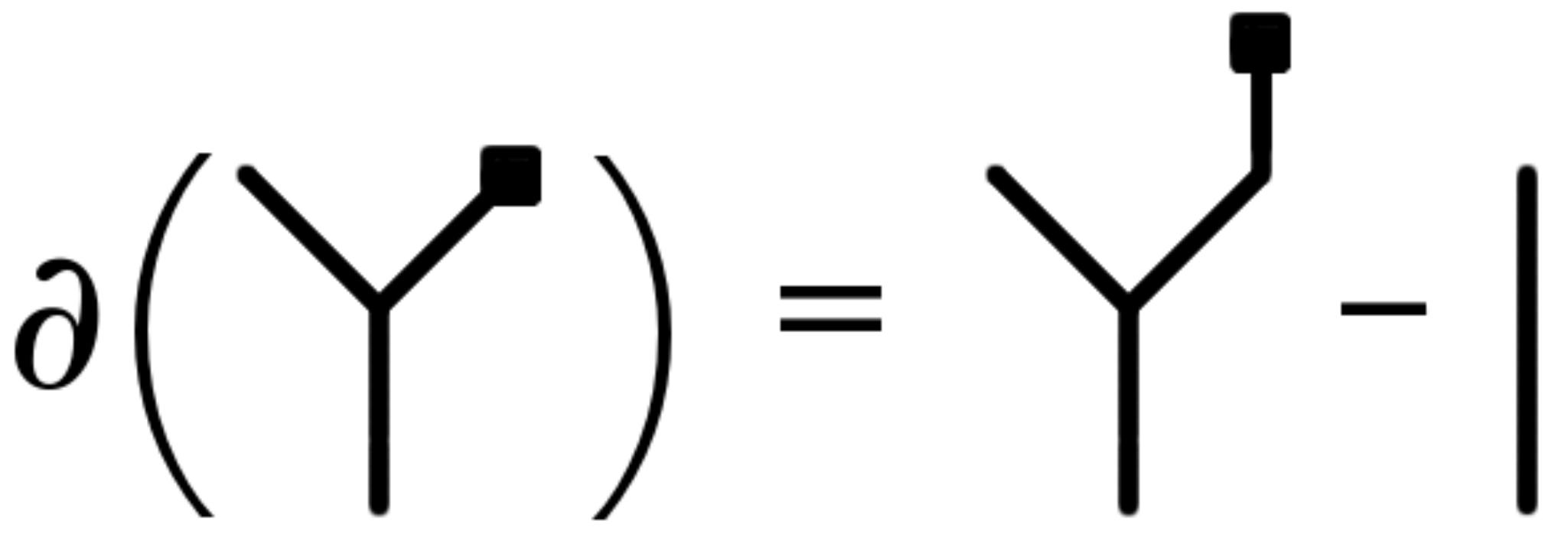}}} \ , &\\
&\vcenter{\hbox{\includegraphics[scale=0.2]{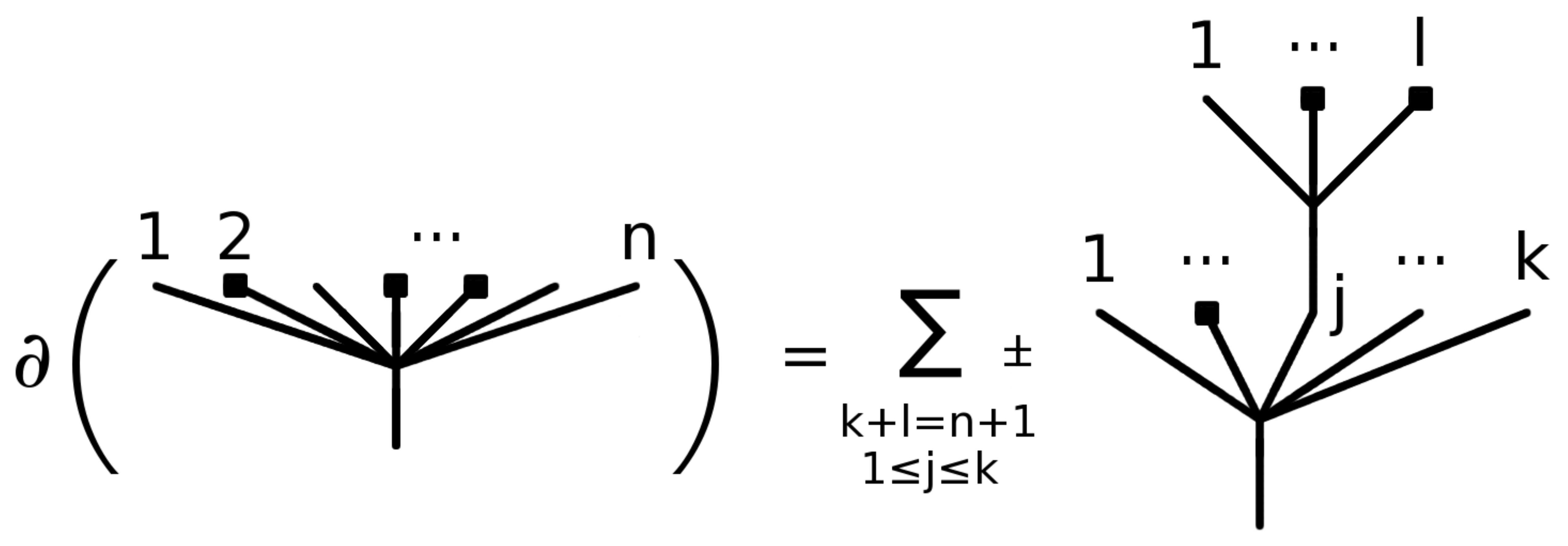}}}\ .&
\end{eqnarray*}

\noindent
This notion of a $uA_\infty$-algebra, obtained by the Koszul duality theory,  coincides with the notion of an ``$A_\infty$-algebra with a homotopy unit''
given and used in \cite{FOOO09I, FOOO09II}. The associated topological cellular operad was recently constructed by F. Muro and A. Tonks in \cite{MuroTonks11}.

\smallskip 

\item[$\diamond$] No Koszul resolution have been proved, so far,  for the properad $uFrob$. However, Joseph Hirsh and Joan Mill\`es introduced in \cite{HirschMilles10} a bar-cobar resolution of it (of curved origin).
\smallskip 

\end{itemize}
\end{exams}

\subsection{Rewriting method}\label{subsec:Rewriting}
In this section, we explain  the rewriting method, which provides a short algorithmic method, based on the rewriting rules given by the relations, to prove that an operad is Koszul. \\

Let us first explain the rewriting method for binary quadratic nonsymmetric operads $\Po(E;R)$, with the example of the ns operad $As$ in mind. 

\subsection*{Step 1}  We consider an ordered basis $\mu_1<\mu_2< \cdots < \mu_k$ for the generating space 
$E_2$ of binary operations. 

\begin{exam}
The ns operad $As$  is generated by one operation of arity $2$: $\vcenter{\hbox{\includegraphics[scale=0.15]{Corolla2Simple.pdf}}}$.
\end{exam}

\subsection*{Step 2}
The weight $2$ part of the free ns operad $\TTT(E)$ is spanned by the left combs  $\mu_{i}\circ_{1}\mu_{j}$ and by the right combs $\mu_{i}\circ_{2}\mu_{j}$. We consider the following total order on this set:
$$
\left\{
\begin{array}{ll}
\mu_{i}\circ_{2}\mu_{j} \ < \ \mu_{i}\circ_{1}\mu_{j},&  \quad \mathrm{for}\ \textrm{any}\ i,j,\\
\mu_{i}\circ_{a}\mu_{j} \ < \ \mu_{k}\circ_{a}\mu_{l}, & \quad \mathrm{whenever}\ i<k, a=1\ \mathrm {or} \ 2,\  \mathrm{and} \ \mathrm{for} \ \mathrm{any}\ j,l,\\
\mu_{i}\circ_{a}\mu_{j} \ < \ \mu_{i}\circ_{a}\mu_{l},& \quad \mathrm{whenever}\ j<l, a=1\ \mathrm {or} \ 2. 
\end{array}
\right.$$

The operad $\Po$ is determined by the space of relations $R$, which is spanned by a set of relators written in this basis as 
$$r \ = \ \lambda\,  \mu_{i}\circ_{a}\mu_{j} - \sum\lambda_{k,b,l}^{i,a,j}\ \mu_{k}\circ_{b}\mu_{l}, \quad \lambda, \lambda_{k,b,l}^{i,a,j}\in \KK\ \mathrm{and} \ \lambda\neq 0\ ,$$
where  the sum runs over the indices satisfying $ \mu_{i}\circ_{a}\mu_{j} > \mu_{k}\circ_{b}\mu_{l}$. The operation $ \mu_{i}\circ_{a}\mu_{j} $  is called the \emph{leading term} of the relator $r$. One can always suppose  that $\lambda$ is equal to $1$, that the leading terms of the set of relators are all distinct and that there is no leading term of any other relator in the sum on the right hand side. This is called a \emph{normalized form} of the presentation.

\begin{exam}
There is only one relator in $As$: 
$$r=\vcenter{\hbox{\includegraphics[scale=0.15]{FIG1TERAssociator.pdf}}}
 \ .$$
\end{exam}

\subsection*{Step 3}

Every relator gives rise to a \emph{rewriting rule} in the operad $\Po$: 
$$\mu_{i}\circ_{a}\mu_{j} \mapsto \sum\lambda_{k,b,l}^{i,a,j}\ \mu_{k}\circ_{b}\mu_{l}.$$

\begin{exam}$ \ $

\begin{center}
\includegraphics[scale=0.15]{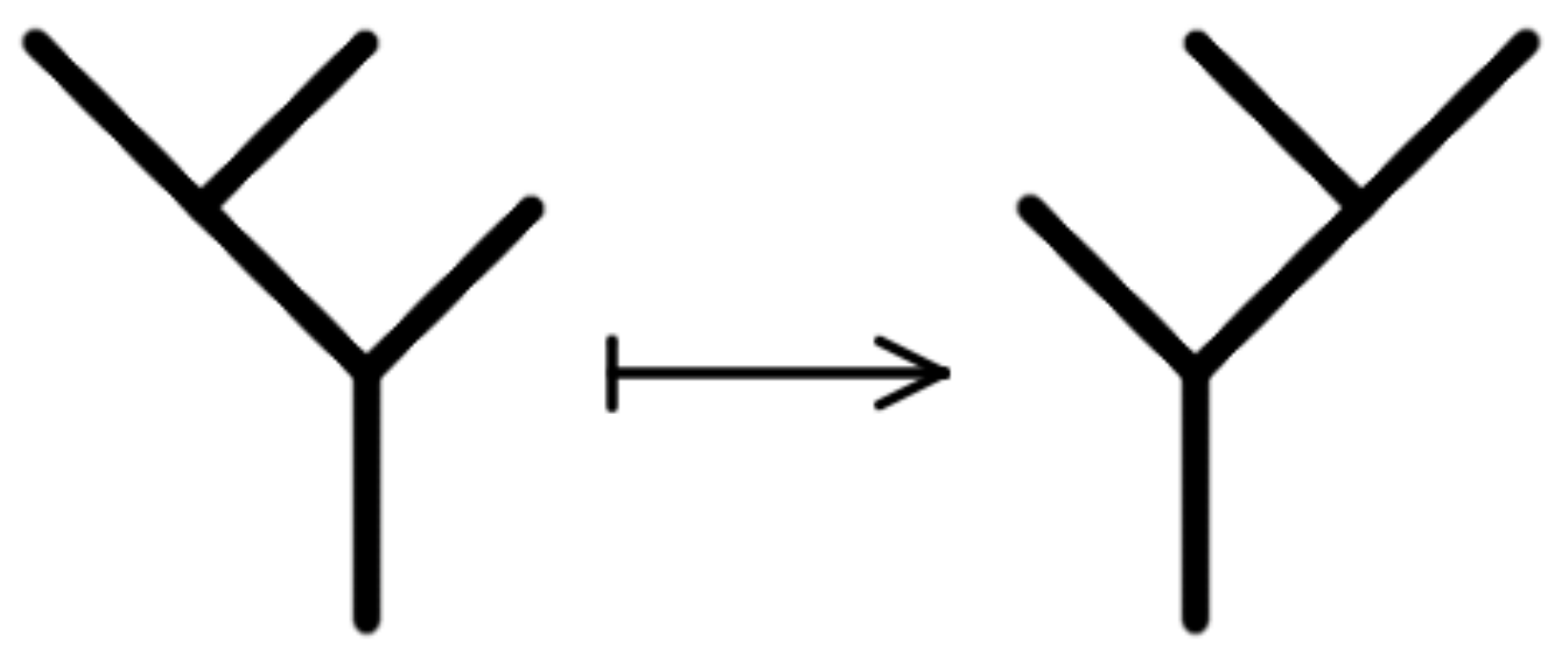}
\end{center}
\end{exam}

Given three generating binary operations $\mu_{i}, \mu_{j}, \mu_{k}$, one can compose them in $5$ different ways: they correspond to the $5$ planar binary trees with $3$ vertices. Such a monomial, i.e.\  decorated planar tree,   is called \emph{critical}  if the two sub-trees with $2$ vertices are leading terms. 

\begin{exam}
For the nonsymmetric operad $As$, the left comb  is the only following critical monomial.
\begin{center}
\includegraphics[scale=0.15]{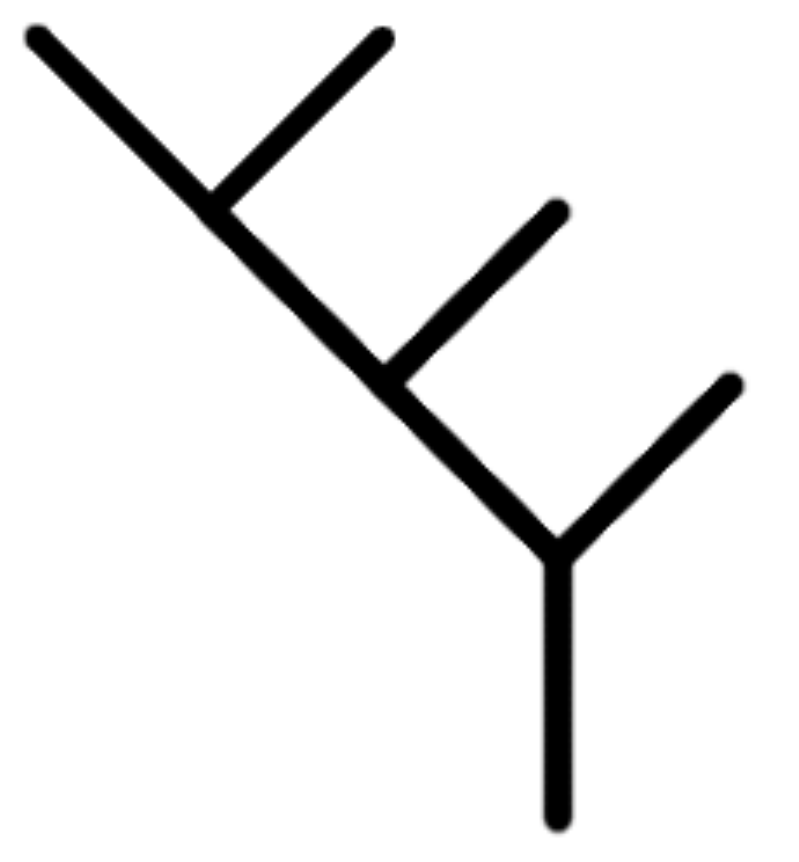}
\end{center}
\end{exam}

\subsection*{Step 4} There are at least two ways of rewriting a critical monomial ad libitum, that is, until no rewriting rule can be applied any more. If all these ways lead to the same element, then the critical monomial is said to be \emph{confluent}. 

\begin{exam}
The critical monomial of $As$ gives rise to the confluent graph.
\begin{center}
\includegraphics[scale=0.15]{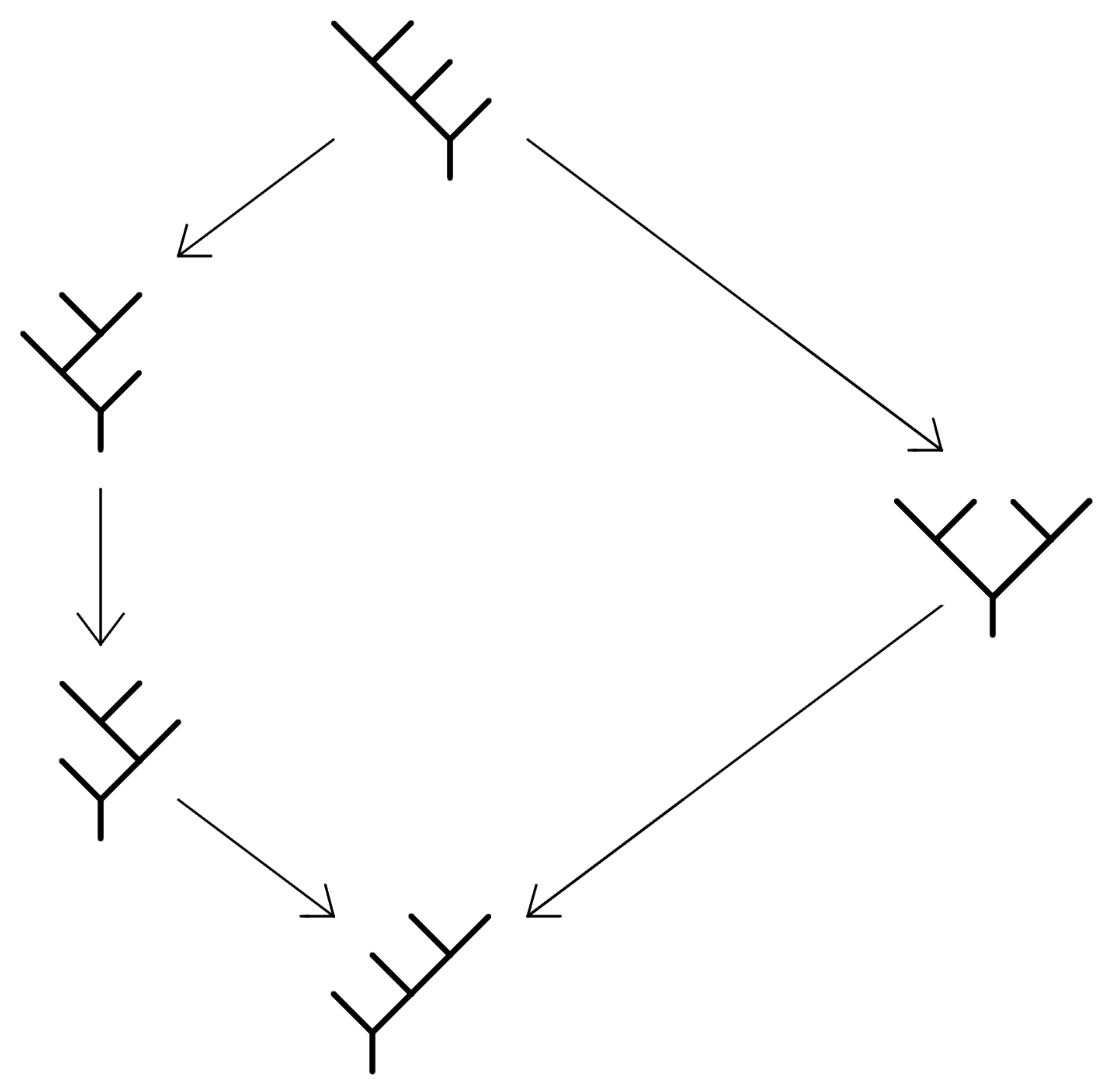}
\end{center}
\end{exam}

\subsection*{Conclusion} If every critical monomial is confluent, then the ns operad $\Po$ is Koszul. \\

\begin{ex}
Consider the same example but with the modified associativity relation 
$$\vcenter{\hbox{\includegraphics[scale=0.15]{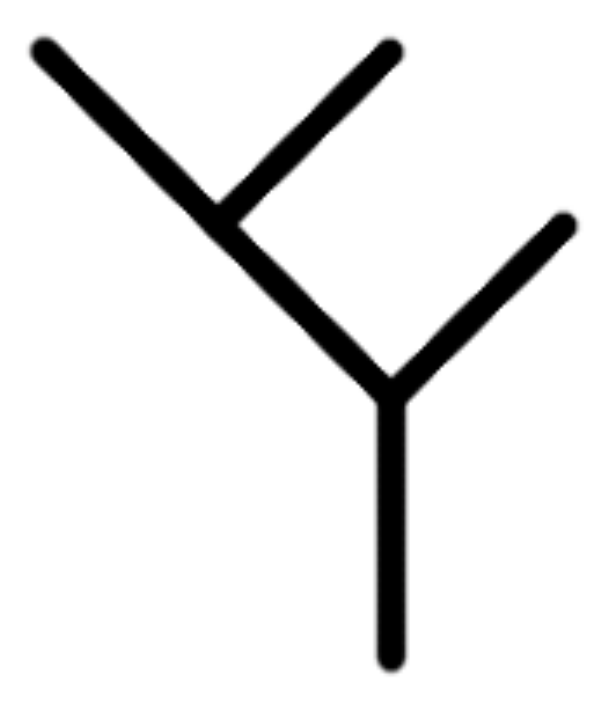}}}
=2\, \vcenter{\hbox{\includegraphics[scale=0.15]{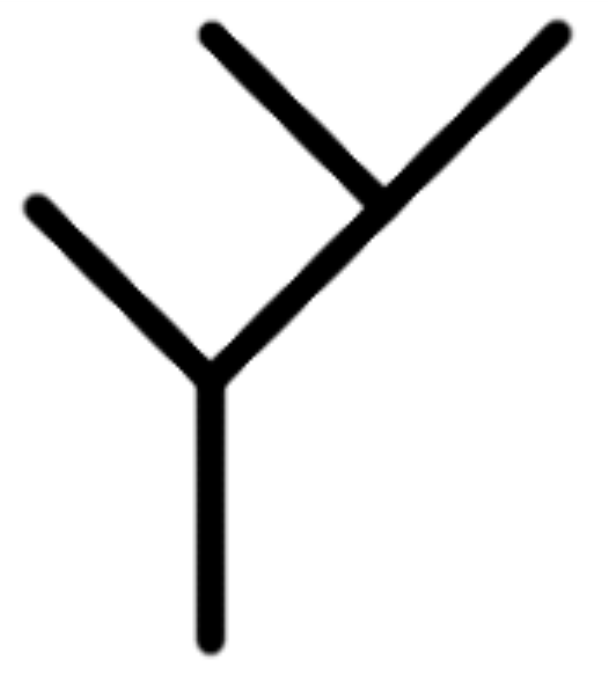}}} \ .$$
Show that the only critical monomial is not confluent. (It can be proved that this ns operad is not Koszul.)
\end{ex}

The general method holds for quadratic  operads $\Po(E;R)$ as follows. We forget the symmetric group action and we consider a $\KK$-linear ordered basis $\mu_1<\mu_2< \cdots < \mu_k$ for the generating space 
$E$. Then, the only difference lies in Step~$2$, where one has to use a good  basis for the free operad $\TTT(E)$. To this end, we introduce the set $\TTT_\shuffle$ of  \emph{shuffle trees} , which are 
 planar rooted trees equipped with a bijective labeling of the leaves by integers $\{1,2,  \ldots ,n\}$ satisfying the following condition. First, we label each edge  by the minimum of the labels of the inputs of the above vertex. Secondly, we require that, for each vertex, the labels of the inputs, read from left to right, are increasing.
 
\begin{center}
\includegraphics[scale=0.3]{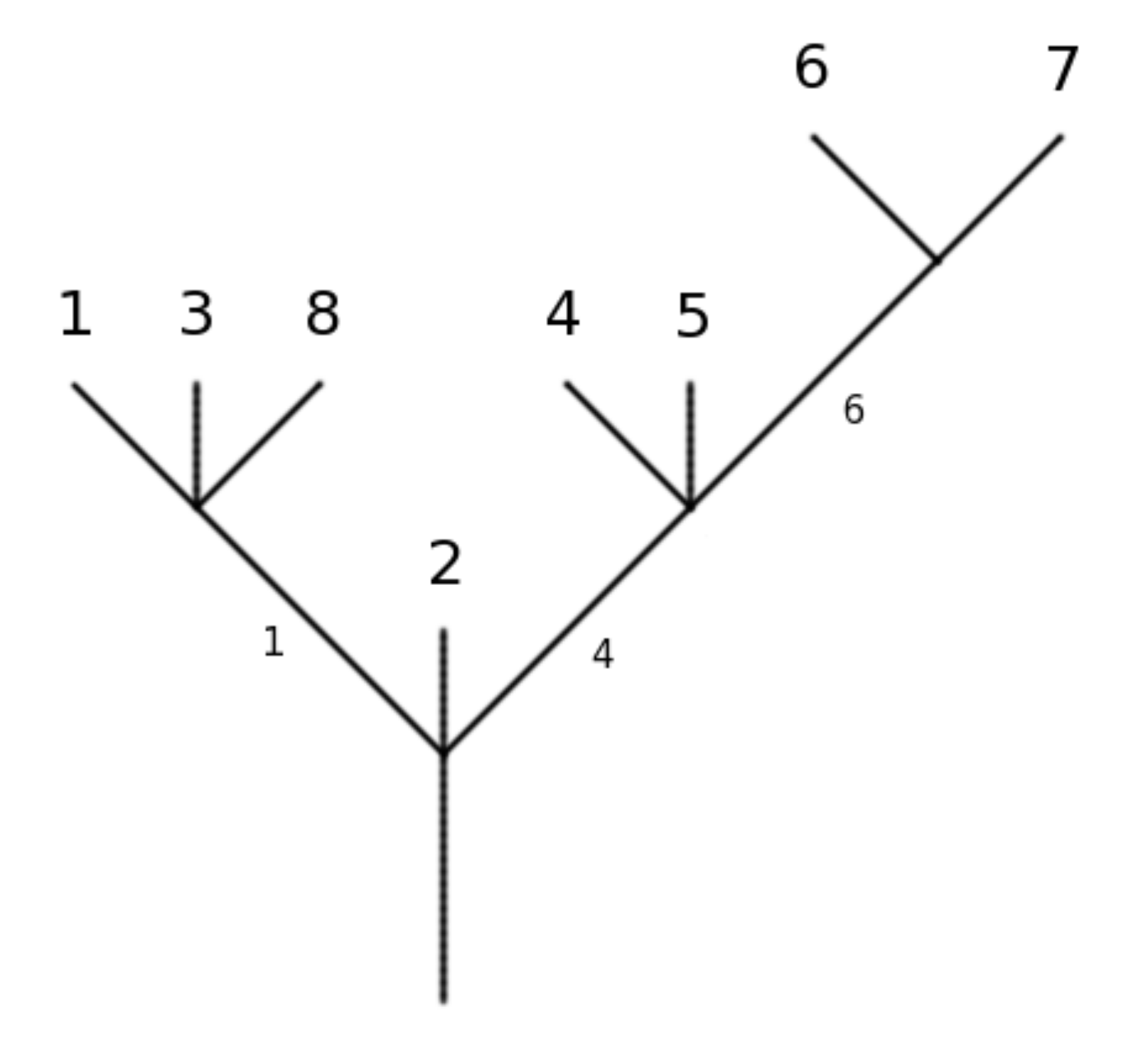}
\end{center}

\begin{prop}[Shuffle tree basis \cite{Hoffbeck10, DotsenkoKhoroshkin10}]
The set $\TTT_\shuffle(\lbrace 1, 2, \ldots , k \rbrace)$ of shuffle trees with vertices labelled by $1, 2, \ldots, k$ 
forms  a $\KK$-linear basis of the free operad $\TTT(E)$.
\end{prop}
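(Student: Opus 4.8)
The plan is to factor the statement through the theory of shuffle operads. Forgetting the actions of the symmetric groups defines a forgetful functor from $\Sy$-modules to $\NN$-modules, and the crucial point is that this functor is monoidal from the symmetric composite product (whose monoids are symmetric operads) to the \emph{shuffle composite product} of Dotsenko--Khoroshkin (whose monoids are shuffle operads). Granting this, the forgetful functor carries the free symmetric operad $\TTT(E)$ to the free shuffle operad on the underlying $\NN$-module of $E$. It therefore suffices to establish two things: first, that free shuffle operads admit an explicit basis indexed by decorated shuffle trees; and second, that the underlying $\NN$-module of $\TTT(E)$ really does coincide with that free shuffle operad.

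For the first point, I would argue by induction on the number of vertices. The shuffle composite product $M \circ_\shuffle N$ is built from \emph{pointed partitions} of the leaf set, whose blocks are ordered by their minimal elements; an elementary check shows that such pointed partitions correspond bijectively to the leaf-labelings satisfying exactly the shuffle condition of the statement---each internal edge carrying the minimum of the labels above it, these minima increasing from left to right at every vertex. Writing the free shuffle operad as the sum over all trees of the spaces of their decorations and iterating the composite product, the induction then shows that the decorated shuffle trees form a $\KK$-linear basis: the shuffle condition is precisely the normal form that removes the redundancy which the symmetric coinvariants would otherwise introduce.

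For the second point---the comparison---one unwinds the definition of the symmetric composite product, in which each factor $M(k)$ is tensored over $\Sy_k$ with its inputs. After applying the forgetful functor, these coinvariants are computed by selecting, within each $\Sy$-orbit of planar decorated trees, the unique representative whose leaf labels have increasing minima at every vertex, that is the unique \emph{shuffle} representative. Summing over shuffle trees thus provides a transversal computing the coinvariants without redundancy, which identifies the forgetful image of $\TTT(E)$ with the free shuffle operad and, combined with the first point, yields the desired basis.

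The main obstacle is this comparison step. Since the chosen ordered basis $\mu_1 < \cdots < \mu_k$ of $E$ is \emph{not} stable under the symmetric group actions, one cannot reason naively on sets of decorated trees; the argument has to take place at the level of the monoidal forgetful functor, where the content is exactly that every orbit contains a single shuffle representative and that this choice is compatible with grafting. Establishing this uniqueness of shuffle representatives, and hence the monoidality of the forgetful functor, is the crux; once it is in hand, the inductive description of free shuffle operads finishes the proof.
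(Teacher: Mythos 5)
Your proposal is correct and is essentially the argument of the sources the paper cites: the paper itself offers no proof of this proposition, deferring to \cite{Hoffbeck10, DotsenkoKhoroshkin10} and to \cite[Chapter~8]{LodayVallette10}, where the statement is established exactly as you describe, via the monoidal forgetful functor to the shuffle composite product and the shuffle-tree basis of free shuffle operads. Your identification of the crux---that each $\Sy$-orbit in the composite product admits a unique shuffle representative, compatibly with grafting---is the right one and is what makes the comparison with the free shuffle operad work.
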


We then consider a \emph{suitable order} on shuffle trees, that is a total order, which makes the partial compositions into strictly increasing maps.  
For instance, the \emph{path-lexicographic order}, defined in \cite{Hoffbeck10},  is a suitable order. On binary shuffle trees with $2$ vertices, it is given by 
\begin{eqnarray*}
& \vcenter{\xymatrix@R=8pt@C=8pt@M=0pt{ &*+<8pt>{2}\ar@{-}[dr]& &*+<8pt>{3} \ar@{-}[dl]\\
*+<8pt>{1}\ar@{-}[dr] && *+<10pt>[o][F-]{1}\ar@{-}[dl]&    \\
&*+<10pt>[o][F-]{1}\ar@{-}[d] && \\
& && }}
<\vcenter{\xymatrix@R=8pt@C=8pt@M=0pt{ &*+<8pt>{2}\ar@{-}[dr]& &*+<8pt>{3} \ar@{-}[dl]\\
*+<8pt>{1}\ar@{-}[dr] && *+<10pt>[o][F-]{2}\ar@{-}[dl]&    \\
&*+<10pt>[o][F-]{1}\ar@{-}[d] & \\&
& && }}< \cdots <\vcenter{\xymatrix@R=8pt@C=8pt@M=0pt{ &*+<8pt>{2}\ar@{-}[dr]& &*+<8pt>{3} \ar@{-}[dl]\\
*+<8pt>{1}\ar@{-}[dr] && *+<10pt>[o][F-]{k}\ar@{-}[dl]&    \\
&*+<10pt>[o][F-]{1}\ar@{-}[d] && \\
& && }}<
\vcenter{\xymatrix@R=8pt@C=8pt@M=0pt{ &*+<8pt>{2}\ar@{-}[dr]& &*+<8pt>{3} \ar@{-}[dl]\\
*+<8pt>{1}\ar@{-}[dr] && *+<10pt>[o][F-]{1}\ar@{-}[dl]&    \\
&*+<10pt>[o][F-]{2}\ar@{-}[d] && \\
& && }}<& \\
%%%%%%%%%%%%%%%%%%%%%%%%
&\cdots <\vcenter{\xymatrix@R=8pt@C=8pt@M=0pt{ &*+<8pt>{2}\ar@{-}[dr]& &*+<8pt>{3} \ar@{-}[dl]\\
*+<8pt>{1}\ar@{-}[dr] && *+<10pt>[o][F-]{k}\ar@{-}[dl]&    \\
&*+<10pt>[o][F-]{k}\ar@{-}[d] && \\
& && }}<
\vcenter{\xymatrix@R=8pt@C=8pt@M=0pt{ *+<8pt>{1}\ar@{-}[dr]& &*+<8pt>{3}\ar@{-}[dl]  &\\
 & *+<10pt>[o][F-]{1}\ar@{-}[dr]& & *+<8pt>{2} \ar@{-}[dl]  \\
&&*+<10pt>[o][F-]{1}\ar@{-}[d] & \\
& && }}<
\vcenter{\xymatrix@R=8pt@C=8pt@M=0pt{ *+<8pt>{1}\ar@{-}[dr]& &*+<8pt>{2}\ar@{-}[dl]  &\\
 & *+<10pt>[o][F-]{1}\ar@{-}[dr]& & *+<8pt>{3} \ar@{-}[dl]  \\
&&*+<10pt>[o][F-]{1}\ar@{-}[d] & \\
& && }}<\vcenter{\xymatrix@R=8pt@C=8pt@M=0pt{ *+<8pt>{1}\ar@{-}[dr]& &*+<8pt>{3}\ar@{-}[dl]  &\\
 & *+<10pt>[o][F-]{2}\ar@{-}[dr]& & *+<8pt>{2} \ar@{-}[dl]  \\
&&*+<10pt>[o][F-]{1}\ar@{-}[d] & \\
& && }}<
\cdots \ .&
\end{eqnarray*}

The rewriting method relies on the following result. 

\begin{theo}[Rewriting method] Let $\Po(E;R)$ be a quadratic (resp. nonsymmetric) operad. If its generating space $E$ admits an ordered basis, for which there exists a suitable order on shuffle (resp. planar) trees such that every critical monomial is confluent, then the (resp. nonsymmetric) operad $\Po$ is Koszul. 
\end{theo}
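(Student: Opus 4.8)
The plan is to convert the combinatorial confluence hypothesis into a homological statement by producing a Gröbner-type basis, following the route: confluent rewriting system $\Rightarrow$ PBW basis $\Rightarrow$ Koszulness of the associated monomial operad $\Rightarrow$ Koszulness of $\Po$. First I would dispose of the symmetric group. By the Shuffle tree basis proposition the free operad is modeled on shuffle trees, and in characteristic $0$ the whole Koszul apparatus (the Koszul dual cooperad $\Po^{\ac}$, the bar construction, the Koszul complex) depends only on the underlying \emph{shuffle} operad; so the symmetric case reduces to the shuffle case and it suffices to treat a quadratic presentation whose free object carries a tree-monomial basis (planar or shuffle) together with a suitable order.

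Second, I would read the normalized relators of Step~$2$ as a rewriting system, each relator rewriting its leading term into the strictly smaller tree-monomials on its right-hand side, as in Step~$3$. Since the suitable order is total and makes the partial compositions strictly increasing, it is well-founded on each arity, so the rewriting system is \emph{terminating}: no infinite chain of rewritings exists. Third, I would invoke Newman's Diamond Lemma: for a terminating system, global confluence is equivalent to local confluence, and local confluence is detected by the minimal overlaps, which are exactly the \emph{critical monomials} of Step~$3$ (tree-monomials both of whose two-vertex subtrees are leading terms). The hypothesis that every critical monomial is confluent therefore upgrades to global confluence.

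Fourth, termination together with confluence yields unique normal forms: the normal tree-monomials are those containing no leading term as a subtree, and they span $\Po(E;R)$; a counting argument shows they form a basis. This exhibits $\Po$ as a PBW, or Gröbner, operad, sharing its normal-monomial basis, and hence its Poincaré series, with the \emph{monomial operad} $\Po^{lt}$ presented by the quadratic monomial relations ``leading term $=0$''. Finally I would use the two homological inputs: a quadratic monomial operad is Koszul, a base case that can be checked directly since its Koszul dual cooperad and bar complex are purely combinatorial and the Koszul complex is visibly acyclic; and Koszulness transfers from the associated graded to $\Po$. Concretely, the suitable order filters $\Po$ with $\mathrm{gr}\,\Po \cong \Po^{lt}$, and the induced filtration on the Koszul complex of $\Po$ gives a spectral sequence whose first page is the acyclic Koszul complex of $\Po^{lt}$; the coincidence of Poincaré series forces degeneration, so the Koszul complex of $\Po$ is acyclic and $\Po$ is Koszul.

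The hard part will be this last transfer of Koszulness along the filtration: one must set up the filtration on the Koszul (or bar--cobar) complex so that its associated graded is precisely the corresponding complex of $\Po^{lt}$, and then argue that the equality of Poincaré series, guaranteed by the common normal-monomial basis, is exactly the numerical input making the comparison spectral sequence collapse. The purely combinatorial verification that every quadratic monomial operad is Koszul is the self-contained lemma feeding this step, and it is best proved once and for all, independently of the particular presentation at hand.
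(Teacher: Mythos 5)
Your proposal is correct and follows essentially the same route as the paper, whose own ``proof'' consists of a citation to the operadic Diamond Lemma of Dotsenko--Khoroshkin and to Chapter~8 of Loday--Vallette: your chain (reduction to shuffle trees, termination from the suitable order, Newman's lemma on critical monomials, PBW/Gr\"obner basis, Koszulness of the associated monomial operad, transfer along the filtration) is precisely the argument carried out in those references. The only cosmetic quibble is that linear independence of the normal monomials follows directly from uniqueness of normal forms rather than from a separate counting argument.
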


\begin{proof}
This theorem is proved by a version of the Diamond Lemma of George M. Bergman \cite{Bergman78} for operads, see \cite{DotsenkoKhoroshkin10} and \cite[Chapter~$8$]{LodayVallette10}.
\end{proof}

\begin{ex}
Apply the  rewriting method to the operad $Lie$ to show that it is Koszul. 
First, prove that, under the path-lexicographic order, the Jacobi relation becomes the following rewriting rule:

\begin{center}
\includegraphics[scale=0.2]{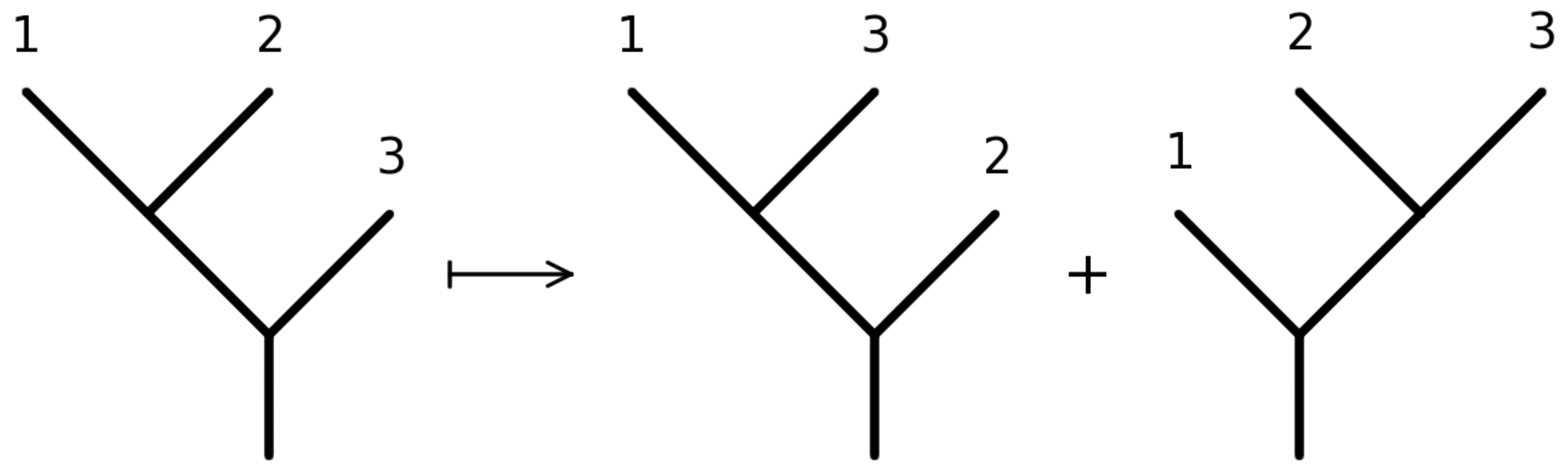} 
\end{center}

Finally, show that there is only one critical monomial, which is confluent:

\begin{center}
\includegraphics[scale=0.15]{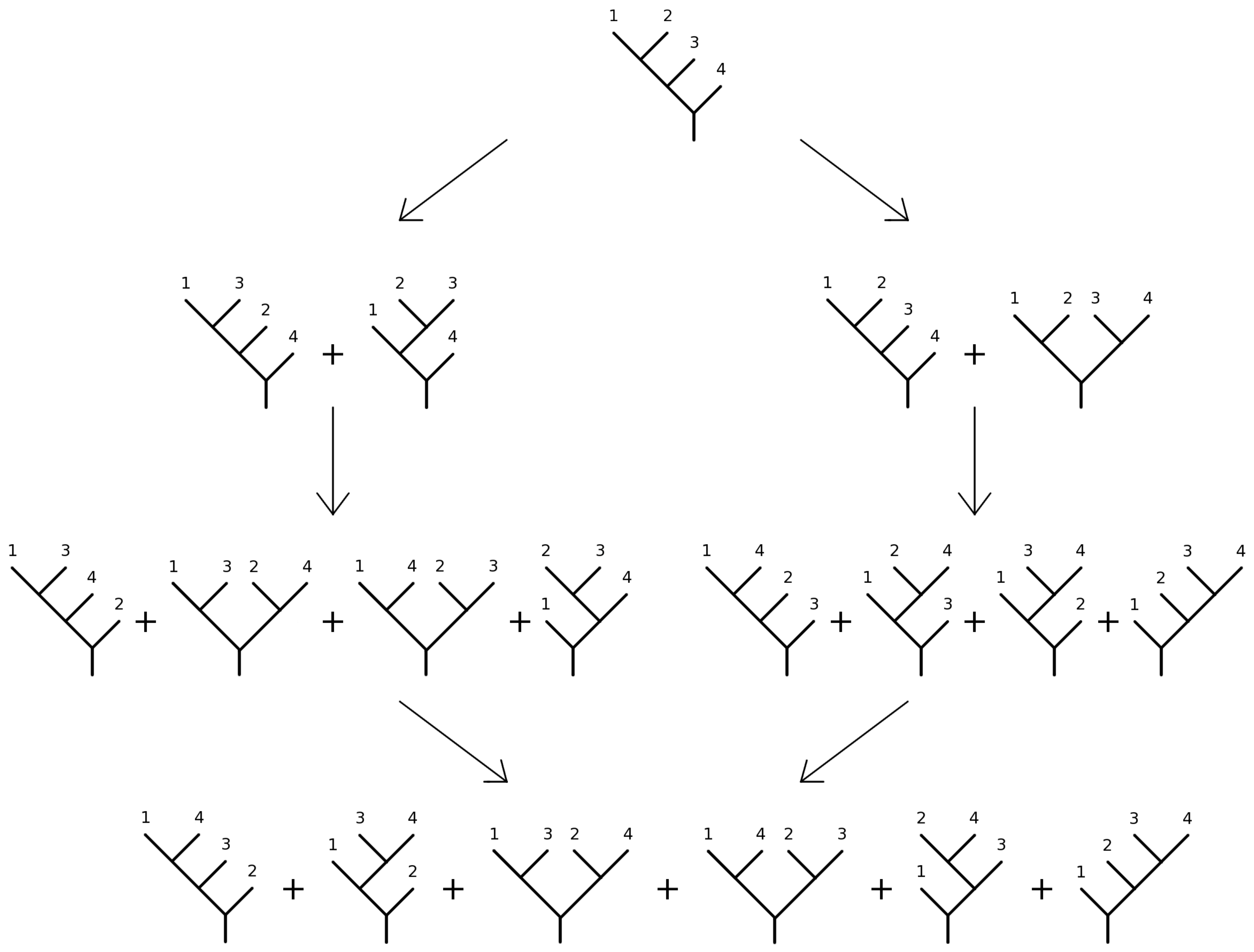}
\end{center}
\end{ex}

\begin{rema}
Notice that the graph given here is a compact version of the full rewriting graph of the operad $Lie$. There is another way to draw this rewriting diagram, which gives the Zamolodchikov tetrahedron equation. It leads to the categorical notion of \emph{Lie $2$-algebra}, see Baez-Crans \cite[Section~$4$]{BaezCrans04}.
\end{rema}

\begin{ex}
Apply the  rewriting method to the operad $Com$, to show that it is Koszul. 
\end{ex}

When the rewriting method applies, the operad $\Po$ admits a $\KK$-linear basis made up of some planar trees called a \emph{Poincar\'e-Birkhoff-Witt basis}, \emph{PBW basis} for short. The PBW basis is given by the set of shuffles (resp. planar) trees with vertices labelled by $1, 2, \ldots, k$ such that no sub-tree with $2$ vertices is a leading term. 

\begin{exams}$ \ $ 

\begin{itemize}
\item[$\diamond$] The ns operad $As$ admits a 
PBW basis  made up of the right combs: 
\begin{center}
\includegraphics[scale=0.2]{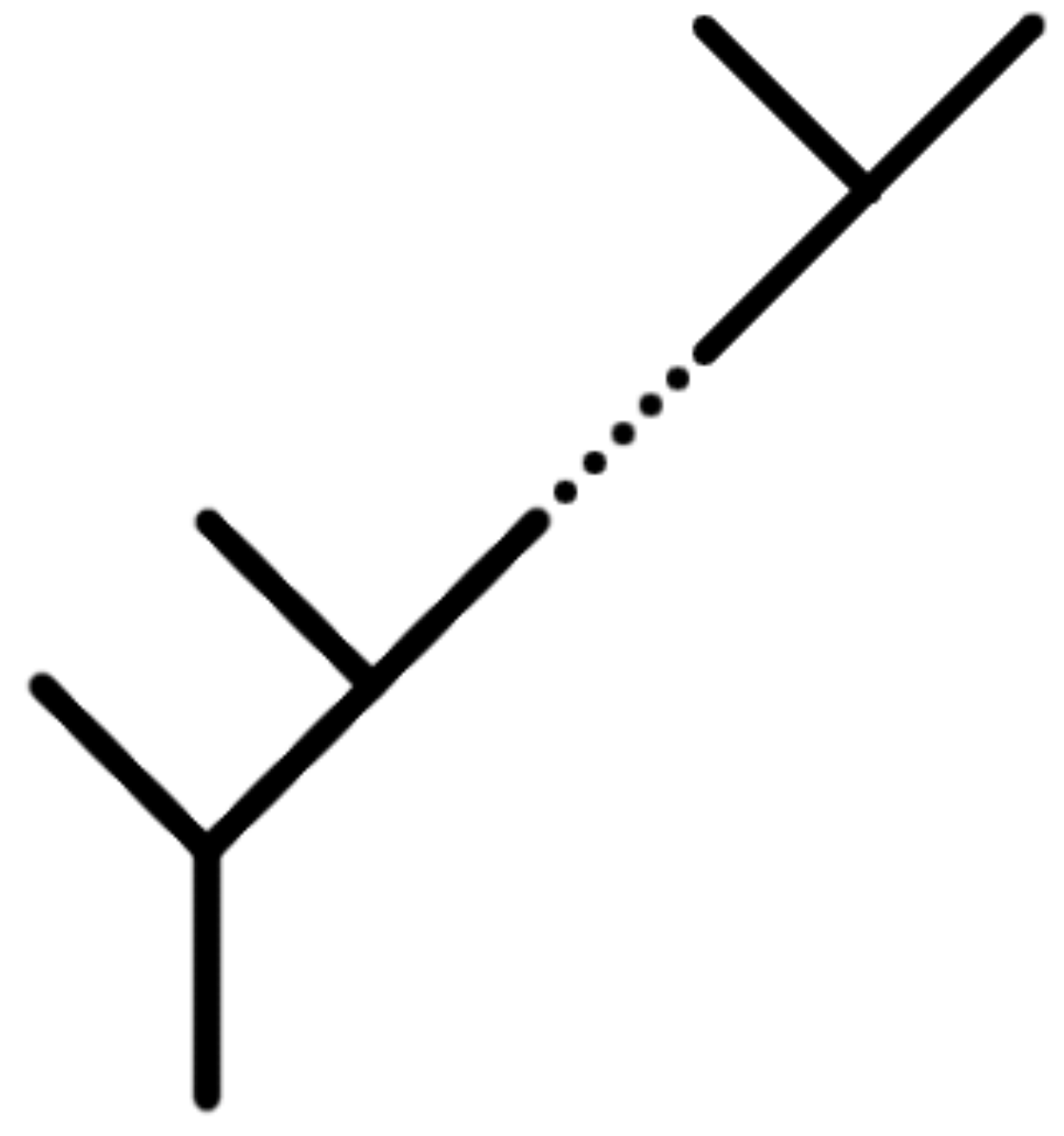}
\end{center}

\noindent
It is interesting to notice that one recovers exactly Mac Lane's coherence theorem for (non-unital) monoidal categories \cite{MacLane98}. This is not a so big surprise since both actually  relies on the Diamond Lemma. With this remark in mind, the reading of \cite[Section~$\text{VII-}2$]{MacLane95} enjoys another savor. 

\smallskip

\item[$\diamond$] The 
$MR_n$-trees, defined in Section~\ref{subsec:SymmetricOp}, form the PBW basis of the 
operad $Lie$. It appears quite often in the literature when one wants to prove that some representation of the symmetric group, coming either from algebraic topology or algebraic combinatorics, is isomorphic to $Lie(n)$, see for instance \cite{Cohen76, RobinsonWhitehouse02,  Turchin06, SalvatoreTauraso09}.
\end{itemize}
\end{exams}

The notion of operadic PBW basis provides a special basis for quotient operads $\Po(E; R)=\TTT(E)/(R)$. It was introduced by Eric Hoffbeck in \cite{Hoffbeck10}. The dual notion of \emph{Gr\"obner basis} for the operadic ideal $(R)$ was introduced by Vladimir Dotsenko and Anton Khoroshkin in \cite{DotsenkoKhoroshkin10}.

\section{Homotopy Transfer Theorem}

The purpose of this section is to extend all the results given in Section~\ref{Sec:Alg+Homo} on the level of associative algebras to any category of algebras over a Koszul operad $\Po$. We first state a  Rosetta stone, which gives four equivalent definitions of a $\Po_\infty$-algebra. The Homotopy Transfer Theorem can be proved with the third equivalent definition and the notion of an $\infty$-morphism is defined with the fourth one. This provides us with all the necessary tools to study the homotopy theory of $\Po_\infty$-algebras.

\subsection{The Rosetta stone for homotopy algebras}
In this section, we give three other equivalent definitions of a  $\Po_\infty$-algebra.\\

Let $\CCC$ be a dg cooperad and let $\Qo$ be a dg operad. Their partial compositions and decompositions endow the space of $\Sy$-equivariant maps 
$$\Hom_\Sy(\CCC, \Qo):=\prod_{n\in \NN} \Hom_{\Sy_n}(\CCC(n), \Qo(n))$$ 
with a dg Lie algebra structure $\big({\Hom}_\Sy(\CCC, \Qo), [\; ,\,], \partial\big)$, called the \emph{convolution algebra}. 

\begin{defi}[Twisting morphism]
A \emph{twisting morphism} $\alpha$ is a solution $\alpha: \CCC \to  \Qo$ of degree $-1$ to the 
 the Maurer-Cartan equation 
$$\partial (\alpha) +\frac{1}{2}[\alpha, \alpha] =0 $$
in the convolution  dg Lie algebra.
The associated set is denoted by $\Tw(\CCC, \Qo)$.
\end{defi}

This notion is sometimes called ``twisting cochain'' in algebraic topology.
This  bifunctor can be represented both on the left-hand side and on the right-hand side by the following \emph{bar-cobar adjunction}. 
$$\Omega \ : \ \textsf{(augmented) dg cooperads} \rightleftharpoons
\textsf{(conilpotent) dg operads} \ : \ \B\ .$$ 

\begin{defi}[Bar and cobar constructions]
The \emph{bar construction} $\B \Qo$ of an (augmented) dg operad $(\Qo, d_\Qo)$ is the dg cooperad 
$$\B \Qo := (\TTT^c(s\overline{\Qo}), d_1+d_2)\ , $$
where $d_1$ is the unique coderivation which extends $d_\Qo$ and where $d_2$ is the unique coderivation 
which extends the partial compositions of the operad $\Qo$. (An augmented operad is an operad such that $\Qo \cong \I \oplus \overline{\Qo}$ is a morphism of operads).

 The \emph{cobar construction} $\Omega \CCC$ of a (coaugmented) dg cooperad $(\CCC, d_\CCC)$ is the dg operad 
$$\Omega \, \CCC := (\TTT(s^{-1}\overline{\CCC}), d_1+d_2)\ , $$
where $d_1$ is the unique derivation which extends $d_\CCC$ and where $d_2$ is the unique derivation 
which extends the partial decompositions of the cooperad $\CCC$. 
\end{defi}

Notice that the Koszul resolution of Section~\ref{subsec:KoszulDuality} is given by the cobar construction of the Koszul dual cooperad: 
$$\Po_\infty=\Omega \, \Po^{\ac}\ . $$

We apply the preceding definitions to the dg cooperad $\CCC:=\Po^{\ac}$ and to the dg operad $\Qo:=\End_A$. Let us begin with 
$$\Hom_\Sy(\Po^{\ac}, \End_A)=\prod_{n \in \NN} \Hom_{\Sy_n}(\Po^{\ac}(n), \Hom(A^{\otimes n}, A))
\cong \Hom\left(\bigoplus_{n\in \NN}\Po^{\ac}(n)\otimes_{\Sy_n} A^{\otimes n}, A\right)\ .$$ 
We denote by $\Po^{\ac}(A):=\bigoplus_{n\in \NN}\Po^{\ac}(n)\otimes_{\Sy_n} A^{\otimes n}$; this is the cofree $\Po^{\ac}$-coalgebra structure on $A$. Since coderivations on cofree $\Po^{\ac}$-coalgebras are completely characterized by their projection $\Po^{\ac}(A) \to A$ onto the space of generators, we get 
$\Hom_\Sy(\Po^{\ac}, \End_A)\cong \mathrm{Coder}(\Po^{\ac}(A))$. Under this isomorphism, twisting morphisms correspond to \emph{square-zero} coderivations, that we call \emph{codifferentials} and the set of which we  denote $\mathrm{Codiff}(\Po^{\ac}(A))$.

\begin{theo}[Rosetta Stone]\label{thm:RosettaStone}
The set of $\Po_\infty$-algebra structures on a dg module $A$ is equivalently given by 
$$\boxed{\underbrace{\Hom_{\mathsf{dg\ Op}}(\Omega \, \Po^{\ac},\, {\End}_A)}_{\rm Definition} \cong  
\underbrace{\Tw(\Po^{\ac},\, {\End}_A)}_{\rm Deformation \ theory} \cong 
\underbrace{\Hom_\mathsf{dg\ Coop}(\Po^{\ac},\, \B \, {\End}_A)}_{\rm HTT}
\cong 
\underbrace{\mathop{\rm Codiff}(\Po^{\ac}(A))}_{\rm \infty-morphism}.}$$ 
\end{theo}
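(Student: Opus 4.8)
The plan is to treat the set of twisting morphisms $\Tw(\Po^{\ac}, \End_A)$ as the central object and to establish each of the remaining three characterisations as a bijection with it. Two of these bijections are instances of the \emph{bar--cobar adjunction} applied to the dg cooperad $\CCC := \Po^{\ac}$ and the dg operad $\Qo := \End_A$, while the last one is exactly the coderivation computation recalled immediately before the statement. Throughout, a twisting morphism is a degree $-1$ map $\alpha$ satisfying the Maurer--Cartan equation, and the whole point is that the same $\alpha$ appears, suitably repackaged, in all four boxes.

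First I would treat the equivalence $\Hom_{\mathsf{dg\ Op}}(\Omega\,\Po^{\ac}, \End_A) \cong \Tw(\Po^{\ac}, \End_A)$. Since $\Omega\,\Po^{\ac} = (\TTT(s^{-1}\overline{\Po}^{\ac}), d_1 + d_2)$ is free as a graded operad on the $\Sy$-module $s^{-1}\overline{\Po}^{\ac}$, a morphism of graded operads to $\End_A$ is the same datum as a morphism of $\Sy$-modules $s^{-1}\overline{\Po}^{\ac} \to \End_A$, that is, after desuspension, a degree $-1$ map $\alpha : \Po^{\ac} \to \End_A$ (extended by zero on $\I$). It then remains to check that such a graded morphism commutes with the differentials if and only if $\alpha$ is a twisting morphism: the part of the constraint coming from $d_1$ reproduces the term $\partial(\alpha)$, while the part coming from $d_2$, which encodes the partial decompositions of $\Po^{\ac}$, reproduces the term $\frac{1}{2}[\alpha,\alpha]$. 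Thus compatibility with the differentials is precisely the Maurer--Cartan equation.

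Next I would dualise this argument to obtain $\Tw(\Po^{\ac}, \End_A) \cong \Hom_{\mathsf{dg\ Coop}}(\Po^{\ac}, \B\,\End_A)$. Here $\B\,\End_A = (\TTT^c(s\overline{\End_A}), d_1 + d_2)$ is cofree as a connected graded cooperad cogenerated by $s\overline{\End_A}$, so a morphism of graded cooperads out of $\Po^{\ac}$ is determined by its corestriction onto the cogenerators, again a degree $-1$ map $\alpha : \Po^{\ac} \to \End_A$, and the verification that commutation with the differentials amounts to the Maurer--Cartan equation is formally dual to the previous one. Together these two bijections exhibit both $\Omega\,\CCC$ on the left and $\B\,\Qo$ on the right as representing the bifunctor $\Tw(\CCC,\Qo)$.

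Finally, for $\Tw(\Po^{\ac}, \End_A) \cong \Codiff(\Po^{\ac}(A))$ I would invoke the isomorphism of graded vector spaces $\Hom_\Sy(\Po^{\ac}, \End_A) \cong \Coder(\Po^{\ac}(A))$ recalled before the statement, which sends $\alpha$ to the unique coderivation $d_\alpha$ whose projection onto the cogenerators $A$ is induced by $\alpha$. Under this identification the differential $\partial$ of the convolution Lie algebra corresponds to the coderivation $d$ induced by the internal differential $d_A$, and one checks that a twisting morphism $\alpha$ corresponds exactly to a square-zero coderivation $D := d + d_\alpha$: expanding $D^2 = d^2 + (d\, d_\alpha + d_\alpha\, d) + d_\alpha^2$ and using $d^2 = 0$, the cross term reproduces $\partial(\alpha)$ and the term $d_\alpha^2$ reproduces $\frac{1}{2}[\alpha,\alpha]$, so that $D^2 = 0$ is equivalent to the Maurer--Cartan equation. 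The hard part will be precisely this last bookkeeping: one must track the suspension, the desuspension $s^{-1}$ occurring in $\Omega$, and the Koszul signs carefully enough to see that the self-composition $d_\alpha^2$ matches the convolution term $\frac{1}{2}[\alpha,\alpha]$ rather than merely $[\alpha,\alpha]$, and that the three a priori different-looking degree $-1$ maps are literally the same $\alpha$. Once the sign conventions are fixed consistently, all four boxes are seen to classify the same element $\alpha \in \Tw(\Po^{\ac}, \End_A)$, which proves the fourfold equivalence.
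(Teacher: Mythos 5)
Your proposal is correct and follows essentially the same route the paper takes: the bar--cobar adjunction represents the twisting-morphism bifunctor on both sides (giving the first and third bijections via freeness of $\Omega\,\Po^{\ac}$ and cofreeness of $\B\,\End_A$ among connected cooperads), and the identification $\Hom_\Sy(\Po^{\ac},\End_A)\cong\mathrm{Coder}(\Po^{\ac}(A))$ sending Maurer--Cartan elements to codifferentials gives the fourth, exactly as in the discussion preceding the statement. The paper leaves all of this implicit rather than writing out a formal proof, so your version, including the sign/suspension bookkeeping you flag as the delicate point, is a faithful expansion of the intended argument.
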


We indicated which equivalent definition suits which question best. The first one was used in Section~\ref{sec:OpSyzygies} to define $\Po_\infty$-algebras. The second one is used to study the deformation theory of 
$\Po$-algebras and $\Po_\infty$-algebras, see \cite[Section~$12.2$]{LodayVallette10}. The third one will be used in Section~\ref{subsec:HTT} to state the Homotopy Transfer Theorem (HTT). We are going to use the fourth one in the next section to define the notion of an $\infty$-morphism between $\Po_\infty$-algebras.

%\espace

%Recall that the notion of a coassociative coalgebra is the dual notion of an associative algebra: it is  a vector space $C$ equipped with a coassociative decomposition map $C \to C \otimes C$. The tensor module $\bar{T}^c(V):=\bigoplus_{k \ge 1} V^{\otimes k}$ equipped with the deconcatenation map 
%$$v_1  \cdots v_{k} \mapsto \sum_{i=1}^{k-1} v_1 \cdots v_i \otimes v_{i+1} \cdots v_{k} \ .$$
%forms a coassociative coalgebra, which 

%\begin{ex}
%Show that the data $\lbrace \mu_n : A^{\otimes n} \to n \rbrace_{n\ge 2}$ of an $A_\infty$-algebra structure on a chain complex $(A, d)$ is equivalent to a square-zero coderivation on the cofree (non-unital) coassociative coalgebra $\bar{T}^c(sA)$. 
%\end{ex}

\begin{ex}
Describe the aforementioned four equivalent definitions in the case of $A_\infty$-algebras. 

\noindent 
\texttt{Hint.} The Koszul resolution $\Omega \, As^{\ac}$ was described in \ref{subsec:Syzygies}. The convolution dg Lie algebra $\Tw(As^{\ac},\, {\End}_A)$ is the Hochschild cochain complex together with the Gerstenhaber bracket \cite{Gerstenhaber63}. Finally, the data of an $A_\infty$-algebra structure on $A$ is equivalently given by  a square-zero coderivation on the noncounital cofree coassocative coalgebra $\overline{T}^c(sA)$. 
\end{ex}

\begin{ex}
Describe the aforementioned four equivalent definitions in the case of $L_\infty$-algebras. 

\noindent 
\texttt{Hint.} The Koszul resolution $\Omega \, Lie^{\ac}$ was described in \ref{subsec:KoszulDuality}. The convolution dg Lie algebra $\Tw(Lie^{\ac},\, {\End}_A)$ is the Chevalley-Eilenberg cochain complex together with the Nijenhuis-Richardson bracket  \cite{NijenhuisRichardson66, NijenhuisRichardson67}.
Finally, the data of an $L_\infty$-algebra structure on $A$ is equivalently given by a square-zero coderivation on the noncounital cofree cocommutative coalgebra $\overline{S}^c(sA)$. 
\end{ex}

\subsection{$\infty$-morphism} 
We use the fourth definition given in the Rosetta stone to define a notion of morphisms between $\Po_\infty$-algebras with nice properties. 

\begin{defi}[$\infty$-morphism]
Let $A$ and $B$ be two $\Po_\infty$-algebras. An \emph{$\infty$-morphism} $A \rightsquigarrow B$ between $A$ and $B$ is a morphism $\Po^{\ac}(A) \to \Po^{\ac}(B)$ of dg $\Po^{\ac}$-coalgebras.
\end{defi}

Two such morphisms are obviously composable and the associated category is denoted 
$\infty\textsf{-}\Po_\infty\textsf{-alg}$.

\begin{ex}
Show that, in the case of the ns operad $As$, ones recovers the definition of an $\Ai$-morphism given in Section~\ref{subsec:AiMorph}. 
\end{ex}

\begin{ex}
Make explicit the notion of $\infty$-morphism between $L_\infty$-algebras. This notion is called \emph{$L_\infty$-morphism} in the litterature. 
\end{ex}

As a morphism to a cofree $\Po^{\ac}$-coalgebra, an $\infty$-morphism is completely charactrized by its projection  $\Po^{\ac}(A) \to B$ onto the space of generators. Such a data is equivalent to giving a morphism of $\Sy$-modules 
$\Po^{\ac} \to \End^A_B$, where $\End^A_B:=\lbrace \Hom(A^{\otimes n}, B)\rbrace_{n\in \NN}$. The commutativity of the respective codifferentials translates into some relation satisfied by this latter map $\Po^{\ac} \to \End^A_B$.

\begin{ex}
Make this relation explicit in terms of the decomposition maps and the partial decompositions of the cooperad $\Po^{\ac}$.
\end{ex}

For any $\infty$-morphism $f : A \rightsquigarrow B$, the image of $\I \in \Pac$ produces a chain map $f_1  : A\to B$. 

\begin{defi}[$\infty$-isomorphism and $\infty$-quasi-isomorphism]
When the map $f_1$ is an isomorphism (resp. a quasi-isomorphism), the $\infty$-morphism  $f$ is called an \emph{$\infty$-isomorphism} (resp. an \emph{$\infty$-quasi-isomorphism}). 
\end{defi}

\begin{prop}
The $\infty$-isomorphisms are the invertible morphisms of the category $\infty\textsf{-}\Po_\infty\mathsf{-alg}$. 
\end{prop}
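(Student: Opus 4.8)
The plan is to read everything through the fourth description of the Rosetta Stone (Theorem~\ref{thm:RosettaStone}): an $\infty$-morphism $f : A \rightsquigarrow B$ is the same datum as a morphism $F : \Pac(A) \to \Pac(B)$ of dg $\Pac$-coalgebras, and under this dictionary composition of $\infty$-morphisms is composition of coalgebra morphisms while the identity $\infty$-morphism is the identity of $\Pac(A)$. Hence $f$ is invertible in $\infty\textsf{-}\Po_\infty\textsf{-alg}$ if and only if $F$ is invertible in the category of dg $\Pac$-coalgebras. Since $\Pac(A)$ is cofree, $F$ is determined by its components $\Pac \to \End^A_B$, and its restriction to the weight-zero part $\KK\,\I \subset \Pac$ is exactly the chain map $f_1 : A \to B$; I will call $f_1$ the \emph{linear part} of $F$.

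For the easy implication, suppose $f$ admits a two-sided inverse $g$ in the category. Passing to linear parts is functorial, $(g\circ f)_1 = g_1\circ f_1$, and sends the identity $\infty$-morphism to $\Id_A$; so $g\circ f = \Id_A$ and $f\circ g = \Id_B$ give $g_1 f_1 = \Id_A$ and $f_1 g_1 = \Id_B$. Thus $f_1$ is an isomorphism and $f$ is an $\infty$-isomorphism.

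Conversely, assume $f_1$ is an isomorphism. The key step is the purely coalgebraic statement that a morphism $F : \Pac(A) \to \Pac(B)$ of cofree $\Pac$-coalgebras, forgetting the codifferentials, is invertible as soon as its linear part $f_1$ is. This is the operadic avatar of the fact that a power series with invertible leading coefficient is invertible, and I would prove it by building a left inverse $G$ component by component along the weight grading of $\Pac$. Unfolding $G\circ F$ by means of the decomposition maps of the cooperad $\Pac$, its weight-$w$ component has the shape
\[
(G\circ F)^{(w)} = g_1 \circ f^{(w)} + g^{(w)}\circ f_1^{\otimes w} + \big(\text{terms involving only } g^{(w')},\ 0<w'<w\big).
\]
Setting $g_1 := f_1^{-1}$ and solving $(G\circ F)^{(w)} = 0$ for $w\ge 1$ by induction on $w$, every summand on the right except $g^{(w)}\circ f_1^{\otimes w}$ is already determined, and precomposing with the isomorphism $(f_1^{\otimes w})^{-1}$ fixes $g^{(w)}$ uniquely. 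This yields $G$ with $G\circ F = \Id$. As $G$ again has invertible linear part $f_1^{-1}$, the same construction provides a left inverse $H$ of $G$; then $F = H\circ(G\circ F) = (H\circ G)\circ F$, so $H = H\circ(G\circ F) = (H\circ G)\circ F = F$, whence $F\circ G = H\circ G = \Id$ and $F$ is a two-sided isomorphism of graded coalgebras with inverse $G$.

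Finally, I would promote $G$ to a morphism of \emph{dg} coalgebras. Writing $d$ for the codifferentials on both sides, the relation $d\,F = F\,d$ expressing that $f$ is an $\infty$-morphism gives $d\,G = G\,F\,d\,G = G\,d\,F\,G = G\,d$, so $G$ commutes with the codifferentials too. Therefore $G$ corresponds to an $\infty$-morphism $g : B \rightsquigarrow A$, which is the two-sided inverse of $f$ in $\infty\textsf{-}\Po_\infty\textsf{-alg}$. The main obstacle is the recursive inversion of the third paragraph; once the cofreeness of $\Pac(A)$ and the functoriality of the linear part are in hand, the rest is formal.
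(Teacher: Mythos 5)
Your proof is correct and follows exactly the route the paper takes: its proof consists of the single remark that one should use the weight grading on $\Pac\subset\TTT^c(sE)$ and argue as for the invertibility of power series with invertible first coefficient, which is precisely the recursion you carry out in detail (your only slip is notational: the tensor power of $f_1$ in the weight-$w$ term should be indexed by the arity of the cooperad element rather than by $w$ itself). The additional observations — that passing to linear parts is functorial, and that the inverse of a dg coalgebra morphism automatically commutes with the codifferentials — are exactly the routine points the paper leaves implicit.
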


\begin{proof} We use the weight grading on the Koszul dual cooperad $\Pac \subset \TTT^c(sE)$ and we notice then that the  proof is similar to the proof that power series $a_1 x + a_2 x^2 + \cdots$ with invertible first term   are invertible. 
\end{proof}

\subsection{Homotopy Transfer Theorem}\label{subsec:HTT}
We now  have all the tools to prove the following result, with explicit constructions. 

\begin{theo}[Homotopy Transfer Theorem \cite{GCTV09}]\label{TransferThm}
Let $\Po$ be a Koszul operad and let $(H, d_H)$ be a homotopy retract of $(A, d_A)$:
\begin{eqnarray*}
&\xymatrix{     *{ \quad \ \  \quad (A, d_A)\ } \ar@(dl,ul)[]^{h}\ \ar@<0.5ex>[r]^{p} & *{\
(H,d_H)\quad \ \  \ \quad }  \ar@<0.5ex>[l]^{i}}&\\
& \Id_A-i p =d_A  h+ h  d_A\ \text{and} \ i \  \text{quasi-isomorphism}.
\end{eqnarray*}
Any $\Po_\infty$-algebra structure on $A$ can be transferred into a $\Po_\infty$-algebra structure on $H$ such that $i$ extends to an $\infty$-quasi-isomorphism.
\end{theo}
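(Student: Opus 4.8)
The plan is to read everything through the third description of the Rosetta Stone (Theorem~\ref{thm:RosettaStone}): a $\Po_\infty$-algebra structure on a dg module is the same datum as a morphism of dg cooperads from $\Po^{\ac}$ to the bar construction of the relevant endomorphism operad. Thus the given structure on $A$ is a morphism of dg cooperads $\Phi_A : \Po^{\ac} \to \B\End_A$, and producing a transferred structure on $H$ amounts to producing a morphism of dg cooperads $\Po^{\ac} \to \B\End_H$. I would obtain the latter by precomposing $\Phi_A$ with a universal morphism $\Psi : \B\End_A \to \B\End_H$ built solely out of the homotopy retract data $(i,p,h)$.

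First I would construct $\Psi$. Since $\B\End_H = \TTT^c(s\overline{\End_H})$ is cofree as a (connected) cooperad, a morphism of cooperads into it is determined by its projection onto the cogenerators, that is by a single map $\TTT^c(s\overline{\End_A}) \to s\overline{\End_H}$. I take the \emph{tree-contraction} map: a tree whose vertices are labelled by operations of $A$ is sent to the operation on $H$ obtained by decorating each leaf with $i$, composing the vertex operations along the tree while inserting the homotopy $h$ on every internal edge, and applying $p$ at the root. This assignment defines $\Psi$, and the transferred structure is the composite $\Phi_H := \Psi \circ \Phi_A$. Unravelling $\Phi_H$ through Theorem~\ref{thm:RosettaStone} reproduces, for a general Koszul operad $\Po$, exactly the tree-sum formulae that appear in the $A_\infty$-case of Theorem~\ref{HTTAiQI}.

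The crux is to check that $\Psi$ commutes with the two differentials $d_1 + d_2$ of the bar constructions, for then $\Phi_H$, being a composite of dg cooperad morphisms, is automatically one and hence a genuine $\Po_\infty$-structure. Here the internal differential of $A$ acting on an $h$-labelled edge produces, via the fundamental relation $\Id_A - ip = d_A h + h d_A$, precisely the required correction terms: the $ip$-part reorganizes the contraction into the $d_2$-part that cuts an edge of the tree, while the remaining pieces match the action of $d_A$ at the vertices and the bar differential of $\End_H$. This is the one genuinely combinatorial verification, and I expect the bookkeeping of signs and of boundary edges to be the main obstacle; conceptually, though, it is nothing more than the statement that $(i,p,h)$ is a homotopy retract, lifted to trees.

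Finally I would produce the $\infty$-quasi-isomorphism extending $i$. Using the fourth description, an $\infty$-morphism $H \rightsquigarrow A$ is a morphism of dg $\Po^{\ac}$-coalgebras $\Po^{\ac}(H) \to \Po^{\ac}(A)$, equivalently a map $\Po^{\ac} \to \End^H_A$ of $\Sy$-modules subject to a compatibility with the partial decompositions of $\Po^{\ac}$. I define $i_\infty$ by the \emph{same} tree-contraction formula as above but \emph{omitting} the final $p$, so that the output remains in $A$; its arity-one, weight-zero component is then $\tilde{\imath}_1 = i$. The defining relation of an $\infty$-morphism again follows from $\Id_A - ip = d_A h + h d_A$ by the identical tree computation, so $i_\infty$ is an $\infty$-morphism between the initial structure on $A$ and the transferred structure on $H$. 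Since $i$ is a quasi-isomorphism by hypothesis, $i_\infty$ is an $\infty$-quasi-isomorphism, which completes the argument.
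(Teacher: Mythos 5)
Your overall strategy is exactly the one the paper follows: read a $\Po_\infty$-algebra structure as a morphism of dg cooperads $\Po^{\ac}\to \B\,\End_A$ via the third description of the Rosetta Stone (Theorem~\ref{thm:RosettaStone}), construct a universal morphism $\Psi : \B\,\End_A \to \B\,\End_H$ from the homotopy retract data by the tree-contraction recipe ($i$ on the leaves, $h$ on the internal edges, $p$ at the root --- this is Van der Laan's morphism, Lemma~\ref{lemm:HomoMorphPsi}), and define the transferred structure as the pushforward $\Psi\circ\Phi_A$. The compatibility of $\Psi$ with the bar differentials, which you correctly identify as the one genuinely combinatorial verification hinging on $\Id_A - ip = d_A h + h d_A$, is precisely the content of that lemma, so up to this point your argument coincides with the paper's.

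There is, however, a genuine error in your last step. You define the components of the $\infty$-quasi-isomorphism $\tilde{\imath} : H \rightsquigarrow A$ by ``the same tree-contraction formula but omitting the final $p$''. This cannot work for $n\ge 2$: the transferred operation $\mu_n$ has degree $n-2$, and dropping the degree-$0$ map $p$ does not change that degree, whereas the $n$-th component of an $\infty$-morphism must have degree $n-1$. The correct recipe (as in Theorem~\ref{HTTAiQI} for the $A_\infty$ case, and as stated in the paper after the list of examples) is to keep $\tilde{\imath}_1 := i$ and, for $n\ge 2$, to \emph{replace} the map $p$ labelling the root by the degree $+1$ homotopy $h$. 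With your formula the ``identical tree computation'' you invoke would not close up --- the terms produced by letting $d_A$ act at the root have nothing to cancel against, and the degrees in the $\infty$-morphism relation do not even match --- so the claim that the defining relation follows is false as written. The fix is purely local, and the rest of your argument stands.
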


\begin{rema}
The existence part of the theorem can also be proved by model category arguments, see Clemens Berger and Ieke Moerdijk \cite{BergerMoerdijk03} and Benoit Fresse \cite{Fresse09ter}. 
\end{rema}

\begin{proof}
The proof is based on the third definition $\Hom_\mathsf{dg\ Coop}(\Po^{\ac},\, \B \, {\End}_A)$ of the Rosetta stone together with a morphism of dg cooperads $\Psi : \B\, \End_A \to \B\, \End_H$, that we describe below: 
$$\begin{array}{ccc}
\Hom_\mathsf{dg\ Coop}(\Po^{\ac},\, \B \, {\End}_A) &\xrightarrow{\Psi_*} & \Hom_\mathsf{dg\ Coop}(\Po^{\ac},\, \B \, {\End}_H)     \\
&&\\
{ \rm Initial \  structure}\  \nu & \mapsto &{\rm  Transferred\  structure}\ \mu:=\Psi  \circ \nu .
\end{array}        $$
\end{proof}

\begin{lemm}\cite[Theorem~$5.2$]{VanDerLaan03}\label{lemm:HomoMorphPsi}
Let $(H, d_H)$ be a homotopy retract of $(A, d_A)$. The unique morphism of cooperads which extends 
$\psi : \TTT^c(s \End_A) \to \End_H$:
\begin{center}
\includegraphics[scale=0.23]{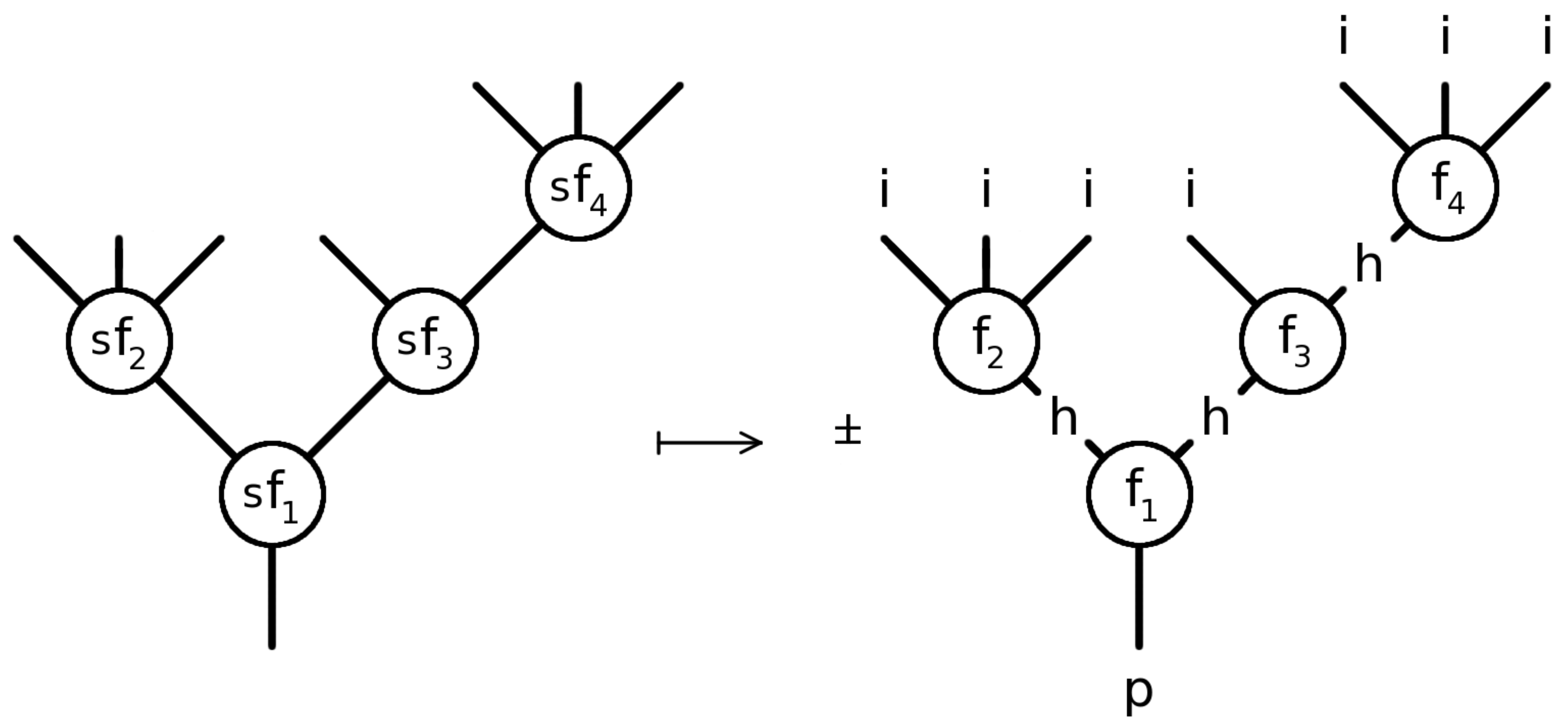} 
\end{center}
is a  morphism of dg cooperads  $\Psi : \B\, \End_A \to \B\, \End_H$ between the cobar constructions. 
\end{lemm}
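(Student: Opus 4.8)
The plan is to observe first that there is really only one thing to prove. Since $\B\,\End_H = \TTT^c(s\overline{\End_H})$ is a cofree cooperad, the universal property says that a morphism of cooperads with target $\B\,\End_H$ is the same datum as its corestriction onto the space of cogenerators $s\overline{\End_H}$; this is exactly what makes the extension $\Psi$ of $\psi$ exist and be unique, and it makes $\Psi$ automatically a morphism of graded cooperads. Hence the entire content of the lemma is the compatibility with the differentials, namely $\Psi\circ d_{\B\,\End_A}=d_{\B\,\End_H}\circ\Psi$, where on each side $d=d_1+d_2$.

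To prove this identity I would reduce it to the level of cogenerators. Recall that a \emph{$\Psi$-coderivation} is a map $\B\,\End_A\to\B\,\End_H$ satisfying the co-Leibniz rule for the decomposition maps twisted by the cooperad morphism $\Psi$; dually to the fact that a derivation out of a free operad is determined by its restriction to the generators, a $\Psi$-coderivation valued in the cofree cooperad $\B\,\End_H$ is completely characterized by its projection $\pi\circ(-):\B\,\End_A\to s\overline{\End_H}$, where $\pi:\B\,\End_H\epi s\overline{\End_H}$. Because $\Psi$ is a cooperad morphism and $d_{\B\,\End_A}$, $d_{\B\,\End_H}$ are coderivations, both composites $\Psi\circ d_{\B\,\End_A}$ and $d_{\B\,\End_H}\circ\Psi$ are $\Psi$-coderivations, so it will suffice to establish the single identity
$$\pi\circ\Psi\circ d_{\B\,\End_A}\ =\ \pi\circ d_{\B\,\End_H}\circ\Psi\ :\ \B\,\End_A\to s\overline{\End_H}\ .$$
Using $\pi\circ\Psi=\psi$, the left-hand side is just $\psi\circ(d_1+d_2)$, whereas the right-hand side unwinds, from the shape of the bar differential on $\B\,\End_H$, into the internal boundary $\partial_H\circ\psi$ (the $d_1$ part) plus the sum of the terms obtained by composing in $\End_H$ the two $\psi$-blocks cut out when a top internal edge is contracted (the $d_2$ part).

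The heart of the computation is then to check this identity on a decorated tree $t$ with vertices labelled by $f_1,\dots,f_k\in\End_A$. Writing $\psi(t)=p\circ C_t\circ i^{\otimes n}$, where $C_t$ is the composite of the $f_j$ along $t$ with the homotopy $h$ inserted on each internal edge, the term $\psi\circ d_1$ lets $d_A$ act on each vertex through $\partial$. Pushing these $d_A$'s onto the whole composite $C_t$ and using that $i$ and $p$ are chain maps, so that $d_H=p\,d_A\,i$, the contributions at the root and the leaves reassemble into $\partial_H\circ\psi(t)$. The remaining contributions are those where $d_A$ hits a homotopy $h$ on an internal edge, and there I would invoke the defining relation of the homotopy retract,
$$d_A\,h+h\,d_A\ =\ \Id_A-i\,p\ ,$$
one edge at a time. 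The $\Id_A$ term deletes $h$ on that edge and yields the direct composition in $\End_A$ of the two adjacent operations, which is exactly the matching summand of $\psi\circ d_2$, so these cancel; the $-\,i\,p$ term replaces $h$ by $-\,i\circ p$, factoring $C_t$ into a lower $\psi$-block (followed by $i$) and an upper $\psi$-block (preceded by $p$), which is precisely the corresponding summand of $\pi\circ d_2^{H}\circ\Psi(t)$.

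The one genuine difficulty I anticipate is the sign bookkeeping: one must verify that the Koszul signs coming from the suspensions $s$ on the cogenerators, from the definitions of $d_1$ and $d_2$, and from commuting $d_A$ past the degree $+1$ map $h$, all conspire so that the $\Id_A$ terms really cancel $\psi\circ d_2$ and the $-\,i\,p$ terms reproduce $d_2^{H}\circ\Psi$ with the correct sign. The saving feature, which I would exploit to keep this tractable, is that both the bar differential and the homotopy relation act as sums of local, single-edge contributions, so the whole verification can be organized edge by edge rather than globally over the tree.
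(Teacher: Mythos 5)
The paper does not actually prove this lemma: it is quoted verbatim from van der Laan's thesis (the citation \cite[Theorem~$5.2$]{VanDerLaan03} is the whole of the paper's justification), so there is no in-text argument to compare yours against. That said, your outline is the standard and correct proof of this statement: cofreeness of $\B\, \End_H=\TTT^c(s\overline{\End}_H)$ reduces everything to checking commutation with the differentials after corestriction to the cogenerators, both composites are $\Psi$-coderivations hence determined there, and the edge-by-edge application of $d_A h + h d_A = \Id_A - i\, p$ is exactly the mechanism that makes the $\Id_A$ terms absorb $\psi\circ d_2$ and the $-\,i\,p$ terms produce the two-block decompositions constituting $\pi\circ d_2^H\circ \Psi$. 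Two small points to tighten. First, the parenthetical ``so that $d_H = p\, d_A\, i$'' is only valid for a deformation retract ($p\, i=\Id_H$); for a general homotopy retract what you need, and what your argument in fact uses, is just that $p$ and $i$ are chain maps, i.e.\ $p\, d_A = d_H\, p$ and $d_A\, i = i\, d_H$. Second, the sign verification you defer is genuinely where the work lies (the suspensions $s$ on the cogenerators and the degree $+1$ of $h$ interact nontrivially), and a complete write-up would have to fix orientation conventions on the trees and carry the Koszul signs through the edge-local computation; but the local structure you identify is exactly what makes that verification tractable, and it is how the proof is carried out in \cite{VanDerLaan03}.
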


Explicitly, under the identification $\Tw(\Po^{\ac},\, {\End}_A) \cong 
\Hom_\mathsf{dg\ Coop}(\Po^{\ac},\, \B \, {\End}_A)$, 
if $\nu : \Po^{\ac} \to \End_A$ denotes the initial $\Po_\infty$-algebra on $A$, then the transferred $\Po_\infty$-algebra $\mu : \Po^{\ac} \to \End_H$ on $H$ is equal to the following composite 
$$\boxed{\begin{array}{ccccccc}
\Po^{\ac} & \xrightarrow{\Delta_{\Po^{\ac}}}& \TTT^c(\Po^{\ac}) &\xrightarrow{\TTT^c(s\nu)}& \TTT^c(s{\End}_A) &\xrightarrow{\psi} & {\End}_H \ ,\\
&{ \rm (type \ of \ algebra)} && {\rm (initial\   structure)} && {\rm (homotopy \ data)}&
\end{array}}$$
where the first map $\Delta_{\Po^{\ac}} :  \Po^{\ac} \to  \TTT^c(\Po^{\ac})$ is given by all the possible iterations of the partial decomposition maps of the cooperad $\Po^{\ac}$. This composite is made up of $3$ \underline{independent} terms corresponding respectively to the type of algebraic structure considered, to the initial algebra structure, and to the homotopy data. 

\begin{exams}$ \ $

\begin{itemize}

\item[$\diamond$]  In the example of  the ns operad $As$, its Koszul dual cooperad is the linear dual $As^*$, up to suspension. So, the full decomposition map $\Delta_{As^*} : As^*\to \TTT^c(As^*)$ produces all the planar trees out of any corolla:
\begin{center}
\includegraphics[scale=0.2]{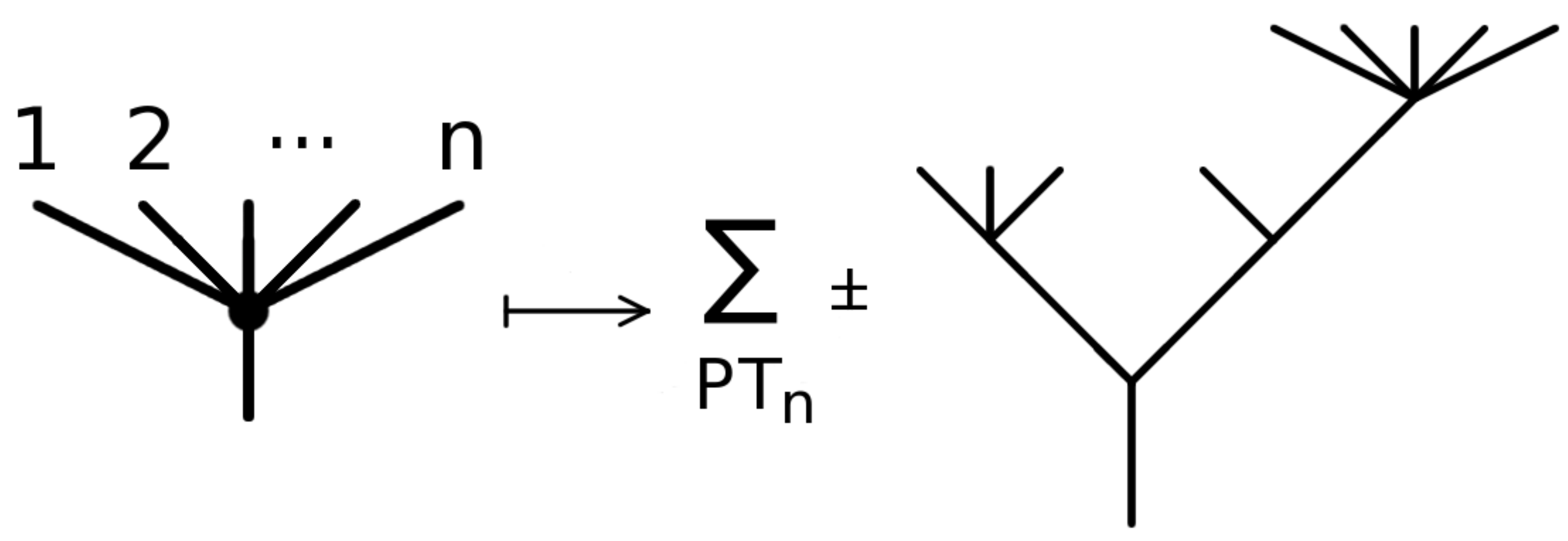}
\end{center}
The second map labels their vertices with the initial operations $\lbrace \nu_n\rbrace_{n\ge 2}$ according to the arity. The third map labels the inputs by $i$, the internal edges by $h$, and the root by $p$.  Finally, we get the formula given in Theorem~\ref{HTT2}.

\smallskip

\item[$\diamond$] The Koszul dual cooperad $Lie^{\ac}=Com^*$ of the operad $Lie$ is the linear dual of $Com$, up to suspension. So, the full decomposition map $\Delta_{Com^*} :Com^*\to \TTT^c(Com^*)$ splits any corolla into the sum  of all rooted trees. If $\{ \ell_n\,:  \, A^{\otimes n} \to A\}_{n\ge 2}$ stands for the initial  $L_\infty$-algebra structure on $A$, then the transferred $L_\infty$-algebra structure $\{ l_n\,:  \, H^{\otimes n} \to H\}_{n\ge 2}$  is equal to
$$l_n=\sum_{t\in RT_n} \pm \,  p \, t(\ell, h) \, i^{\otimes n}\ ,$$
where the sum runs over rooted trees $t$ with $n$ leaves and where the notation $t(\ell, h)$ stands for the $n$-multilinear operation on $A$ defined by the composition scheme $t$ with vertices labeled by the operations $\ell_k$ and the internal edges labeled by $h$.

\smallskip

\item[$\diamond$] Recall that a $C_\infty$-algebra is an $A_\infty$-algebra whose structure maps $\mu_n : A^{\otimes n} \to A$ vanish on the  sum of all $(p,q)$-shuffles for $p+q=n$. Using the morphism of cooperads 
$Ass^{\ac} \to Com^{\ac}$ and the aforementioned boxed formula, 
we leave it to the reader to prove that the planar tree formula for the transfer of $A_\infty$-algebra structures applies to $C_\infty$-algebras as well. See \cite{ChengGetzler08} for a proof by direct computations.

\smallskip

\item[$\diamond$] The data of a dg $D$-module is equivalent to the data of a bicomplex $(A, d, \delta)$, that is a module equipped with two anti-commuting square-zero unary operators. Considering the homotopy retract $(H, d_H):=(H_\bullet(A, d), 0)$ of $(A,d)$, the transferred $D_\infty$-module structure $\lbrace d_n : H \to H \rbrace_{n\ge 1}$ is a lifted version of the spectral sequence associated to the bicomplex $(A, d, \delta)$. Indeed, one can easily see that the formula for the transferred structure
$$d_n:=  \pm\, p  \,  \underbrace{\delta \, h\,  \delta\,  h\,  \,\cdots\, \delta\, h\, \delta}_{n\  \text{times}\  \delta}\, i $$
corresponds to the diagram-chasing formula  \cite[\S~$14$]{BottTu82} giving the higher differentials of the spectral sequence. Such a formula goes back it least to \cite{Shih62}.

\smallskip

\item[$\diamond$] The great advantage with this conceptual presentation is that one can prove the HTT for Koszul properads as well, using the very same arguments. The bar and cobar constructions were extended to the level of properads in \cite{Vallette07}. The only missing piece is a Van der Laan type morphism for graphs with genus instead of trees. Such a morphism is given in \cite{Vallette11bis}. This settles the HTT for homotopy (involutive) Lie bialgebras and for homotopy (involutive) Frobenius bialgebras, for instance.

\end{itemize}
\end{exams}

The quasi-ismorphism $i : H \qi A$ extends to an $\infty$-quasi-isomorphism $\tilde{\imath} : H \stackrel{\sim}{\rightsquigarrow} A$ defined by the same formula as the one giving the transferred structure but replacing the map $p$ labeling the root by the homotopy $h$.

\begin{rema}
The Homotopy Transfer Theorem should not be confused with the \emph{Homological Perturbation Lemma}. One can prove the HTT with it, see \cite{Berglund09}. The other way round, the HTT applied to the algebra of dual numbers gives the perturbation lemma. 
\end{rema}

\subsection{Homotopy theory of $\Po_\infty$-algebras}

\begin{theo}[Fundamental theorem of $\infty$-quasi-isomorphisms \cite{LodayVallette10}]\label{theo:InverseInftyQI}
If there exists an $\infty$-quasi-iso\-mor\-phi\-sm $ A \stackrel{\sim}{\rightsquigarrow} B$ between  two $\Po_\infty$-algebras, then there exists an $\infty$-quasi-isomorphism in the opposite direction $B\stackrel{\sim}{\rightsquigarrow} A$, which is the inverse of $H(A)\xrightarrow{\cong}H(B)$ on the level on holomogy. 
\end{theo}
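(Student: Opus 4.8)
The plan is to reduce the statement to the invertibility of $\infty$-isomorphisms established in the Proposition just above, by passing through the \emph{minimal models} produced by the Homotopy Transfer Theorem. The conceptual obstacle to keep in mind is that a bare $\infty$-quasi-isomorphism is \emph{not} an invertible morphism of $\Po_\infty$-algebras --- only the $\infty$-isomorphisms are --- so the whole task is to manufacture an arrow in the opposite direction, and the one place where reversing direction costs nothing is on homology, where the differential is zero.

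First I would fix, exactly as in the discussion of Massey products, deformation retracts of $A$ and of $B$ onto their homologies $(H(A),0)$ and $(H(B),0)$; these exist since we work over a field. Applying Theorem~\ref{TransferThm} endows $H(A)$ and $H(B)$ with transferred \emph{minimal} $\Po_\infty$-structures (trivial differential) and yields $\infty$-quasi-isomorphisms $\tilde{\imath}_A : H(A) \stackrel{\sim}{\rightsquigarrow} A$ and $\tilde{\imath}_B : H(B) \stackrel{\sim}{\rightsquigarrow} B$. In its complete, deformation-retract form, the HTT also provides reverse $\infty$-quasi-isomorphisms $\tilde{p}_A : A \stackrel{\sim}{\rightsquigarrow} H(A)$ and $\tilde{p}_B : B \stackrel{\sim}{\rightsquigarrow} H(B)$, each inducing on homology the inverse of the corresponding $\tilde{\imath}$. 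Supplying this reverse transfer map is precisely the ingredient that the fundamental theorem cannot bootstrap from itself; it is the genuine crux, and I would import it from the HTT construction rather than re-derive it here.

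Next I would form $g := \tilde{p}_B \circ f \circ \tilde{\imath}_A : H(A) \rightsquigarrow H(B)$, an $\infty$-quasi-isomorphism between two minimal $\Po_\infty$-algebras. Its first component is $g_1 = p_B \circ f_1 \circ i_A$, a composite of quasi-isomorphisms and hence a quasi-isomorphism; but since both source and target carry the zero differential, a quasi-isomorphism here \emph{is} an isomorphism, so $g$ is an \emph{$\infty$-isomorphism}. By the Proposition it is therefore invertible, with inverse $g^{-1} : H(B) \rightsquigarrow H(A)$ obtained by the power-series inversion argument.

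Finally I would set $f^{-1} := \tilde{\imath}_A \circ g^{-1} \circ \tilde{p}_B : B \rightsquigarrow A$. As a composite of $\infty$-quasi-isomorphisms it is again an $\infty$-quasi-isomorphism, giving the desired arrow in the opposite direction. On homology, $H(\tilde{\imath}_A)$ and $H(\tilde{p}_B)$ are the identities after the canonical identifications $H(H(A))=H(A)$ and $H(H(B))=H(B)$, so $H(g)=H(f)$ and hence $H(f^{-1})=H(g^{-1})=H(f)^{-1}$, as claimed. Everything past the second paragraph is formal bookkeeping; the only non-formal input is the existence of the reverse transfer maps $\tilde{p}_A,\tilde{p}_B$, which is where all the genuine content of the theorem is concentrated. (Alternatively, one could build $f^{-1}$ directly by inductively solving the $\infty$-morphism relations order by order in the weight grading of $\Pac$, starting from a chain-level homotopy inverse of $f_1$, but the reduction above isolates the difficulty more cleanly.)
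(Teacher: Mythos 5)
Your argument is correct and is essentially the proof given in the cited reference \cite{LodayVallette10} (the paper itself states the theorem without proof): transfer to the minimal models $H(A)$ and $H(B)$ via the HTT, observe that the induced $\infty$-quasi-isomorphism between structures with zero differential is an $\infty$-isomorphism and hence invertible by the preceding Proposition, and conjugate back. You also correctly isolate the one non-formal ingredient, namely the extension of $p$ to an $\infty$-quasi-isomorphism $\tilde{p}$, which the paper records in the deformation-retract form of the HTT.
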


This is the main property of $\infty$-quasi-isomorphisms, which does not hold for 
 quasi-isomorphisms of dg  $\Po$-algebras. These two notions of quasi-isomorphisms are related by the following property:  there exists a zig-zag of quasi-isomorphisms of dg $\Po$-algebras 
if and only there exists a direct $\infty$-quasi-isomorphism
 $$\exists \ A \stackrel{\sim}{\leftarrow} \bullet  \stackrel{\sim}{\rightarrow} 
\bullet  \stackrel{\sim}{\leftarrow} \bullet \cdots \bullet  \stackrel{\sim}{\rightarrow}  B 
\quad \Longleftrightarrow \quad \exists \ A \ \stackrel{\sim}{\rightsquigarrow} B \ . $$ 

\begin{defi}[Homotopy equivalence]
Two dg $\Po$-algebras (resp. two $\Po_\infty$-algebras) are \emph{homotopy equivalent} if they are related by a zig-zag of quasi-isomorphisms of dg $\Po$-algebras (resp. by an $\infty$-quasi-isomorphism). 
\end{defi}

The underlying homology groups $H(A)$ of any dg $\Po$-algebra $A$ carry a natural $\Po$-algebra structure. 
Moreover, the Homotopy Transfer Theorem~\ref{TransferThm} allows us to endow $H(A)$ with a $\Po_\infty$-algebra structure, with trivial differential, and which  extends this induced $\Po$-algebra structure. 

\begin{defi}[Operadic Massey products]
We call \emph{operadic Massey products} the operations making up this transferred $\Po_\infty$-algebra structure on the homology groups $H(A)$. 
\end{defi}

\begin{defi}[Formality]
A dg $\Po$-algebra $(A, d)$ is called \emph{formal} if  it is homotopy equivalent to 
the $\Po$-algebra $H(A)$ equipped with the induced structure: 
$$(A, d) \stackrel{\sim}{\leftarrow} \bullet  \stackrel{\sim}{\rightarrow} 
\bullet  \stackrel{\sim}{\leftarrow} \bullet \cdots \bullet  \stackrel{\sim}{\rightarrow}  (H(A), 0)  \ .$$
\end{defi}

\begin{prop}$Ê\ $
If the higher operadic Massey products vanish, then the dg $\Po$-algebra is formal.
\end{prop}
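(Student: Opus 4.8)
The plan is to read off formality as a direct consequence of the Homotopy Transfer Theorem~\ref{TransferThm} combined with the property relating zig-zags of quasi-isomorphisms of dg $\Po$-algebras to direct $\infty$-quasi-isomorphisms. The proof will parallel the argument given for the second point of the $A_\infty$ Proposition in Section~\ref{subsec:HoTheoAiAlg}.

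First, I would build the deformation retract of $(A, d_A)$ onto its homology $(H(A), 0)$ exactly as in Section~\ref{subsec:Massey}, using that we work over a field. Applying Theorem~\ref{TransferThm} to this retract produces a transferred $\Po_\infty$-algebra structure on $H(A)$, together with the $\infty$-quasi-isomorphism $\tilde{\imath} : (H(A),0) \stackrel{\sim}{\rightsquigarrow} A$ whose first component is the inclusion $i$ (given by the boxed transfer formula with the root labelled by $h$ instead of $p$). By definition, the operations making up this transferred structure are precisely the operadic Massey products.

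Second, I would use the hypothesis. Since the higher operadic Massey products vanish, the transferred twisting morphism $\Po^{\ac} \to \End_{H(A)}$ has no components beyond those coming from the generators of $\Po$; equivalently, it factors through the canonical projection $\Po_\infty = \Omega\, \Po^{\ac} \to \Po$. Hence the transferred $\Po_\infty$-structure reduces to the induced \emph{strict} $\Po$-algebra structure on $H(A)$, with trivial differential. In other words $(H(A),0)$ is a genuine dg $\Po$-algebra, not merely a $\Po_\infty$-algebra with nontrivial homotopies. Since $A$ is itself a strict dg $\Po$-algebra, and hence a $\Po_\infty$-algebra with vanishing higher operations, the morphism $\tilde{\imath}$ is now an $\infty$-quasi-isomorphism \emph{between two dg $\Po$-algebras}. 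Invoking the biconditional stated just before the definition of homotopy equivalence --- the existence of a direct $\infty$-quasi-isomorphism $H(A) \stackrel{\sim}{\rightsquigarrow} A$ is equivalent to the existence of a zig-zag of quasi-isomorphisms of dg $\Po$-algebras between them --- we obtain such a zig-zag connecting $A$ and $(H(A),0)$, which is exactly the assertion that $A$ is formal.

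The one step requiring genuine care is the second: I must verify that vanishing of the higher operadic Massey products really forces the transferred structure to be \emph{strict}, so that both the source and the target of $\tilde{\imath}$ are honest dg $\Po$-algebras. This is the hypothesis of the biconditional, which is stated for dg $\Po$-algebras and not for arbitrary $\Po_\infty$-algebras, so the reduction to the strict structure is precisely what makes the final invocation legitimate. Everything else is a direct application of results already established in the preceding sections.
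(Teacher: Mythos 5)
Your proposal is correct and follows essentially the same route as the paper: the paper's proof is precisely ``a corollary of the HTT'', and its expanded form (given explicitly for the $A_\infty$ case in Section~\ref{subsec:HoTheoAiAlg}) is exactly your argument --- vanishing of the higher operadic Massey products makes the transferred structure on $H(A)$ a strict dg $\Po$-algebra, so $\tilde{\imath}$ becomes an $\infty$-quasi-isomorphism between two dg $\Po$-algebras, and the biconditional relating direct $\infty$-quasi-isomorphisms to zig-zags of quasi-isomorphisms yields formality. Your explicit attention to why the strictness of both endpoints is needed before invoking that biconditional is a welcome clarification of a step the paper leaves implicit.
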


\begin{proof}
The proof  is a corollary of the HTT, Theorem~\ref{TransferThm}.
\end{proof}

In other words, when the higher operadic Massey products vanish, the homology $\Po$-algebra $H(A)$ has the same homotopy type as the initial dg $\Po$-algebra $A$. To study the general case, we will need the following result.

\begin{prop}[Rectification property \cite{DolgushevTamarkinTsygan07}]
Any $\Po_\infty$-algebra $A$ admits a dg  $\Po$-algebra, denoted $\mathrm{Rect}(A)$, which is  $\infty$-quasi-isomorphic to it
$$A \stackrel{\sim}{\rightsquigarrow} \mathrm{Rect}(A) \ . $$
\end{prop}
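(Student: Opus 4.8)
The plan is to run the \emph{bar--cobar} machinery one categorical level down, at the level of algebras and coalgebras, relative to the Koszul twisting morphism $\kappa : \Po^{\ac} \to \Po$ (the degree $-1$ map obtained by projecting $\Po^{\ac}$ onto $sE$, desuspending, and including $E$ into $\Po$; it is the restriction of the canonical map $\Po_\infty = \Omega\,\Po^{\ac} \to \Po$). First I would use the fourth description in the Rosetta Stone, Theorem~\ref{thm:RosettaStone}: the given $\Po_\infty$-structure on $A$, i.e.\ a twisting morphism $\varphi \in \Tw(\Po^{\ac}, \End_A)$, is the same datum as a square-zero coderivation $d_\varphi$ on the cofree $\Po^{\ac}$-coalgebra $\Po^{\ac}(A)$. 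Write $\B A := (\Po^{\ac}(A), d_\varphi)$ for this dg $\Po^{\ac}$-coalgebra, the \emph{bar construction of the homotopy algebra} $A$.

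Then I would set
$$\mathrm{Rect}(A) := \Omega_\kappa\, \B A,$$
the cobar construction relative to $\kappa$. Concretely $\Omega_\kappa$ sends a dg $\Po^{\ac}$-coalgebra $C$ to the quasi-free $\Po$-algebra $(\Po(\overline{C}), d_\kappa + d_C)$, where $\overline C$ is the coaugmentation coideal and $d_\kappa$ is the unique derivation extending the composite of the coalgebra structure map with $\kappa$. By construction this is a genuine, strict dg $\Po$-algebra, so the first half of the statement---that $A$ admits a dg $\Po$-algebra---will be settled by this formula.

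Next I would produce the $\infty$-morphism. The relative constructions $\Omega_\kappa$ and $\B_\kappa$ form an adjunction, and its unit supplies a natural morphism of dg $\Po^{\ac}$-coalgebras
$$\eta \ : \ \B A \ \longrightarrow \ \B_\kappa\, \Omega_\kappa\, \B A \ = \ \Po^{\ac}\big(\mathrm{Rect}(A)\big).$$
Since $\B A = \Po^{\ac}(A)$, this $\eta$ is precisely a morphism $\Po^{\ac}(A) \to \Po^{\ac}(\mathrm{Rect}(A))$ of dg $\Po^{\ac}$-coalgebras, which by the very definition of the category $\infty\textsf{-}\Po_\infty\textsf{-alg}$ is an $\infty$-morphism $\tilde\eta : A \rightsquigarrow \mathrm{Rect}(A)$.

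It remains to check that $\tilde\eta$ is an $\infty$-\emph{quasi}-isomorphism, i.e.\ that its first component $\tilde\eta_1 : A \to \mathrm{Rect}(A)$---the composite of the canonical inclusion $A = \I(A) \hookrightarrow \Po^{\ac}(A)$ with the universal map into $\Omega_\kappa \B A$---is a quasi-isomorphism. \textbf{This is the main obstacle, and it is exactly where Koszulness of $\Po$ enters.} I would filter $\Po^{\ac}(\mathrm{Rect}(A))$ by the weight grading of $\Po^{\ac}$ (as in the proof that $\infty$-isomorphisms are invertible), so that on the associated graded the twisted differential governing $\eta$ reduces to the differential of the Koszul complex $\Po^{\ac} \circ_\kappa \Po$ applied to $A$. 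Koszulness of $\Po$ is precisely the acyclicity of this Koszul complex, i.e.\ the statement that $\kappa$ is a Koszul twisting morphism; the standard comparison lemma for filtered complexes then upgrades this acyclicity on the associated graded to the conclusion that $\eta$, and hence $\tilde\eta_1$, is a quasi-isomorphism. Conilpotence of $\Po^{\ac}$ makes the filtration exhaustive and bounded below on each arity, so the spectral sequence converges and the argument goes through, completing the proof.
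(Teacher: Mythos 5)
The paper offers no proof of this proposition at all---it is stated with a citation to Dolgushev--Tamarkin--Tsygan---but your argument is precisely the standard bar--cobar rectification $\mathrm{Rect}(A)=\Omega_\kappa \B_\iota A$ found in \cite[Chapter~11]{LodayVallette10}, and it is correct: the unit of the $\Omega_\kappa \dashv \B_\kappa$ adjunction gives the $\infty$-morphism, and Koszulness of $\Po$ (the standing hypothesis of this section) gives the quasi-isomorphism via the weight filtration. The only cosmetic slips are that the cobar construction on conilpotent dg $\Po^{\ac}$-coalgebras needs no coaugmentation coideal, and that the Koszul complex appearing on the associated graded of $\Po\circ\Po^{\ac}\circ A$ is $\Po\circ_\kappa\Po^{\ac}$ rather than $\Po^{\ac}\circ_\kappa\Po$; both are acyclic when $\Po$ is Koszul, so nothing breaks.
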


The rectification property together with the HTT provide us with two $\infty$-quasi-isomorphisms
$$\xymatrix{A  &   \ar@{~>}[l]_(0.6)\sim H(A)  \ar@{~>}[r]^\sim &  \mathrm{Rect}(A)} \ , $$
where $H(A)$ is considered as a $\Po_\infty$-algebra equipped with the operadic Massey products. Therefore, the data of the operadic Massey products on the homology groups allows us to recover the homotopy type of the initial dg $\Po$-algebra. So the Massey products faithfully encode the homotopy type of $A$. 

In the homotopy class of any $\Po_\infty$-algebra $(A, d)$, there is a $\Po_\infty$-algebra $H(A)$ with trivial differential, by the HTT, and there is a ``strict'' dg $\Po$-algebra $\mathrm{Rect}(A)$, by the Rectification property. In the first case, the underlying space is rather small but the algebraic structure is more complex. In the latter case, the underlying space is pretty big (one has to fatten $A$ to get $\mathrm{Rect}(A)$) but the algebraic structure is simpler. We informally call this phenomenon the \emph{Heisenberg uncertainty principle of homotopical algebra}: in general, homotopy classes of $\Po_\infty$-algebras cannot be represented by two ``small'' data, space and structure, at the same time. 
\\

One can define a suitable notion of \emph{$\infty$-homotopy}, denoted $\sim_h$, between $\infty$-morphisms, see \cite{Vallette11}, which allows us to state the following result. 

\begin{theo}[Homotopy category \cite{Vallette11}]
The following categories are equivalent
$$\boxed{\mathsf{Ho}(\mathsf{dg}\  \Po\textsf{-}\mathsf{alg})\  \cong \  \infty\mathsf{-}\Po_\infty\mathsf{-alg}/\sim_h    
\ 
 \cong \  \infty\textsf{-}\mathsf{dg}\  \Po\textsf{-}\mathsf{alg}/\sim_h     \   . }$$
\end{theo}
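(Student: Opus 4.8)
The plan is to prove the two displayed isomorphisms of categories separately, treating the middle category $\infty\mathsf{-}\Po_\infty\mathsf{-alg}/\sim_h$ as the pivot. The key preliminary observation, used on both sides, is that in each quotient category the $\infty$-quasi-isomorphisms become genuine isomorphisms. Indeed, by the Fundamental theorem of $\infty$-quasi-isomorphisms (Theorem~\ref{theo:InverseInftyQI}), any $\infty$-quasi-isomorphism $f : A \stackrel{\sim}{\rightsquigarrow} B$ admits an $\infty$-quasi-isomorphism $g : B \stackrel{\sim}{\rightsquigarrow} A$ in the opposite direction; the notion $\sim_h$ of $\infty$-homotopy is precisely designed so that $g$ can be arranged to satisfy $g\circ f \sim_h \Id_A$ and $f\circ g \sim_h \Id_B$, whence $[f]$ is invertible in the quotient.

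For the right-hand equivalence $\infty\mathsf{-}\Po_\infty\mathsf{-alg}/\sim_h \cong \infty\textsf{-}\mathsf{dg}\ \Po\textsf{-}\mathsf{alg}/\sim_h$ I would use the Rectification property. The inclusion of the full subcategory of strict dg $\Po$-algebras into all $\Po_\infty$-algebras descends, after passing to $\infty$-homotopy classes, to a functor $\infty\textsf{-}\mathsf{dg}\ \Po\textsf{-}\mathsf{alg}/\sim_h \to \infty\mathsf{-}\Po_\infty\mathsf{-alg}/\sim_h$. It is essentially surjective because each $\Po_\infty$-algebra $A$ carries an $\infty$-quasi-isomorphism $A \stackrel{\sim}{\rightsquigarrow} \mathrm{Rect}(A)$ onto a dg $\Po$-algebra, which by the preliminary observation is an isomorphism in the target; it is fully faithful because the subcategory is full and the relation $\sim_h$ restricts correctly.

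The substantial part is the left-hand equivalence $\mathsf{Ho}(\mathsf{dg}\ \Po\textsf{-}\mathsf{alg}) \cong \infty\textsf{-}\mathsf{dg}\ \Po\textsf{-}\mathsf{alg}/\sim_h$. Every strict morphism of dg $\Po$-algebras is in particular an $\infty$-morphism, so there is a canonical functor $\mathsf{dg}\ \Po\textsf{-}\mathsf{alg} \to \infty\textsf{-}\mathsf{dg}\ \Po\textsf{-}\mathsf{alg}/\sim_h$; since a quasi-isomorphism is in particular an $\infty$-quasi-isomorphism, the preliminary observation shows this functor sends quasi-isomorphisms to isomorphisms, so by the universal property of the localization it factors through a functor $\Theta : \mathsf{Ho}(\mathsf{dg}\ \Po\textsf{-}\mathsf{alg}) \to \infty\textsf{-}\mathsf{dg}\ \Po\textsf{-}\mathsf{alg}/\sim_h$. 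Essential surjectivity of $\Theta$ is immediate, as the objects are the same. To prove full faithfulness I would identify both hom-sets with $\infty$-homotopy classes of $\infty$-morphisms: on one side, the stated correspondence between zig-zags of quasi-isomorphisms and direct $\infty$-quasi-isomorphisms straightens any fraction $f\,s^{-1}$ occurring in a morphism of $\mathsf{Ho}$ into a single class of $\infty$-morphism; on the other side, one invokes the model structure on dg $\Po$-algebras \cite{Hinich97, BergerMoerdijk03} and the cofibrant (quasi-free) replacement furnished by the cobar-bar resolution $\Omega\B A \stackrel{\sim}{\to} A$, using that $\infty$-morphisms $A \rightsquigarrow B$ are in bijection, modulo $\sim_h$, with strict morphisms $\Omega\B A \to B$ modulo homotopy.

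The main obstacle is exactly this last identification, namely showing that the localized hom-set $\mathrm{Hom}_{\mathsf{Ho}}(A,B)$ is computed on the nose by $\infty$-homotopy classes of $\infty$-morphisms. Faithfulness requires that two strict morphisms that become equal in $\mathsf{Ho}$ are already $\infty$-homotopic, and fullness requires that every zig-zag reduces to a single $\infty$-morphism; both rest on the homotopical control provided by cofibrant replacements and on the delicate point of checking that the two a priori distinct homotopy relations — the model-categorical one on strict maps between cofibrant-fibrant objects and the operadic relation $\sim_h$ on $\infty$-morphisms — actually coincide. Assembling $\Theta$ with the rectification equivalence then yields the two boxed isomorphisms of categories.
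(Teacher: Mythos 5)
Your proposal is correct in outline and agrees with the paper on the right-hand equivalence, which both of you obtain from the Rectification property together with the invertibility of $\infty$-quasi-isomorphisms modulo $\sim_h$. Where you genuinely diverge is on the other equivalence: the paper proves $\mathsf{Ho}(\mathsf{dg}\ \Po\textsf{-}\mathsf{alg}) \cong \infty\textsf{-}\Po_\infty\textsf{-}\mathsf{alg}/\sim_h$ by transporting the whole problem to the \emph{coalgebra} side, namely by endowing the category of dg $\Pac$-coalgebras with a model category structure for which the bar--cobar adjunction is a Quillen equivalence and the fibrant--cofibrant objects are exactly the quasi-free coalgebras, i.e.\ the $\Po_\infty$-algebras; there, $\infty$-morphisms are literally the morphisms of the category and $\sim_h$ is the model-categorical homotopy relation, so the identification of hom-sets in the homotopy category is automatic. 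You instead stay on the algebra side, using the cofibrant replacement $\Omega\B A \qi A$ and the bijection between $\infty$-morphisms $A \rightsquigarrow B$ and strict morphisms $\Omega\B A \to B$; this is essentially the route of \cite{LefevreHasegawa03} for $A_\infty$-algebras. Your approach is viable, but the point you flag as ``the main obstacle'' --- that the model-categorical homotopy relation on strict maps out of $\Omega\B A$ coincides with the operadic relation $\sim_h$, and that composition of fractions in the localization matches composition of $\infty$-morphisms --- is precisely the nontrivial content that the paper's coalgebra-side model structure is engineered to make tautological; in your setup it remains to be proved, as does your preliminary claim that the homotopy inverse furnished by Theorem~\ref{theo:InverseInftyQI} can be chosen to be a two-sided inverse up to $\sim_h$ (the theorem as stated only provides an inverse on homology). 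One further caveat for your route: quasi-free dg $\Po$-algebras such as $\Omega\B A$ are cofibrant for the model structures of \cite{Hinich97, BergerMoerdijk03} only after checking the existence of a suitable filtration on the generators (the paper itself warns about this outside the non-negatively graded setting), whereas on the coalgebra side every object is cofibrant. In short: same skeleton, honestly identified gaps, but the paper's detour through dg $\Pac$-coalgebras buys exactly the comparison of homotopy relations that your argument still owes.
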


\begin{proof}
The first equivalence is proved by endowing the category of  dg $\Pac$-coalgebras with a model category structure.
The last equivalence with the category of dg  $\Po$-algebras equipped with the $\infty$-morphisms  is given by the Rectification property.
\end{proof}

\subsection{Applications}
In this section, we conclude with a non-exhaustive, but not so short,  list of fields where the aforementioned operadic homotopical algebra is used. 

\subsubsection*{$\quad$ \sc Algebra}$ \ $

\bigskip

\paragraph{$\diamond$ \it Chevalley-Eilenberg cohomology of Lie algebras}The cohomology groups of a Lie algebra 
$\g$, with coefficients in $S(\g)$
and the adjoint representation,  carry a homotopy $BV$-algebra structure  \cite{DrummondColeVallette10}. 
The cohomology of the  Lie algebra $L_{1}$ of polynomial vector fields over the line $\KK^1$ is  
 a $C_{\infty}$-algebra generated by $ H^1_{CE}(L_{1})$ \cite{Millionshchikov10}.
 
\medskip 

\paragraph*{$\diamond$ \it Bar construction} The bar construction for $A_\infty$-algebras carries a natural commutative $BV_\infty$-algebra structure  \cite{TTW10}. 

\medskip 

\paragraph*{$\diamond$ \it Vertex  algebras} The study of vertex algebras  \cite{Borcherds86} yields to BV-algebras. Lian and Zuckerman \cite{LianZuckerman93} conjectured that their structure of BV-algebra on the homology groups of a Topological Vertex Operator Algebra lifts to a homotopy BV-algebra structure on the TVOA itself. This was proved, with explicit formulae, in \cite{GCTV09}.

\medskip 

\paragraph*{$\diamond$  \it  (Cyclic) Deligne conjecture} The (cyclic) Deligne conjecture states that the (BV) Gersenhaber algebra 
structure on the Hochschild cohomology groups of an (cyclic unital) associative algebra can be lifted to a homotopy (BV) Gersenhaber algebra 
structure on the cochain level. This conjecture received several proofs in \cite{Tamarkin98, Voronov00, KontsevichSoibelman00, McClureSmith02, BergerFresse04,  Kaufmann04, Kaufmann07, TradlerZeinalian06, Costello07,  KontsevichSoibelman09}.

\subsubsection*{$\quad$  \sc Deformation theory, Quantum algebra, Noncommutative geometry} $ \ $

\bigskip

\paragraph*{$\diamond$ \it  Deformation-Quantization of Poisson manifolds} The first proof of the deformation-quantization of Poisson manifolds, given by Maxim Kontsevich in \cite{Kontsevich03, Weinstein95}, relies on an explicit $L_\infty$-quasi-isomorphism to prove a formality result. 

\medskip 

\paragraph*{$\diamond$ \it  Drinfeld associators, Grothendieck-Teichm\"uller group} The second proof of the deformation-quantization of Poisson manifolds, given by Dimitry Tamarkin in \cite{Tamarkin98}, relies on the homotopy properties  of $G_\infty$-algebras. It relies on the obstruction theory; so it is less explicit than Kontsevich's proof. It however unveils the links with the Drinfeld associators and the GT groups. 

\medskip

\paragraph*{$\diamond$ \it  Cyclic Homology} The cyclic homology of (unital) associative algebras can be computed using various bicomplexes, see \cite{Loday98}. This result can be made more precise using the HTT for mixed complexes, see \cite{Kassel90} where all these bicomplexes are shown to be $\infty$-quasi-isomorphic.

\subsubsection*{$\quad$ \sc Algebraic Topology.}$ \ $ 

\bigskip

\paragraph*{$\diamond$ \it  Massey products} The Massey products on the cohomology groups of a topological space are related to the operad $A_\infty$, see Section~\ref{subsec:Massey}. 

\medskip 

\paragraph*{$\diamond$ \it   Homotopy groups of spheres} The second page of the Adams spectral sequence which computes the homotopy groups of spheres is equal to the homology of the $\Lambda$ algebra, which is the Koszul dual dg algebra of the Steenrod algebra, see \cite{BCKQRS66}.  

\medskip 

\paragraph*{$\diamond$ \it  Spectral Sequences} We have seen in Section~\ref{subsec:HTT} that spectral sequences actually come from the HTT applied to the algebra of dual numbers.
The advantage of this point of view on bicomplexes versus spectral sequences, is that the HTT can be applied to bicomplexes equipped with a deformation retract whose boundary map is not necessarily trivial.
For instance, the cyclic bicomplex of a unital associative algebra, which involves the boundary maps $
b, b'$ and the cyclic operator, cf.\ \cite{LodayQuillen84, Loday98}, admits a deformation retract made up of the 
the columns involving only $b$. Applying the HTT to it gives  automatically Connes' boundary map $B$. So we recover the fact that, in cyclic homology theory, the $(b,B)$-bicomplex is quasi-isomorphic to the cyclic bicomplex.

\medskip 

\paragraph*{$\diamond$ \it  Iteration of the bar construction} The method proposed by Benoit Fresse in \cite{Fresse10bis} to iterate the bar construction for $E_\infty$-algebras relies on the HTT for a cofibrant $E_\infty$-operad. (An $E_\infty$-algebra is a homotopy version of a commuative algebra, where both the associativity relation and the symmetry property are relaxed up to higher homotopies). 

\medskip 

\paragraph*{$\diamond$ \it  Rational Homotopy Theory} Minimal models in Dennis Sullivan's approach to Rational Homotopy Theory \cite{Sullivan77} are quasi-free commutative algebras on the rational homotopy groups of  topological spaces endowed with an $L_\infty$-(co)algebra structure. In Dan Quillen's approach to RHT \cite{Quillen69}, the minimal models are quasi-free Lie algebras generated by the cohomology groups of the space endowed with a $C_\infty$-(co)algebra structure. 

\medskip 

\paragraph*{$\diamond$ \it  String Topology} String Topology is full of higher structures, to name  but a few: 
Batalin-Vilkovisky algebra in loop homology \cite{ChasSullivan99, CHV06} and (homotopy) involutive Lie bialgebra in (equivariant) string homology \cite{ChasSullivan04}. 

\subsubsection*{$\quad$ \sc Differential Geometry}$ \ $

\bigskip

\paragraph*{$\diamond$ \it  Lie algebroids}  The structure of Courant algebroids is shown to induce a  structure of homotopy Lie algebra by Dmitry Roytenberg and Alan Weinstein in \cite{RoytenbergWeinstein98}.

\medskip

\paragraph*{$\diamond$ \it  K\"ahler manifolds}  The dg commutative algebra of differential forms of a K\"ahler manifold is shown to be formal in \cite{DGMS75}. It was also proved in loc. cit. that this is equivalent to the uniform vanishing of the higher Massey products. 

\medskip 

\paragraph*{$\diamond$ \it  Lagrangian submanifolds, Floer Homology, Symplectic Field Theory} Structures of $A_\infty$-algebras \cite{Fukaya02}, $uA_\infty$-algebras \cite{FOOO09I, FOOO09II} and ${BiLie_\diamond}_\infty$-algebras \cite{EGH00, CFL11} play a crucial role in these fields. 

\medskip 

\paragraph*{$\diamond$ \it  F-manifold, Nijenhuis structure and Poisson manifold}   Sergei Merkulov developed in \cite{Merkulov04, Merkulov05, Merkulov06bis} a programme called ``operadic profiles'', which establishes a new link between differential geometry and higher algebra. He thereby describes 
several differential geometric structures (Hertling-Manin, Nijenhuis, and Poisson) in terms of homotopy algebras, i.e. algebras over operadic Koszul resolutions. 

\medskip 

\paragraph*{$\diamond$ \it  Poincar\'e duality} The study of the Poincar\'e duality of oriented closed manifolds is related to the homotopy (co)unital Frobenius bialgebra structure \cite{Wilson07, HirschMilles10, Miller11} on the differential forms of the manifold. 

\medskip 

\paragraph*{$\diamond$ \it  Fluid mechanics}  Dennis Sullivan proposed in \cite{Sullivan10} a programme to solve the Euler and Navier-Stockes equations, which describe fluid motion, using the HTT for $BV_\infty$-algebras. 

\subsubsection*{$\quad$ \sc Algebraic Geometry} $ \ $

\bigskip

\paragraph*{$\diamond$ \it  Moduli spaces of algebraic curves $\mathcal{M}_{g,n}$ and $\overline{\mathcal{M}}_{g,n}$,  Gromov-Witten invariants}  The two homology operads $H_\bullet(\overline{\mathcal{M}}_{0, n+1})$ and $H_\bullet(\mathcal{M}_{0, n+1})$ are Koszul dual to each other, see \cite{Getzler95}. 

\medskip 

\paragraph*{$\diamond$ \it  Frobenius manifolds, Quantum cohomology}  
The space $H^\bullet(\mathcal{M}_{0, n+1})$ provides the generators of the minimal model (quasi-free resolution without internal differential) of the operad $BV$, see \cite{DrummondColeVallette10}. This allows us to define the notion of a homotopy Frobenius manifold. 
Using the HHT for Lie algebras, S. Barannikov and M. Kontsevich  \cite{BarannikovKontsevich98}, and Y.I. Manin \cite{Manin99} proved that the underlying homology groups of some dg BV-algebras carry a Frobenius manifold structure, i.e. an algebra over $H_\bullet(\overline{\mathcal{M}}_{0, n+1})$.  Using the aforementioned minimal model for $BV$ and the HTT for homotopy BV algebras, we  endowed in \cite{DrummondColeVallette10} the homology groups with a homotopy Frobenius manifold structure, i.e. an algebra parametrized by $H^\bullet(\mathcal{M}_{0, n+1})$, extending the Barannikov-Kontsevich-Manin structure, cf. loc. cit. This latter structure keeps faithfully track of the homotopy type of the initial dg $BV$-algebra.
  
\medskip 

\paragraph*{$\diamond$ \it  Mirror symmetry conjecture} 
The Mirror Symmetry conjecture \cite{Kontsevich95} claims that the Fukaya ``category'' of Lagrangian submanifolds of a Calabi-Yau manifold $\mathcal M$ (A-side) should be equivalent to the bounded derived category of coherent sheaves on a dual Calabi-Yau manifold $\widetilde{\mathcal M}$ (B-side). The Fukaya ``category'' is actually an $A_\infty$-category. The relevant algebraic structure on the B-side is the homotopy Frobenius manifold stated in the previous point, where one starts for the dg BV-algebra of the Dolbeault cochain complex of the Calabi-Yau manifold $\widetilde{\mathcal M}$.

\subsubsection*{$\quad$ \sc Mathematical Physics}$ \ $

\bigskip 

\paragraph*{$\diamond$ \it  Feynman diagrams, Batalin-Vilkovisky formalism, Renormalization theory} 
One of the highlights of this operadic homotopy theory lies in the following result by Sergey Merkulov \cite{Merkulov10}. 
He proved that the Batalin-Vilkovisky formalism is equivalent to the HTT for unimodular Lie bialgebras. In other words, the classical Feynman diagrams are exactly the graphs appearing in the HTT formula for the wheeled properad encoding unimodular Lie bialgebras.

\medskip 

\paragraph*{$\diamond$ \it BRST cohomology} The BRST cohomology carries a $BV_\infty$-algebra structure \cite{GCTV09}.

\medskip 

\paragraph*{$\diamond$ \it  Field theories} The various types of fields theories (Topological Quantum Field Theory \cite{Atiyah88}, Conformal Field Theory \cite{Segal04}, String Field Theories \cite{WittenZwiebach92, Zwiebach93}, etc.) yield to various types of homotopy structures (
$L_\infty$ 
\cite{FulpLadaStasheff02}, 
$uFrob_\infty$ \cite{HirschMilles10}, $OCHA$ \cite{KajiuraStasheff06}, ${BiLie_\diamond}_\infty$ \cite{MuensterSachs11}, etc.). 

\subsubsection*{$\quad$ \sc Computer Science}$ \ $

\bigskip 

\paragraph*{$\diamond$ \it  Rewriting system} The rewriting method of Section~\ref{subsec:Rewriting} is strongly related to the notion of Rewriting system, see \cite{GuiraudMalbos09, GuiraudMalbos11} for instance.

\section*{Conclusion}

\subsection*{Open questions}Here are a few open and interesting questions in this field of research. 

\begin{enumerate}
\item Are the Koszul dual properads $Frob$ and $BiLie_\diamond$, encoding respectively Frobenius bialgebras and involutive Lie bialgebras, Koszul ? 

\item  There are $4$ functors making up the following commutative diagram 
$$\xymatrix{\textsf{modular} \ \textsf{operads} \ar[r]  \ar[d] & \textsf{properads}  \ar[d] \\ 
\textsf{cyclic} \ \textsf{operads} \ar[r]   & \textsf{operads} \ . \\   } $$
It was shown in \cite{Getzler95} that the two graded operads $H_\bullet(\overline{\mathcal{M}}_{0, n})$ and $H_\bullet({\mathcal{M}}_{0, n})$ are Koszul dual operads and Koszul operads. We conjecture that the two full genera properads associated to the modular operads 
$H_\bullet(\overline{\mathcal{M}}_{g, n+1})$ and $H_\bullet({\mathcal{M}}_{g, n+1})$
are Koszul dual properads and Koszul properads.

\item Develop the Koszul duality for operads in characteristic $p$. 
\end{enumerate}

\subsection*{Further reading} At this point, the interested reader is invited to pursue  its journey in   operad theory and  homotopical algebra, with the  book \cite{LodayVallette10}.
%, freely downloadable at: 
%\begin{center}
%\texttt{http://math.unice.fr/$\sim$brunov/Operads.pdf}. 
%\end{center}
The various answers to the exercises proposed here can be found in this reference.

\subsection*{Acknowledgements} This survey follows several talks given at the colloquia or seminars of the universities of Lyon, Dublin, Stony Brooke, G\"ottingen, Glasgow, Paris $6$, at the conference ``Higher Structures 2009'' in Z\"urich, and  at the Hayashibara forum on Symplectric Geometry, Non-commutative geometry and Mathematical Physics at MSRI.  The author would like to thank the audience, and especially Dennis Sullivan, for their questions and  comments, which helped to improve the present text. Many thanks to Olivia Bellier and Jim Stasheff for their corrections and  interesting  remarks on the first draft of the present paper.   
Last but not least, I would like to express my deep gratitude to the Max-Planck Institute f\"ur Mathematik in Bonn for the long term invitation and for the excellent working conditions.

%%%%%%%%%%%%%%%%%%%%%%%%%%%%%%%%%%%%%%%%%%%%%%%%%%%%%%%%%%%%%%%

\bibliographystyle{amsalpha}
\bibliography{bib}

\medskip

%{\small \textsc{Bruno Vallette, Laboratoire J.A. Dieudonn\'e,
%Universit\'e de Nice Sophia-Antipolis,} \\
%textsc{Parc Valrose, 06108 Nice
%Cedex 02, France}\\
%\& {\small \textsc{Max-Planck-Institut f\" ur Mathematik, Vivatsgasse 7,
%53111 Bonn, Deutschland.}

\smallskip

%E-mail address : \texttt{brunov@unice.fr}\\
%URL : \texttt{http://math.unice.fr/$\sim$brunov}}

\end{document}